\newtheorem{theorem}{Theorem}[chapter]
\newtheorem{lemma}[theorem]{Lemma}
\newtheorem{proposition}[theorem]{Proposition}
\newtheorem{corollary}[theorem]{Corollary}
\theoremstyle{definition}
\newtheorem{definition}[theorem]{Definition}
\newtheorem{claim}[theorem]{Claim}
\theoremstyle{remark}
\newtheorem{remark}[theorem]{Remark}
\numberwithin{section}{chapter}
\numberwithin{equation}{chapter}
\renewcommand\d{\mathbb{D}}
\renewcommand\c{\mathbb{C}}
\renewcommand\t{\mathbb{T}}
\renewcommand\r{\mathbb{R}}
\newcommand\royal{\mathcal{R}}
\newcommand\cald{\mathcal{D}}
\newcommand\calr{\mathcal{R}}
\newcommand\cala{\mathcal{A}}
\newcommand\calf{\mathcal{F}}
\newcommand\car[1]{|#1|_{\rm car}}
\newcommand\kob[1]{|#1|_{\rm kob}}
\newcommand\id[1]{{\rm id}_{#1}}
\newcommand\be{\begin{equation}}
\newcommand\ee{\end{equation}}
\newcommand\set[2]{\{ #1 \, : \, #2\}}
\newcommand\norm[1]{\| #1 \|}
\DeclareMathOperator{\ran}{ran}
\DeclareMathOperator{\Car}{{Car}}
\DeclareMathOperator{\Kob}{{Kob}}
\DeclareMathOperator{\aut}{{Aut}}
\DeclareMathOperator{\tr}{{tr}}
\DeclareMathOperator{\re}{Re}
\DeclareMathOperator{\im}{Im}
\newcommand\bbm{\begin{bmatrix}}
\newcommand\ebm{\end{bmatrix}}
\newcommand\al{\alpha}
\newcommand\ga{\gamma}
\newcommand\Ga{\Gamma}
\newcommand\de{\delta}
\newcommand\e{\mathrm{e}}
\newcommand\eps{\varepsilon}
\newcommand\la{\lambda}
\newcommand\ph{\varphi}
\newcommand\ups{\upsilon}
\newcommand\si{\sigma}
\newcommand\nd{nondegenerate }
\newcommand\npep{norm-preserving extension property}
\newcommand\half{{\tfrac 12}}
\newcommand\df{\stackrel{\rm def}{=}}
\newcommand\sym{\mathrm{sym}}
\renewcommand\phi{\varphi}
\newcommand{\twopartdef}[4]
{
	\left\{
		\begin{array}{ll}
			#1 &  #2 \\  \\
			#3 &  #4
		\end{array}
	\right.
}
\begin{document}

\frontmatter

\title[The norm-preserving extension property in the symmetrized bidisc]{Geodesics, retracts, and the norm-preserving extension property in the symmetrized bidisc}


\author{Jim Agler}
\address{Department of Mathematics, University of California at San Diego, CA \textup{92103}, USA}
\thanks{Partially supported by National Science Foundation Grant
DMS 1361720 and by the London Mathematical Society grant 41527}

\author{Zinaida Lykova}
\address{School of Mathematics and Statistics, Newcastle University, Newcastle upon Tyne
 NE\textup{1} \textup{7}RU, U.K.}
\email{Zinaida.Lykova@ncl.ac.uk}
\thanks{Partially supported by the Engineering and Physical Sciences grant EP/N03242X/1}

\author{Nicholas Young}
\address{School of Mathematics and Statistics, Newcastle University, Newcastle upon Tyne NE1 7RU, U.K.
{\em and} School of Mathematics, Leeds University,  Leeds LS2 9JT, U.K.}
\email{Nicholas.Young@ncl.ac.uk}
\date{ 14th March 2016, revised 20th August 2016}

\subjclass[2010]{Primary: 32A07, 53C22, 54C15, 47A57, 32F45; Secondary: 47A25, 30E05}

\keywords{symmetrized bidisc, complex geodesic,  norm-preserving extension property, holomorphic retract, spectral set,  Kobayashi extremal problem, Carath\'eodory extremal problem, von Neumann  inequality, semialgebraic set}


\maketitle

\tableofcontents

\begin{abstract}
A set $V$ in a domain $U $ in $\c^n$ has the {\em norm-preserving extension property} \  if every bounded holomorphic function on $V$ has a holomorphic extension to $U$  
with the same supremum norm.  We prove that an algebraic subset of the 
{\em symmetrized bidisc}
\[
G\df\{(z+w,zw):|z|<1, \ |w| < 1\}
\]
 has the norm-preserving extension property  if and only if it is either a singleton, $G$ itself, a complex geodesic of $G$, or the union of the set $\{(2z,z^2): |z|<1\}$ and a complex geodesic of degree $1$ in $G$.    We also prove that the complex geodesics in $G$ coincide with the nontrivial holomorphic retracts in $G$. Thus, in contrast to the case of the ball or the bidisc, there are sets in $G$ which have the norm-preserving extension property   but are not holomorphic retracts of $G$.  In the course of the proof we obtain a detailed classification of the complex geodesics in $G$ modulo automorphisms of $G$.  We give applications to von Neumann-type inequalities for $\Gamma$-contractions (that is, commuting pairs of operators for which the closure of $G$ is a spectral set) and for symmetric functions of  commuting pairs of contractive operators. We find three other domains that contain sets with the norm-preserving extension property which are not retracts:  they are the spectral ball of $2\times 2$ matrices, the tetrablock and the pentablock.  We also identify the subsets of the bidisc which have the norm-preserving extension property for  symmetric functions.
\end{abstract}

\chapter*{Preface} \label{preface}
The symmetrized bidisc is just one domain in $\c^2$, but it is a fascinating one which amply repays a detailed study.  In the absence of a Riemann mapping theorem in dimensions greater than one there is a choice between studying broad classes of domains  and choosing some special domains.  From the work of Oka onwards there has developed a beautiful and extensive theory of general domains of holomorphy; however, in such generality there can be no expectation of a detailed analytic function theory along the lines of the spectacularly successful theory of functions on the unit disc, which is one of the pinnacles of twentieth century analysis.  For this reason the general theory is complemented by the study of functions on special domains such as the ball and the polydisc \cite{rudin1,rudin2}, or more generally Cartan domains \cite{hua,fuks}.  Thereafter it seems natural to investigate {\em inhomogeneous} domains for which a detailed function theory is possible.  The symmetrized bidisc $G$ in $\c^2$ is emphatically one such.  It is inhomogeneous, but it has a $3$-parameter group of automorphisms.  It is closely related to the bidisc, being the image of the bidisc under the polynomial map $(\la^1+\la^2, \la^1\la^2)$. Its geometry and function theory have many resemblances to, but also many differences from, those of the bidisc.  It transpires that many quantities of interest for $G$, for example the Kobayashi and Carath\'eodory distances, can be calculated in a fairly explicit form.

We originally started studying the symmetrized bidisc in connection with the {\em spectral Nevanlinna-Pick problem}, an interpolation problem for analytic $2\times 2$-matrix-valued functions subject to a bound on the spectral radius.  This problem is of interest in certain engineering problems.  We can claim only partial success in resolving the interpolation problem, but we happened on a rich vein of function theory, which was of particular interest to specialists in the theories of invariant distances \cite{jp} and models of multioperators \cite{bhatta}.

In this paper we determine the holomorphic retractions of $G$ and the subsets of $G$ which permit the extension of holomorphic functions without increase of supremum norm.  The methods we use are of independent interest: we analyse the complex geodesics of $G$, and show that there are {\em five} qualitatively different types thereof.  The type of a complex geodesic $\cald$ can be characterized in two quite different ways.
One way is in terms of the intersection of $\cald$ with the special variety
\[
\calr \df \{(2z,z^2):|z|<1\}.
\]
$\calr$ is itself a complex geodesic of $G$, uniquely specified by the property that it is invariant under every automorphism of $G$.  Remarkably, the type of $\cald$ can also be characterized by the number of Carath\'eodory extremal functions for $\cald$ from a certain one-parameter family of extremals.
In contrast, for the bidisc, one can define types of complex geodesics in an analogous way, and one finds that there are only {\em two} types.

\mainmatter

\chapter{Introduction}\label{intro}

A celebrated theorem of H. Cartan states the following \cite{cartan}.
\begin{theorem}\label{cartext}
If $V$ is an analytic variety in a domain of holomorphy $U$ and if $f$ is an analytic function on $V$ then there exists an analytic function on $U$ whose restriction to $V$ is $f$.
\end{theorem}
In the case that the function $f$ is bounded, Cartan's theorem gives no information on the supremum norm of any holomorphic extension of $f$.
We are interested in the case that an extension exists with the {\em same} supremum norm as $f$.
\begin{definition}\label{npep}
A subset $V$ of a domain $U$ has the {\em norm-preserving extension property} if, for every bounded analytic function $f$ on $V$, there exists an analytic function $g$ on $U$ such that
\[
g|V= f \quad \mbox{ and }\quad \sup_U |g| = \sup_V |f|.
\]
\end{definition}
Here $V$ is not assumed to be a variety; to say that $f$ is analytic on $V$ simply means that, for every $z\in V$ there exists a neighborhood $W$ of $z$ in $U$ and an analytic function $h$ on $W$ such that  $h$  agrees with $f$ on $W\cap  V$.
\index{extension property!norm-preserving }
\index{function!analytic on a subset of a domain}

There are at least two reasons to be interested in the \npep, one function-theoretic and one operator-theoretic.  In the case that $U$ is the bidisc, the description of the solutions of an interpolation problem for holomorphic functions on $U$ leads to norm-preserving extensions, while knowledge of the sets having the \npep\ in the bidisc also leads to an improvement of Ando's inequality for pairs of commuting contractions  \cite[Section 1]{agmc_vn}.

The subsets of the bidisc $\d^2$ having the \npep\ were identified in \cite{agmc_vn}.  In this memoir we shall do the same for the symmetrized bidisc.
Let $\c$ denote the complex field and $\d$ the unit disc $\set{z \in \c}{|z|<1}$. 
\index{$\c$}
\index{$\d$}
The {\em symmetrization map} is the map $\pi:\c^2\to \c^2$  given by
\[
\pi(\lambda^1,\lambda^2) = (\lambda^1+\lambda^2,\lambda^1\lambda^2),\qquad\lambda^1,\lambda^2\in\c,
\]
and  the \emph{symmetrized bidisc} is the set 
\begin{align*}
G &= \pi(\d^2) \\
	&= \{(z+w,zw):z,w\in\d\} \subset \c^2.
\end{align*}
\index{$G$}

Retracts of a domain have the \npep.  A (holomorphic) {\em retraction} of $U$ is a holomorphic map of $\rho:U\to U$ such that $\rho\circ\rho=\rho$, and a {\em retract} in $U$ is a set which is the range of a retraction of $U$.  Evidently, for any retraction $\rho$, a bounded analytic function $f$ on $\ran \rho$ has the analytic norm-preserving extension $f\circ\rho$ to $U$.
\index{retraction}
\index{retract}
Two simple types of retracts of $U$ are singleton sets and the whole of $U$; these retracts are {\em trivial}.
Hyperbolic geometry in the sense of Kobayashi \cite{Koba} provides an approach to the construction of  nontrivial retracts.  A {\em complex geodesic} \label{comge} of $U$ is the range $V$ of a holomorphic map $k:\d\to U$ which has a holomorphic left inverse $C$.  Here, since $C\circ k$ is the identity map $\id{\d}$ on $\d$, clearly $k \circ C$ is a retraction of $U$ with range $V$.

To summarize, for a subset $V$ of a domain $U$, the statements
\begin{enumerate}[(1)]
\item $V$  is a singleton, a complex geodesic or all of $U$,
\item $V$ is a retract in $U$,
\item  $V$  has the \npep \ in $U$
\end{enumerate}
satisfy (1) implies (2) implies (3).
It was shown in \cite{agmc_vn} that, in the case that $U$ is the bidisc $\d^2$, the converse implications also hold.  One might conjecture that the same is true for a general domain in $\d^2$; however, the two main results of the paper show that, for the symmetrized bidisc,  (2) implies (1) but (3) does not imply (2).  
\begin{theorem}\label{main2}
A subset $V$ of $G$ is a nontrivial retract of $G$ if and only if $V$ is a complex geodesic of $G$.
\end{theorem}
\index{theorem!\ref{main2}}
Since complex geodesics in $G$ are algebraic sets, as we prove in Theorem \ref{geosvariet} below, it follows that retracts in $G$ are algebraic sets.

A {\em flat geodesic} of $G$ is a complex geodesic of $G$ which is the intersection of $G$ with a complex line: see Proposition \ref{deg1flat}.

\begin{theorem}\label{main}
 $V$ is an algebraic subset of $G$ having the norm-preserving extension property if and only if  either $V$ is a retract in $G$ or $V=\calr\cup \cald$, where $\calr=\{(2z,z^2): z\in\d\}$ and $\cald$ is a flat geodesic in $G$. 
\end{theorem}
\index{theorem!main}
Sets of the form $\calr\cup \cald$ are not retracts of $G$, but nevertheless have the \npep (see Theorem \ref{anom.thm10}).

 Theorems \ref{main2} and \ref{main} have applications to the function theory of the bidisc, the theory of von Neumann inequalities for commuting pairs of operators and the complex geometry of certain domains in dimensions  three and four.  These applications are given in Chapters \ref{appD2} to \ref{SpecBall}.  They are briefly described in the next chapter.

It should be mentioned that there is a substantial theory of holomorphic extensions that refines Cartan's Theorem \ref{cartext}.  However, as far as we know the results in the literature barely overlap the theorems in this paper.  Previous authors (with the exception of \cite{agmc_vn}) do not consider such a stringent condition as the preservation of the supremum norm, and they typically consider either other special domains, such as polydiscs or polyhedra  \cite{alex, PolHenk,dey}, or strictly pseudoconvex domains \cite{henkin}  (note that $G$ is not smoothly embedded in $\c^2$).

\chapter{An overview}\label{overview}
Much of this memoir is devoted to the proof of the main result, Theorem \ref {main} from the introduction.

Neither necessity nor sufficiency is trivial.  Sufficiency is proved in Chapters  \ref{retractsG} and \ref{anomalous}.  In the former it is shown that complex geodesics in $G$ are algebraic sets and that nontrivial retracts are complex geodesics.  In the latter the Herglotz Representation Theorem  is used to show that $\calr\cup \cald$ has the \npep. The proof of necessity in Theorem \ref{main} takes up the remainder of Chapters \ref{extprobG} to \ref{mainproof}.

  The principal tool is the theory of hyperbolic complex spaces in the sense of Kobayashi \cite{Koba}.  The Carath\'eodory and Kobayashi extremal problems are described in Section \ref{CarKob}.  There is a substantial theory of these two problems in the special case of the symmetrized bidisc; the relevant parts of it are
described in Chapters \ref{extprobG} and \ref{cgeosG}.  More of the known function theory of $G$, with citations, is given in Appendix A.

There are two versions of the Carath\'eodory and Kobayashi extremal problems, one for pairs of points and one for tangent vectors.  In order to treat both versions of the problems simultaneously we introduce the terminology of {\em datums}.  A datum in a domain $U$ is either a pair of points in $U$ or an element of the tangent bundle of $U$; the former is a {\em discrete datum}, the latter an {\em infinitesimal datum}.
If $V$ is a subset of $U$ and $\delta$ is a datum in $U$ then $\delta$ \emph{contacts} $V$ if either 
\index{datum}
\index{contact}
\begin{enumerate}[\rm (1)]
\item $\delta=(s_1,s_2)$ is discrete and  both $s_1$ and $s_2$ lie in $V$, or 
\item $\delta =(s,v)$ is infinitesimal and  either $v=0$ and $s\in V$, or  there exist two sequences of points $\{s_n\}$  and $\{t_n\}$ in $V$ such that $s_n \neq t_n$ for all $n$, $s_n \to s$, \, $t_n \to s$ and
 \[
 \frac{t_n-s_n}{\norm{t_n-s_n}} \to v_0,
 \]
for  some unit vector $v_0$ collinear with $v$. 
\end{enumerate} 
A datum $\de$ is {\em degenerate} if either $\delta=(s_1,s_2)$ is discrete and  $s_1=s_2$ or $\de=(s,v)$ is infinitesimal and $v=0$.
\index{datum!degenerate}
An  important property of $G$ is that every \nd\ datum $\de$ in $G$ contacts a unique complex geodesic of $G$, denoted by $\cald_\de$. 

The proof of  necessity in Theorem \ref{main} is based on an analysis of the datums that contact a subset $V$ of $G$ having the \npep.
We divide the \nd  datums in $G$ into five types, called {\em purely unbalanced, exceptional, purely balanced, royal} and {\em flat}.  The definitions of these types (Definition \ref{extdef10} in Section \ref{5types}) for a datum $\de$ are in terms of the Carath\'eodory extremal functions for the datum $\de$.  We use another special feature of $G$: for every \nd datum in $G$ there is a solution $\Phi_\omega$ of the Carath\'eodory extremal problem having the form
\[
\Phi_\omega(s)= \frac{2\omega s^2-s^1}{2-\omega s^1}, \qquad \mbox{ for } s=(s^1,s^2),
\]
for some $\omega\in\t$, where $\t$ denotes the unit circle in $\c$ (Theorem \ref{Phiunivl}).
\index{$\Phi_\omega$}
For royal and flat datums, {\em every} $\Phi_\omega$ is a Carath\'eodory extremal for $\de$.  If there are exactly two values of $\omega\in\t$ for which $\Phi_\omega$ is extremal for $\de$, then $\de$ is said to be {\em purely balanced}.  If there is a unique $\omega\in\t$ for which $\Phi_\omega$ is extremal for $\de$, then we say that $\de$ is either {\em purely unbalanced} or {\em exceptional}, depending on whether a certain second derivative is zero.  The strategy is to consider the consequences for $V$ of the hypothesis that a datum of each of the five types in turn contacts $V$.

To put this plan into effect we have to establish several results about types of datum.  Firstly, according to Theorem \ref{extprop10}, every \nd datum in $G$ is of exactly one of the five types.  A second important fact is that the type of a \nd datum $\de$ can be characterized in terms of the geometry of the complex geodesic $\cald_\de$; this is the gist of Chapters \ref{delicate}  and \ref{classifyG}, and it requires some detailed calculations.  The upshot is (Theorem \ref{geothm30}) that the type of a datum $\de$ is determined by the number and location of the points of intersection of $\cald_\de^-$ and the royal variety $\calr^-=\{(2z,z^2):|z|\leq 1\}$ (that is, whether these points belong to $G$ or $\partial G$).  This result has two significant consequences: (1) the type of a datum is preserved by automorphisms of $G$, and (2) if two \nd datums contact the same complex geodesic then they have the same type.    We may therefore define the type of a complex geodesic $\cald$ to be the type of any \nd datum that contacts $\cald$, whereupon the complex geodesics of $G$ are themselves partitioned into five types.  In particular, $\calr$ is the sole geodesic of royal type, and the geodesics of flat type are precisely the complex lines
\be\label{flatbe}
\calf_\beta \df\{ (\beta+\bar\beta z,z): z\in\d\}
\ee
for $\beta\in\d$.  These flat geodesics constitute a foliation of $G$.
\index{$\calf_\beta$}
\index{complex geodesic!type of}
Theorem \ref{formgeos} gives concrete formulae for each of the five types of geodesics modulo automorphisms of $G$.

Here is a rough outline of the proof of necessity in Theorem \ref{main}. 
Let $V$ be an algebraic set in $ G$ having the norm-preserving extension property. We may assume that $V$ is neither a singleton set nor $G$.
 We must show that  either $V$ is a complex geodesic in $G$ or $V=\calr\cup \cald$ for some flat geodesic $\cald$. 

 Chapter \ref{Gnpep} proves some consequences of the assumption that $V$ has contact with certain types of datum.  Lemma \ref{extlem20} states that if some flat, royal or purely balanced datum $\de$ contacts $V$   then $\cald_\de \subseteq V$.  According to
Lemma \ref{extlem40}, if a flat datum $\de$ contacts $V$  then $V$ is the flat geodesic $\cald_\de$ (which is a retract in $G$) or $V=\calr\cup \cald_\de$, while Lemma \ref{extlem45} states that if a royal datum contacts $V$ then either $V=\calr$ or $V=\calr\cup\cald$ for some flat geodesic $\cald$.  Hence Chapter \ref{Gnpep} reduces the problem to the case that neither a flat nor a royal datum contacts $V$.  By Lemma \ref{purelybal}, if a purely balanced datum $\de$ contacts $V$ then $V$ is the purely balanced geodesic $\cald_\de$.

In Chapter \ref{Vflat} and in Chapter \ref{mainproof} it is shown that if no flat datum contacts $V$ then $V$ is a properly embedded planar Riemann surface of finite type, meaning that the boundary of $V$ has finitely many connected components. Finite connectedness is proved with the aid of real algebraic geometry, specifically, from the permanence properties of semialgebraic sets, which follow from the Tarski-Seidenberg Theorem. By a classical theorem on finitely connected domains \cite[Theorem 7.9]{conway}, $V$ is conformally equivalent to a finitely-circled domain $R$ in the complex plane, meaning that the boundary $\partial R$ is a union of finitely many disjoint circles.  In Lemma \ref{Vrtr} we prove that if $\partial R$ has a single component then $V$ is a nontrivial retract in $G$ and so is a complex geodesic of $G$. 
 To complete the proof it remains to show that, if $V$ has no contact with any flat, royal or purely balanced datum, then $\partial R$ has a single component. This task is achieved by a winding number argument that depends on known properties of Ahlfors functions for the domain $R$; this argument takes up Sections \ref{prelim} and \ref{analytic}.

Finally, some applications of the main results are presented in the last three chapters.  In Chapter \ref{appD2} we identify the symmetric subsets of the bidisc which have the {\em symmetric} extension property (roughly, the \npep \ for symmetric functions). 
In Theorem \ref{allsymexprop} we show that  there are six mutually exclusive classes of such sets and we give explicit formulae for them.

 In Chapter \ref{AvonN} we give two applications of Theorem \ref{main} to the theory of spectral sets.
Theorem \ref{allsymexprop-appl}  states that the sets $V$ of Theorem \ref{allsymexprop} are the only symmetric algebraic sets of $\d^2$ for which the inequality
\[
\| f(T)\| \le \sup_{V} |f |
\]
holds for all bounded symmetric holomorphic functions $f $ on $V$ and all pairs of commuting contractions $T$  subordinate to $V$ (subordination is the condition needed to ensure that $f(T)$ be well defined). 

For any set $A$ of bounded holomorphic functions on a set $V\subseteq \c^2$,  we say that $V$ is an  {\em $A$-von Neumann set} if
\[
\|f(T)\| \leq \sup_{V} |f |
\]
for all $f \in A$ and all pairs $T$ of commuting contractions which are subordinate to $V$.  Here subordination is the natural notion that ensures that the operator $f(T)$ be well defined.   In Theorem \ref{allsymexprop-appl} we show that if $V$ is a symmetric algebraic subset of $\d^2$ and $A$ is the algebra of all bounded symmetric holomorphic functions on $V$ then $V$ is an $A$-von Neumann set if and only if $V$ belongs to one of the six classes of sets which are described by explicit formulae in Theorem \ref{allsymexprop}.

For the second application we need to introduce a variant of the notion of $A$-von Neumann set that is adapted to $G$.
Consider a subset $V$ of $G$ and let $A$ be a set of bounded holomorphic functions on $V$.  We say that $V$ is a {\em $(G, A)$-von Neumann set} if $V$ is a spectral set for every $\Ga$-contraction $T$ subordinate to $V$.  Here a $\Gamma$-contraction is a pair of commuting operators for which $\Ga$ is a spectral set.  Let $A$ be the algebra of all bounded holomorphic functions on $V$.
In Theorem \ref{spectral_sets_G} we show that  if $V$ is algebraic then  $V$ is a $(G,A)$-von Neumann set  if and only if  either $V$ is a retract in $G$ or $V=\calr\cup \cald$, where $\cald$ is a flat geodesic in $G$ (as in equation \eqref{flatbe} above).

In Chapter  \ref{SpecBall} we observe that in any domain which contains $G$ as a holomorphic retract there are sets that have the \npep\  but are not retracts.  In particular this observation applies to the $2\times 2$ spectral ball (which comprises the $2\times 2$ matrices of spectral radius less than one) and two domains in $\c^3$ known as the tetrablock and the pentablock.

  In a digression in Section \ref{caseD2} we show that our geometric method based on uniqueness of solutions of certain Carath\'eodory problems can be used to give an alternative proof of a result of Heath and Suffridge \cite{HS} on the retracts of the bidisc (they all have the form of either a singleton, $\d^2$ itself or a complex geodesic).

Appendix A gathers together the relevant known facts about the geometry and function theory of $G$, with references and a few proofs.   Appendix B is intended to help the reader absorb the notions of the five types of complex geodesic by means of a table of their main properties and a series of cartoons.

\chapter{Extremal problems in the symmetrized bidisc $G$}\label{extprobG}

In this chapter we shall set out our notation and give a brief exposition of some known properties of the Carath\'eodory and Kobayashi extremal problems on the symmetrized bidisc.   We introduce the notion of a {\em datum}, which permits the simultaneous analysis of the discrete and infinitesimal versions of the Kobayashi and Carath\'eodory problems.

If $U$ is an open set in $\c^n$, then by a \emph{datum in $U$} \index{datum} we mean an ordered  pair $\delta$ where either $\delta$ is \emph{discrete}, that is, has the form \index{datum!discrete}
\[
\delta =(s_1,s_2)
\]
where $s_1,s_2 \in U$, or $\delta$ is \emph{infinitesimal}, that is, $\delta$ is an element of the tangent bundle $TU$ of $U$, and so has the form
\index{datum!infinitesimal}
\[
\delta = (s,v)
\]
where $s \in U$ and $v$ is a tangent vector to $U$ at $s$. We identify the tangent vector 
\[
v^1\frac{\partial}{\partial s^1}+\dots+ v^n \frac{\partial}{\partial s^n} \quad \mbox{ with } \quad (v^1,\dots,v^n) \in\c^n.
\]
If $\delta$ is a datum, we say that $\delta$ is \emph{degenerate} if either $\delta$ is discrete and $s_1=s_2$ or $\delta$ is infinitesimal and $v=0$, and {\em nondegenerate} otherwise.
\index{datum!nondegenerate}
If $U \subseteq \c^{n_1}$ and $\Omega \subseteq \c^{n_2}$ are two open sets we denote by $\Omega(U)$ the set of holomorphic mappings from $U$ into $\Omega$. 
\index{$\Omega(U)$}
Furthermore, if $F\in \Omega(U)$, $s\in U$, and $v \in \c^{n_1}$, we denote by $D_v F(s)$  the directional derivative at $s$ of $F$ in the direction $v$.

For any set $X$ we denote by $\id{X}$ the identity map on $X$.
\index{$\id{X}$}

 If $U$ and $\Omega$ are domains, $F\in \Omega(U)$, and $\delta$ is a datum in $U$, we define a datum $F(\delta)$ in $\Omega$ by
 \[
 F(\delta) = (F(s_1),F(s_2))
 \]
 when $\delta$ is discrete and by
 \[
 F(\delta)  =F_*\de  =(F(s),D_v F (s))
 \]
 when $ \de$ is infinitesimal.

Note that if $U, V$ and $W$ are domains,  $\de$ is a datum in $U$ and $F\in V(U), \, G\in W(V)$, then
\be\label{chainrule}
(G\circ F)(\de)= G(F(\de)).
\ee

Let $\delta$ be a datum in $\d$.  We define the {\em modulus} of $\de$, denoted by $|\delta|$, by
\[
|\delta| = \left|\frac{z_1 -z_2}{1-\bar{z_2}z_1}\right|,
\]
when $\delta=(z_1,z_2)$ is discrete,
and by
\[
|\delta|=\frac{|c|}{1-|z|^2}
\]
when $\delta = (z,c)$ is infinitesimal.  \index{datum!modulus of}

Thus $|\cdot|$ is the pseudohyperbolic distance on $\d$ in the discrete case and the Poincar\'e metric on the tangent bundle of $\d$ in the infinitesimal case.

For any domain $U$, the group of automorphisms of $U$ will be denoted by $\aut U$.  
\index{$\aut U$}
We shall make frequent use of the Blaschke factor $B_\al$, defined for $\al\in\d$ by
\[
B_\al(z)= \frac{z-\al}{1-\bar\al z}\quad\mbox{ for } z\in\d.
\]
\index{$B_\al$}
\index{Blaschke factor}

\section{The Carath\'eodory and Kobayashi extremal problems}\label{CarKob}

If $U$ is a domain (an open set in $\c^n$ for some $n$) and $\delta$ is a datum in $U$, then we may consider the Carath\'eodory Optimization Problem, that is, to compute the quantity $\car{\delta}$ defined by
\[
\car{\delta}=\sup_{F\in \d(U)} |F(\delta)|.
\]
\index{Carath\'eodory extremal problem}
\index{$\car{\de}$}
In the sequel we shall refer to this problem as $\Car(\delta)$. 
\index{$\Car(\de)$}
The notation suppresses the dependence of both $\car{\delta}$ and $\Car (\delta)$ on $U$. However, this will not be a cause for confusion, as the domain $U$ will always be clear from the context. We say that \emph{C solves} $\Car (\delta)$ if $C\in \d(U)$ and
\[
\car{\delta} = |C(\delta)|.
\]
In \cite[Section 2.1]{jp}, the function $\car{\cdot}$, when applied to discrete datums, is called the M\"obius pseudodistance for $U$, while, when applied to infinitesimal datums, $\car{\cdot}$ is called the Carath\'eodory-Reiffen pseudometric for $U$. For any \nd datum $\delta$ in $U$, there exists  $C \in \d(U)$ that solve $\Car(\delta)$  \cite{Koba}.

If $U$ is a domain and $\delta$ is a datum in $U$, then we may also consider the Kobayashi Extremal Problem for $\de$, that is, to compute the quantity $\kob{\delta}$ defined by
\be \label{defkob}
\kob{\delta}=\inf \{ |\zeta| : \zeta \mbox{ is a datum } \mbox{ in }\d \mbox{ and there exists }f\in U(\d)\mbox{ such that } f(\zeta) =\delta\}.
\ee
\index{Kobayashi extremal problem}
\index{$\kob{\de}$}
In the sequel we shall refer to this problem as $\Kob(\delta)$.  
\index{$\Kob(\de)$}
We say that \emph{$k$ solves} $\Kob(\delta)$ if $k\in U(\d)$ and there is a datum $\zeta$ in $\d$ such that 
\[
k(\zeta) = \delta \quad \mbox{ and }\quad
\kob{\delta} = |\zeta|.
\]
In \cite[Section 3.1]{jp}, for discrete datums, the function $\tanh^{-1}\kob{\cdot}$ is called the Lempert function for $U$.  In \cite[Section 3.5]{jp}, for infinitesimal datums, the function $\kob{\cdot}$ is called the Kobayashi-Royden pseudometric for $U$.

A domain $U$ is said to be {\em taut} if the space $U(\d)$ is normal (for example, \cite[Section 3.2]{jp}).
If $U$ is a taut domain then, for any \nd datum $\delta$ in $U$, there exists $k \in U(\d)$  that solve $\Kob(\delta)$  \cite[Section 3.2]{jp}.

$G$ is a hyperconvex domain \cite[Remark 7.1.6]{jp} and therefore taut \cite[Remark 3.2.3]{jp}.  Hence $\kob{\de}$ is attained for any datum $\de$ in $G$.

 It is immediate from the Schwarz-Pick Lemma that, for any \nd datum $\de$ in a domain $U$, $\car{\de}\leq\kob{\de}$.
\begin{proposition}\label{Cksolve}
Let $h\in U(\d), \ C\in\d(U)$ be such that $C\circ h=\id{\d}$.  If $\zeta$ is a datum in $\d$ and $\de=h(\zeta)$ then $h$ solves $\Kob(\de)$.
\end{proposition}
For then
\[
\car{\de}\geq |C(\de)|=|C\circ h(\zeta)|=|\zeta|\geq \kob{\de} \geq \car{\de}
\]
and so $|\zeta|=\kob{\de}$.

\section{The Carath\'eodory extremal problem $\mathrm{Car}(\delta)$ for $G$}\label{carG}

We say that a set $\mathcal{C} \subseteq \d(U)$ is a \emph{universal set for the Carath\'eodory problem on $U$} if, whenever $\delta$ is a \nd datum in $U$, there exists a function $C \in \mathcal{C}$ such that $C$ solves $\Car (\delta)$.
\index{Carath\'eodory extremal problem!universal set for}

On the bidisc, the two co-ordinate functions constitute a universal set for the Carath\'eodory problem.
On $G$, there is a one-parameter family that constitutes a universal set.
\begin{definition}\label{defPhi}
The function $\Phi$ is defined for $(z,s^1, s^2) \in \c^3$ such that $z s^1 \neq 2$ by 
\be\label{ext10}
\Phi(z,s^1, s^2) = \frac{2 z s^2 -s^1}{2-z  s^1}. 
\ee
We shall write $\Phi_z(s)$ as a synonym for $\Phi(z,s^1, s^2)$
where $s=(s^1,s^2)$.
\end{definition}
\index{$\Phi_\omega(s)$}
 $\Phi_\omega$ does belong to $\d(G)$ for every $\omega\in\t$: see Proposition \ref{elG} in Appendix A.  The following statement is in Theorem 1.1 (for discrete datums) and Corollary 4.3 (for infinitesimal datums) in \cite{AY04}.
\begin{theorem}\label{extthm10}
The set $\mathcal{C} = \set{\Phi_\omega}{ \omega \in \t}$ is a universal set for the Carath\'eodory problem on $G$.
\end{theorem}

 Theorem \ref{extthm10} provides a concrete way of analyzing the Carath\'eodory problem on $G$.
Indeed, if, 
for a \nd datum $\delta$, we define a function $\rho_\delta$ on the circle by the formula
\be\label{defrhode}
\rho_\delta(\omega) = |\Phi_\omega (\delta)|^2, \qquad \omega \in \t,
\ee
\index{$\rho_\de$}
then Theorem \ref {extthm10} implies that
\be\label{ext20}
\car{\delta}^2=\max_{\omega \in \t} \rho_\delta(\omega).
\ee

\section{Five types of datum $\delta$ in $G$}\label{5types}

The following definition formalizes five qualitative cases for the behavior of the Carath\'eodory extremal problem for $G$.
\begin{definition}\label{extdef10}
Let $\delta$ be a \nd datum in $G$.  We say that $\delta$ is
\begin{enumerate}[\rm (1)]
\item  \emph{purely unbalanced} if $\rho_\delta$ has a unique maximizer $\omega_0 = e^{it_0}$ and
    \[
    \frac{d^2}{dt^2} \rho_\delta(e^{it})\big|_{t=t_0} < 0;
    \]
\item  \emph{exceptional} if $\rho_\delta$ has a unique maximizer $\omega_0 = e^{it_0}$ and
    \[
    \frac{d^2}{dt^2} \rho_\delta(e^{it})\big|_{t=t_0} = 0;
    \]
\item  \emph{purely balanced} if $\rho_\delta$ has exactly two maximizers in $\t$;
\item  \emph{royal} if $\rho_\delta$ is constant and $C(s) = \tfrac12 s^1$ solves $\Car (\delta)$;
\item  \emph{flat} if $\rho_\delta$ is constant and $C(s) = s^2$ solves $\Car (\delta)$.
\end{enumerate}
\index{datum!purely unbalanced}
\index{datum!exceptional}
\index{datum!purely balanced}
\index{datum!royal}
\index{datum!flat}
\index{datum!balanced}
\index{datum!unbalanced}
\index{datum!type of}
 Finally, we say that $\delta$ is \emph{balanced} if $\delta$ is either exceptional, purely balanced, or royal and we say that $\delta$ is \emph{unbalanced} if $\delta$ is either purely unbalanced or flat.
\end{definition}

It may be helpful to look at the cartoons in Appendix B at this point; they provide visual images of geodesics of the five types.

\newtheorem{pentathm}[equation]{Pentachotomy Theorem}
\begin{pentathm}\label{extprop10} \em
If $\delta$ is a \nd datum in $G$, then exactly one of the cases $(1)$ to $(5)$ in Definition {\rm \ref{extdef10}} obtains. In particular, either $\delta$ is balanced or $\delta$ is unbalanced and not both.
\end{pentathm}
\index{pentachotomy}
\begin{proof}
Case 1: $\de$ is discrete.\\
(i)  Suppose that $\rho_\de$ is constant on $\t$.  By Theorem \ref{flatandroyal} either (a) $(s_j^1)^2=4s_j^2$ for $j=1,2$ or (b) there exists $\beta\in\d$ such that $s_j^1= \beta+\bar\beta s_j^2$ for $j=1,2$.  In case (a) we may write $s_j=(2z_j,z_j^2)$ where $z_j=\half s_j^1$.  We then find that
\[
\Phi_\omega(s_j)=-z_j
\]
independently of $\omega$, and so, by equations \eqref{defrhode} and  \eqref{ext20},
\[
\car{\de}^2=\rho_\de(\omega)=|\Phi_\omega(\de)|^2= |(\Phi_\omega(s_1),\Phi_\omega(s_2))|^2=|(-z_1,-z_2)|^2=|(z_1,z_2)|^2= |(\half s_1^1, \half s_2^1)|^2.
\]
Since the function $C(s)=\half s^1$ belongs to $\d(G)$, it follows that $C$ solves $\Car(\de)$, and so, according to Definition \ref{extdef10}, $\de$  is a royal datum.

In case (b) we have $s_j=(\beta+\bar\beta z_j, z_j)$, where $z_j=s_j^2$.      A simple calculation shows that, for any $\omega \in\t$,
\[
\Phi_\omega(s_j)=m_\omega(z_j) \quad \mbox{ for } j=1,2,
\]
for some $m_\omega \in\aut\d$.  Again  by equations \eqref{defrhode} and  \eqref{ext20},
\[
\car{\de}^2=\sup_\omega|\Phi_\omega(\de)|^2= |(\Phi_\omega(s_1),\Phi_\omega(s_2))|^2=|(m_\omega(z_1),m_\omega(z_2))|^2=|(z_1,z_2)|^2= |( s_1^2,  s_2^2)|^2.
\]
Hence $C(s)=s^2$ solves $\Car(\de)$, and so $\de$ is a flat datum.

(ii) Now suppose that  $\rho_\delta$ is not constant on $\t$. It is clear from the definition of  
$\rho_\delta$,
\[
\rho_\delta(e^{it}) =\left|\Phi_{e^{it}}(s_1,s_2) \right|^2 = \left| 
\frac{\Phi_{e^{it}}(s_1) - \Phi_{e^{it}}(s_2) }{1 - \overline{\Phi_{e^{it}}(s_2)} \Phi_{e^{it}}(s_1)}  \right|^2,
\]
that the function $t \mapsto \rho_\delta(e^{it})$ is smooth and $2 \pi$-periodic, and so attains its maximum over $[0, 2 \pi)$ at some point $t_0$, which is a critical point satisfying
\[
\frac{d^2}{dt^2} \rho_\delta(e^{it})\big|_{t = t_0} \le 0.
\]
In fact, by Theorem \ref{trichotomy},  $\rho_\delta$ attains its supremum over $\t$ at either one or two points. In the former case $\delta$ is either purely unbalanced or exceptional, and in the latter case $\delta$ is  purely balanced. We have shown that, for any \nd discrete datum $\delta$ in $G$, at least one of (1) to (5) is true.

Conditions (1), (2) and (3) are clearly mutually exclusive. Moreover, since they together correspond to $\rho_\delta$ being nonconstant on $\t$, they are inconsistent with (4) and (5). It remains to show that $\delta$ cannot be both flat and royal. If it is, then by the calculations above there exists $\beta \in \d$ such that both $s_1$ and $s_2$ lie in the intersection of the sets
\[
\{(\beta +\bar{\beta} z, z): z \in \d \} \;\; \text{and} \;\; \{ s \in G: (s^1)^2 = 4 s^2 \}.
\]
Thus the equation
\[
( \beta + \bar{\beta} z)^2 = 4 z
\]
has two distinct roots for $z \in \d$; this is easily seen to be false. Hence a \nd discrete datum cannot be both royal and flat. We have shown that exactly one of the cases (1) to (5) obtains for  any discrete datum $\delta$.

Case 2: $\delta= (s_1, v)$ is a \nd infinitesimal datum in $G$.\\
(i)  Suppose that $\rho_\de$ is constant on $\t$. By Theorem \ref{flatroyalesimal},
 $\rho_\delta$ is constant on $\t$ if and only if one of the following conditions holds:\\
{\rm (a)} there exists $z \in \d$ such that $s_1= (2z, z^2)$ and $v$ is collinear with $(1,z)$,\\
{\rm (b)} there exist $\beta, z \in \d$ such that $s_1= (\beta + \bar{\beta}z, z)$ and $v$ is collinear with $(\bar{\beta},1)$.\\
Thus, according to Definition \ref{extdef10}, $\de$  is a royal datum in case (a) and 
 a flat datum in case (b). \\
(ii) Now suppose that  $\rho_\delta$ is not constant on $\t$. 
Let $\delta=(s_1,v)$.  By the definition of  
$\rho_\delta$, for $\omega \in \t$, 
\[
\rho_\delta(e^{it}) =\left|\Phi_{e^{it}}(s_1,v) \right|^2 = \left|(\Phi_{e^{it}}(s_1),D_v\Phi_{e^{it}}(s_1)) \right|^2 = \frac{|D_v\Phi_{e^{it}}(s_1)|^2}{(1-|\Phi_{e^{it}}(s_1)|^2)^2}.
\]
It is clear that the function $t \mapsto \rho_\delta(e^{it})$ is smooth and $2 \pi$-periodic, and so attains its maximum over $[0, 2 \pi)$ at some point $t_0$, which is a critical point satisfying
\[
\frac{d^2}{dt^2} \rho_\delta(e^{it})\big|_{t = t_0} \le 0.
\]
By Theorem \ref{trichotomy},  $\rho_\delta$ attains its supremum over $\t$ at either one or two points. In the former case $\delta$ is either purely unbalanced or exceptional, and in the latter case $\delta$ is  purely balanced. We have shown that, for any \nd infinitesimal datum $\delta$ in $G$, at least one of (1) to (5) is true.

Conditions (1), (2) and (3) are clearly mutually exclusive. Moreover, since they together correspond to $\rho_\delta$ being nonconstant on $\t$, they are inconsistent with (4) and (5). It remains to show that $\delta$ cannot be both flat and royal.
 If it is, then 
there exists $z \in \d$ such that 
\[
s_1= (2z, z^2) \;\;\text{and} \;\;v \;\;\text{is collinear with} \;\; (1,z),
\]
and there exists $\beta \in \d$ such that
\[
s_1=(\beta +\bar{\beta} z^2, z^2)\;\;\text{and} \;\;v \;\;\text{is collinear with} \;\; (\bar\beta,1).
\]
Thus $(1,z)$ is collinear with $ (\bar\beta,1)$, and so $1=\bar\beta z$, which is impossible for $z, \beta \in \d$.
Hence a \nd infinitesimal datum cannot be both royal and flat. We have shown that exactly one of the cases (1) to (5) obtains for $\delta$.
\end{proof}

We shall have more to say about the above classification scheme of datums using the relation \eqref{ext20}. Later, in Theorem \ref{geothm30}   we shall give necessary and sufficient geometrical conditions for a datum to be one of the five types in Definition \ref{extdef10}. Additionally, the balanced/unbalanced dichotomy is characterized in geometric terms in Section \ref{BalGeo}  (Theorem \ref{geothm30} below).

Let $\delta$ be a datum in $G$. Observe that if $C$ solves $\Car (\delta)$, then, as automorphisms of $\d$ act as isometries in the pseudohyperbolic metric,  $m \circ C$ also solves $\Car (\delta)$. Thus the solution to $\Car (\delta)$ is never unique. This motivates the following definition.
\begin{definition}\label{extdef20}
If $\delta$ is a \nd datum in $G$, we say that \emph{the solution to $\Car (\delta)$ is essentially unique}, if whenever $C_1$ and $C_2$ solve $\Car (\delta)$ there exists  $m\in\aut\d$ such that $C_2=m \circ C_1$.
\end{definition}

\section{The Kobayashi extremal problem $\mathrm{Kob} (\delta)$ for $G$}\label{kobG}

Let $\delta$ be a datum in $G$. Just as solutions to $\Car (\delta)$ are never unique, nor are solutions to $\Kob (\delta)$: if $m\in\aut\d$ and $f$ solves $\Kob(\delta)$, then $f \circ m$ solves $\Kob(\delta)$. This suggests the following analog of Definition \ref{extdef20}.
\begin{definition}\label{extdef30}
If $\delta$ is a \nd datum in a domain $U$, we say that \emph{the solution to $\Kob (\delta)$ is essentially unique}, if, whenever $f_1$ and $f_2$ solve 
$\Kob (\delta)$, there exists $m\in\aut\d$ of $\d$ such that $f_2=f_1 \circ m$.
\end{definition}

It is easy to see that if $U$ is strictly convex and the boundary of $U$ is smooth then the solution of any Kobayashi extremal problem in $U$ is essentially unique \cite[Proposition 11.3.3]{jp}. On the other hand, if $U$ is merely assumed to be convex, it does not follow that Kobayashi extremals are essentially unique -- witness the example $U=\d^2$.   It is noteworthy that for $G$, despite its non-convexity, solutions of Kobayashi problems are nevertheless essentially unique.  For discrete datums, 
the following result is contained in \cite[Theorem 0.3]{AY06}.  For infinitesimal datums it is proved in Appendix A -- see Theorem \ref{Kob_ess_unique}.  
\begin{theorem}\label{extprop30}
If $\delta$ is a \nd  datum in $G$ then the solution to $\Kob (\delta)$ is essentially unique.
\end{theorem}

\begin{remark}\label{essun} \rm
This theorem implies that if, for some \nd datum $\de$ in $G$, two solutions to $\Kob (\delta)$ agree at some \nd datum in $\d$ then they coincide.
\end{remark}

\newpage

\chapter{Complex geodesics in $G$} \label{cgeosG}
 
In Section \ref{cgeos} we  establish some facts required for the proof of Theorem \ref{geothm30}.  We begin with some generalities.
In the present context it is convenient to use the term `geodesic' to denote a {\em set}, rather than a holomorphic map on $\d$ (as for example in \cite{Koba}).  For the latter notion we reserve the term `complex C-geodesic'.

\section{Complex geodesics and datums in $G$}\label{cgeos}

\begin{definition}\label{defcompgeo}
Let $U$ be a domain and let $\cald\subset U$.  We say that $\cald$ is a {\em complex geodesic} in $U$ if there exists a function $k\in U(\d)$ and a function $C\in \d(U)$ such that $C\circ k=\id{\d}$ and $\cald=k(\d)$.
\end{definition}
\index{complex geodesic}

Kobayashi \cite[Chapter 4, Section 6]{Koba}, following \cite{ves}, defines a {\em complex C-geodesic} in a domain $U$  to be a map $k\in U(\d)$ such that, for every \nd datum $\zeta$ in $\d$,
\[
 \car{k(\zeta)} =|\zeta|.
\]
\index{complex C-geodesic}
It follows easily from the Schwarz-Pick Lemma that $k\in U(\d)$ is a complex C-geodesic if and only if $k$ has a holomorphic left inverse.  Thus a map $k$ is a complex C-geodesic if and only if $k(\d)$ is a complex geodesic in the sense of Definition \ref{defcompgeo}\footnote{Kobayashi also uses the term `complex geodesic', in a sense which differs slightly from ours, though the difference is not significant for the domains studied in this paper.}.

A notable advance in the hyperbolic geometry of domains in  $\c^n$ was the seminal theorem of Lempert \cite{lem81} to the effect that the Carath\'eodory and Kobayashi metrics agree on any bounded convex domain $U$: $\kob{\cdot}=\car{\cdot}$ in $U$.
\index{theorem!Lempert's}
 It implies that if $\delta$ is a \nd datum in a bounded convex domain $U \subseteq \c^n$, then there exists a solution $C$ to $\Car(\delta)$ and a solution $k$ to $\Kob (\delta)$ such that
\be\label{geo10}
C \circ k =\id{\d}.
\ee
This result  guarantees a plentiful supply of complex geodesics in bounded convex domains.

The relationship between datums in $U$ and complex geodesics in $U$ can be cleanly described using the notion of contact. 
\begin{definition}\label{defcontact}
Let $U$ be a domain,  let $V$ be a subset of $U$ and let $\delta$ be a datum in $U$.  Then $\delta$ \emph{contacts} $V$ if either 
\begin{enumerate}[\rm (1)]
\item $\delta=(s_1,s_2)$ is discrete and  $s_1\in V$ and $s_2 \in V$, or 
\item $\delta =(s,v)$ is infinitesimal and  either $v=0$ and $s\in V$, or  there exist two sequences of points $\{s_n\}$  and $\{t_n\}$ in $V$ such that $s_n \not=t_n$ for all $n$, $s_n \to s$, \, $t_n \to s$ and
 \[
 \frac{t_n-s_n}{\norm{t_n-s_n}} \to v_0,
 \]
for  some unit vector $v_0$ collinear with $v$. 
\end{enumerate} 
\end{definition}
\index{contact}
\begin{remark} \label{F1=F2} \rm If a datum $\de$ in $U$ contacts a set $V\subseteq U$, then, for any holomorphic map $F$ on $U$, the value of $F(\de)$ only depends on $F|V$.
It is obvious for discrete $\de$ and for the infinitesimal datum 
$\delta =(s,0)$.  Consider a \nd infinitesimal datum
$\delta =(s,v)$ that contacts a subset $V$ of $U$.  Then there exist two sequences of points 
$\{s_n\}$  and $\{t_n\}$ in $V$ such that $s_n \not=t_n$ for all $n$, $s_n \to s$, \, $t_n \to s$ and
 \[
 \frac{t_n-s_n}{\norm{t_n-s_n}} \to v_0,
 \]
for  some unit vector $v_0$ collinear with $v$. Note that
\[
t_n = s_n + \frac{t_n-s_n}{\norm{t_n-s_n}} \norm{t_n-s_n},
\]
and so, for any domain $\Omega$ and $F \in \Omega(U)$,
\[
F(s) =\lim_{n\to\infty} F(s_n) \quad\mbox{ and }\quad
D_v F(s) = \lim_{n \to \infty} \frac{F(t_n) -F(s_n)}{\norm{t_n-s_n}}.
\]
Thus $F(\de)$ is determined by $F|V$.
\end{remark}

The notion of contact permits Lempert's discovery to be expressed in the following geometric fashion: 

{\em For every \nd datum $\de$ in a bounded convex domain $U$ there is a complex geodesic in $U$ that $\de$ contacts.}

The following proposition relates the concept of contact to holomorphic maps on $\d$.  
\begin{proposition}\label{geoprop10}
Let $U$ be a domain in $\c^n$.
\begin{enumerate}[\rm (1)]
    \item   If $f\in U(\d)$ and $\zeta$ is a datum in $\d$ then $f(\zeta)$ contacts $f(\d)$; 
 \item if $k$ is a complex C-geodesic in $U$ and $\delta$ is a datum in $U$ that contacts $k(\d)$, then there exists a datum $\zeta$ in $\d$ such that $\delta =k(\zeta)$.
\end{enumerate}
\end{proposition}
\begin{proof}
(1)  The statement is trivial for discrete datums.  Let $\zeta=(z,v)$ be an infinitesimal datum in $\d$.  If $f'(z)v=0$ then $f(\zeta)=(f(z),0)$, which contacts $f(\d)$.  Consider the case that $f'(z)v\neq 0$.  Let 
\[
u_n=z+ \frac {1}{n}v, \qquad  v_n= z+\frac {1}{2n}v.
\]
For $n$ a sufficiently large integer, $u_n$ and $v_n$ are in $\d$, and we may define $s_n=f(u_n), \ t_n=f(v_n)$ in $U$. Clearly $s_n \to f(z), \ t_n \to f(z)$. Since
\[
s_n-t_n= \frac{1}{2n}f'(z)v+o(1/n)
\]
and $f'(z)v$ is nonzero, $s_n\neq t_n$ for sufficiently large $n$, and
\begin{align*}
\frac{s_n-t_n}{\|s_n-t_n\|} &= \frac{(2n)^{-1}f'(z)v + o(\tfrac 1n)}{\|(2n)^{-1}f'(z)v + o(\tfrac 1n)\|} \\
	&\to \frac{f'(z)v}{\|f'(z)v\|},
\end{align*}
which is collinear with $f'(z)v$.  Thus $f(\zeta)$ contacts $f(\d)$.

(2)  Let $k$ be a complex C-geodesic.  Then $k$ has a left inverse $C\in \d(U)$. Let $V=k(\d)$. We have
\[
k \circ C{|V}= {\id{U}}{|V}
\]
Since $\delta$ contacts $V$, by Remark \ref{F1=F2},
\[
(k \circ C)(\delta) = \id{U}(\delta) = \delta.
\]
Let $\zeta=C(\delta)$. Then $\zeta$ is a datum in $\d$ and, by equation \eqref{chainrule},  $k(\zeta)=\de$.
\end{proof}

\section{Uniqueness of  complex geodesics for each datum in $G$} \label{cgeosdtmsG}

An important fact in the function theory of $G$ is that Lempert's conclusion ($\kob{\cdot}=\car{\cdot}$) holds for $G$, even though $G$ does not satisfy any of the various convexity hypotheses of versions of Lempert's theorem:  $G$ is not convex (nor even biholomorphic to a convex domain  \cite{cos04}).
\begin{theorem}\label{lempprop}
For any datum $\de$ in $G$,  $\kob{\de}=\car{\de}$.
\end{theorem}
For discrete datums this is \cite[Corollary 5.7]{AY04}.  In fact, since $G$ is taut, it follows that equality also holds for infinitesimal datums \cite[Proposition 11.1.7]{jp}.  See also \cite[Theorem 7.1.16]{jp}.

The following property of $G$ plays a fundamental role in this paper.
\begin{theorem}\label{geothm10}
For every \nd datum $\delta$ in $G$ there exists a unique complex geodesic $\cald$ in $G$ such that $\delta$ contacts $\cald$. Moreover $\cald = k(\d)$ where $k$ solves $\Kob(\de)$.
\end{theorem}
\index{theorem!uniqueness of complex geodesics for each datum}
\begin{proof}
Let $\de$ be a \nd datum in $G$.  Let $k$ solve $\Kob(\de)$ and $C$ solve $\Car(\de)$.  Then there exists a datum $\zeta$ in $\d$ such that $k(\zeta)=\de$ and
\[
|\zeta|= \kob{\de}=\car{\de}=|C(\de)| >0.
\]
Thus $C\circ k \in \d(\d)$ and $|C\circ k(\zeta)|=|\zeta|$.  By the Schwarz-Pick Lemma, $C\circ k \in\aut\d$.  Let $\cald=k(\d)$.  Then $\cald$ is a complex geodesic in $G$ and $\de=k(\zeta)$ contacts $\cald$ as desired.

To prove uniqueness, let $\cald'$ be any complex geodesic of $G$ that is contacted by $\de$.  By Definition \ref{defcompgeo} there exists $h\in G(\d)$ and $F\in\d(G)$ such that $F\circ h=\id{\d}$ and $\cald'=h(\d)$.
Since $\de$ contacts $h(\d)$, by Proposition \ref{geoprop10}, there exists a datum $\zeta'$ in $\d$ such that $h(\zeta')=\de$.  Since $F\circ h=\id{\d}$ we have $|\zeta'|=|\de|=|\zeta|$, and so $h,k$ are both solutions of $\Kob(\de)$.  By Theorem \ref{extprop30}, $h=k\circ m$ for some $m\in\aut\d$.  Thus $\cald'=h(\d)=k(\d)=\cald$.
\end{proof}

As a consequence of Theorem \ref{geothm10} we may unambiguously attach to each datum in $G$ a unique complex geodesic.
\begin{definition}\label{d}
For any \nd datum $\delta$ in $G$ the unique complex geodesic in $G$ that is contacted by $\delta$ is denoted by  $\cald_\delta$.
\end{definition}
\index{$\cald_\de$}

\section{Flat C-geodesics}\label{flatcgeos}
The symmetrized bidisc has the remarkable property that it is foliated by the ranges of its complex C-geodesics of degree one.
For any $\beta\in\d$ we introduce the function $f_\beta:\d\to \c^2$ given by
\be\label{defFbeta}
f_\beta(z)= (\beta + \bar\beta z, z)
\ee
and the set 
\be\label{defcalfbeta}
\calf_\beta=f_\beta(\d)= \{(\beta + \bar\beta z, z):z\in\d\}.
\ee
\index{$f_\beta$}
\index{$\calf_\beta$}
\begin{proposition}\label{fsubbeta}
For any $\beta\in\d$ the map $f_\beta$ is a complex C-geodesic of $G$.
\end{proposition}
\begin{proof}
By Proposition \ref{elG}, $f_\beta$ maps $\d$ into $G$.  Thus $f_\beta\in G(\d)$, and clearly $f_\beta$ has a holomorphic left inverse, to wit the co-ordinate function $s^2$.  Hence $f_\beta$ is a complex C-geodesic of $G$.
\end{proof}
It follows of course that $\calf_\beta$ is a complex geodesic of $G$.

The following proposition implies that $G$ is foliated by the flat geodesics $\calf_\beta, \, \beta\in\d$, in $G$.  The proof is a simple calculation. 
\begin{proposition}\label{propbeta}
If $s\in G$ then there is a unique $\beta\in\d$ such that $s\in \calf_\beta$. Moreover $\beta$ is given by
\[
\beta=\frac{s^1-\bar{s^1}s^2}{1-|s^2|^2}.
\]
\end{proposition}
The fact that $\beta$ so defined lies in $\d$ is contained in Proposition \ref{elG}.
\begin{proposition}\label{deg1flat}
The following statements are equivalent for a complex {\rm C}-geodesic $k$ of $G$.
\begin{enumerate}[\rm (1)]
\item $\deg(k)=1$;
\item $k=f_\beta\circ m$ for some $\beta\in\d$ and $m\in\aut\d$;
\item  $k(\d)$ is the intersection of $G$ with a complex line;

\item $k(\d)$ is contained in the intersection of $G$ with a complex line.

\end{enumerate}
\end{proposition}
\begin{proof}
(1)$\Leftrightarrow$(2) The complex C-geodesics of $G$ are explicitly described in \cite{AY06,pz05}: they are $\Gamma$-inner functions of degree one or two, and those of degree one are precisely of the form in (2).\\
(2)$\Rightarrow$(3)$\Rightarrow$(4) is trivial.
Conversely, suppose that 
$k(\d)$ is contained in the intersection of a complex line with $G$.
 Let $\omega\in\t$ be such that $\Phi_\omega$ is a left inverse of $k$.  Then $m\df\Phi_\omega\circ k\in\aut\d$.  By Theorem \ref{descgeos} and Proposition \ref{cancels}, $k^2$ is a non-constant Blaschke product, $\ph$ say, of degree $1$ or $2$, and 
\be\label{kkk}
k^1=\overline {k^1}k^2 \quad \mbox{ on } \t.
\ee
Since $k(\d)$ lies in a complex line we can write $k=(a+b\ph,\ph)$ for some $a,b\in\c$.  Moreover the relation \eqref{kkk}
implies that $b=\bar a$.  Hence 
\be\label{kaph}
k=(a+\bar a\ph,\ph).
\ee
   Since $\ph$ is nonconstant it has a zero $z_0$  in $\d$. Since $k(z_0)=(a,0)$ and $k(z_0) \in G$, we have $a\in\d$.   Differentiate the identity
\[
(2\omega\ph-a-\bar a \ph)(z)=m(z)(2-\omega a-\omega \bar a\ph(z))
\]
and substitute $z=z_0$ to show that $ \ph'(z_0) \neq 0$ and thus that $z_0$ is a simple zero of $\ph$. 
Since $k$ has a holomorphic left inverse, $k$ is injective, and therefore $z_0$ is the only zero of $\ph$ in $\d$.
Hence $\ph$ has degree $1$ and so $\ph\in\aut\d$.  Equation \eqref{kaph} shows that statement (2) holds.  Thus (4) implies (2).
\end{proof}
In the light of Proposition \ref{deg1flat} we shall call any complex C-geodesic of degree one  a {\em flat } C-{\em geodesic} of $G$.  
\index{flat C-geodesic}
There is no conflict with the notion of flat datum, as introduced in Definition \ref{extdef10}, for the following reason.
\begin{proposition} \label{twoflats}
A \nd datum $\de$ in $G$ is a flat datum if and only if there is a flat {\rm C}-geodesic $k$ that solves $\Kob(\de)$.
\end{proposition}
\begin{proof}
Suppose that $\de$ is a flat datum.  Since $\rho_\de$ is constant on $\t$, Theorems \ref{flatandroyal} and \ref{flatroyalesimal} imply that $\de$ contacts either $\calr$ or $\calf_\beta$ for some $\beta\in\d$.  In the former case the function $C(s)=\half s^1$ solves $\Car(\de)$, which implies that the datum $\de$ is both royal and flat, contrary to the Pentachotomy Theorem.  Hence $\de$ contacts $\calf_\beta$, and the flat C-geodesic $k=f_\beta$ solves $\Kob(\de)$.

Conversely, suppose that $f_\beta\circ m$ solves $\Kob(\de)$ for some $\beta\in\d, \ m\in\aut\d$; then so does $f_\beta$.  Since $C\circ f_\beta=\id{\d}$, where $C(s)=s^2$, it follows that $C$ solves $\Car(\de)$. A simple calculation shows that, for any $\omega\in\t$, 
\[
\Phi_\omega\circ f_\beta=m_\omega \quad \mbox{ for some }m_\omega \in \aut\d.
\] 
Since $\delta$ contacts $f_\beta(\d)$, by Proposition \ref{geoprop10}, $\delta=f_\beta(\zeta)$ for some \nd datum $\zeta$ in $\d$.  We have, in view of the definition \eqref{defrhode},
\[
\rho_\de(\omega)=|\Phi_\omega(\delta)|^2= |\Phi_\omega\circ f_\beta(\zeta)|^2= |m_\omega(\zeta)|^2=|\zeta|^2.
\]
Hence $\rho_\delta$ is constant on $\t$. 
Thus $\delta$ is a flat datum.
\end{proof}

\section{Rational $\Ga$-inner functions}\label{RatGaInner}
Let us recall some notions and results from \cite{ALY15}.  The closure of $G$ in $\c^2$ will be denoted by $\Ga$; thus
\[
\Ga=\{(z+w,zw): z,w\in\d^-\}.
\]
\index{$\Ga$}
The distinguished boundary of $\Ga$ is denoted by $b\Ga$.
\index{$b\Ga$}
\index{distinguished boundary}
\index{function!rational $\Ga$-inner}
\begin{definition}\label{Gainner}
A {\em rational $\Gamma$-inner function} is a rational analytic map $h:\d\to \Gamma$ with the property that $h$ maps $\t$ into the distinguished boundary $b\Gamma$ of $\Gamma$.  
\end{definition}
\index{function!$\Ga$-inner}
All complex C-geodesics in $G$ are rational $\Ga$-inner functions (Theorem \ref{descgeos}). 

 For any rational $\Gamma$-inner function $h$, the second component $h^2$ is a finite Blaschke product.  The degree $\deg (h)$ is equal to the degree of $h^2$ (in the usual sense).   The following notion will play an important role.
\begin{definition}\label{defroynode}
Let $h$ be a rational $\Gamma$-inner function.  A {\em royal node} of $h$ is a point  $\la\in\d^-$ such that $h(\la)\in\royal^-$. A {\em royal point} of $h$ is a point of $h(\d^-)\cap \calr^-$. 
\end{definition}
\index{royal node}
\index{royal point}
The following result is \cite[Theorem 1.1]{ALY15}.  Observe that $\la\in\d^-$ is a royal node of $h$ if and only if $h^1(\la)^2=4h^2(\la)$.
\begin{theorem}\label{number_royal} 
If $h$ is a  nonconstant  rational $\Gamma$-inner  function then either $h(\d^-) = \royal^-$ or $h$ has exactly $\deg(h)$ royal nodes, counted with multiplicity.
\end{theorem}
Here multiplicity is defined as follows.
\begin{definition}\label{multiplicity}
Let $h$ be a rational $\Gamma$ inner function.  If $\sigma$ is a zero of $(h^1)^2-4h^2$ of order $\ell$, we define the \emph{multiplicity $\#\sigma$} of $\sigma$ (as a royal node of $h$) by 
\[
 \#\sigma \quad = \quad \twopartdef{\ell} {\mbox{ if }\sigma \in \d}{\half \ell} {\mbox{ if }\sigma \in \t.}
\]
\end{definition}
\index{royal node!multiplicity of}

\begin{lemma}\label{cancel_maxim}
Let $\delta$ be a \nd datum in $G$, let  $k$ solve $\Kob(\de)$ and suppose that $k^2$ is a Blaschke product of degree $2$. Let $\omega \in \t$.
The following conditions are equivalent:

{\rm (i)} $\omega$ is a maximiser of $\rho_\delta$ on $\t$;

{\rm (ii)} $(2 \bar\omega, \bar\omega^2)$ is in $k(\d)^- \cap \partial \calr$;

{\rm (iii)} $\Phi_{\omega}$ solves $\Car(\delta)$.
\end{lemma}
\begin{proof}
By the definition of  $\rho_\delta$, (i) $\Leftrightarrow$ (iii).

(ii) $\Rightarrow$  (iii).  Suppose (ii), and let $\tau\in\d^-$ be such that $k(\tau)=(2\bar\omega,\bar\omega^2)$.  By the Maximum Modulus Principle, $\tau\in\t$.  Consider the rational function
\be\label{seecancel}
\Phi_\omega\circ k(\la) = \frac{2\omega k^2(\la)-k^1(\la)}{2-\omega s(\la)}
\ee
which is inner, by Proposition \ref{phiok}.  By \cite[Corollary 6.10]{ALY13} $k^1, k^2$ are rational functions having the same denominator and therefore $\Phi_\omega\circ k$ has apparent degree $2$, the degree of $k$.  However, as $\la\to\tau$, both the numerator and the denominator of the right hand side of equation \eqref{seecancel} tend to zero.  Hence both numerator and denominator are divisible by $\la-\tau$, and so $\Phi_\omega\circ k$ is a rational inner function of degree at most one.

Suppose that $\Phi_\omega\circ k$ is constant.  Then $k(\d)$ is contained in a complex line, and so, by Proposition \ref{deg1flat}, $k$ has degree one, contrary to hypothesis.  Thus $\Phi_\omega\circ k$ is a rational inner function of degree exactly one, which is to say that $\Phi_\omega \circ k\in\aut\d$.

Since $k$ solves $\Kob(\de)$ there exists a datum $\zeta \in \d$ such that $k(\zeta) = \delta$ and $|\zeta| = \kob{\delta}$. By  the invariance of $|\cdot |$ under automorphisms,
\[
| \Phi_{\omega}\circ k (\zeta)| = |\zeta|.
\]
Therefore
\[
|\Phi_{\omega}(\delta)|= | \Phi_{\omega}\circ k (\zeta)| = |\zeta|
=  \kob{\delta}= \car{\delta}.
\]
Hence $\Phi_{\omega}$ solves $\Car(\delta)$.  Thus (ii) implies (iii).

(iii) $\Rightarrow$  (ii). Let $\Phi_{\omega}$ solve $\Car(\delta)$. Then $\Phi_{\omega}\circ k \in \aut\d$ has degree one. Thus 
\[
\Phi_{\omega}\circ k = \frac{2\omega k^2 -k^1}{2- \omega k^1}
\]
has a cancellation at some point $\tau\in\d^-$ (that is, $\la-\tau$ divides both numerator and denominator). By Theorem \ref{crit-canc=royal},  $\tau \in \t$,  $\tau$ is a royal node for $k$ and $k(\tau) = (2 \bar\omega, \bar\omega^2)$. Therefore $(2 \bar\omega, \bar\omega^2)$ is in $k(\d)^- \cap \partial \calr$.
\end{proof}
\begin{proposition}\label{georoyal}
Let $k$ be a rational $\Ga$-inner function of degree $2$.  Then $k$ is a complex C-geodesic of $G$ if and only if $k$ has a royal node in $\t$.
\end{proposition}
\begin{proof}
Let $k$ be a complex C-geodesic, so that $k$ has a holomorphic left inverse $C$.  Pick a \nd datum $\zeta$ in $\d$.  By Proposition \ref{Cksolve}, $k$ solves $\Kob(\de)$, where $\de=k(\zeta)$.  Let $\omega$ be a maximizer of $\rho_\de$ in $\t$.  By Lemma \ref{cancel_maxim}, there exists $z\in\t$ such that $k(z)=(2\bar\omega,\bar\omega^2)$.  Then $z$ is a royal node of $k$ in $\t$.

Conversely, suppose that $k$ has a royal node $z\in\t$, so that $k(z)=(2\bar\omega,\bar\omega^2)$ for some $\omega\in\t$.  By Lemma \ref{cancel_maxim}, $\Phi_\omega$ solves $\Car(\de)$.  Since $\kob{\cdot}=\car{\cdot}$, it follows that $\Phi_\omega$ is a holomorphic left inverse of $k$ modulo $\aut\d$.  Hence $k$ is a complex C-geodesic of $G$.
\end{proof}

\chapter{The retracts of $G$ and the bidisc $\d^2$}\label{retractsG}

As was observed in the introduction, there are inclusions between the collections of complex geodesics, holomorphic retracts and sets with the \npep\ in a general domain.  In this chapter we prove Theorem \ref{main2}, to the effect that the nontrivial retracts in $G$ are the complex geodesics.  By way of a digression, we show that a similar geometric argument can be used to establish the (known) description of the retracts in $\d^2$.  Furthermore, we show by an explicit construction that complex geodesics in $G$ are zero sets of polynomials of degree at most $2$.

\section{Retracts and geodesics of $G$}\label{retracts&geos}

Let $U$ be a domain. We say that $\rho$ is a \emph{retraction of $U$} if $\rho \in U(U)$ and $\rho \circ \rho = \rho$. \index{retraction} A set $R\subseteq U$ is a \emph{retract in $U$} if there exists a retraction $\rho$ of $U$ such that $R=\ran \rho$. \index{retract}
We say that a retraction $\rho$ and the corresponding retract $R$ are {\em trivial} if either $\rho$ is constant (and $R$ is a singleton set) or $\rho=\id{U}$ (and $R=U$).

A particularly simple way that a nontrivial retraction can arise is if $C\in \d(U)$, $k \in U(\d)$ and $C\circ k= \id{\d}$. In that event, if we set $\rho=k \circ C$, then $\rho \in U(U)$ and
\be\label{ret10}
\rho \circ \rho = (k \circ C) \circ (k \circ C) = k \circ (C \circ k) \circ C
=k \circ \id{\d} \circ C = k \circ C = \rho.
\ee
Therefore, if $U$ has dimension greater than one,  $\rho$ is a nontrivial retraction and  the complex geodesic $\ran k = \ran \rho$ is a nontrivial retract. Thus, on general domains in dimension greater than one, complex geodesics are nontrivial retracts. The question arises: for which domains is the converse of this statement true? See for example \cite{HS,knese}.  It {\em is} true for the symmetrized bidisc.
\begin{theorem}\label{retthm10}
A map $\rho \in G(G) $ is a nontrivial retraction of $G$ if and only if 
$\rho= k \circ C$  for some $C\in \d(G)$ and $k \in G(\d)$ such that $C\circ k=\id{\d}$.
 Thus  $R$ is a nontrivial retract in $G$ if and only if $R$ is a complex geodesic in $G$.
\end{theorem}
\index{theorem!retracts are complex geodesics in $G$}
\begin{proof}
Let $\rho$ be a retraction of $G$ that is neither constant nor $\id{G}$ and let $R=\ran \rho$.  
As $\rho$ is not constant, there exists a pair $s_1, s_2$ of distinct points in $\rho(G)$.  Let $\de$ be the 
\nd discrete datum $(s_1,s_2)$ in $G$.

Choose a datum $\zeta$ in $\d$ and $k\in G(\d)$ such that $k$ solves $\Kob(\de)$ and $k(\zeta)=\de$.
Since $\rho|R$ is the identity, $\rho(\de)=\de$, and so both $k$ and $\rho\circ k$ solve $\Kob(\de)$.  Moreover $k(\zeta)=\de=\rho\circ k(\zeta)$.  By Remark \ref{essun}, $\rho\circ k =k$.
Therefore 
\[
\mathcal{D}_\de =k(\d) = \rho\circ k(\d)\subseteq R.
\]

If $\mathcal{D}_\de \neq R$ then there exists $t\in R\setminus \mathcal{D}_\de$.  By the argument in the foregoing paragraph, for any $s\in \mathcal{D}_\de$, the complex geodesic through $t$ and $s$ is contained in $R$, that is, $\mathcal{D}_{(t,s)} \subseteq R$.
In particular, as $s_1\neq s_2$, the geodesics $\cald_1\df\mathcal{D}_{(t,s_1)}$ and $\cald_2\df\mathcal{D}_{(t,s_2)}$ are contained in $R$.  Furthermore, Theorem \ref{geothm10} implies that $\cald_1\neq \cald_2$.  

Choose $k_j$ to solve $\Kob(t,s_j)$ and a vector $v_j$ tangent to $\cald_j, \, j=1,2$ at $t$.   Since $\cald_1\neq \cald_2$, it follows from Theorem \ref{geothm10} that $v_1, v_2$ are not collinear.  Since $\rho$ acts as the identity map on $R$,
\[
D\rho(t)v_j=v_j, \qquad j=1,2.
\]
Hence $D\rho(t)$ is an invertible linear transformation.  By the Inverse Function Theorem $\rho(G)$ contains a neighborhood of $t$ in $\c^2$.  Since $\rho$ is the identity on its range, $\rho$ agrees with $\id{G}$ on a nonempty open set.  Thus $\rho=\id{G}$, contrary to our assumption.  Hence $R=\mathcal{D}_\de$.
\end{proof}

\section{Retracts of $\d^2$}\label{caseD2}

The argument used in the preceding section to characterize retractions in $G$ applies in any domain $U$ in $\c^2$ with the property that, for every \nd datum $\de$ in $U$, the solution to $\Kob(\de)$ is essentially unique.  The bidisc does not have this property, but  nevertheless,  a variant of the argument is effective. We shall exploit the fact that, for datums $\la$ in $\d^2$, the solution of $\Kob(\la)$ is not essentially unique precisely when the solution of $\Car(\la)$ {\em is} essentially unique.  Thereby we shall obtain known results on retractions of $\d^2$ (see \cite{HS,guo,knese}) by simple geometric methods. 

Incidentally, the implications of uniqueness and nonuniqueness of solutions of Carath\'eodory and Kobayashi problems in many domains (including $G$) are explored in \cite{kz2013}.

\begin{definition}\label{def_bal_datum_D2}
Let $\la=(\la_1,\la_2)$ be a \nd discrete datum  in $\d^2$. Then $\la$ is 
 {\em balanced},  {\em of type $1$} or {\em of type $2$} according as
\[
|(\la_1^1,\la_2^1)| =|(\la_1^2,\la_2^2)|, \quad
|(\la_1^1,\la_2^1)|  > |(\la_1^2,\la_2^2)| \; \mbox{ or }
|(\la_1^1,\la_2^1)|  < |(\la_1^2,\la_2^2)|
\]
respectively.
$\la$ is {\em unbalanced} if $\la$ is of type $1$ or type $2$.
\end{definition}

If $\la $ is a balanced datum in $\d^2$ then 
the unique geodesic in $\d^2$ that contacts $\la$ is a balanced disc in the sense of Definition \ref{defbalD2}.
It is not hard to see that, for the \nd datum $\la=(\la_1,\la_2)$ in $\d^2$, the Kobayashi extremal problem $\Kob(\la)$ has an essentially unique solution if and only if $\la$ is balanced, whereas the Carath\'eodory extremal problem $\Car(\la)$ has an essentially unique solution if and only if $\la$ is unbalanced. 
\begin{theorem}\label{retractD2}
A subset $R$ of $\d^2$ is a nontrivial retract in $\d^2$ if and only if
$R$ is a complex geodesic in $\d^2$.
\end{theorem}
\begin{proof} Since $R$ is assumed to be nontrivial, in particular $R$ is not a singleton. Choose points $\lambda_1,\lambda_2 \in R$ with $\lambda_1
\not=\lambda_2$.

If $\lambda =(\lambda_1,\lambda_2)$ is balanced, then by \cite[page 293]{agmc_vn} the solution to $\Kob(\lambda)$ is essentially unique. It then follows by the argument in the preceding section that $R = k(\d)$ where $k$ solves $\Kob (\lambda)$.

Now assume that $\lambda$ is not balanced, say
\be\label{t1}
|(\la_1^1,\la_2^1)|  > |(\la_1^2,\la_2^2)|.
\ee
Let $\rho$ be a retraction of $\d^2$ with $\rho (\d^2) = R$.
Let $f_1, f_2:\d^2 \to \d$ be the first and second co-ordinate functions.   Since $\{f_1,f_2\}$ constitutes a universal set for the Carath\'eodory problem on $\d^2$, the inequality \eqref{t1} shows that $f_1$ solves $\Car(\la)$.  Since $\la$ contacts $R$ and $\rho|R$ is the identity map, it is clear that $f_1\circ \rho$ also solves $\Car(\la)$.  By the essential uniqueness of solutions of $\Car(\la)$, there exists $m\in\aut\d$ such that $f_1\circ\rho=m\circ f_1$.  Apply this equation to the points $\la_1, \la_2\in R$ to obtain the equations
\[
\la_1^1= m(\la^1_1) \quad \mbox{ and }\quad \la_2^1 = m(\la_2^1).
\]
The relation \eqref{t1} shows that $\la_1^1\neq \la_2^1$, and thus $m$ fixes a pair of distinct points in $\d$.  Hence $m=\id{\d}$, and so
\be\label{f1rho}
f_1\circ \rho = f_1
\ee
on $\d^2$.  Thus $\rho$ has the form
\[
\rho(\lambda) = (\lambda^1, \phi(\lambda))
\]
for some $\phi \in \d(\d^2)$. 
Consequently, if for each fixed $\alpha \in \d$ we define $\phi_\alpha \in \d(\d)$ by the formula,
\[
\phi_\alpha(w) = \phi(\alpha,w),\qquad w \in \d.
\]
For any $z\in\d^2$, the equation $\rho\circ\rho=\rho$ yields the relation
\[
\rho(z^1, \phi(z))=(z^1, \phi(z)).
\]
Hence 
\be\label{rho2z}
\phi(z)=\phi(z^1,\phi(z)), \qquad z \in \d^2.
\ee
We claim that $\phi_\al$ is a retraction of $\d$.  Indeed, for $w \in \d$,
\begin{align*}
\ph_\al(\ph_\al(w)) &= \phi(\al,\phi(\al,w)) \\
	&= \phi(\al,w) \qquad \mbox{ (by equation \eqref{rho2z})} \\
	&= \ph_\al(w).
\end{align*}
This proves that $\phi_\alpha$ is a retraction of $\d$ for all $\alpha \in \d$.

Now retractions of $\d$ are either constant maps or $\id{\d}$. If there exists $\alpha_0 \in \d$ such that $\phi_{\alpha_0} = \id{\d}$, then by continuity, $\phi_\alpha = \id{\d}$ for all $\alpha$ in a neighborhood of $\alpha_0$. But then $\rho = \id{\d}$, which contradicts the assumption that $R$ is nontrivial. Therefore, $\phi_\alpha$ is constant for all $\alpha \in \d$.

For each $\alpha \in \d$, choose $g(\alpha) \in \d$ where $\phi_\alpha(w) = g(\alpha)$ for all $w \in \d$. Then
\[
\rho(\lambda) = (\lambda_1, \phi(\lambda)) = (\lambda_1, \phi_{\lambda_1}(\lambda_2))=(\lambda_1,g(\lambda_1))
\]
for all $\lambda_1 \in \d$. This proves that if we define  $k\in \d^2(\d)$ by the formula $k(w) = (w,g(w))$, $w \in \d$, then
\[
R=\ran \rho =\ran k.
\]
Since in addition, $f_1 \circ k = \id{\d}$, it follows that $R$ is a geodesic.

If $\lambda$ is not balanced and $|(\lambda_1^2,\lambda_2^2)| >|(\lambda_1^1,\lambda_2^1)|$, then the reasoning as above, with the roles of the coordinates reversed, again yields the conclusion that  $R$ is a geodesic.
\end{proof}

\section{Geodesics in $G$ are varieties}\label{geovar}

The relation in equation \eqref{ret10} has another valuable application, namely to the calculation of a quadratic polynomial that vanishes on a prescribed geodesic of $G$.  

\begin{theorem}\label{geosvariet}
For every complex geodesic $\cald$ in $G$ there exists a polynomial $P$ of total degree at most $2$ such that $\cald=\{s\in G:P(s)=0\}$.
\end{theorem}
\index{theorem!complex geodesics are varieties}
If $\cald$ is flat then we may choose $P(s)=s^1-\beta - \bar\beta s^2$ for some $\beta\in\d$.
The result therefore follows from the following more detailed statement.

{\em Warning: in the next proposition superscript $2$ is used for the square of a number and for the second component in the same setting. For example, $(s^2)^2$ is the square of the the second component
of $s = (s^1, s^2)$.}

\begin{proposition}\label{retprop10}
Let $\delta$ be a nonflat \nd datum in $G$, let $\omega\in\t$ be such that $\Phi_\omega$ solves $\Car(\delta)$ and let $k\in G(\d)$ satisfy  $\Phi_\omega \circ k = \id{\d}$.    Then $k^2$ is expressible in the form
\be\label{ret30}
k^2=  \frac{q^\sim}{q}
\ee
for some polynomial $q$  of degree $2$ that does not vanish on $\d^-$, where 
$q^\sim(z) =z^{2} \overline{q(1/\bar{z})}$.

Let
\be\label{defP}
P(s) =(2-\omega s^1)^2 \; \frac{ \tilde q\circ \Phi_\omega(s) - s^2 q\circ\Phi_\omega(s)}{\omega^2s^2-1}.
\ee
Then 
\begin{enumerate}[\rm (1)]
\item $P$ is a polynomial of total 
degree $2$ that vanishes on $\cald_\de$;
\item $P$ is irreducible;
\item $P(s)=  (s^2)^2\overline{P(\overline{s^1}/\overline{s^2}, 1/\overline{s^2})}$ and
\item $\cald_\delta = \set{s\in G}{P (s)=0}$.
\end{enumerate}
\end{proposition}
\begin{proof}
 Proposition \ref{cancels} in the Appendix contains the statement that $k^2$ is expressible in the form \eqref{ret30} for some polynomial $q$  of degree at most $2$ that does not vanish on $\d^-$. Since $\delta$ is not flat, $\deg k^2 =2$, which is to say that  $\deg(q^\sim)=2$. 

By equation \eqref{ret10}, $k \circ \Phi_\omega$ is a retraction onto $\ran k = \cald_\delta$. Therefore, for $s \in \cald_\delta$,
\[
(k \circ \Phi_\omega) (s) = s.
\]
Writing $k =(k^1, k^2)$ and $s=(s^1,s^2)$, we obtain the equations

\be\label{ret20}
k^1\circ\Phi_\omega(s) = s^1\qquad \text{ and }\qquad k^2\circ\Phi_\omega(s) =s^2
\ee
 for all $s\in \cald_\delta$.

By Proposition \ref{cancels}, $ k^2(\bar{\omega}) = \bar{\omega}^2$, and
therefore

\be\label{ret50}
q^\sim(\bar\omega) =\bar\omega^2 q(\bar\omega).
\ee
Substitute $z=\Phi_\omega(s)$ into equation \eqref{ret30} and use the second equation in \eqref{ret20} to obtain
\be\label{ret60}
q^\sim\circ\Phi_\omega(s) =s^2 q\circ\Phi_\omega(s)
\ee
for all $s\in \cald_\delta$. By virtue of  equation \eqref{ext10}, if  $Q^\sim$ and $Q$ are defined by the formulas
\be\label{ret70}
Q^\sim(s)=(2-\omega s^1)^2 q^\sim\circ\Phi_\omega(s)\qquad
\text{ and }\qquad Q(s) = (2-\omega s^1)^2q\circ\Phi_\omega(s),
\ee
then $Q^\sim$ and $Q$ are quadratic polynomials in two variables, and  equation \eqref{ret60} becomes
\be\label{ret80}
Q^\sim(s) =s^2 Q(s) \qquad \mbox{ for all } s \in \mathcal{D}_\de.
\ee
Also, note that if $s=(s^1,\bar\omega^2)$ and $s^1\not=2\bar\omega$, then
\[
\Phi_\omega(s) = \frac{2\omega \bar\omega^2-s^1}{2-\omega s^1}=\bar\omega,
\]
so that equations  \eqref{ret70} and \eqref{ret50} imply that
\be\label{ret90}
Q^\sim(s) =(2-\omega s^1)^2q^\sim(\bar\omega)=(2-\omega s^1)^2 \bar\omega^2 q(\bar\omega)=\bar\omega^2Q(s).
\ee
Equations \eqref{ret80} and \eqref{ret90} together imply that
\be\label{ret100}
Q^\sim(s) -s^2Q(s)
\ee
is a polynomial that vanishes on both $\cald_\delta$ and $\{s:s^2=\bar\omega^2\}$. It follows that
\be\label{ret110}
P(s) = \frac{Q^\sim(s) -s^2Q(s)}{\omega^2s^2-1}
\ee
is a well-defined polynomial that vanishes on $\cald_\delta$.   
From equation \eqref{ret110} one can derive an explicit formula for $P$ in terms of $q$:
\begin{align}\label{explicP}
P(s)&=  -q(\bar\omega) (s^1)^2+2q'(\bar\omega) s^1s^2-2q''(0) (s^2)^2 \notag  \\
	&\hspace*{1cm}  +2 \overline{q'(\bar\omega)} s^1 +4(2\re q(0) - q(\bar\omega))  s^2 -2 \overline{q''(0)}.
\end{align}
Note that, by equation \eqref{ret50},   $q(\bar\omega)$ is real.

 $P$ has total degree $2$, and so (1) holds.\\

\noindent (2) Suppose $P$ is not irreducible.  Since $P$ has degree 2, the zero set of $P$ is the union of two complex lines. If $\cald_\de$ is contained in only one of the complex lines then, by Proposition \ref{deg1flat}, $\de$ is a flat datum, contrary to hypothesis.  Hence both lines meet $\cald_\de$.  Since $\cald_\de$ is connected, $\cald_\de$ contains the common point, $k(z_0)$ say, of the two lines.    Thus both lines have the complex tangent $k'(z_0)$ at $k(z_0)$, and so the two lines coincide.  Again $\cald_\de$ is contained in a complex line, and so Proposition \ref{deg1flat} applies again to show that $\de$ is a flat datum, contrary to hypothesis.  Thus $P$ is irreducible.\\

\noindent (3)
Note that, for any $s\in G$,
\[
\Phi_\omega\left(\frac{\overline{s^1}}{\overline{s^2}}, \frac{1}{\overline{s^2}}\right)= 1/ \overline{\Phi_\omega(s)}.
\]
Thus 
\begin{align*}
P(\overline{s^1}/\overline{s^2}, 1/\overline{s^2}) & =
(2- \omega \overline{s^1}/\overline{s^2})^2 \; 
\frac{ \tilde q\circ \Phi_\omega\left(\frac{\overline{s^1}}{\overline{s^2}}, \frac{1}{\overline{s^2}}\right)
 - 1/\overline{s^2} \; q\circ
\Phi_\omega\left(\frac{\overline{s^1}}{\overline{s^2}}, \frac{1}{\overline{s^2}}\right)}{\omega^2/\overline{s^2}-1}\\
 & =(2- \omega \overline{s^1}/\overline{s^2})^2 \; 
\frac{\overline{s^2} \;\tilde q (1/ \overline{\Phi_\omega(s)}) - q (1/ \overline{\Phi_\omega(s)})}{\omega^2 -  \overline{s^2}}.
\end{align*}
By definition,
$q^\sim(z) =z^{2} \overline{q(1/\bar{z})}$, hence
\[
\overline{\tilde q (1/ \overline{\Phi_\omega(s)})} = \frac{  q\circ
\Phi_\omega(s)}{\Phi_\omega(s)^2}
\]
and 
\[
\overline{ q (1/ \overline{\Phi_\omega(s)})} = \frac{ \tilde q\circ
\Phi_\omega(s)}{\Phi_\omega(s)^2}.
\]
Therefore
\begin{align*}
(s^2)^2\overline{P(\overline{s^1}/\overline{s^2}, 1/\overline{s^2})} &=
\frac{(2\omega s^2 -  s^1)^2}{1 - \omega^2 s^2} \left( s^2 \; 
\overline{\tilde q (1/ \overline{\Phi_\omega(s)})} - \overline{ q (1/ \overline{\Phi_\omega(s)})} \right)\\
&= \frac{(2\omega s^2 - s^1)^2}{1 - \omega^2 s^2}
 \left( s^2 \; \frac{  q\circ
\Phi_\omega(s)}{\Phi_\omega(s)^2} - \frac{ \tilde q\circ
\Phi_\omega(s)}{\Phi_\omega(s)^2} \right)\\
&=  P(s).
\end{align*}
Statement (3) holds.\\

\noindent (4)  From statement (1), $\cald_\de\subset P^{-1}(\{0\})$.
It is easy to see from the definition \eqref{defP} of $P$ that, for every $s \in G$,
\begin{align*}
P(s) & =(2-\omega s^1)^2 \; \frac{ \tilde q\circ \Phi_\omega(s) - s^2 q\circ\Phi_\omega(s)}{\omega^2s^2-1}\\
& =\frac{(2-\omega s^1)^2}{\omega^2 s^2-1} \; \left[
\frac{\tilde q\circ \Phi_\omega(s)}{q\circ\Phi_\omega(s)} - s^2 \right]\; q\circ\Phi_\omega(s) \\
& =\frac{(2-\omega s^1)^2}{\omega^2 s^2-1} \; q\circ\Phi_\omega(s) \;
\left[ k^2 \circ\Phi_\omega(s) - s^2 \right].
\end{align*}
Recall that the polynomial $q$  does not vanish on $\d^-$, and 
$\frac{(2-\omega s^1)^2}{\omega s^2-1} \neq 0$ for all $s \in G$.

By assumption,  $k\in G(\d)$ satisfies  $\Phi_\omega \circ k = \id{\d}$.
We claim that,
for each $s \in G$, the equation $k^2 \circ\Phi_\omega(s) =s^2$ implies that $k^1 \circ\Phi_\omega(s) =s^1$. For every $z \in \d$,
\[
\frac{2 \omega k^2(z) - k^1(z)}{2-\omega k^2(z)}=z.
\]
Hence 
\[
\frac{2 \omega k^2(\Phi_\omega(s)) - k^1(\Phi_\omega(s))}{2-\omega k^2(\Phi_\omega(s))}=\Phi_\omega(s)
\]
and, since $k^2 \circ\Phi_\omega(s) =s^2$, we have
\[
\frac{2 \omega s^2 - k^1(\Phi_\omega(s))}{2-\omega s^2}=\Phi_\omega(s).
\]
This implies that
\[
2 \omega s^2 - k^1(\Phi_\omega(s))= 2\omega s^2 -s^1,\; \text{and so}\;\;
k^1 \circ\Phi_\omega(s) =s^1.
\]
Therefore 
\begin{align*}
 \set{s\in G}{P (s)=0}  &= \set{s\in G}{k^2 \circ\Phi_\omega(s) =s^2}\\
	 &= \set{s\in G}{k^2 \circ\Phi_\omega(s) =s^2\; \text{and}\; k^1 \circ\Phi_\omega(s) =s^1}.
\end{align*}
Thus, by the equations \eqref{ret20}, $\cald_\delta = \set{s\in G}{P (s)=0}$.
\end{proof}

\begin{lemma}\label{retlem10}
Let $\delta$ be a balanced datum in $G$, let $\Phi_{\omega_1}$ and $\Phi_{\omega_2}$ solve $\Car(\delta)$ with $\omega_1\not=\omega_2$, and let $k$ solve $\Kob(\delta)$. Choose $m_1, \, m_2\in\aut\d$ such that
\[
m_1 \circ \Phi_{\omega_1} \circ k = \id{\d}\qquad  \text{ and }\qquad  m_2 \circ  \Phi_{\omega_2} \circ k = \id{\d},
\]
and then let $C_1$ and $C_2$ be defined by
\[
C_1=m_1 \circ \Phi_{\omega_1}\qquad \text{ and }\qquad C_2=m_2 \circ \Phi_{\omega_2}.
\]
There exists a nonzero constant $c$ such that
\[
C_1(s)-C_2(s) =\frac{cP_\delta(s)}{(2-\omega_1s^1)(2-\omega_2s^1)}
\]
for all $s\in G$,  where $P_\delta$ is defined by equation \eqref{defP}.
\end{lemma}
\begin{proof}
Simply observe that
\[
(2-\omega_1s^1)(2-\omega_2s^1)(C_1(s)-C_2(s))
\]
is a quadratic polynomial that vanishes on $\ran k$ and that
\[
\ran k = \cald_\delta = \set{s\in G}{P_\delta (s)=0}.
\]
\end{proof}

\chapter{Purely unbalanced and exceptional datums in $G$} \label{delicate}

The most delicate aspect of the geometric characterization of types of datum in Theorem \ref{geothm30} below is the distinction between purely unbalanced and exceptional datums, depending as it does on whether or not a certain second derivative is zero.  In this chapter we calculate the relevant second derivatives.

Let us recall the following  elegant parametrization of solutions to $\Kob(\delta)$ due to Pflug and Zwonek \cite[Theorem 2 part (ii)]{pz05}.
\begin{proposition}\label{prop3.10}
Let $\delta$ be a \nd datum in $G$ and assume that $\cald_\delta \cap \calr = \emptyset$. If $k$ solves $\Kob(\delta)$ then there exist  $m_1, \ m_2\in\aut\d$  such that
\be\label{kmm}
k = (m_1+m_2,m_1m_2).
\ee
\end{proposition}

\begin{proposition}\label{secderiv}
Let $\de$ be a \nd datum in $G$ and suppose that $ \cald_\de^- \cap \calr^-$ is the singleton set $\{(2\bar\omega_0, \bar\omega_0^2)\}$ for some $\omega_0\in\t$.  Then
\be\label{goal}
\frac{d^2}{dt^2} \rho_\de(e^{it}) \big|_{e^{it}=\omega_0} = 0.
\ee
\end{proposition}
\begin{proof}
Let $k$ solve $\Kob(\de)$, so that there is a datum $\zeta$ in $\d$ such that 
$k(\zeta)=\de$ and $|\zeta|= \kob{\de}$.    Then $k(\d)=\cald_\de, \; k(\d^-)=\cald_\de^-$ and $\Phi_{\omega_0}$ solves $\Car(\de)$ by Lemma \ref{cancel_maxim}.

Since $k$ has no royal nodes in $\d$, by Proposition \ref{prop3.10}, $k=(m_1+m_2,m_1m_2)$ for some $m_1,m_2\in\aut \d$.  Replace $k$ by $k\circ m_1^{-1}$: then we may write $k(z)=(z+m(z),zm(z))$, where $m=cB_\al$
for some $c\in\t$ and $\al\in\d$.    Consider any royal node $\tau\in\d^-$  of $k$.   The royal nodes of $k$ are the fixed points of $m$ in $\d^-$, and so, since $k(\tau) \in E$, 
\[
k(\tau)=(2\tau,\tau^2)=(2\bar\omega_0,\bar\omega_0^2).
\]
Thus $\tau=\bar\omega_0$.  Hence $k$ has the unique royal node $\bar\omega_0$, and so $m$ has the unique fixed point $\bar\omega_0$, and this fixed point lies in $\t$.

It is elementary that, for the function $m=cB_\al$,
\begin{enumerate}[\rm (1)]
\item $m$ has a unique fixed point which lies in $\t$ if and only if $c\neq 1$ and $|\al| = \half|1-c|$, and
\item  $m$ has two fixed points which lie in $\t$ if and only if $|\al| > \half |1-c|$.
\end{enumerate}
For our chosen $m$, alternative (1) holds, and so $|\al| = \half|1-c|>0$.

For $1\leq r< |\al|^{-1}$ let $m_r= cB_{r\al}$.
Then $m_r\in\aut\d$ and $m_1=m$.  For $r>1$, since $|r\al| > \half|1-c|$, \; $m_r$ has two fixed points lying in $\t$.  These fixed points are easily seen to be
\[
\tau_\pm(r) =  \frac {1-c \pm \sqrt{(1-c)^2+4r^2|\al|^2 c}}{2r\bar\al}.
\]
When $r=1$ the discriminant in this formula is zero, since $m_1$ has a unique fixed point, and so $(1-c)^2=-4|\al|^2c$.  Hence
\be\label{formtaur}
\tau_\pm(r) =\frac{1-c}{2r\bar\al}(1 \pm i\sqrt{r^2-1})\quad \in\t.
\ee
These two fixed points are distinct when $r>1$ and coincide with $\tau$ when $r=1$.

Let
\be\label{kr}
k_r(z)=(z+m_r(z),zm_r(z))
\ee
for $z\in\d, \, 1\leq r<|\al|^{-1}$.  Then $k_r$ is a rational $\Gamma$-inner function of degree $2$ having royal nodes $\tau_\pm(r) \in\t$.
Since $k_r^1$ and $k_r^2$ have the same denominator, $\Phi_\omega\circ k_r$ is also rational of degree at most $2$, for any $\omega$.  Moreover, $\Phi_\omega\circ k_r$ is inner, by Proposition \ref{phiok} in the Appendix.  When $\omega= \overline{\tau_\pm(r)}$, the numerator and denominator of $\Phi_\omega\circ k_r(z)$ both vanish at $z=\tau_\pm(r)$.  Thus cancellation occurs, and so $\Phi_{\overline{\tau_\pm(r)}}\circ k_r$ has degree $1$.  Thus $k_r$ has the left inverse $\Phi_{\overline{\tau_\pm(r)}}$ modulo $\aut\d$, which is to say that $k_r$ is a complex C-geodesic of $G$.

Let $\de_r=k_r(\zeta), \; 1\leq r< |\al|^{-1}$.  Then $k_r$ solves $\Kob(\de_r)$ and $\Phi_{\overline{\tau_\pm(r)}}$ solves $\Car(\de_r)$.  Hence the point $\overline{\tau_\pm(r)}$ is a maximizer for $\rho_{\de_r}$, and consequently
\be\label{maxrhodelr}
\frac{d}{dt} \rho_{\de_r}(e^{it})\big|_{e^{it}=\overline{\tau_\pm(r)}} =0.
\ee

For $1\leq r<|\al|^{-1}$ define $F_r:\t\to\r$ by
\begin{align*}
F_r(\omega)&= \frac{d}{dt} \rho_{\de_r}(e^{it})\big|_{e^{it}=\omega} \\
	&=   \frac{d}{dt} |\Phi_{e^{it}}(\de_r)|^2\big|_{e^{it}=\omega}.
\end{align*}
By equation \eqref{maxrhodelr},
\be\label{zerosFr}
F_r(\overline{\tau_\pm(r)})=0.
\ee

From the formula \eqref{defPhi} for $\Phi_\omega$ it is easy to see that 
\[
 |\Phi_\omega(\de_r)| = |g_r(\omega)| \qquad \mbox{ for } \omega \in\t
\]
for some fractional quadratic function $g_r$ in which the coefficients depend continuously (indeed, polynomially) on $r$.  It follows that $g_r \to g_1$ in $C^\infty(\t)$ as $r\to 1$.  Now
\begin{align*}
F_r(\omega)&= \frac{d}{dt} |g_r(e^{it})|^2\big|_{e^{it}=\omega} \\
	&=   -2\im\left( \omega g_r'(\omega)\overline{g_r(\omega)}\right).
\end{align*}
Consequently $F_r \to F_1$ in $C^\infty(\t)$ as $r\to 1$.

Let
\[
G_r(t)= F_r(e^{it})
\]
for $t\in\r$ and write
\[
\tau= e^{it_0}, \qquad \tau_\pm(r)= e^{i{t_\pm(r)}},
\]
where $t_\pm(r)$ is close to $t_0$ for $r$ close to $1$.
Our goal \eqref{goal} is to show that $G_1'(-t_0)=0$.   We have, for any $t\in\r$,
\[
G_r(t)=G_r(-t_0)+(t+t_0)G_r'(-t_0)+\half(t+t_0)^2 G_r''(\theta)
\]
for some $\theta=\theta(r,t)\in\r$.   By equation \eqref{zerosFr}, $G_r(-t_\pm(r))=0$.  Therefore
\begin{align*}
0&= G_r(-t_+(r))-G_r(-t_-(r)) \\
	&=(t_-(r)-t_+(r))G_r'(-t_0)+A_+(r)(t_0-t_+(r))^2 + A_-(r)(t_0-t_-(r))^2
\end{align*}
for some real numbers $A_\pm(r)$ which are uniformly bounded for all $r,t$, say by $K>0$.  Thus, for $r>1$,
\[
|G_r'(-t_0)| \leq K \frac{(t_0-t_+(r))^2 +(t_0-t_-(r))^2}{|t_-(r)-t_+(r)|}
\]
Now $|t_0-t_\pm(r)|$ and $| t_+(r)-t_-(r)|$ are asymptotic to $|\tau-\tau_\pm(r)|$ and $|\tau_+(r)-\tau_-(r)|$ respectively as $r\to 1$,
and from equation \eqref{formtaur} it is easy to see that all these quantities are asymptotic to $\sqrt{r-1}$.  It follows that $G_r'(-t_0) \to 0$ as $r\to 1$.    Since $G_r \to G_1$ with respect to the $C^1$ norm, it follows that $G_1'(-t_0)=0$, as required.
\end{proof}
We shall need the following calculation.
\begin{lemma}\label{Phioconst}
If a datum $\de$ in $G$ contacts $\calr$ then $\Phi_\omega(\de)$ is independent of $\omega\in\t$.
\end{lemma}
\begin{proof}
If $\de$ is discrete it has the form $(s_1,s_2)$ where $s_j=(2z_j,z_j^2)$ for some $z_j\in\d, \, j=1,2$.  One then finds that $\Phi_\omega(\de)=(-z_1,-z_2)$ for all $\omega\in\t$.  If $\de$ is infinitesimal then $\de=(s_1,v)$ where $s_1=(2z_1,z_1^2)$ for some $z_1\in\d$ and $v=\la(1,z_1)$ for some $\la\neq 0$.  A straightforward calculation (more details are in the proof of Theorem \ref{flatroyalesimal}) yields
\[
\Phi_\omega(\de)=(-z_1, -\half\la).
\]
In either case, $\Phi_\omega(\de)$ does not depend on $\omega$.
\end{proof}
For every $m\in\aut\d$ we define a map $\tilde m\in G(G)$ by 
\be\label{defmtilde-main}
\tilde m(z+w,zw)= (m(z)+m(w), m(z)m(w)) \qquad \mbox{ for all } z,w\in\d.
\ee
\index{$\tilde m$}
Every automorphism of $G$ is of the form $\tilde m$ for some $m\in\aut\d$, by Theorem \ref{autosG}.

We use the notation $f^\vee(z)= \overline{f(\bar z)}$ for any function $f$ on $\d$.
\index{$f^\vee$}
\begin{lemma}\label{mcircPhio}
Let $m=cB_\al \in\aut\d$ for some $c\in\t$ and $\al\in\d$.  For any $\omega\in\t$
\be\label{formmcircPhi}
\Phi_\omega \circ \tilde m = m_1\circ \Phi_{(m^\vee)^{-1}(\omega)}
\ee
where $m_1=cB_{-\al}$.
Consequently, for any \nd datum $\de$ in $G$ and any $m\in\aut\d$,
\be\label{formrhomde}
\rho_{\tilde m(\de)}=\rho_\de\circ (m^\vee)^{-1}.
\ee
\end{lemma}
If $m(z)=cB_\al(z)$ then $(m^\vee)^{-1}(\omega)=B_{-\bar\al}(c\omega)$.
\begin{proof}
The identity \eqref{formmcircPhi} is a matter of verification.  If $\omega_1=(m^\vee)^{-1}(\omega)$ then 
\begin{align*}
\rho_{\tilde m(\de)}(\omega)&= |\Phi_\omega \circ \tilde m(\de)|^2= |m_1\circ \Phi_{\omega_1}(\de)|^2=|\Phi_{\omega_1}(\de)|^2=\rho_\de(\omega_1)=\rho_\de\circ (m^\vee)^{-1}(\omega).
\end{align*}
\end{proof}
\begin{lemma}\label{rhomtild}
Let $\de$ be a \nd datum in $G$ and let $m=cB_\al\in\aut\d$ for some $c\in\t$ and $\al\in\d$.
  If $\rho_\de$ attains its maximum over $\t$ at $\omega_0$ then $\rho_{\tilde m(\de)}$ attains its maximum at $m^\vee(\omega_0)$ and 
\be\label{sectild}
\frac{d^2}{dt^2} \rho_{\tilde m(\de)}(e^{it})\big|_{e^{it}=m^\vee(\omega_0)}= \frac{|1-\al\omega_0|^4}{(1-|\al|^2)^2} \frac{d^2}{dt^2}\rho_\de(e^{it})\big|_{e^{it}=\omega_0}.
\ee
\end{lemma}
\begin{proof}
It is immediate from equation \eqref{formrhomde} that $\rho_{\tilde m(\de)}$ attains its maximum at $m^\vee(\omega_0)$.

Let $\omega_0=e^{it_0}$ and define
\[
r_\de(t)=\rho_\de(e^{it})
\]
for $t\in\r$.  Since $r_\de$ attains its maximum over $\r$ at $t_0$, we have
\be\label{critpt}
r_\de'(t_0)=0.
\ee

Let $\tau(\cdot)$ be defined as a smooth function of $t$  in a neighborhood of $t_0$ by
\[
e^{i\tau(t)}=(m^\vee)^{-1}(e^{it}) = \frac{ce^{it}+\bar\al}{1+c\al e^{it}}.
\]
Then
\be\label{tauprime}
\tau'(t)=\frac{1-|\al|^2}{|1+ c\al e^{it}|^2}.
\ee
From equation \eqref{formrhomde} we have
\begin{align*}
r_{\tilde m(\de)}(t)&=\rho_{\tilde m(\de)}(e^{it})=\rho_\de \circ (m^\vee)^{-1}(e^{it})=\rho_\de(e^{i\tau(t)})=r_\de\circ\tau(t).
\end{align*}
Hence, by equation \eqref{tauprime}, for all $t\in \r$,
\[
r_{\tilde m(\de)}'(t)= r_\de'\circ\tau(t) \tau'(t)= r_\de'\circ\tau(t)  \frac{1-|\al|^2}{|1+ c\al e^{it}|^2}.
\]
Differentiate again to obtain
\be\label{rdblprim}
r_{\tilde m(\de)}''(t)= \frac{1-|\al|^2}{|1+ c\al e^{it}|^4}\left( |1+ c\al e^{it}|^2\frac{d}{dt} r_\de'\circ\tau(t)-r_\de'\circ\tau(t)\frac{d}{dt}|1+ c\al e^{it}|^2\right).
\ee
Note that
\[
e^{it}=m^\vee(\omega_0) \Leftrightarrow (m^\vee)^{-1}(e^{it})=\omega_0 \Leftrightarrow e^{i\tau(t)} = e^{it_0}.
\]
Let $t_1$ be such that $\tau(t_1)=t_0$. Then $e^{it_1}= m^\vee(\omega_0)$ and, by equation \eqref{critpt},
\[
r_\de'\circ\tau(t_1)= r_\de'(t_0)=0.
\]
Hence the final term on the right hand side of equation \eqref{rdblprim} vanishes when $t=t_1$, and we have
\begin{align}\label{211015}
r_{\tilde m(\de)}''(t_1)&= \frac{1-|\al|^2}{|1+ c\al e^{it_1}|^2}\frac{d}{dt} r_\de'\circ \tau(t)\big|_{t=t_1} \notag\\
	&= \frac{1-|\al|^2}{|1+ c\al m^\vee(\omega_0)|^2} r_\de''\circ \tau(t_1)\tau'(t_1)\notag \\
	&=\frac{(1-|\al|^2)^2}{|1+ c\al m^\vee(\omega_0)|^4} r_\de''(t_0).
\end{align}
Now
\[
1+ c\al m^\vee(\omega_0) = \frac{1-|\al|^2}{1-\al\omega_0},
\]
and so equation \eqref{211015} becomes
\[
r_{\tilde m(\de)}''(t_1)= \frac{|1-\al\omega_0|^4}{(1-|\al|^2)^2} r_\de''(t_0).
\]
The last equation is equivalent to equation \eqref{sectild}.
\end{proof}
\begin{lemma}\label{0021}
If $\cald$ is a complex geodesic in $G$ such that
\be\label{sproynodes}
\cald^-\cap\calr^- = \{(0,0),(2,1)\}
\ee
then $\cald= k_r(\d)$ for some $r\in (0,1)$, where
\be\label{formkr}
k_r(z)=  \frac{1}{1-rz}\left(2(1-r)z, z(z-r)\right)\quad\mbox{ for }z\in\d.
\ee
\end{lemma}
\begin{proof}
Let $\de$ be a \nd datum that contacts $\cald$ and let $k$ solve $\Kob(\de)$.  Then $\cald=k(\d)$.
Since $(0,0) \in k(\d)$ and $(2,1)\in k(\d^-)$ we may assume (composing $k$ with an automorphism of $\d$ if necessary) that  $k(0)=(0,0)$ and $k(1)=(2,1)$.  By \cite[ Remark 2]{pz05}, $k$ can be written in the form
\[
k(\la)= \frac{1}{1-\bar\al\la}\left( 2\tau(1-|\al|)\la, \tau^2\la(\la-\al)\right)
\]
for some $\tau\in\t$ and $\al\in\d$.  Since $k(1)=(2,1)$ we have
\[
\frac{\tau(1-|\al|)}{1-\bar\al}=1=\frac{\tau^2(1-\al)}{1-\bar\al}.
\]
It follows that $1-|\al|=|1-\al|$, and so $0\leq\al<1$ and $\tau=1$.  Thus 
\[
k(\la)= \frac{1}{1-\al\la}\left( 2(1-\al)\la, \la(\la-\al)\right).
\]
If $\al=0$ then $k(\d)=\calr$, contrary to the hypothesis \eqref{sproynodes}.  Thus $0 < \al < 1$.
\end{proof}
\begin{proposition}\label{d2non0}
If $\de$ is a \nd datum in $G$ such that $\cald_\de^- \cap \calr^-$ consists of two points, one lying in $\calr$ and one lying in the boundary $\partial\calr$ of $R$,
then
\[
\frac{d^2}{dt^2}\rho_\de(e^{it}) \big |_{e^{it}=1} < 0.
\]
\end{proposition}
\begin{proof}
First consider the case that 
\be\label{specroynodes}
\cald_\de^- \cap \calr^- = \{(0,0), (2,1)\}.
\ee
Let $k$ solve $\Kob(\de)$.   By Lemma \ref{0021}, $k=k_r$ as in equation \eqref{formkr} for some $r\in (0,1)$.
Suppose that $\de$ is a discrete datum.  Then $\de=k(\zeta)$ for some discrete datum $\zeta$ in $\d$, say $\zeta=(z_1,z_2)$ for some distinct points $z_1,z_2 \in\d$.    Let $\omega=e^{it}$ throughout the proof.  Observe that
\[
\Phi_\omega\circ k(z)=\omega z\frac{z-\overline{\ga(\omega)}}{1-\ga(\omega) z}
\]
where 
\be\label{defgam}
\ga(\omega)= r 1+(1-r)\omega.
\ee
  Here $\ga\in\d$ if $\omega\in\t\setminus\{1\}$, while $\ga(1)=1$ and $\Phi_1\circ k(z)=-\omega z$.
Moreover
\be\label{modgasq}
|\ga(\omega)|^2= 1-2r(1-r)(1-\cos t).
\ee

We wish to differentiate with repect to $t$ the function
\begin{align}\label{2.3}
1-\rho_\de(e^{it}) &= 1-|\Phi_\omega(\de)|^2 \notag \\
	&=1-|\Phi_\omega\circ k(\zeta)|^2 \notag \\
	&= 1-\left|\frac{\Phi_\omega\circ k (z_1)- \Phi_\omega \circ k(z_2)}{1-\overline{\Phi_\omega\circ k(z_2)}\Phi_\omega\circ k(z_1)} \right|^2 \notag \\
	&= \frac{ (1-|\Phi_\omega \circ k(z_1)|^2)(1-|\Phi_\omega\circ k(z_2)|^2)}{| 1-\overline{\Phi_\omega\circ k(z_2)}\Phi_\omega\circ k(z_1)|^2}.
\end{align}
We have
\begin{align} 
1-\overline{\Phi_\omega\circ k(z_2)}\Phi_\omega\circ k(z_1) &= (1-\bar z_2 z_1) \left( 1+ \frac{\bar z_2 z_1(1-|\ga|^2)}{(1-\bar\ga\bar z_2)(1-\ga z_1)} \right)   \label{16.1} \\
	&=  \frac{(1-\bar z_2 z_1)(1-\bar\ga \bar z_2 - \ga z_1 + \bar z_2 z_1)}{(1-\bar\ga \bar z_2)(1-\ga z_1)}.    \label{16.1a}
\end{align}
By equation \eqref{2.3},
\begin{align}\label{goodlogform}
\log ( 1-\rho_\de(\omega))&= \half \log (1-|\Phi_\omega\circ k(z_1)|^2)^2 + \half \log (1-|\Phi_\omega\circ k(z_2)|^2)^2 \notag \\
	& \hspace*{1.5cm} - \log | 1-\overline{\Phi_\omega\circ k(z_2)}\Phi_\omega\circ k(z_1)|^2.
\end{align}
Consequently
\be\label{diffgoodform}
\frac{ \frac{d}{dt}(1-\rho_\de(\omega))}{1-\rho_\de(\omega)} =\half \mathrm{I} +\half \mathrm{II}- \mathrm{III}
\ee
where 
\begin{align}\label{IIII}
\mathrm{I} &= \frac{\frac{d}{dt} (1-|\Phi_\omega\circ k(z_1)|^2)^2}{(1-|\Phi_\omega\circ k(z_1)|^2)^2}, \notag \\
\mathrm{II}&= \frac{\frac{d}{dt} (1-|\Phi_\omega\circ k(z_2)|^2)^2}{(1-|\Phi_\omega\circ k(z_2)|^2)^2},  \\
\mathrm{III} &=\frac{\frac{d}{dt}  | 1-\overline{\Phi_\omega\circ k(z_2)}\Phi_\omega\circ k(z_1)|^2}{ | 1-\overline{\Phi_\omega\circ k(z_2)}\Phi_\omega\circ k(z_1)|^2}. \notag
\end{align}
Let us calculate $\mathrm{III}$.  From equations \eqref{defgam} and \eqref{modgasq} we have
\be\label{derivga}
\frac{d\ga}{dt}= i(1-r)\omega, \qquad \frac{d|\ga|^2}{dt} = - 2r(1-r)\sin t.
\ee
Thus, on differentiating equation \eqref{16.1} we obtain
\begin{align}
\frac{d}{dt}  (1-\overline{\Phi_\omega\circ k(z_2)}\Phi_\omega\circ k(z_1)) &= \frac{2r(1-r)\bar z_2 z_1(1-\bar z_2 z_1)}{(1-\bar\ga \bar z_2)^2 (1-\ga z_1)^2)} \times  \label{18.1}\\
	& \left( (1-\bar z_2)(1-z_1)\sin t + i(1-r)(\bar z_2-z_1)(1-\cos t) \right). \notag
\end{align}
On combining equations \eqref{18.1} and \eqref{16.1a} we find that
\be\label{19.1}
\frac{d}{dt}  | 1-\overline{\Phi_\omega\circ k(z_2)}\Phi_\omega\circ k(z_1)|^2= \frac{4r(1-r)|1-\bar z_2 z_1|^2}{|1-\ga z_1|^2 |1-\ga z_2|^2} \re A(t,z_1,z_2)
\ee
where
\begin{align}\label{19.2}
A(t,z_1,z_2)& = \frac{\bar z_2 z_1(1-\bar\ga \bar z_1 - \ga z_2 + \bar z_1 z_2)}{(1-\bar\ga \bar z_2)(1-\ga z_1)} \times \notag \\
	&\hspace*{1cm} \left( (1-\bar z_2)(1-z_1) \sin t + i(1-r)(\bar z_2 -z_1)(1-\cos t) \right).
\end{align}
Now use equations \eqref{16.1a} and \eqref{19.2} to deduce that
\be\label{21.1}
\mathrm{III}= 4r(1-r)\re\left\{ \frac{\bar z_2 z_1 \left((1-\bar z_2)(1-z_1)\sin t +i(1-r)(\bar z_2 -z_1)(1-\cos t)\right)} {(1-\bar\ga \bar z_2 -\ga z_1 + \bar z_2 z_1)(1-\bar\ga \bar z_2)(1-\ga z_1)} \right\}.
\ee
By equation \eqref{diffgoodform},
\[
\frac{d}{dt}\rho_\de(\omega)=-(1-\rho_\de(\omega))(\half\mathrm{I}+\half\mathrm{II}-\mathrm{III}).
\]
We shall differentiate this equation and put $t=0, \ \omega=1$.  Since $\omega=1$ is a maximizer for $\rho_\de$, the point $t=0$ is a critical point of $\rho_\de(e^{it})$, and $\rho_\de(1)=\car{\de}^2$.  Thus
\be\label{thisisit}
\frac{d^2}{dt^2}\rho_\de(\omega) \big|_{t=0} =  -(1-\car{\de}^2)\frac{d}{dt}(\half\mathrm{I}+\half\mathrm{II}-\mathrm{III})\big|_{t=0}.
\ee
In view of equations \eqref{defgam} and \eqref{derivga}, at the point $t=0$,
\[
\ga=1, \quad \frac{d\ga}{dt}= i(1-r) \; \; \mbox{ and } \;\; \frac{d|\ga|^2}{dt}=0.
\]
Hence 
\begin{align*}
\frac{d}{dt}\mathrm{III}\big|_{t=0}&= 4r(1-r)\re\left\{\bar z_2 z_1 \frac{d}{dt} \frac {(1-\bar z_2)(1-z_1)\sin t +i(1-r)(\bar z_2-z_1)(1-\cos t)} {(1-\bar\ga \bar z_2 -\ga z_1 + \bar z_2 z_1)(1-\bar\ga \bar z_2)(1-\ga z_1)}      \right\} \big|_{t=0}    \\
	& = 4r(1-r)\re \frac{\bar z_2 z_1}{(1-\bar z_2)(1-z_1)}.
\end{align*}

Since $\mathrm{I, II}$ are the special cases of $\mathrm{III}$ obtained when $z_2=z_1$ and $z_1=z_2$ respectively, it follows that
\begin{align*}
\frac{d}{dt}\mathrm{I}\big|_{t=0}&=\frac {4r(1-r)|z_1|^2}{|1-z_1|^2}, \\ 
\frac{d}{dt}\mathrm{II}\big|_{t=0}&=\frac {4r(1-r)|z_2|^2}{|1-z_2|^2}.
\end{align*}
By equation \eqref{thisisit},
\begin{align*}
\frac{d^2}{dt^2}\rho_\de(\omega) \big|_{t=0}& =  -(1-\car{\de}^2)2r(1-r)\left(\frac {|z_1|^2}{|1-z_1|^2} + \frac{|z_2|^2}{|1-z_2|^2}
-2\re \frac{\bar z_2 z_1}{(1-\bar z_2)(1-z_1)} \right)\\
	&=  -(1-\car{\de}^2)2r(1-r)\left|\frac{z_1}{1-z_1} - \frac{z_2}{1-z_2}\right|^2 \\
	&< 0.
\end{align*}
We have proved Proposition \ref{d2non0} for discrete datums $\de$ in the case that 
\[
\cald_\de^-\cap \calr^-=\{(0,0),(2,1)\}.
\]

Now consider the case that $\de$ is an infinitesimal datum.  Then $\de=k(\zeta)$ for some infinitesimal datum $\zeta\in T\d$, say $\zeta=(z,v)$ where $v\neq 0$.  We find that
\[
\Phi_\omega\circ k(\zeta)= \left(\omega z B_{\bar\ga}(z), -\omega v \frac{\bar\ga-2z+\ga z^2}{(1-\ga z)^2}\right)
\]
where $\ga$ is given by equation \eqref{defgam} and $B_{\bar\ga}$ is a Blaschke factor.
Therefore
\begin{align*}
|\Phi_\omega\circ k(\zeta)| &= \frac{ |v|\, |\bar\ga-2z + \ga z^2|}{|1-\ga z|^2(1-|z B_{\bar\ga}(z)|^2)} \\
	&= \frac{|v| \, |\bar\ga-2z+\ga z^2|}{(1-|z|^2)(1-2\re(\ga z)+|z|^2)}.
\end{align*}
Hence
\begin{align*}
\log \rho_\de(\omega) &= \log |\Phi_\omega\circ k(\zeta)|^2 \\
	&= 2\log\frac{|v|}{1-|z|^2} + \log |\bar\ga -2z+\ga z^2|^2 - 2\log (1-2\re(\ga z)+|z|^2),
\end{align*}
and so
\be\label{firstderiv}
\frac{d}{dt}\rho_\de(\omega) = \rho_\de(\omega)(\mathrm{I}-2\mathrm{II})
\ee
where
\begin{align*}
\mathrm{I}&= \frac{\frac{d}{dt} |\bar\ga -2z+\ga z^2|^2}{ |\bar\ga -2z+\ga z^2|^2}, \\
\mathrm{II}&= \frac{\frac{d}{dt}(1-2\re(\ga z)+|z|^2)}{1-2\re(\ga z)+|z|^2}.
\end{align*}
Now, by equations \eqref{derivga},
\be\label{another}
\frac{d}{dt}(\bar\ga - 2z+\ga z^2) = -i(1-r)(\bar\omega-\omega z^2),
\ee
and therefore
\be\label{gotI}
\mathrm{I}= 2(1-r)\im\frac{\bar\omega-\omega z^2}{\bar\ga - 2z+ \ga z^2}.
\ee
Likewise
\be\label{gotII}
\mathrm{II} = \frac{2(1-r)\im(\omega z)}{1-2\re(\ga z) +|z|^2}.
\ee

Differentiate equation \eqref{firstderiv} and set $t=0$.  Since $0$ is a critical point of $\rho_\de(e^{it})$ and $\rho_\de(1)=\car{\de}^2$,
\be\label{secder}
\frac{d^2}{dt^2}\rho_\de(\omega)\big|_{t=0} = \car{\de}^2\left( \frac{d}{dt}\mathrm{I} -2 \frac{d}{dt}\mathrm{II}\right)\big|_{t=0}.
\ee
From equations \eqref{gotI}, \eqref{another} and \eqref{derivga} one has
\[
\frac{d}{dt}\mathrm{I}\big|_{t=0}= -2(1-r)\re \frac{-2z+r(1+z)^2}{(1-z)^2},
\]
while
\begin{align*}
\frac{d}{dt}\mathrm{II}\big|_{t=0} &=2(1-r)\im \frac{d}{dt} \frac{\omega z}{1-2\re(\ga z) +|z|^2}\big|_{t=0} \\
	&=  \frac{2(1-r)}{|1-z|^4}  \{ |1-z|^2 \re z - 2(1-r)(\im z)^2\}.
\end{align*}
Hence, by equation \eqref{secder},
\begin{align*}
\frac{d^2}{dt^2}\rho_\de(\omega)\big|_{t=0} &= -2(1-r) \car{\de}^2( \mathrm{III}+ r \mathrm{IV})
\end{align*}
where
\begin{align*}
\mathrm{III}&=\re \left\{\frac{2z}{|1-z|^2}-\frac{2z}{(1-z)^2}\right\}- \frac{4(\im z)^2}{|1-z|^4} \\
	&=0, \\
\mathrm{IV}&=  \re \frac{(1+z)^2}{(1-z)^2} +\frac{4(\im z)^2}{|1-z|^4} \\
	&=\frac{(1-|z|^2)^2}{|1-z|^4}.
\end{align*}
Thus 
\begin{align*}
\frac{d^2}{dt^2}\rho_\de(\omega)\big|_{t=0}&= -2r(1-r) \car{\de}^2\frac{(1-|z|^2)^2}{|1-z|^4} \\
	&<0.
\end{align*}

We have proved the conclusion of the Proposition in the case that $\cald_\de^- \cap \calr^- = \{(0,0), (2,1)\}$.   Now consider the general case: suppose that
\be\label{genroynodes}
\cald_\de^- \cap \calr^- = \{(2\al,\al^2), (2\bar\omega_0,\bar\omega_0^2)\}
\ee
for some $\al\in\d, \, \omega_0\in\t$.  Let
\[
m=\overline{B_\al(\bar\omega_0)} B_\al  \in \aut\d.
\]
Since $\tilde m$ is an automorphism of $G$ and $\de$ contacts $\cald_\de$, the datum $\tilde m$ contacts
$\tilde m(\cald_\de)$, and therefore
\[
\tilde m(\cald_\de^-)= \cald_{\tilde m(\de)}^-, \qquad \tilde m(\calr^-)=\calr^-
\]
and so
\[
\tilde m(\cald_\de^- \cap \calr^-) = \cald_{\tilde m(\de)}^-\cap \calr^-.
\]
Hence, in view of the equation \eqref{genroynodes},
\[
\cald_{\tilde m(\de)}^-\cap \calr^- = \tilde m( \{(2\al,\al^2), (2\bar\omega_0,\bar\omega_0^2)\}) = \{(0,0),(2,1)\}.
\]

By the above special case,
\[
\frac{d^2}{dt^2} \rho_{\tilde m(\de)}(e^{it})\big|_{e^{it}=1} < 0.
\]
It follows from Lemma \ref{rhomtild} that
\[
\frac{d^2}{dt^2} \rho_\de (e^{it})\big|_{e^{it}=\omega_0} < 0.
\]
\end{proof}

\chapter{A geometric classification of geodesics in $G$}\label{classifyG}

All geodesics in a domain $U$ are coequal for Euclidean geometry, in the sense that they are all properly embedded analytic discs, and so conformally equivalent. However they can differ from the point of view of the intrinsic geometry of $U$. Say two geodesics $\cald_1$ and $\cald_2$ are \emph{equivalent} in $U$ (written $\cald_1 \sim \cald_2$) if there exists an automorphism $\tau$ of $U$ such that $\tau(\cald_1) = \cald_2$.
\index{geodesics!equivalence of}
 In the case when $U=\mathbb{B}$,  the unit ball in $\c^n$,   the group of automorphisms acts transitively on the set of complex geodesics, and so there is a single equivalence class.   In $G$, on the other hand, we identify five distinct species of geodesic, and three of these five species consist of one-parameter families of pairwise inequivalent geodesics (Theorem \ref{formgeos}), while the remaining two species comprise a single equivalence class.

 In this chapter we prove a geometric characterization of the type of a datum $\de$ (in the sense of  Definition \ref{extdef10}) in terms of the intersection of the complex geodesic $\cald_\de$ with the royal variety (more precisely, with the intersection of the corresponding closures).  The result is Theorem \ref{geothm30}.  It has two important consequences: firstly, the type of a datum is preserved by automorphisms of $G$ (Corollary \ref{holinv}), and secondly, if two datums contact the same geodesic then they have the same type (Corollary \ref{sametype}).
The correspondence between \nd datums and geodesics therefore permits a classification of geodesics too into five types. Recall the Pentachotomy Theorem \ref{extprop10}, which states
that, for  a \nd datum $\delta$ in $G$,  exactly one of the cases $(1)$ to $(5)$ in Definition \ref{extdef10} holds.  

The definitions of the five types of datum are not, at a first glance, geometric in character: it is not immediately apparent that types are preserved by automorphisms of $G$.  However, the following theorem characterizes types of datums purely in terms of the geometry of $G$.

\begin{theorem}\label{geothm30}
Let $\delta$ be a \nd datum in $G$ and let $E=\cald_\delta^- \cap \calr^-$. Then
\begin{enumerate}[\rm (1)]
    \item\ \  $E$ consists of two points, one lying in $\calr$ and one lying in $\partial \calr$ $\iff \delta $ is purely unbalanced;
   \item\ \   $E$ consists of a single point that lies in $\partial \calr$ $\iff \delta$ is exceptional;
       \item\ \  $E$ consists of two points, both lying in $\partial \calr$  $\iff \delta $ is purely balanced;
\item\ \  $E$ consists of a single point that lies in $\calr$ $\iff \delta$ is flat;
        \item\ \  $E$ = $\calr^-$ $\iff \delta$ is royal.
\end{enumerate}
\end{theorem}
\index{datum!flat}
\index{datum!exceptional}
\index{datum!purely unbalanced}
\index{datum!purely balanced}
\index{datum!royal}
\index{theorem!geometric classification of geodesics}
\begin{proof}   
By Theorem \ref{geothm10},  there exists a solution $k$ of $\Kob(\de)$ and $\cald_\de=k(\d)$.
By Theorem \ref{descgeos},  this $k$ is a rational $\Gamma$-inner function and $k^2$ is a Blaschke product of degree $1$ or $2$. 
Since $k$ is  $\Gamma$-inner, $k(\t) \subseteq b\Gamma $.

If $k(\d^-) \neq \royal^-\cap \Gamma$, by Theorem \ref{number_royal},  $k(\d^-)$ meets $\royal^-$ exactly $\deg(k)$ times,
that is, once or twice, counted with multiplicity.\\

(4)  Suppose that $E$ consists of a single point $s_1=k(z_1)$ that lies in $\calr$ (thus in $G$ rather than $\partial G$). 
Now $z_1$ is a simple zero of the rational function $R=(k^1)^2-4k^2$ in $\d$.  For suppose it has multiplicity greater than $1$.
By \cite[Proposition 3.5]{ALY15}, the zeros of $R$ are symmetric with respect to $\t$, and it follows that $R$ has at least $4$ zeros lying off $\t$. However, since $\deg(k)\leq 2$, $R$ has degree at most $4$, and so $\deg(R)=4$,  $\deg(k)=2$  and $k$ has no royal nodes in $\t$.  This contradicts the fact (see Proposition \ref{georoyal}) that any complex C-geodesic of degree $2$ has a royal node in $\t$.  Hence $z_1$ is the unique royal node of $k$ and is simple.
By Theorem \ref{number_royal}, the complex C-geodesic $k$ has degree $1$.   Thus there is a flat complex C-geodesic that solves $\Kob(\de)$.  By Proposition \ref{twoflats},  $\de$ is a flat datum.   

Conversely, suppose $\delta$ is a flat datum. By Proposition \ref{twoflats}, there is a flat complex C-geodesic, which has degree 1, that solves $\Kob(\de)$.  By Theorem \ref{number_royal}, $k$ has exactly one royal node, counted according to multiplicity.
However, by Proposition \ref{deg1flat}, any flat complex C-geodesic has the form $f_\beta\circ m$ for some $\beta\in\d, \ m\in\aut\d$ and so has a royal node in $\d$, by Proposition \ref{flatcaproyal}.  Hence $k$ has a single royal node, which lies in $\d$, and so  $E$ consists of a single point that lies in $\calr$.   The equivalence (4) is proved.\\


(5) Suppose  $E = \royal^-$.  Then $\royal^-=\cald_\de^-\cap \royal^-$ and so  $\royal^- \subseteq \cald_\de^-=k(\d^-)$.  Thus the holomorphic function $(k^1)^2-4k^2$ has uncountably many zeros in $\d$, and so vanishes identically.  Thus $\cald_\de = \calr$, and so $\de$ contacts $\calr$.  By Lemma \ref{Phioconst}, $\Phi_\omega(\de)$ is independent of $\omega$.  Thus $\rho_\de$ is constant on $\t$, and so $\de$ is either flat or royal.  However, by statement (4), $\delta$ is not flat, and so $\de$ is royal.

Conversely, suppose that $\de$ is royal: $\rho_\de$ is constant on $\t$ and $C(s)=\half s^1$ solves $\Car(\de)$.  Thus $\Phi_\omega$ solves $\Car(\de)$ for all $\omega\in\t$.  

Define $h\in G(\d)$ by $h(z)=(2z,z^2)$.  Then $C\circ h=\id{\d}$.  We claim that there exists a datum $\zeta$ in $\d$ such that $h(\zeta)=\de$.
In the case that $\de$ is discrete, say $\de=(s_1,s_2)$, Theorem \ref{flatandroyal} asserts that  $s_1$ and $s_2$ either both lie in $\calr$ or both lie in $\{(\beta+\bar\beta z,z):z\in\d\}$ for some $\beta\in\d$.  If the second alternative holds then $C_1(s)=s^2$ is easily seen to solve $\Car(\de)$, and so $\de$ is both flat and royal, contrary to Theorem \ref{extprop10}.  Thus $s_1,s_2$ both lie in $\calr$, so that $s_1=h(z_1), \ s_2=h(z_2)$ for some $z_1,z_2\in\d$.  Hence $\de=h(\zeta)$ where $\zeta=(z_1,z_2)$.

In the case that $\de$ is infinitesimal, say $\de=(s_1,v)$, Theorem \ref{flatroyalesimal} asserts that either (a) there exists $z_1\in\d$ such that $s_1=(2z_1,z_1^2)$ and $v$ is collinear with $(1,z_1)$, or (b) there exist $\beta,z_1\in\d$ such that $s_1=(\beta+\bar\beta z_1,z_1)$ and $v$ is collinear with $(\bar\beta,1)$, say $v=c(\bar\beta,1), \, c\neq 0$.  

We claim that (b) does not hold.  For suppose it does. Let $\eta$ be the datum $(z_1, c)$ in $\d$ and let $C_1(s)=s^2$.
Then $h(\eta)=\de$ and $C_1\circ h=\id{\d}$.  It follows that $C_1$ solves $\Car(\de)$, and therefore $\de$ is both royal and flat, again contrary to Theorem \ref{extprop10}.  Hence (a) holds.
Now, if $\zeta$ is the datum $(z_1, \half c)$ in $\d$ then $h(\zeta)=\de$.  

Thus, in either case, there is a datum $\zeta$ in $\d$ such that $h(\zeta)=\de$.  By Lemma \ref{Cksolve}, since $C\circ h=\id{\d}$,
it follows that $h$ solves $\Kob(\de)$.  Hence $\cald_\de=h(\d)=\calr$, and so $E=\calr^-$.  
Thus the equivalence (5) holds.\\

(3)  Suppose $E$ consists of two points, both in $\partial\calr$.  If $\deg(k)=1$ then, as is shown in the proof of (4) above, $\de$ is flat and hence $E\cap\partial\calr=\emptyset$, contrary to hypothesis.  Thus $\deg(k)=2$.  By Lemma \ref{cancel_maxim},  
   $\rho_\delta$ has exactly two maximizers in $\t$, which is to say that $\delta$ is purely balanced.

Conversely, suppose that $\de$ is purely balanced, so that $\rho_\de$ has exactly two maximizers, say $\omega_1\neq \omega_2$ in $\t$.  Then once again  $\de$ is not flat and so $\deg(k)=2$.  By Lemma \ref{cancel_maxim}, 
\[
\cald_\de^-\cap\partial\calr=k(\d)^-\cap\partial\calr=\{(2\bar\omega_1,\bar\omega_1^2), (2\bar\omega_2,\bar\omega_2^2)\}.
\]
Hence $E$ contains two distinct points in $\partial\calr$.  Since $k$ has exactly $2$ royal nodes,  by Theorem \ref{number_royal}, it follows that $k$ has no royal points in $G$, and so $E=\{(2\bar\omega_1,\bar\omega_1^2), (2\bar\omega_2,\bar\omega_2^2)\}$.
Thus the equivalence (3) holds.\\

(1)  Suppose that $E$ consists of two points, one in $\calr$ and one, say $(2\bar\omega_0,\bar\omega_0^2)$, in $\partial\calr$.  By statement (4) above, $\de$ is not flat and hence, by Propositions \ref{twoflats} and  \ref{deg1flat},  $\deg(k)=2$.  By Lemma \ref{cancel_maxim}, $\rho_\de$ has the unique maximizer $\omega_0$ in $\t$, and by Proposition \ref{d2non0},
\be\label{lt0}
\frac{d^2}{dt^2}\rho_\de(e^{it})\big|_{e^{it}=\omega_0} < 0.
\ee
By definition, $\de$ is purely unbalanced.

Conversely, suppose that $\de$ is purely unbalanced, so that $\rho_\de$ has a unique maximizer, say $\omega_0$ in $\t$, and the inequality \eqref{lt0} holds.  By Proposition \ref{secderiv}, $E \neq \{(2\bar\omega_0, \bar\omega_0^2)\}$.   Thus $k$ has a second royal node $z_0\in\d^-$ such that $k(z_0)\neq (2\bar\omega_0, \bar\omega_0^2)$.  If $z_0\in\t$ then $k(z_0)\in\partial\calr$ and, by Lemma  \ref{cancel_maxim}, $\rho_\de$ has a second maximizer in $\t$, contrary to the assumption that $\de$ is purely unbalanced.  Hence $z_0\in\d$, so that $k(z_0)\in\calr$.  Thus $E$ contains two points, one in $\calr$ and one in $\partial\calr$.  Since $k$ has at most two royal nodes, by Theorem \ref{number_royal}, $E$ consists of these two points.
The equivalence (1) is proved.\\

(2)  Since $\deg(k)$ is one or two, by Theorem \ref{number_royal}, either $k(\d)=\calr$ or $k$ has one or two royal nodes.  There are thus five possibilities for $E$, as described in the statements (1)-(5).  Thus $E$ consists of a single point lying in $\partial\calr$ if and only if $E$ is not of the forms described in statements (1), (3), (4) and (5).  Consequently $E$ consists of a single point lying in $\partial\calr$ if and only if $\de$ is not purely unbalanced, purely balanced, flat or royal, which in turn is so if and only if $\de$ is exceptional, by Theorem \ref{extprop10}.
\end{proof}
\begin{corollary}\label{holinv}
The type of a datum in $G$ is holomorphically invariant: if $\de$ is a \nd datum in $G$ and $\tilde m\in\aut G$ then $\de$ and $\tilde m(\de)$ are of the same type.
\end{corollary}
The statement is immediate from Theorem \ref{geothm30} and the fact that every automorphism $\tilde m$ of $G$ extends continuously to a bijective self-map of $\Ga$ (Corollary \ref{corautos}).

\begin{corollary}\label{sametype}  The type of a complex geodesic is unambiguously defined: if $\de_1, \de_2$ are \nd datums in $G$ and $\cald_{\de_1}=\cald_{\de_2}$ then $\de_1$ and $\de_2$ have the same type.
\end{corollary}
This statement too is clear from  Theorem \ref{geothm30}.

The following geometric characterization of balanced geodesics in $G$ is an immediate corollary of Theorem \ref{geothm30}
and Definition \ref{extdef10}.

\begin{proposition}\label{prop3.9}
If $\delta$ is a \nd datum in $G$, then $\delta$ is unbalanced if and only if $\cald_\delta$  meets $\calr$ in a single point. $\delta$ is balanced if and only if either $\cald_\delta \cap \calr = \emptyset$ or $\cald_\delta=\calr$.
\end{proposition}

In the light of Theorem \ref{geothm30}, the terms \emph{flat, exceptional, purely unbalanced, purely balanced}, and \emph{royal} (as introduced in Definition \ref{extdef10}) are all terms that  refer not only to qualitative properties of the Carath\'eodory extremal problem associated with a given datum $\delta$ but also to geometric properties of the unique geodesic associated with $\delta$.

\begin{definition}\label{typesofgeos}
A geodesic $\cald$ in $G$ is said to be {\em flat}, or of {\em flat type}, if there is a \nd flat datum $\de$ in $G$ such that $\cald=\cald_\de$.
Analogous definitions apply for exceptional, purely unbalanced, purely balanced and royal geodesics.
\end{definition}
\index{geodesics!flat}
\index{geodesics!exceptional}
\index{geodesics!purely unbalanced}
\index{geodesics!purely balanced}
\index{geodesics!royal}
\index{geodesics!types of}
\begin{proposition}\label{calfbeta}
The flat geodesics in $G$ are the sets $\calf_\beta$  {\rm (}defined in equation \eqref{defcalfbeta}{\rm )} for $\beta\in\d$.
\end{proposition}
\begin{proof}
Let $\cald$ be a flat geodesic in $G$ and let $\de$ be a flat datum such that $\cald=\cald_\de$.  By Proposition \ref{twoflats} there is a flat C-geodesic $k$ that solves $\Kob(\de)$, and by Proposition \ref{deg1flat}, $k= f_\beta\circ m$ for some $\beta\in\d$ and $m\in\aut\d$.  Hence
\[
\cald=\cald_\de=k(\d)=f_\beta(\d)=\calf_\beta.
\]
\end{proof}
\begin{proposition}\label{flatcaproyal}
The closure of a flat geodesic meets the closure of the royal variety $\calr^-$ exactly once, and the common point lies in $\calr$.
\end{proposition}
\begin{proof}
Consider the flat geodesic $\calf_\beta=f_\beta(\d)$ where $\beta\in\d$.
The royal nodes of $f_\beta$ are the points $z\in\d^-$ such that $(\beta+\bar\beta z,z) \in\calr^-$, that is, such that
\be\label{quadra}
(\beta+\bar\beta z)^2=4z.
\ee
If $\beta=0$ then the unique royal node of $f_\beta$ is $0$ and $\calf_0^-\cap \calr^- =\{(0,0)\}$.  If $\beta\neq 0$ then the equation \eqref{quadra} has two distinct roots in $\c$ which are symmetric with respect to the unit circle.  Hence there is no root in $\t$ and exactly one root in $\d$.  Thus $\calf_\beta^-\cap \calr^-$ consists of a single point, which belongs to $\calr$.
\end{proof}

The type of a complex geodesic is defined in terms of solutions of the Carath\'eodory extremal problem.  It follows, though, that every solution $k$ of a Kobayashi problem has a type, and one can ask for a characterization of types directly in terms of the complex C-geodesic $k\in G(\d)$.
 Recall that two geodesics $\cald_1, \cald_2$ are equivalent (written  $\cald_1\sim \cald_2$) if there exists an automorphism $\tilde m$ of $G$ such that $\tilde m(\cald_1)=\cald_2$.
\begin{theorem}\label{formgeos}
Let $\cald$ be a complex geodesic of $G$.
\begin{enumerate}[\rm (1)]
\item $\cald$ is  purely unbalanced if and only if $\cald\sim k_r(\d)$  for some $r\in (0,1)$ where
\[
k_r(z) = \frac{1}{1-r z}\left(2(1-r)z, z(z-r)\right) \quad \mbox{ for all } z\in\d;
\]
\item $\cald$ is exceptional if and only if $\cald\sim h_r(\d)$  for some real number $r>0$, where
\begin{align}\label{hrz}
h_r(z)&=(z+m_r(z),zm_r(z))
\end{align}
and
\be\label{mrz}
m_r(z) = \frac{(r-i)z+i}{r+i-iz}  \quad \mbox{ for } z\in\d;
\ee
\item  $\cald$ is purely balanced if $\cald\sim g_r(\d)$  for some $r\in (0,1)$, where
\[
g_r(z)=(z+B_r(z), zB_r(z)) \quad \mbox{ for all } z\in\d;
\]
\item $\cald$ is flat if and only if $\cald \sim k(\d)$ where $k(z)=(0,z)$;
\item $\cald$ is royal if and only if $\cald\sim k(\d)$ where $k(z)=(2z,z^2)$.
\end{enumerate}
Moreover, in statements (1), (2) and (3),  the corresponding geodesics are pairwise inequivalent for distinct values of $r$ in the given range.
\end{theorem}
\index{$m_r$}
\index{$k_r$}
\index{$g_r$}
\index{$h_r$}
\index{geodesics!flat}
\index{geodesics!exceptional}
\index{geodesics!purely unbalanced}
\index{geodesics!purely balanced}
\index{geodesics!royal}
\index{theorem!characterization of geodesics in terms of solutions of $\Car(\delta)$}
\begin{proof}
(1) The royal nodes of $k_r$ are $0$ and $1$ and the corresponding royal points are $(0,0)$ and $(2,1)$. Thus $k(\d)^-\cap\calr^- =\{(0,0),(2,1)\}$.  Hence, if $\cald\sim k(\d)$ then $\cald^-\cap \calr^-$ consists of a point in $\calr$ and another in $\partial\calr$.  By Theorem \ref{geothm30}, $\cald$ is purely unbalanced.

Conversely, let $\cald$ be purely unbalanced.  By Theorem \ref{geothm30},
\[
\cald^-\cap \calr^-=\{(2\al,\al^2), (2\tau,\tau^2)\}
\]
 for some $\al\in\d$ and $\tau\in\t$.  Choose $m\in\aut\d$ such that $m(\al)=0$ and $m(\tau)=1$.  Then $\tilde m(\cald^-)\cap\calr^- =\{(0,0),(2,1)\}$.  By Lemma \ref{0021}, $\tilde m(\cald)=k_r(\d)$ for some $r\in (0,1)$, and then $\cald = \tilde m^{-1}(k_r(\d)) \sim k_r(\d)$.

To prove pairwise inequivalence of the geodesics $k_r(\d), \ 0<r<1$, suppose that $k_r(\d)\sim k_{r'}(\d)$ for some pair $r,r'$.  Then there exists $m_1\in\aut\d$ such that $\tilde m_1(k_r(\d))=k_{r'}(\d)$.  According to \cite[Corollary 4.6.4]{Koba}, in a general domain $\Omega$, if $f, g$ are complex C-geodesics in $\Omega$ having the same range then there is an automorphism $\al$ of $\Omega$ such that $g=f\circ \al$.  Hence there exists $m_2\in\aut\d$ such that
\[
k_{r'}=\tilde m_1\circ k_r\circ m_2.
\]
Since $\tilde m_1^{-1}$ preserves the royal variety, $m_2$ maps the royal nodes of $k_{r'}$ to the royal nodes of $k_r$.  These royal nodes are $0$ and $1$ for both functions, and so $m_2$ leaves both $0$ and $1$ fixed.  Hence $m_2=\id{\d}$ and $k_{r'}=\tilde m_1\circ k_r$.  Similarly, $\tilde m_1$ maps the royal points of $k_{r'}$ to the royal points of $k_r$, which implies that $m_1$ also fixes the points $0$ and $1$, and so $m_1=\id{\d}$.  Thus $k_r=k_{r'}$, which easily implies that $r=r'$.\\

(2)  We first show that $h_r(\d)$ is an exceptional geodesic for any $r\in\r\setminus\{0\}$. Let 
\[
\tau = \frac{r-i}{r+i}\quad \mbox{ and }\quad  \al= \half(1-\bar\tau).
\]
Then $\tau\in \t\setminus \{-1,1\}$ and $\al\in\d\setminus \{0\}$, and we have $m_r=\tau B_\al \in\aut\d$. 

Moreover, since $|1-\tau|=2|\al|$, it follows that the equation $m_r(z)=z$ has a unique root in $\t$, which is to say that $h_r$ has a unique royal point in $\partial\calr$, and no royal points in $\calr$.  By Theorem \ref{extprop30}, $h_r(\d)$ is an exceptional geodesic in $G$.

Conversely, suppose that $\cald$ is an exceptional geodesic in $G$.  By Theorem \ref{geothm30}, $\cald^-\cap \calr^-$ consists of a single point in $\partial\calr$, which we may take (replacing $\cald$ by an equivalent geodesic) to be $(2,1)$.  Let $k$ be a complex C-geodesic in $G$ such that $\cald=k(\d)$.  Since $\cald\cap\calr=\emptyset$, Proposition \ref{prop3.10} implies that we can assume that $k(z)=(z+m(z), zm(z))$ for some $m\in \aut\d$, and since the royal points of this $k$ are the points $(2z,z^2)$ where $z\in\d^-$ satisfies $m(z)=z$, we infer that $m$ has the unique fixed point $1$ in $\d^-$.  Any such automorphism has the form $m=\tau B_\al$ for some  $\al\in\d\setminus\{0\}$ and $\tau\in\t\setminus\{-1,1\}$ such that $1-\tau=2\bar\al$.

Let 
\[
r=i\frac{1+\tau}{1-\tau}.
\]
Then $r\in\r\setminus\{0\}$ and
\[
\al=\half (1-\bar\tau)= -\frac{i}{r-i}.
\]
One may then verify that $m=m_r$, 
and so $k=h_r$ for some $r\in\r\setminus\{0\}$.  Now
\[
m_r^{-1}(z)=\frac{(r+i)z-i}{r-i+iz}= m_{-r}(z)
\]
and
\[
h_r\circ m^{-1}(z)= (z+m_r^{-1}(z),zm_r^{-1}(z))=h_{-r}(z).
\]
Thus, for any $r\in\r\setminus\{0\}, \, h_r(\d)=h_{-r}(\d)$.  Hence $\cald\sim h_r(\d)$ for some $r>0$.

Suppose that $h_r(\d)\sim h_{r'}(\d)$ for some $r,r'>0$.  We shall show that $r=r'$.  The equivalence implies that there exist $a_1,a_2\in\aut\d$ such that
\be\label{circcirc}
h_{r'}= \tilde a_2\circ h_r\circ a_1.
\ee
Since $a_1, \tilde a_2$ fix the common royal nodes and points respectively of $h_r, h_{r'}$, necessarily $a_1(1)=1=a_2(1)$.

Equation \eqref{circcirc} can be written
\begin{align*}
(z+m_{r'}(z), zm_{r'}(z)) &= \tilde a_2\circ (a_1(z)+m_r\circ a_1(z), a_1(z)m_r\circ a_1(z)) \\
	&= \left(a_2\circ a_1(z)+a_2\circ m_r\circ a_1(z),  \left(a_2\circ a_1(z)\right)\left(a_2\circ m_r\circ a_1(z)\right) \right)
\end{align*}
for all $z\in\d$.  It follows that, for every $z$, either
\[
a_2\circ a_1(z)=z \quad \mbox{ and }\quad a_2\circ m_r\circ a_1(z) = m_{r'}(z)
\]
or
\[
a_2\circ a_1(z) =m_{r'}(z)\quad \mbox{ and }\quad a_2\circ m_r\circ a_1(z) =z.
\]
Therefore either 
\be\label{alt1}
a_2\circ a_1=\id{\d}\quad \mbox{ and }\quad  a_2\circ m_r\circ a_1 = m_{r'}
\ee
or
\be\label{alt2}
a_2\circ a_1 =m_{r'}\quad \mbox{ and }\quad a_2\circ m_r\circ a_1=\id{\d}.
\ee
If the alternative \eqref{alt1} holds, then $a_2=a_1^{-1}$ and 
\be\label{conjug}
m_r\circ a_1\circ m_{-r'}=a_1.
\ee
Let $a_1=cB_\ga$ for some $c\in\t$ and $\ga\in\d$.  Since $a_1(1)=1$,
\[
c=\frac{1-\bar\ga}{1-\ga}.
\]
In view of equation \eqref{conjug} we have $m_r\circ a_1\circ m_{-r'}(\ga) =0$, and therefore
\[
a_1\circ m_{-r'}(\ga)= -\frac{i}{r-i}.
\]
On writing out this equation in detail we obtain
\[
\frac {1-\bar\ga}{1-\ga} \frac{(r'+i-i\ga)\ga-i-\ga(r'-i)}{(-\bar\ga(r'+i)+i)\ga+\bar\ga i+r'-i}=-\frac{i}{r-i}.
\]
This equation simplifies to $r=r'$.

The remaining possibility is that the alternative \eqref{alt2} holds.  In this case we may write
\[
m_{-r'}\circ a_2\circ a_1=\id{\d}, \qquad m_{-r'}\circ a_2\circ m_r\circ a_1=m_{-r'}.
\]
Let $b=m_{-r'}\circ a_2$.  Then the last equation becomes
\[
b\circ a_1=\id{\d}, \qquad b\circ m_r\circ a_1= m_{-r'}.
\]
Since $m_{-r'}$ and $a_2$ both fix $1$, so does the automorphism $b$, and we may therefore apply the first alternative, with $a_2$ replaced by $b$, to deduce that $r=-r'$  Since $r,r'>0$, this cannot happen.  Thus the first alternative \eqref{alt1} holds and $r=r'$.\\

(3) Suppose that $\cald\sim g_r(\d)$ for some $r\in(0,1)$.  Since $B_r$ has the fixed points $1$ and $-1$, $g_r$ has the royal nodes $1,-1$ and $g(\d)^-$ meets $\calr^-$ in the royal points $(\pm 2,1)$.  By Theorem \ref{geothm30}, $g(\d)$ and consequently $\cald$ are purely balanced geodesics.

Conversely, suppose that $\cald$ is purely balanced.  By Theorem \ref{geothm30}, $\cald^-\cap\calr^-$ consists of a pair of points in $\partial\calr$, and we may take these points to be $(2,1)$ and $(-2,1)$.  Choose a complex C-geodesic $k$ such that $\cald=k(\d)$.  Since $\cald\cap\calr=\emptyset$, by Proposition \ref{prop3.10} we can assume that $k(z)=(z+m(z), zm(z))$ for some $m\in \aut\d$.  The fixed points of $m$ are the royal nodes of $k$, which are $\pm 1$.  It follows that $m=B_r$ and hence $k=g_r$ for some nonzero $r\in (-1,1)$.  Since
\[
g_r\circ B_{-r}=g_{-r},
\]
we conclude that $\cald\sim g_r(\d)$ for some $r$ in $(0,1)$.

Suppose that $g_r(\d) \sim g_{r'}(\d)$.  Then there exist $a_1,a_2\in\aut\d$ such that
\be\label{grr'}
g_{r'}=\tilde a_2\circ g_r\circ a_1,
\ee
and moreover $a_1,a_2$ map the set $\{-1,1\}$ of royal nodes of $g_r$ and $g_{r'}$ to itself.
Hence  either $a_1$ or $-a_1$ fixes both $1$ and $-1$, and so $a_1=\pm B_\ga$ for some $\ga\in (-1,1)$.  Then $a_1(\ga)=0$.

It follows from equation \eqref{grr'} that either
\[
a_2\circ a_1=\id{\d} \quad \mbox{ and } \quad a_2\circ B_r \circ a_1 = B_{r'}
\]
or
\be\label{secalt}
a_2\circ a_1= B_{r'}  \quad \mbox{ and } \quad a_2\circ B_{r}\circ a_1=\id{\d}.
\ee
Consider the first alternative.  Then $a_2=a_1^{-1}$ and $B_{-r}\circ a_1= a_1\circ B_{-r'}$.
Since
\[
B_{-r}\circ a_1(\ga)=B_{-r}(0)= r,
\]
we have
\[
\pm B_\ga\circ B_{-r'}(\ga) = r.
\]
Since $r'$ and $\ga$ are real, $B_{-r'}(\ga)=B_{-\ga}(r')$, and so
\[
r=\pm B_\ga\circ B_{-\ga}(r') = \pm r'.
\]
Since $r,r' >0$, we have $r=r'$.

If the second alternative \eqref{secalt} holds, then
\[
(B_{-r'}\circ a_2)\circ a_1= \id{\d} \quad \mbox{ and }\quad (B_{-r'}\circ a_2)\circ B_r \circ a_1=B_{-r'},
\]
and so, by the first alternative, $r=-r'$.  Again this contradicts $r,r'>0$, and so we deduce that $r=r'$.\\

(4) and (5) follow from Theorem \ref{autosG}. 
\end{proof}
\begin{remark}
By Definition \ref{extdef10}, for a \nd datum $\de$, there is a unique solution of $\Car(\de)$ among the $\Phi_\omega$ if and only if $\de$ is  either purely unbalanced or exceptional.  Hence the complex C-geodesics $k_r$ and $h_r$ described in (1) and (2) of Theorem \ref{formgeos} have unique holomorphic left inverses (modulo $\aut\d$) of the form $\Phi_\omega$.  In fact more is true:  Kosi\'nski and Zwonek \cite[Theorem 5.3]{kz2013} show that the $k_r$ and $h_r$  have unique holomorphic left inverses (modulo $\aut\d$) of any form.
\end{remark}

\chapter{Balanced geodesics in $G$} \label{BalGeo}
In studying sets $V$ with the \npep \ in $G$ we shall exploit intersections of $V$ with balanced and flat geodesics.
Recall that a datum is defined to be balanced if it is exceptional, purely balanced or royal.   We shall say that a complex geodesic is {\em balanced} if it is contacted by a \nd balanced datum.

One characterization of balanced datums is given in Proposition \ref{prop3.9} above; 
another is afforded by Proposition \ref{prop3.12} below.
The following is a simple observation. 
\begin{lemma}\label{m1=m2} Let $\delta$ be a \nd datum in $G$ and let $k$ be a solution to $\Kob(\delta)$ such that
\[
k = (m_1+m_2,m_1m_2)
\]
where $m_1, m_2\in\aut \d$.
Then the royal nodes of $k$ are the solutions of $m_1=m_2$ in $\d^-$ and the roots of this equation in $\c$ are symmetric with respect to $\t$.
\end{lemma}

\begin{proposition}\label{prop3.12}
If $\delta$ is a \nd datum in $G$ and $k$ is a solution to $\Kob(\delta)$, then $\delta$ is balanced if and only if $k$ factors through the bidisc, that is, there exists $f\in \d^2(\d)$ such that $k = \pi \circ f$.
 Furthermore, if $f=(m_1,m_2)$ and $k=\pi\circ f$  then $m_1,m_2\in\aut\d$ and exactly one of the following statements holds:
\begin{enumerate}[\rm (1)]
\item $m_1=m_2$ and $\de$ is royal;
\item  the equation $m_1(z)=m_2(z)$ has exactly two solutions in $\t$ and $\de$ is purely balanced;
\item the equation $m_1(z)=m_2(z)$ has exactly one solution in $\t$ (which is of multiplicity two) and $\de$ is exceptional.
\end{enumerate}
\end{proposition}
\begin{proof}
Fix a datum $\delta$ in $G$ and a solution $k$ to $\Kob(\delta)$.   Assume that $\de$ is  balanced. By Theorem \ref{geothm30} either $\cald_\delta \cap \calr = \emptyset$ or $\cald_\delta=\calr$. If $\cald_\delta=\calr$, then there exists $m\in\aut\d$ such that $k(z) = (2m(z),m(z)^2) = (\pi \circ f) (z)$ where $f:\d\to \d^2$ is defined by $f(z)=(m(z),m(z))$. If $\cald_\delta \cap \calr = \emptyset$ then, by Proposition \ref{prop3.10}, there exist $m_1,m_2\in\aut\d$ such that $k=\pi\circ f$ where $f$ is defined by $f(z)=(m_1(z),m_2(z))$.  Thus, in either case, $k$ factors through the bidisc.

To prove the converse, suppose that $f\in \d^2(\d)$ and $k = \pi \circ f$ but that $\de$ is not balanced, so that $\delta$ is either flat or purely unbalanced. Since $k$ is $\Ga$-inner, the components of $f$ are inner.

If $\delta$ is flat, then since $C(s) = s^2$ solves $\Car(\delta)$, $C\circ k \in\aut\d$.  This implies that $f^1f^2$ has degree $1$. Since $f^1$ and $f^2$ are inner, it follows that either $f^1$ or $f^2$ is constant. But then $k=\pi \circ f$ maps $\d$ into $\partial G$, contradicting the fact that $k$ solves $\Kob(\delta)$.

If $\delta$ is purely unbalanced then $\deg f^1f^2 = \deg k^2 = 2$. Hence, since the argument in the previous paragraph implies that neither $f^1$ nor $f^2$ is constant, $\deg f^1=\deg f^2 =1$. Thus $m_1,m_2\in\aut\d$. By Theorem \ref{geothm30}, $k$  has  one royal node in $\t$ and one in $\d$. By Lemma \ref{m1=m2}, the quadratic equation $m_1=m_2$  has a root in $\d$ and another in $\t$, and its roots are symmetric with respect to $\t$.  The equation thus has at least 3 distinct roots, which implies that $m_1$ and $m_2$ coincide, and so that $\de$ is royal.  This is a contradiction.  Hence, if $k$ factors through the bidisc then $\de$ is balanced.

Suppose now that $\de$ is balanced and $k=\pi\circ (m_1,m_2)$, where $m_1,m_2\in\d(\d)$.  Since $k$ is $\Ga$-inner, $m_1$ and $m_2$ are inner, and degree considerations show that $m_1,m_2\in\aut\d$.  Thus $k$ is a rational $\Ga$-inner function of degree $2$ and is a complex C-geodesic of $G$.  By Proposition \ref{georoyal}, $k$ has a royal node in $\t$.

By Lemma \ref{m1=m2}, the  royal nodes of $k$ are the solutions of $m_1=m_2$ in $\d^-$ and the roots of this equation in $\c$ are symmetric with respect to $\t$.
There are exactly three possible cases for the roots of the quadratic equation $m_1=m_2$, given that there is a root in $\t$.

(1) There are infinitely many roots.  Then $m_1$ coincides with $m_2$ and $k(\d) = \royal$.
By Theorem \ref{geothm30},  $\de$ is royal.

(2) The equation $m_1=m_2$ has exactly two solutions in $\t$.
Then $k$ has exactly two royal nodes in $\t$.
By Theorem \ref{geothm30},  $\de$ is purely balanced.

(3) The equation $m_1=m_2$ has exactly one solution in $\t$, which is of multiplicity two. Then
$k$ has a unique royal node, which lies in $\partial\calr$.
By Theorem \ref{geothm30},  $\de$ is exceptional.
\end{proof}

 The term `balanced' in Definition \ref{extdef10} was suggested by a similar notion for the bidisc. 
\begin{definition}\label{defbalD2}
A  {\em balanced disc} in $\d^2$ is a set of the form $\{(w,m(w)): w\in\d\}$ for some $m\in\aut\d$.
\end{definition}
\index{balanced disc}
 According to \cite[Definition 1.17]{agmc_vn}, a \nd discrete datum $\lambda=(\lambda_1,\lambda_2)$ in $\d^2$ is said to be \emph{balanced} if the hyperbolic distance between the first co-ordinates of $\lambda_1$ and $\lambda_2$ is equal to the hyperbolic distance between the second co-ordinates of $\lambda_1$ and $\lambda_2$; otherwise $\la$ is {\em unbalanced}. 
It was noted in \cite{agmc_vn} that, for a \nd discrete datum $\la$ in $\d^2$, the solution to $\Kob(\la)$ is essentially unique if and only if $\la $ is balanced, and in that case the unique geodesic of $\d^2$ that contacts $\la$ is a  {\em balanced disc}.  Any geodesic in $\d^2$ of the form $(m_1,m_2)(\d)$, where $m_1,m_2 \in\aut\d$ is a balanced disc.  The following lifting property of certain geodesics in $G$ will be exploited in Section \ref{Gnpep}.

\begin{proposition}\label{prop3.13} 
Let $\delta$ be a \nd datum in $G$ that is either balanced or flat.  There exists a balanced disc $D$ in $\d^2$ such that  $\cald_\de=\pi(D)$.
\end{proposition}

\begin{proof}
Suppose that $\de$ is a balanced datum.  By Proposition \ref{prop3.12}, $\cald_\de$ is of the form $\pi(D)$ where $D=(m_1,m_2)(\d)$ for some $m_1,m_2 \in\aut\d$.  This $D$ is a balanced disc in $\d^2$.

Consider the case of a flat datum $\de$.
Note that
\[
\{0\}\times \d = \calf_0
\]
is a flat geodesic.  If $D= \{(z,-z): z\in\d\}$ then $D$ is a balanced disc in $\d^2$, and 
\[
\pi(D)=\{(0,-z^2): z\in\d\} = \calf_0.
\]
Now consider the general flat geodesic
$\calf_\beta$, where $\beta\in\d$.
According to Theorem \ref{autosG}, there exists $m\in\aut\d$ such that $\tilde m(\calf_0)=\calf_\beta$, where $\tilde m$ is the automorphism of $G$ given by
\[
\tilde m(z+w,zw)=(m(z)+m(w),m(z)m(w)).
\]
Let
\[
D_m=\{(m(z),m(-z)): z\in\d\}.
\]
Then $D_m$ is a balanced disc in $\d^2$, and
\[
\pi(D_m)=\{ (m(z)+m(-z),m(z)m(-z)): z\in\d\} = \tilde m(\pi(D))= \tilde m(\calf_0)=\calf_\beta.
\]
\end{proof}

Note that the image under $\pi$ of a balanced datum in $\d^2$ need not be a balanced datum in $G$ -- it can be flat.

\chapter[Geodesics and sets $V$ with the extension property]{Geodesics and sets $V$ with the norm-preserving extension property in $G$}\label{Gnpep}

We say that a set $A\subseteq \c^2$ is an \emph{algebraic set} if there exists a set $S$ of polynomials in two variables such that
\[
A = \set{\lambda \in \c^2}{ p(\lambda)=0 \mbox{ for all } p\in S}.
\]
\index{set!algebraic}
We say that a set $V\subseteq \c^2$ is an \emph{algebraic set in $G$} if there exists an algebraic set $A$ in $\c^2$ such that $V=A\cap G$.  If $V$ is an algebraic set in $G$ and $f$ is a complex valued function defined on $V$, then we say that \emph{$f$ is holomorphic on $V$} if $f$ can be \emph{locally} extended to a holomorphic function on $\c^2$, that is,  if for each $\lambda \in V$ there exists a neighborhood $U$ of $\lambda$ in $\c^2$ and a holomorphic function $F$ defined on $U$ such that $F(\mu) =f(\mu)$ for all $\mu \in V \cap U$.    

We recall Definition \ref{npep} from the introduction: a subset $V$ of $G$ has the norm-preserving extension property if every bounded holomorphic function $f$ on $V$ has a holomorphic extension to $G$ of the same supremum norm.  An immediate consequence of the definition is the following.
\begin{proposition}\label{connect}
Any subset of $G$ that has the \npep \ is connected.
\end{proposition}
\begin{proof}
If not, choose two distinct components $V_1$ and $V_2$ of $V$.  The norm-preserving extension property of $V$ ensures the existence of a function $F\in\c(G)$ such that $\sup_G |F|=1$,  $F=0$ on $V_1$ and $F=1$ on $V_2$, contrary to the Maximum Principle.
\end{proof}

The following is a slight strengthening of an observation in the introduction.
 \begin{proposition}\label{extprop10a}
 If $R$ is a retract in $G$ then $R$ is an algebraic set in $G$ having the norm-preserving extension property.
 \end{proposition}
\begin{proof}
By Theorem \ref{retthm10} $R$ is a geodesic.  Hence, by Proposition \ref{retprop10}, $R$ is an algebraic set.
If $\rho$ is a retraction of $G$ with range $R$ and $f$ is a bounded holomorphic function on $V$ then $f\circ\rho$ is a holomorphic extension of $f$ to $G$ with the same supremum norm as $f$.
\end{proof}

We initially hoped to prove the converse of Proposition \ref{extprop10a} for $G$.  After all, in the bidisc it {\em is} true that sets with the \npep \  are retracts \cite{agmc_vn}. 
However, it transpires that there are algebraic sets with the \npep \ in $G$ which are not retracts; we call them  {\em anomalous} sets.  They are described in Lemma \ref{extlem40} below.  
\index{set!anomalous}

This chapter investigates some of the geometric consequences of the norm-preserving extension property.  The argument will eventually culminate in Lemma \ref{extlem41}, to the effect that if $V$ is an algebraic set in $G$ that has the norm-preserving extension property and if $V$ is not anomalous then $V$ is a properly embedded finitely connected Riemann surface.

 \section{$V$ and $\Car (\delta)$}\label{V&card}
In this section we prove a technical result about solutions of the Carath\'eodory problem for datums that contact a set $V$ having the \npep.
 \begin{lemma}\label{extlem8}
Let $w_0\in \d$ and let $D=\set{z\in\d}{|z-w_0|>r}$ for some  $r$ such that $0<r<1-|w_0|$.
 For any \nd datum $\de$ in $D$ there exists a holomorphic map $\beta:D\to\d$ such that
\be\label{gtth}
|\beta(\de)|_\d > |\de|.
\ee
 \end{lemma}
We use the notation $|\cdot|_\d$ to emphasize that $\beta(\de)$ is being regarded as a datum in $\d$.
\begin{proof}
We can assume that $w_0=0$.   Consider first the case of a \nd discrete datum $\de=(z_1,z_2)$.  We shall choose $t>0$ and $g\in\c(D)$ such that $g(z_1)=g(z_2)=0$ and the function
$z-tg(z)$
maps $D$ into a disc of radius $R<1$; then the function
\[
\beta(z)=R^{-1}(z-tg(z))
\]
will have the desired properties.

Let
\[
\eps= \frac{1}{2 (1+|z_1|)(1+|z_2|)}.
\]
Consider the  Laurent expansion of the rational function $z/((z-z_1)(z-z_2))$, which is analytic in an annulus containing $\t$.  A suitable partial sum of the expansion is a rational function $h \in \c(D)$ such that
\be\label{chooseh}
\left| h(z) - \frac{z}{(z-z_1)(z-z_2)} \right| <\eps
\ee
for all $z\in \t$, and the only poles of $h$ are at $0$ and $\infty$.  Let
\be\label{gotg}
g(z)=(z-z_1)(z-z_2) h(z) \qquad\mbox{ for } z\in D^-.
\ee
$g$ is analytic in $D$ and $g(z_1)=g(z_2)=0$.  Multiply the inequality \eqref{chooseh} by $|(z-z_1)(z-z_2)|$ to deduce that, for $z\in\t$,
\begin{align*}
|\bar z g(z) - 1| &< \eps \sup_{|z|=1}\left| (z-z_1)(z-z_2) \right| \\
	&\leq \eps (1+|z_1|)(1+|z_2|) \\
	&=\half.
\end{align*}
Hence, for $z\in\t$,
\[
\re (\bar z g(z)) > \half.
\]
Consequently, for $z\in\t$,
\begin{align}\label{outbound}
|z-tg(z)|^2 &= 1-2t\re (\bar zg(z)) +t^2 |g(z)|^2 \notag \\
	&<  1- t +t^2 \sup_\t |g|^2.
\end{align}
Let $M= \sup_{|z|=r} |g(z)|$.  For $|z|=r$ we have 
\[
|z-tg(z)| \leq r+t M.
\]
Hence we can choose $R\in(0,1)$ and $t$ so small that 
 $|z-tg(z)| < R< 1$ for all $z$ in both the inner and outer bounding circles of $D$, and so, by the Maximum Principle, for all $z$ in $D$.
Consideration of the points $z_1,z_2$ shows that  $R>\max\{|z_1|,|z_2|\}$, and it follows that
\[
|\beta(\de)|_\d = |(\beta(z_1),\beta(z_2))|= \left|\left(\frac{z_1}{R},\frac{z_2}{R}\right)\right| > |(z_1,z_2)|= |\de|.
\]

Now consider the \nd infinitesimal datum $(z_1;1)$ in $D$.  The proof is similar: construct $t>0$ and $g\in\c(\d)$ such that $g(z_1)=g'(z_1)=0$ and the function $z-tg(z)$ maps $D$ into a disc of radius $R<1$.  The rest of the construction is as in the discrete case, with $z_2=z_1$.
\end{proof}
 \begin{lemma}\label{extlem10}
Let $V \subseteq G$ have the norm-preserving extension property and let $\delta$ be a \nd datum in $G$ that contacts $V$.  If $C$ is a solution to $\Car(\delta)$ then $C(V)$ is dense in $\d$.
 \end{lemma}
 \begin{proof}
 Let $C$ be a solution to $\Car(\delta)$ and let $\zeta= C(\delta)$.   If $C(V)$ is not dense in $\d$ then there exist $w_0\in \d$ and $r$ satisfying $0<r<1-|w_0|$ such that if $D$ is defined to be $\set{z\in\d}{|z-w_0|>r}$, then $C(V) \subseteq D$ and $\zeta$ is a \nd datum in $D$. By Lemma  \ref{extlem8} there exists a holomorphic map $\beta:D\to\d$ such that $|\beta(\zeta)| > |\zeta|$.   The map $f_0=\beta \circ (C|V)$ is well defined and holomorphic from $V$ to $\d$. Consequently, since $V$ has the norm-preserving extension property, there exists a holomorphic map $F:G \to \d^-$ satisfying $F|V=f_0$. By the Open Mapping Theorem, $F(G) \subset \d$,  and so $F$ is a candidate for $\Car(\delta)$. But $C$ is a solution to $\Car (\delta)$ and
 \begin{align*}
|F(\de)| &=|f_0(\de)| \qquad \mbox{ by Remark \ref{F1=F2}}\\
&=|(\beta \circ C) (\de)|\\
&=|(\beta(C(\de))| \qquad \mbox{ by equation  \eqref{chainrule}}\\
&= |\beta(\zeta)| \\
&>|\zeta|\\
&=|C(\de)|\\
&= \car{\de}.
\end{align*}
This contradiction shows that $C(V)$ is dense in $\d$.
 \end{proof}

 \section{$V$ and balanced datums}\label{V&bal}
Here is another technical result concerning datums that contact a set with the \npep.
 \begin{lemma}\label{extlem20}
 Let $V$ be an algebraic set in  $G$ that has the norm-preserving extension property. Let $\delta$ be a \nd datum in $G$ such that there exist distinct points $\omega_1, \omega_2 \in\t$ for which $\Phi_{\omega_1}, \Phi_{\omega_2}$ solve $\Car(\de)$ (so that $\de$ is either flat, royal or purely balanced).  If $\de$ contacts $V$ then $\cald_\delta \subseteq V$. 
\end{lemma}
\begin{proof} Pick $\omega_1,\, \omega_2 \in \t$ where $\omega_1\ne \omega_2$ and $\Phi_{\omega_1}$ and $\Phi_{\omega_2}$ both solve $\Car (\delta)$. Fix any solution $k$ to $\Kob(\delta)$, choose $m_1, \, m_2\in\aut\d$ such that
\[
m_1 \circ \Phi_{\omega_1} \circ k = \id{\d}\qquad  \text{ and }\qquad  m_2 \circ  \Phi_{\omega_2} \circ k = \id{\d},
\]
and then let $C_1$ and $C_2$ be defined by
\[
C_1=m_1 \circ \Phi_{\omega_1}\qquad \text{ and }\qquad C_2=m_2 \circ \Phi_{\omega_2};
\]
then
\[
C_1\circ k = \id{\d} = C_2\circ k.
\]
Define $C$ by 
\[
C = \half C_1 + \half C_2.
\]
\begin{claim}\label{norclaim10}
If $\tau \in \t$,  $\tau \not= C_2(2\bar\omega_1,\bar\omega_1^2)$, and
$\tau \not= C_1(2\bar\omega_2,\bar\omega_2^2)$, then there exists $s_0 \in \cald_\delta^- \cap V^-$ such that $s_0\not=(2\bar\omega_1,\bar\omega_1^2)$, $s_0\not=(2\bar\omega_2,\bar\omega_2^2)$, and 
\[
C_1(s_0)=C_2(s_0)=\tau.
\]
\end{claim}

 To prove this claim, fix $\tau \in \t \setminus \{C_2(2\bar\omega_1,\bar\omega_1^2),
C_1(2\bar\omega_2,\bar\omega_2^2)\}$.  By construction, $C$ solves $\Car(\de)$.
Therefore, by Lemma \ref{extlem10} there exists a sequence of points $\{s_n\}$ in $V$ such that $C(s_n) \to \tau$.     Passing to a subsequence if necessary we may further assume that $s_n\to s_0 \in \partial G$. Since $C(s_n) \to \tau$, it follows that both $C_1(s_n) \to \tau$ and $C_2(s_n) \to \tau$. But then $C_1(s_n)-C_2(s_n) \to 0$. Consequently, by Lemma \ref{retlem10}, it follows that $P_\delta(s_n) \to 0$. Since $P_\delta$ is continuous, it follows that $P_\delta(s_0)=0$, that is, that $s_0 \in \cald_\delta^-$. Since it is also the case that $s_0 \in V^-$, it follows that $s_0 \in \cald_\delta^- \cap V^-$.

To see that $s_0\not=(2\bar\omega_1,\bar\omega_1^2)$, assume to the contrary that $s_0=(2\bar\omega_1,\bar\omega_1^2)$. Since $C_2=m_2\circ \Phi_{\omega_2}$ is continuous on a neighborhood of $(2\bar\omega_1,\bar\omega_1^2)$, it follows that
\[
C_2(2\bar\omega_1,\bar\omega_1^2)=C_2(s_0)=\lim_{n\to \infty}C_2(s_n)=\tau,
\]
contradicting the assumption that $\tau \not=C_2(2\bar\omega_1,\bar\omega_1^2)$. That $s_0\not=(2\bar\omega_2,\bar\omega_2^2)$ follows in similar fashion. This completes the proof of Claim \ref{norclaim10}.

Now fix $\tau \in \t$ with $\tau \not= C_2(2\bar\omega_1,\bar\omega_1^2)$ and
$\tau \neq C_1(2\bar\omega_2,\bar\omega_2^2)$. Let $s_0$ be as in the claim. Since $s_0 \in \cald_\delta^-$, there exists $\eta \in \t$ such that $s_0=k(\eta)$. But then
\[
\tau = C_1(s_0) = C_1(k(\eta)) = (C_1 \circ k) (\eta) = \eta,
\]
so that
\[
k(\tau) = k(\eta) =s_0 \in V^-.
\]
This proves that
\[
k(\t \setminus \{C_2(2\bar\omega_1,\bar\omega_1^2),
C_1(2\bar\omega_2,\bar\omega_2^2)\}) \subseteq V^-.
\]
By continuity, $k(\t) \subseteq V^-$.  Since $V$ is algebraic there is a set $S$ of polynomials such that
\[
V=G\cap \{s: p(s)=0 \mbox{ for all } p\in S\}.
\]
Hence $p\circ k=0$ on $\t$ for all $p\in S$, and by the Maximum Principle, $p\circ k=0$ on $\d$ for all $p\in S$.   Thus 
 $\cald_\delta = k(\d) \subseteq V$, as was to be proved.
\end{proof}

\begin{remark} \rm
The above proof remains valid if $V$, rather than being assumed to be an algebraic set, is only assumed to be relatively polynomially convex.
\end{remark}

\section{$V$ and flat or royal datums}\label{V&flatorroy}
In this section we show that if a set $V$ having the \npep \ contacts either a flat datum or a royal datum then $V$ has one of three concrete forms.
To do this we first prove a result about the closest point in $G$ to a flat or royal geodesic.
\begin{lemma}\label{lem5.11}
Let $\delta$ be a \nd datum in $G$ such that $\Phi_\omega$ solves $\Car(\de)$ for all $\omega\in\t$ (so that $\de$ is flat or royal).  

(i) If $t \in G \setminus \cald_\delta$ then
\[
\inf_{s \in \cald_\delta} \car{(s,t)}
\]
is attained at some point $s_0\in \cald_\delta$. 
Furthermore,

(ii) there are at least two values of $\omega\in\t$ such that $\Phi_\omega$ solves $\Car((s_0,t))$ (so that the datum $(s_0,t)$ is either flat, royal or purely balanced);

(iii) if  $\cald_\de$ is flat and $t \in \calr$ then $s_0 \in \calr$;

(iv) if  $\cald_\de$ is flat and $t \notin \calr$ then the datum 
$(s_0,t)$ is purely balanced. 
\end{lemma}
\begin{proof}

(i)    Let $k$ solve $\Kob(\de)$ and let $\omega_1 \in \t$ be such that $\Phi_{\omega_1}\circ k=\id{\d}$.  The function $\ph$ on $\cald_\de$ defined by
\[
\ph(s)=\car{(s,t)} 
\]
is continuous and $\ran \ph \subseteq (0,1)$.  Note that
\begin{align}
\ph(s)&=\car{(s,t)} = \sup_{\omega \in \t} \left|(\Phi_{\omega}(s), \Phi_{\omega}(t)) \right| \notag\\
	& \ge \left|(\Phi_{\omega_1}(s), \Phi_{\omega_1}(t) )\right|.  \label{phi_ge_dzw}
\end{align}
Let $w_1= \Phi_{\omega_1}(t)$ and let $s = k(z)$,  $ z \in \d$.  We have
\begin{align}
\ph(s) & \ge \left|(\Phi_{\omega_1} \circ k(z), w_1)\right| \notag\\
	& = \left|(z, w_1 )\right| .  \label{phi_ge_dzw_2}
\end{align}
Therefore
\begin{align}
1-\ph(s)^2 & \le 1 - \left|(z, w_1 )\right|^2 \notag\\
 & = \frac{(1 -|z|^2)(1-|w_1|^2)}{|1-\bar{w_1} z|^2}\notag\\
 & \le  (1 -|z|^2) \frac{1+|w_1|}{1- |w_1|} . \label{phi_ge_dzw_3}
\end{align}
It is clear  that $\ph(s) \to 1$ as $s \to \partial G$ in $\cald_\de$.  
Hence the function $\tilde{\ph}$ defined by
\[
\tilde{\ph}(s)=\left\{\begin{array}{lcl}\ph(s) & \text{if} & s \in  \cald_\de\\
1 &\text{if} & s \in   \partial G\cap\cald_\de^-\\
\end{array}\right.
\]
is continuous  on the closure $\overline{\cald_\de}$ of $\cald_\de$ in $G$, and $\ran \tilde\ph \subseteq (0,1]$.  Note that $k( \d^-) = \overline{\cald_\de}$, and so  $\overline{\cald_\de}$ is compact.
Hence $\tilde\ph$ attains its infimum at some point $s_0\in \overline{\cald_\de}$.  It is clear that $s_0$ is not in $k(\t)$, since $\tilde\ph(\d) \subseteq (0,1)$  and $|\tilde\ph(s)| =1$ for all $s \in k(\t)$. Thus $s_0\in \cald_\de$.

(ii) Suppose that there exists a unique $\omega_0\in\t$ such that
\be\label{a}
|\Phi_{\omega_0}((s_0,t))| = \sup_{\omega \in \t} |\Phi_\omega((s_0,t))|=   \car{(s_0,t)}.
\ee
Let $k$ be a solution to $\Kob(\delta)$ 
and let $s_0 =k(z_0)$. 

Since $\delta$ is assumed to be flat or royal, for each $\omega \in \t$, $\Phi_{\omega}\circ k$ is in $\aut\d$ and therefore has no critical points.  For any direction $w\in\c$,
\begin{align}
 D_w\  |\Phi_{\omega_0}((k(z),t))|^2 \big|_{z=z_0}&=D_w\  |(\Phi_{\omega_0}\circ k(z),\Phi_{\omega_0}(t))|^2\ \big|_{z=z_0} \notag\\
	&=\frac{d}{dy}|B\circ\Phi_{\omega_0}\circ k (z_0+y w)|^2\Big|_{y=0}\notag\\
	&=2 \re \{ w (B\circ\Phi_{\omega_0}\circ k )'(z_0) \overline{B\circ\Phi_{\omega_0}\circ k (z_0)}\}, \label{Dw}
\end{align}
where
\[
B(z)=\frac{z- \Phi_{\omega_0}(t)}{1-\overline{\Phi_{\omega_0}(t)} z}.
\]
Now
\[
|B\circ\Phi_{\omega_0}\circ k(z_0)|= | B\circ \Phi_{\omega_0}(s_0)| = \car{(s_0,t)} \neq 0.
\]
Since, further, $B'$ and $(\Phi_{\omega_0}\circ k)'$ are never zero on $\d$, 
\[
(B\circ\Phi_{\omega_0}\circ k )'(z_0) \overline{B\circ\Phi_{\omega_0}\circ k (z_0)} \neq 0.
\]
By equation \eqref{Dw},
there is some direction $w_0$ such that  $ D_{w_0}|\Phi_{\omega_0}((k(z),t))|^2 \big|_{z=z_0}$ is nonzero.  After scaling $w_0$,  we can assume that 
\be\label{Dw1} 
 D_{w_0}|\Phi_{\omega_0}((k(z),t))|^2  \big|_{z=z_0} =1.
\ee

For $x\in [-\pi,\pi]$ and $y$ in some sufficiently small neighborhood $I$ of $0$, let
\[
f(x,y) = \left| \Phi_{\e^{ix}\omega_0}\left((k(z_0+yw_0),t)\right) \right|^2.
\]
Choose a sequence $(y_j)$ in $I$, with $y_j < 0$, such that $y_j \to 0$.

\begin{claim}  There is a sequence $(x_j)$ in $[-\pi,\pi]$ such that, for each $j$, $f(x, y_j)$ attains its maximum at $x=x_j$, {and} $x_j\to 0$.
\end{claim}
Indeed, suppose not.  Choose, for each $j$, any point $x_j$ at which $f(x,y_j)$ attains its maximum over $[-\pi,\pi]$, then pass to
a subsequence such that $x_j$ converges to some point $x_0$.  By supposition, $x_0\neq 0$, and so $\e^{ix_0}\omega_0 \neq \omega_0$.  We have $k(z_0+y_jw_0) \to s_0$, and therefore
\begin{align}
f(x_j,y_j) &= \max_{-\pi\leq x\leq \pi} f(x, y_j)\notag \\
	&= \max_{-\pi\leq x\leq \pi}   \left| \Phi_{\e^{ix}\omega_0}\left((k(z_0+y_jw_0),t)\right) \right|^2  \label{keepthis} \\
	&= \car{(k(z_0+y_jw_0),t)}^2 \notag \\
	&\to  \car{(s_0,t)}^2. \notag
\end{align}
Hence
\[
f(x_0,0)= \car{(s_0,t)}^2.
\]
Thus $\omega_1= \e^{ix_0}\omega_0$ is a second value of $\omega$ for which $\Phi_\omega$ solves $\Car((s_0,t))$, contrary to hypothesis.  This establishes the Claim.

The function $f$ has the following properties.
\begin{enumerate}[\rm (1)]
\item $f$ is real analytic on $[-\pi,\pi]\times I$;
\item $\frac{\partial f}{\partial y}(0,0)=1$;
\item $f(x,0)$ has a unique global maximum over $[-\pi,\pi]$, which is at $x=0$.
\end{enumerate}
Consider the power series expansion of $f$ about $(0,0)$.  In view of statement $(3)$ there exists $n\geq 1$ and $\rho > 0$ such that 
\[
\frac{d^{2n}}{dx^{2n}} f(x,0)\Big|_{x=0} = -\rho
\]
is the first nonzero derivative of $f(x,0)$ at $x=0$.  There also exist analytic functions $\ph(x), \psi(x,y)$ such that $\psi(0,0)=0$ and
\be\label{series}
f(x,y) = \car{(s_0,t)}^2 + y -\rho x^{2n}+ x^{2n+1}\ph(x) + y\psi(x,y)
\ee
in a neighborhood of $(0,0)$.   Consequently
\[
\frac{\partial f}{\partial  x}(x,y)= -2n\rho x^{2n-1} + (2n+1)x^{2n}\ph(x) + x^{2n+1}\ph'(x)+y\frac{\partial \psi}{\partial x}(x,y)
\]
near  $(0,0)$.  As $x_j$ is a maximiser for $f(x,y_j)$, $x_j$ is a critical point of $f(x, y_j)$.  That is,
\[
0= -2n\rho x_j^{2n-1} +(2n+1) x_j^{2n}\ph(x_j) + x_j^{2n+1} \ph'(x_j)+y_j\frac{\partial \psi}{\partial x}(x_j,y_j).
\]
It follows that
\[
x_j^{2n-1}= O(y_j).
\]
Since $x_j \to 0$,
\[
-\rho x^{2n}_j + x_j^{2n+1}\ph(x_j) = x_j^{2n-1}(-\rho x_j+x_j^2\ph(x_j))=o(y_j),
\]
and, since $\psi(0,0)=0$,
\[
y_j\psi(x_j,y_j)=o(y_j).
\]
  Hence the series expansion \eqref{series} yields
\[
f(x_j,y_j) =\car{(s_0,t)}^2 + y_j + o(y_j),
\]
and therefore (since $y_j < 0$) $f(x_j,y_j) < \car{(s_0,t)}^2$ for some $j$.

However, for all $j$ we have, by the calculation \eqref{keepthis},
\begin{align*}
f(x_j,y_j) &=  \car{(k(z_0+y_jw_0),t)}^2\\
	&\geq \car{(s_0,t)}^2
\end{align*}
by choice of $s_0$ as a closest point in $\cald_\de$ to $t$.  This is a contradiction, and so part (i) of
the lemma is proved.

(iii) Suppose  $t \in \calr$, say $t = (2w, w^2)$ for some $w \in \d$.
Without loss of generality we may assume that $\cald_\de$ is the flat geodesic  $\calf_0 = \{(0,z): z\in \d\}$.  This is because the automorphisms of $G$ fix $\calr$, permute the flat geodesics (by Proposition \ref{autosG}) and preserve the Carath\'eodory distance.  We claim that $(0,0)$ is the closest point in $\calf_0$ to $t$.

We must show that, for every $z \in \d$,
\[
|(t, (0,0))|_{\rm car} \le |(t, (0,z))|_{\rm car}. 
\]
Now
\begin{align*}
|(t, (0,0))|_{\rm car}&= \sup_{\omega \in \t} \left|\left(\Phi_\omega (2w,w^2),\Phi_\omega (0,0) \right)\right|\\
	&= \sup_{\omega \in \t} |(-w,0)|\\
         &=|w|.
\end{align*}
Furthermore
\begin{align*}
|(t, (0,z))|_{\rm car}&= \sup_{\omega \in \t} \left|\left(\Phi_\omega (2w,w^2),\Phi_\omega (0,z) \right)\right|\\
	&= \sup_{\omega \in \t} |(-w,\omega z)|\\
         &= \sup_{\omega \in \t} \left|\frac{\omega z + w} {1 + \bar{w}\omega z}\right|.
\end{align*}
Suppose $z \neq 0$. Choose $ \omega \in \t$ and  $r >0$, such that $ \omega z = r w$. Then 
\begin{align*}
|(t, (0,z))|_{\rm car}& \ge \left|\frac{\omega z + w} {1 + \bar{w}\omega z}\right|\\
&=\left|\frac{(1+r) w} {1 + r |w|^2} \right|\\
& > |w|\\
&= |(t, (0,0))|_{\rm car}.
\end{align*}
Hence $(0,0)$ is a closest point to $t$ in $\cald_\de$ (in fact it is the unique closest point). Thus $s_0 \in \calr$.\\

 (iv) Since $t \notin \calr$, by (ii) the datum $(s_0,t)$ is either flat or purely balanced. If  the datum $(s_0,t)$ is  flat then
the $\cald_\delta = \cald_{(s_0,t)}$ and so $t \in \cald_\delta$. This is a contradiction to the assumption that $t \in G \setminus \cald_\delta$.
Therefore the datum $(s_0,t)$ is  purely balanced.

\end{proof}

\begin{lemma}\label{extlem40}
Let $V$ be an algebraic set in $ G$ having the norm-preserving extension property.  If $\delta$ is a flat datum in $G$ that contacts $V$ then either $V=\cald_\delta$,  $V=\calr\cup\cald_\de $  or $V=G$.
\end{lemma}
\begin{proof}
Since $\de$ contacts $V$,  it follows from Lemma \ref{extlem20} that $\cald_\de \subseteq V$.

Suppose that $V \neq \cald_\de$. Then there exists a point $v_0 \in V$ such that $v_0\notin \cald_\de$. 
Consider two cases: (1)  $v_0 \notin \calr$,
and (2)  $v_0\in\calr$.

Case (1): $v_0\notin \calr$.   By Lemma \ref{lem5.11} there exists a point $s_0 \in \cald_\de$ which is the closest point of $\cald_\de$ to 
 $v_0$
and a pair $\omega_1,\omega_2$ of distinct points in $\t$ such that both $\Phi_{\omega_1}$ and $\Phi_{\omega_2}$ solve $\Car(\gamma)$, where $\gamma = (s_0,v_0)$. According to Definition \ref{extdef10} the datum $\gamma$
is either flat,  purely balanced or royal.  Now $\ga$ is not royal, since $v_0\notin\calr$.  Nor is $\ga$ flat; indeed, if  $\gamma$ is flat, then $\cald_\ga$ and $\cald_\de$ are flat geodesics through $s_0$ and hence coincide, contrary to the fact that $v_0 \in \cald_\ga \setminus \cald_\de$.
Hence $\ga$ is purely balanced, which is to say that there are exactly two values of $\omega$ for which $\Phi_\omega$ solves $\Car(\ga)$.    Consequently, by Lemma \ref{extlem20}, $\cald_\gamma \subseteq  V$.  
 Corollary \ref{prop3.13}  now  implies that there is a balanced disc $D$ in the bidisc such that $\pi(D)=\cald_{\gamma}$.

Consider the case that $\cald_\de$ is the flat geodesic given by equation \eqref{defFbeta} with $\beta=0$, that is,
\begin{align*}
\cald_\de &= \calf_0 =\{(0,z): z\in \d\}.
\end{align*}
By inspection,
\[
	\cald_\de=\pi(B)
\]
where $B$ is the balanced disc in $\d^2$ given by
\be\label{defB}
B=\{(z, -z): z\in\d\}.
\ee
Let $\la_0 =(z_0,-z_0) \in B$ be such that $\pi(\la_0)=s_0$.  We can assume that also $\la_0\in D$, for certainly $D$ contains one of the (at most two) points of $\pi^{-1}(s_0)$, and we may replace $D$ by the balanced disc $\{(\la^2,\la^1): (\la^1,\la^2) \in D\}$ if necessary. Thus $\la_0 \in D\cap B$.

Choose $\mu_0\in D$ such that $\pi(\mu_0)=v_0$.  Since $v_0 \notin \calr$, $\mu_0$ does not lie in the diagonal set $\Delta$ in $\d^2$, defined by $\Delta=\{(z,z):z\in\d\}$.    Since $\la_0, \mu_0$ belong to the balanced disc $D$ in $\d^2$,
\[
|(z_0, \mu_0^1)| = |(-z_0, \mu_0^2)|= |(z_0, -\mu_0^2)|.
\]
The locus of points $z\in\d$ such that
\[
|(z, \mu_0^1)| =  |(z, -\mu_0^2)|
\]
 is a geodesic $(z_t)_{t\in\r}$ through $z_0$ in the Poincar\'e disc (this statement is obvious if $\mu_0^1=r, \ \mu_0^2=-r$ for some $r>0$, and the general case follows upon application of a suitable automorphism of $\d$).  Thus
\[
|(z_t, \mu_0^1)| =  |(z_t, -\mu_0^2)| = |(-z_t, \mu_0^2)|.
\]
Let $\la_t= (z_t, -z_t)$.  Then $(\la_t)$ is a curve in $B$ such that $(\la_t, \mu_0)$ is a balanced pair for all $t$.  Let the unique geodesic  of $\d^2$ passing through $\la_t$ and $\mu_0$ be $D_t$ (see the comment preceding Corollary \ref{prop3.13}).
 $D_t$ is the balanced disc having the form 
\be\label{formDt}
D_t= \{(w,m_t(w)): w\in \d\}
\ee
where $m_t$ is the unique automorphism of  $\d$ such that 
\be\label{mtprop}
m_t(\mu_0^1)=\mu_0^2 \quad \mbox{ and } \quad m_t(z_t)= -z_t.
\ee
Clearly $m_t$ depends continuously on $t\in\r$.

We claim that if $t_1\neq t_2$ then $D_{t_1}\cap D_{t_2}=\{\mu_0\}$.  Suppose, to the contrary, that the intersection contains a point $\nu \neq \mu_0$. 
Then, for some $w_1,w_2\in\d$,
\[
(w_1, m_{t_1}(w_1)) = \nu =(w_2, m_{t_2}(w_2)) \neq \mu_0.
\]
Thus $w_1=w_2$ and so
\[
m_{t_1}(w_1)=  m_{t_2}(w_1)
\]
and either $w_1\neq \mu_0^1$ or $m_{t_1}(w_1) \neq \mu_0^2$.

If $w_1\neq \mu_0^1$ then $m_{t_1}, m_{t_2}$ are automorphisms of $\d$ taking the same values at the distinct points $w_1$ and $\mu_0^1$. Hence $m_{t_1}=m_{t_2}$ and so $D_{t_1}=D_{t_2}$.  Thus $D_{t_1}$ is a balanced disc containing the two distinct points $\la_{t_1}, \la_{t_2}$.  The balanced disc $B$ also contains these two points, and so, by uniqueness, $D_{t_1}=B$.  It follows that $\mu_0 \in B$, and so, by application of $\pi$, we have $v_0 \in \cald_\de$, a contradiction.  We deduce that  $w_1=\mu_0^1$ and $m_{t_1}(w_1) \neq \mu_0^2$.  That is,
$m_{t_1}(\mu_0^1) \neq \mu_0^2$, contrary to equation \eqref{mtprop}.  Hence $D_{t_1}\cap D_{t_2}=\{\mu_0\}$.

  Since $D$ is the balanced disc in $\d^2$ containing $\la_0$ and $\mu_0$,  again by uniqueness, $D_0=D$.  Hence $\pi(D_0)=\cald_{\gamma}$ and so 
\be\label{unexcep}
\pi(D_0) \mbox{  is a purely balanced geodesic in } G.   
\ee
Now $D_0=\{(w,m_0(w)):w\in\d\}$.   By Proposition \ref{prop3.10}, $m_0$ has two distinct fixed points in $\t$.  It follows that there is a neighborhood $I$ of $0$ in $\r$ such that $m_t$ has two distinct fixed points in $\t$ for all $t\in I$.  Then, for $t\in I$, $\pi(D_t)$ is a purely balanced geodesic in $G$ containing the distinct points $v_0=\pi(\mu_0)$ and $\pi(\la_t)$, both of which belong to $V$.  Hence
\[
\pi(D_t) = \cald_{\de_t} \quad \mbox{ where } \de_t = (v_0,\pi(\la_t)).
\]
The \nd discrete datum $\de_t$ contacts $V$.  By  Lemma \ref{extlem20}, $\cald_{\de_t} \subseteq V$ for all $t\in I$.

Now, since $\la_t \in B$, we have $\pi(\la_t)\in \calf_0 \cap \cald_{\de_t}$ for all $t\in\r$.   Hence, for $t\in I$ and $\beta$ in a neighborhood $N$ of zero, 
\be\label{nearlythere}
\calf_\beta \mbox{ meets } \cald_{\de_t} \quad \mbox{ and }\quad \cald_{\de_t} \subset V.
\ee
Since $v_0\notin \calf_0$, we may ensure (replacing $N$ by a smaller neighborhood of $0$ if necessary) that $v_0\notin \calf_\beta$ for all $\beta\in N$.  

Since $\cald_{\de_t}=\pi(D_t)$, statement \eqref{nearlythere} implies that there exists $\la_{\beta t} \in D_t\setminus \{\mu_0\}$ such that $\pi(\la_{\beta t}) \in \calf_\beta \cap \cald_{\de_t}\subseteq V$ for $\beta\in N, t\in I$.  Since distinct $D_t$s meet only at $\mu_0$, for fixed $\beta\in N$, the points $\la_{\beta t}, \ t\in I$, are pairwise distinct.  Since each point of $G$ has at most two preimages in $\d^2$ under $\pi$, it follows that the set $\{\pi(\la_{\beta t}): t\in I\}$ is an uncountable subset of $\calf_\beta \cap V$.

We can now show that $V=G$.

Suppose that $p$ is a polynomial that vanishes on $V$.  Then,  for  $\beta\in N$,  $p$ vanishes at the uncountably many points $\pi(\la_{\beta t}), \  t\in I$, in the one-dimensional disc $\calf_\beta$, and hence $p|\calf_\beta=0$.  Since the union of the discs $\calf_\beta$, for $\beta\in N$, is a neighborhood of zero in $\c^2$, we have $p=0$.  Thus every polynomial that vanishes on $V$ is the zero polynomial.   Therefore, since $V$ is an algebraic set in $G$,  $V=G$.

We have shown that $V=G$ in the case that $\cald_\de$ is the flat geodesic $\calf_0$.   If $D_\de$ is the flat geodesic $\calf_\beta$, for some $\beta\in\d$, then the same conclusion can be deduced from the case $\beta=0$ by the application of an automorphism of $G$, with the aid of Theorem \ref{autosG} in the Appendix.  This concludes the proof of the lemma in Case (1).

Case (2): $v_0\in \calr.$    By Lemma \ref{lem5.11}(ii) the closest point $s_0$ to $v_0$ in $\cald_\de$ lies in $\calr$.  Hence the \nd discrete datum $\ga=(s_0,v_0)$ is royal and contacts $V$.  By Lemma \ref{extlem20}, $\cald_\ga\subseteq V$, that is, $\calr\subseteq V$.  Hence $\calr\cup\cald_\de \subseteq V$.

If $V\neq \calr\cup\cald_\de$ then there exists a point $v_1\in V\setminus (\calr\cup\cald_\de)$.  By Case (1), $V=G$.

We have shown that, if $V\neq\cald_\de$, then in Case (1) $V=G$ and in Case (2) either $V=\calr\cup\cald_\de$ or $V=G$.
\end{proof}

\begin{lemma}\label{extlem45}
Let $V$ be an algebraic set in $G$ having the \npep.
If a royal datum contacts $V$ then either $V=\calr, \, V=\calr\cup\cald_\de$ for some flat datum $\de$ or $V=G$.
\end{lemma}
\begin{proof}
By Lemma \ref{extlem20}, $\calr \subseteq V$.  Suppose that $V\neq \calr$.  There exists a point $t_0\in V\setminus \calr$.  By Lemma \ref{lem5.11} there is a point $s_0\in\calr$ such that the datum $\de=(s_0,t_0)$ is either flat, royal or purely balanced.  Since $t_0 \notin \calr$, $\de$ is not royal.  Neither is $\de$ purely balanced, because $\cald_\de$ meets $\calr$ at the point $s_0\in G$, and Theorem \ref{geothm30}(4) applies.
Hence $\de$ is flat.  Again by Lemma \ref{extlem20}, $\cald_\de\subseteq V$.  Hence, by Lemma \ref{extlem40}, either $V= \calr\cup\cald_\de $ or $V=G$.
\end{proof}

\chapter[Anomalous sets with the extension property]{Anomalous sets $\calr\cup\cald$ with the norm-preserving extension property in $G$}\label{anomalous}

Lemmas \ref{extlem40} and \ref{extlem45} imply that, among sets that contact a flat or royal datum, the only possible anomalous algebraic sets having the \npep \ are sets of the form $\calr\cup\cald$ for some flat complex geodesic $\cald$.
This chapter will be devoted to the proof that such sets do indeed have the \npep.

\begin{theorem}\label{anom.thm10}
If 
 \[
 V=\calr\cup\cald
 \]
 for some flat geodesic $\cald$ of $G$ then $V$ has the norm-preserving extension property.
 \end{theorem}
 Section \ref{def&lem} contains a number of definitions and lemmas to be used in the proof of Theorem \ref{anom.thm10} and the proof of the theorem is executed in Section \ref{proofthm10}.
 \section{Definitions and lemmas}\label{def&lem}
 \begin{definition}\label{anom.def10}
 Let $U$ be a domain and assume that $V \subseteq U$. We say that a function $f:V\to \c$ is \emph{Herglotz on $V$} if $f$ is analytic on $V$ and $\re f(s) >0$ for all $s\in V$. 
\index{Herglotz function}
We say that $V$ has the \emph{Herglotz-preserving extension property} if every function $f$ that is Herglotz on $V$ has an extension to a function that is Herglotz on $U$.
 \end{definition}
\index{extension property!Herglotz-preserving }
 \begin{lemma}\label{anom.lem10}
  Let $U$ be a domain and assume that $V \subseteq U$.  Then $V$ has the Herglotz-preserving extension property if and only if $V$ has the norm-preserving extension property.
 \end{lemma}
 \begin{proof}
 The formula
 \[
 \ph (z) =\frac{1+z}{1-z}
 \]
 defines a conformal automorphism of  the Riemann sphere which maps $\d$ onto the right halfplane $\set{w\in \c}{\re w >0}$.  Consequently, if $V \subseteq U$ and $f$ is an analytic function on $V$, then 
 \[
  f \text{ is Herglotz on } V    \iff |\ph^{-1}\circ f(s)|<1 \mbox{  for all } s\in V.
 \]

 Since this statement holds when $V=U$ as well, the lemma follows immediately.
 \end{proof}
\begin{definition}\label{anom.def20}
If $f$ is a complex valued function on $\calr\cup\calf_0$ we define functions on $\d$ by the formulas
\be\label{anom1}
f_\calf (z) = f(0,z)\quad \text{ and }\qquad f_\calr (z) = f(2z,z^2).
\ee
\end{definition}
\index{$f_\calf$}
\index{$f_\calr$}
\begin{lemma}\label{anom.lem15}
Let $f$ be a complex valued function defined on $\calr\cup\calf_0$.  Then $f$ is analytic on $\calr\cup\calf_0$ if and only if $f_\calf$ and $f_\calr$ are analytic on $\d$ and $f_\calf(0) = f_\calr(0)$.
\end{lemma}
\begin{proof}
Clearly, if $f$ is analytic on $\calr\cup\calf_0$, then $f_\calf$ and $f_\calr$ are analytic on $\d$ and $f_\calf(0) = f_\calr(0)$. Conversely, assume that $f_\calf$ and $f_\calr$ are analytic on $\d$ and
\[
f_\calf(0) =\eta = f_\calr(0).
\]
It is immediate from the first formula in \eqref{anom1} that $f$ is analytic at each point $s=(0,z) \in \calf_0 \setminus \{(0,0)\}$. Likewise, the second formula in \eqref{anom1} implies that $f$ is analytic at each point $s=(2z,z^2) \in \calr\setminus \{(0,0)\}$. To see that $f$ is analytic at $s=(0,0)$, define $F$ by the formula
\[
F(s) = f_\calr\left(\tfrac 12 s^1\right) +f_\calf\left(s^2-\tfrac 14 (s^1)^2\right)-\eta
\]
and observe that $F$ is a holomorphic extension of $f$ to a neighborhood of $(0,0)$.
\end{proof}
\begin{definition}\label{anom.def30}
For $\tau=(\tau_1,\tau_2) \in \t^2$ we define $f_\tau:\calr\cup\calf_0 \to \d$ by the formula
\[
f_\tau (s) = \left \{ \begin{array}{ccl} \tau_1 z  &  \text{ if }  & s=(0,z) \in \calf_0, \\
	 \tau_2 z &  \text{ if } & s=(2z,z^2) \in \calr.  \end{array} \right.
\]
\end{definition}
\begin{lemma}\label{anom.lem20}
The function $f_\tau$ is analytic on $\calr \cup \calf_0 $ for any $\tau\in\t^2$. Furthermore, if a complex-valued function $\Psi_\tau$ is defined on $G$ by the formula
\be\label{anom2}
\Psi_\tau(s) = -\tau_2\Phi_{\omega}(s),\qquad s \in G,
\ee
where $\omega=-\bar\tau_2\tau_1$, then $\Psi_\tau$ is a norm-preserving analytic extension of $f_\tau$ to $G$.
\end{lemma}
\begin{proof}
By Lemma \ref{anom.lem15} $f_\tau$ is analytic on $\calr\cup\calf_0$ and, clearly,
\[
\sup_{s\in \calr\cup\calf_0} |f(s)| = 1.
\]
Also, as $\Phi_\omega$ is a Carath\'eodory extremal function on $G$,
\be\label{anom5}
\sup_{s\in G} |\Psi_\tau(s)| = \sup_{s\in G} |\Phi_\omega(s)| =1.
\ee
It  is a simple to show from Definition \ref{defPhi} that $f_\tau = \Psi_\tau|  (\calr\cup\calf_0)$.  This fact also implies that $f_\tau$ is analytic on $\calr\cup\calf_0$.
\end{proof}
\begin{lemma}[A Herglotz Representation Theorem for $\calr\cup\calf_0$]  \label{anom.lem30}
A function $h:\calr\cup\calf_0\to\c$ is a Herglotz function on $\calr\cup\calf_0$ satisfying $h(0,0) = 1$  if and only if there exists a probability measure $\mu$ on $\t^2$ such that
\be\label{anom10}
h(s) = \int_{\t^2} \frac{1+f_\tau(s)}{1-f_\tau(s)}\ d\mu (\tau)
\ee
for all $s \in \calr\cup\calf_0$.
\end{lemma}
\begin{proof}
Lemma \ref{anom.lem20} asserts that $f_\tau$ is analytic on $\calr\cup\calf_0$  for each $\tau \in \t^2$. Consequently, as $\sup_{\calr\cup\calf_0}|f_\tau| \le 1$ and $f_\tau (0,0) =0$, it follows that the function
\[
s \mapsto  \frac{1+f_\tau(s)}{1-f_\tau(s)}, \qquad s \in \calr\cup\calf_0,
\]
is a Herglotz function on $\calr\cup\calf_0$ that assumes the value 1 at $(0,0)$  for each $\tau \in \t^2$. Hence, if $h$ is defined by equation \eqref{anom10}, then $h$ is Herglotz function on $\calr\cup\calf_0$ satisfying $h(0,0) = 1$.

Now assume that $h$ is a Herglotz function on $\calr\cup\calf_0$ satisfying $h(0,0) = 1$. By Lemma \ref{anom.lem15} it follows that the functions $h_\calf$ and $h_\calr$ are analytic on $\d$. In addition $h_\calf(0) =1$, $h_\calf(0)=1$, and both $h_\calf$ and $h_\calr$ have positive real part on $\d$. Therefore, by the classical Herglotz Representation Theorem there exist probability measures (i.e. finite positive Borel measures with total variation 1) $\mu_\calf$ and $\mu_\calr$ on $\t$ such that
\be\label{anom20}
h_\calf (z) = \int_{\tau_1 \in \t} \frac{1+\tau_1 z}{1-\tau_1 z}\ d\mu_\calf (\tau_1)
\ee
and
\be\label{anom30}
h_\calr (z) = \int_{\tau_2 \in \t} \frac{1+\tau_2 z}{1-\tau_2 z}\ d\mu_\calr (\tau_2)
\ee
for all $z\in \d$. We define a probability measure $\mu$ on $\t^2$ by $d\mu(\tau) = d \mu_\calf(\tau_1) d\mu_\calr (\tau_2)$.

Observe that if $s=(0,z) \in \calf_0$, then Definition \ref{anom.def20} and formula \eqref{anom20} imply that
\[
h(s) = h_\calf(z) =\int_{\tau_1 \in \t} \frac{1+\tau_1 z}{1-\tau_1 z}\ d\mu_\calf (\tau_1).
\]
Therefore, since $\mu_\calr$ has total variation 1, Fubini's Theorem implies that
\begin{align*}
h(s) &= \int_{\tau_2 \in \t} h(s)\ d\mu_\calr (\tau_2)\\ \\
&=  \int_{\tau_2 \in \t}\Big(\int_{\tau_1 \in \t} \frac{1+\tau_1 z}{1-\tau_1 z}\ d\mu_\calf (\tau_1)\Big)\ d\mu_\calr (\tau_2)\\ \\
&=\int_{\tau \in \t^2} \frac{1+\tau_1 z}{1-\tau_1 z}\ d\mu (\tau).
\end{align*}
But since $s=(0,z) \in \calf_0$,
\[
\frac{1+\tau_1 z}{1-\tau_1 z} = \frac{1+f_\tau(s)}{1-f_\tau(s)}.
\]
where $f_\tau$ is as in Definition \ref{anom.def30}. Therefore, equation \eqref{anom10} holds when $s \in \calf_0$.

If $s=(2z,z^2) \in \calr$, then by repeating the argument in the previous paragraph using the representation \eqref{anom30} for $h_\calr$ one shows that equation \eqref{anom10} holds when $s\in \calr$ as well.
\end{proof}

\section{The proof of  the norm-preserving extension property for $\calr\cup\cald$ }\label{proofthm10}

We first observe that Theorem \ref{autosG}  guarantees that the group of automorphisms of $G$ acts transitively on the flat geodesics in $G$ and fixes $\calr$. Therefore, it suffices to prove Theorem \ref{anom.thm10} in the special case when $\cald = \calf_0$.

Observe next that Lemma \ref{anom.lem10} implies that it suffices to prove that $\calr\cup\calf_0$ has the Herglotz-preserving extension property. However, if $U$ is a general domain, $V$ is a subset of $U$, and $f$ is a Herglotz function on $V$, then

\[
h=\frac{f- i \im f(0,0)}{\re f(0,0)}
\]
is a Herglotz function on $V$ satisfying $h(0,0) =1$. Therefore, as
\[
f=(\re f(0,0))h +i \im f(0,0),
\]
to establish the Herglotz-preserving extension property for $V$, it suffices to show that every Herglotz function $h$ on $V$ satisfying $h(0,0) =1$ has a Herglotz extension to $U$. Hence, the proof of Theorem \ref{anom.thm10} will be complete if we can show that every Herglotz function $h$ on $\calr\cup\calf_0$ satisfying $h(0,0)=1$ has a Herglotz extension to $G$.

Fix a Herglotz function $h$ on $\calr\cup\calf_0$ satisfying $h(0,0)=1$. By Lemma \ref{anom.lem30} there exists a probability measure on $\t^2$ such that equation \eqref{anom10} holds. Define a function $g$ on $G$ by the formula
\be\label{anom40}
g(s) = \int_{\tau \in \t^2} \frac{1+\Psi_\tau (s)}{1-\Psi_\tau(s)}\ d\mu(\tau)
\ee
where for each $\tau \in \t^2$, $\Psi_\tau$ is the function defined by the formula \eqref{anom2}.

 First, note that $\Psi_\tau$ is analytic on $G$ for each $\tau \in \t^2$ and that $\tau \mapsto \Psi_\tau$ is a continuous map from $\t$ into $\c(G)$ (endowed with the topology of locally uniform convergence).   Therefore,
\be\label{anom45}
 g \text{ is a well defined analytic function on } G.
\ee

Next, note that since equation \eqref{anom5} implies that, for each $\tau \in \t^2$,
\[
\re \frac{1+\Psi_\tau (s)}{1-\Psi_\tau(s)} >0
\]
for all $s\in G$, and $\mu$ is a probability measure, necessarily the function $g$ defined in equation \eqref{anom40} satisfies
\be\label{anom50}
\re g(s)> 0\ \ \ \text{ for all }s \in G.
\ee

Finally, note that Lemma \ref{anom.lem20} asserts that $f_\tau =\Psi_\tau| (\calr\cup\calf_0)$ for each $\tau \in \t^2$. Consequently,
\[
 \frac{1+f_\tau(s)}{1-f_\tau(s)} = \frac{1+\Psi_\tau (s)}{1-\Psi_\tau(s)}
\]
for each $\tau \in \t^2$ and each $s \in \calr\cup\calf_0$. Therefore, using equations \eqref{anom10} and \eqref{anom40}, for each $s \in \calr\cup\calf_0$, we have
\begin{align*}
h(s) &= \int_{\t^2} \frac{1+f_\tau(s)}{1-f_\tau(s)}\ d\mu (\tau)\\ \\
&=\int_{ \t^2} \frac{1+\Psi_\tau (s)}{1-\Psi_\tau(s)}\ d\mu(\tau)\\ \\
&= g(s).
\end{align*} 
This completes the proof of Theorem \ref{anom.thm10}. \hfill $\Box$

\chapter{$V$ and a circular region $R$ in the plane}\label{Vflat}

In the light of Lemma \ref{extlem40} the study of algebraic sets $V$ in $G$ that have the norm-preserving extension property can be reduced to the case where $V$ does not have contact with any flat datum.  Our goal in this section is to show, under these assumptions, that $V$ is a properly embedded planar Riemann surface of finite type [meaning that the boundary of $V$ has finitely many connected components].

 \begin{lemma}\label{extlem5}
 Let $V$ be an algebraic set in $G$ with the norm-preserving extension property.  If $s$ is an isolated point in $V$ then $V=\{s\}$.
 \end{lemma}
This statement follows from Proposition \ref{connect}.

Recall that the map $\beta: G\to \d$ was defined in Proposition \ref{propbeta} by the formula
\be\label{defbetabis}
\beta(s) = \frac{s^1-\overline{s^1}s^2}{1-|s^2|^2} \quad \mbox{ for } s\in G.
\ee

\begin{lemma}\label{extlem41}
Let $V$ be an algebraic set in $ G$ having the norm-preserving extension property. 
If $V$ is not a singleton  and   $V$  does not have contact with any flat datum, then  $V$ is a Riemann surface properly embedded in $G$ and $\beta|V$ is a homeomorphism onto its range.
\end{lemma}
\begin{proof}
Clearly, as $V$ is assumed not to  have contact with any flat datum, $V\neq G$. Therefore, if $V$ is not a singleton, it follows from Lemma  \ref{extlem5} that $V$ has no isolated points and so is a one-dimensional algebraic set in $G$. We first prove that $V$ has no singular points, and as a consequence that $V$ is a one-dimensional complex manifold properly embedded in $G$.  We then show that $\beta|V$ is an open mapping.  As Proposition \ref{propbeta} implies that $\beta|V$ is injective, it follows that $\beta|V$ is a homeomorphism onto its range.

{\em $V$ has no singular points.}   Fix $s_0 \in V$ and choose a neighborhood $U_0$ of $s_0$ and a holomorphic function $\zeta$ on $U_0$ such that
 \be\label{ha1}
 V\cap U_0 = Z_\zeta,
 \ee
where $Z_\zeta$ denotes the zero set of $\zeta$ in $U_0$. Let  $\beta_0=\beta(s_0)$.
Introduce local coordinates $(z,w)$ at $s_0$ by setting
\[
s=s_0 + z(\bar{\beta_0},1) +w(1,-\beta_0),
\]
and define $\eta$ by
\[
\eta(z,w) =\zeta(s).
\]
Now, since  $(\bar{\beta_0},1)$ points in the flat direction at $s_0$,  there does not exist $\eps>0$ such that $\eta$ is identically zero on the set $\set{(z,0)}{|z|<\eps}$; else $V$ contacts the flat infinitesimal datum $(s_0,(\bar{\beta_0},1))$, contrary to hypothesis. Therefore, $\eta(z,0)$ has a zero of finite positive order, $\ell$ say, at $z=0$. Consequently, by the Weierstrass Preparation Theorem, there exist analytic functions of one variable $b_1,\ldots, b_\ell$, defined on a neighborhood $\eps\d$ of zero, for some $\eps>0$, and vanishing at zero, and a holomorphic function $h(z,w)$ on $(\eps\d)^2$ such that if $P_w$ is the pseudopolynomial
\[
P_w(z) = z^\ell +b_1(w)z^{\ell-1}+\ldots + b_\ell(w),
\]
then
\be\label{ha2}
\eta(z,w) = P_w(z)h(z,w)\ \  \text{ and }\ \ h(z,w)\not=0
\ee
on $(\eps\d)^2$. Evidently, it will be the case that $s_0$ is not a singular point of $V$ if $P_w$  has a unique irreducible factor, which has degree $1$.

To see that $P_w$ has a unique irreducible factor, assume to the contrary that $Q_w$ and $R_w$ are distinct irreducible monic factors of $P_w$
 in the ring 
\[
R=\c(\eps\d)[z]
\]
of pseudopolynomials.   Since the zeros of a polynomial depend continuously on its coefficients, there exists $\de>0$ such that the zeros of the polynomial $P_w(\cdot)$ lie in $\d^-$ whenever $|w|<\de$.  It follows that the zeros of $Q_w(\cdot)$ and $R_w(\cdot)$ also lie in $\d^-$ when $|w| < \de$.

If the restrictions of $Q_w, R_w$ to $(\eps\d)\times n^{-1}\d$ are equal, for some $n> 1/\de$, then in fact $Q_w=R_w$ on $(\eps\d)^2$.  Since $Q_w$ and $R_w$ are distinct, for each $n>1/\de$, there exists $w_n\in n^{-1}\d$ such that $Q_{w_n}(\cdot) \neq R_{w_n}(\cdot)$.  As $Q_{w_n}(\cdot), \,  R_{w_n}(\cdot)$ are distinct monic polynomials, their zero sets are not identical, and so there exist complex numbers $x_n$ and $y_n$   such that  $x_n \neq y_n$ and 
\[
Q_{w_n}(x_n) =0=R_{w_n}(y_n).
\]
Since $x_x, y_n \in\d^-$, by passing to a subsequence if necessary, we may assume that the sequences $\{x_n\}$ and $\{y_n\}$ converge. Since, for all $n$, $Q_{w_n}(x_n) =0$, we have
$P_{w_n}(x_n)= 0$ and thus
\[
0 = \lim_{n \to \infty}
P_{w_n}(x_n)
\]
\[= \lim_{n \to \infty}x_n^\ell + \lim_{n \to \infty} b_1(w_n)x_n^{\ell-1}+\ldots + \lim_{n \to \infty}b_\ell(w_n)= (\lim_{n \to \infty}x_n)^\ell.
\]
Therefore
 $x_n\to 0$ and, similarly, $y_n\to 0$.     Finally, define $s_n,t_n \in V$ by setting
\[
s_n =s_0 + x_n(\bar{\beta_0},1) +w_n(1,-\beta_0)
\]
and
\[
t_n =s_0 + y_n(\bar{\beta_0},1) +w_n(1,-\beta_0).
\]
With the above constructions we have $s_n\to s_0$ and 
\[
 \frac{t_n-s_n}{\norm{t_n-s_n}} = \frac{y_n-x_n}{\|y_n-x_n\|}\frac{(\bar\beta_0,1)}{\norm{(\bar\beta_0,1)}}.
\]
After passage to a further subsequence one infers that there exists $\chi\in\t$ such that
\[
\frac{t_n-s_n}{\norm{t_n-s_n}} \to \chi \frac{(\bar\beta_0,1)}{\norm{(\bar\beta_0,1)}},
\]
which is a flat direction at $s_0$.  Thus   $V$ is contacted by the flat infinitesimal datum $(s_0, (\bar\beta_0,1))$, contrary to assumption.
 This completes the proof that $P_w$ does not have a distinct pair of irreducible factors.

To see that  each of the irreducible factors of $P_w$ has degree $1$, assume to the contrary that $Q_w$ is an irreducible factor and $\deg Q_w =d \ge 2$. Since $Q_w$ is irreducible, there exists $\eps>0$ such that for each $w$ with $0<|w|<\eps$ the equation $Q_w(z) =0$ has exactly $d$ distinct roots.  Thus we may choose sequences $\{w_n\}$, $\{x_n\}$, and $\{y_n\}$ such that $w_n \to 0$ and
\[
x_n\not=y_n\ \ \text{ and }\ \ Q_{w_n}(x_n) =0=Q_{w_n}(y_n)
\]
for all $n$. By the argument of the previous paragraph, these sequences lead to a contradiction to the assumption that $V$ does not have contact with a flat datum.  

We have shown that  $\eta(z,w)=(z+b_1(w))^k h(z,w)$  for some positive integer $k$.  Thus
\be\label{b1w}
V\cap U_0=Z_\zeta= \{s\in U_0: z+b_1(w)=0\}
\ee
and $V\cap U_0 = f(N)$ for some neighborhood $N$ of $0$ in $\c$, where
\be\label{locparamV}
f(w)=s_0-b_1(w)(\bar\beta_0,1)+w(1,-\beta_0).
\ee
Thus $s_0$ is not a singular point of $V$.  Hence $V$ has no singular points.

{\em $\beta|V$ is an open mapping}.  As $V$ is a one dimensional analytic set without singular points, it follows that $V$ is a Riemann surface. 
 Since algebraic sets are closed, the injection map of $V$ into $G$ is a proper embedding. There remains to show that the function $\beta|V$ is a homeomorphism onto its range. Clearly, $\beta|V$ is continuous. Furthermore, as $V$ is assumed not to have contact with any flat geodesic, it is clear from Proposition \ref{propbeta} that $\beta|V$ is injective. We conclude the  proof that $\beta$ is a homeomorphism by showing that $\beta|V$ is an open mapping.

Firstly, for $f$ as in equation \eqref{locparamV},
\[
df_0 =(-b_1'(0)\bar{\beta_0}+1,\, -b_1'(0)-\beta_0)\ dw_0.
\]
Secondly, the total complex differential of $\beta$ at a general point $s\in G$ is given by
\begin{align}\label{thisone}
d\beta&=\frac{\partial\beta}{\partial s^1} ds^1 +\frac{\partial\beta}{\partial \bar{s^1}}d\bar{s^1}
+\frac{\partial\beta}{\partial s^2} ds^2
+\frac{\partial\beta}{\partial \bar{s^2}} d\bar{s^2}\notag \\
&=\frac{1}{1-|s^2|^2} ds^1
 -\frac{s^2}{1-|s^2|^2}d\bar{s^1}
+\frac{-\bar{s^1}+s^1\bar{s^2}}{(1-|s^2|^2)^2} ds^2
+s^2\frac{s^1 - \bar{s^1}s^2}{(1-|s^2|^2)^2} d\bar{s^2}\notag \\
&=\frac{1}{1-|s^2|^2}
\Big(ds^1 -s^2d\bar{s^1}
-\bar{\beta} ds^2 +s^2\beta d\bar{s^2}\Big) \notag \\
&=\frac{1}{1-|s^2|^2}
\Big(ds^1-\bar{\beta} ds^2  - s^2{(d\bar{s^1}-{\beta} d\bar{s^2} )}\Big).
\end{align}
Thirdly, observe that if $f=(f^1,f^2)$ is given by equation \eqref{locparamV}, then 
\[
(df^1)_0=(-b_1'(0)\bar{\beta_0}+1) dw_0 \quad \mbox{ and }\quad (df^2)_0=(-b_1'(0)-\beta_0) dw_0,
\]
 and so
\begin{align}\label{penult}
(df^1-\bar{\beta_0} df^2)_0 &=(-b_1'(0)\bar{\beta_0}+1)-\bar{\beta_0}(-b_1'(0)-\beta_0) dw_0\notag\\
&=(1+|\beta_0|^2) dw_0.
\end{align}
Similarly
\begin{align}\label{ult}
(d\bar{f^1}-{\beta_0} d\bar{f^2})_0 
&=(1+|\beta_0|^2) d\bar w_0.
\end{align}
Therefore, by virtue of equations \eqref{thisone}, \eqref{penult} and \eqref{ult},
\begin{align}\label{dbcf}
d(\beta \circ f)_0 &=C (dw_0 - s_0^2d\bar w_0)
\end{align}
where
\[
C=\frac{1+|\beta_0|^2}{1-|s_0^2|^2} > 0.
\]
 Let $u(x,y), v(x,y)$ be the real and imaginary parts of $\beta\circ f(x+iy)$ for $x,y\in\r$.

It follows from equation \eqref{dbcf} that the Jacobian determinant of $\beta\circ f$ at $0$  is
\[
\frac{\partial(u,v)}{\partial(x,y)}(0)=  C^2-|Cs_0^2|^2 = C^2(1-|s_0^2|^2) > 0.
\]
By the Inverse Function Theorem, $\beta \circ f$ is invertible in a neighborhood of $s_0$ in $V$. As the point $s_0 \in V$ is arbitrary, $\beta|V$ is an open mapping, as was to be proved.

We have shown that $V$ is a one-dimensional manifold.  By Proposition \ref{connect}, $V$ is connected, and so $V$ is a Riemann surface.
\end{proof}
\begin{lemma}\label{extlem50}
If $V$ is an algebraic set in $G$  
then $\beta(V)$ is a semialgebraic set and
 $\partial \beta(V)$ has a finite number of components.
\end{lemma}
\index{semialgebraic set}
\begin{proof}
$G$ is the set of points $s\in\c^2$ such that
\[
|s^1-\overline{s^1}s^2|^2 < (1-|s^2|^2)^2,
\]
and therefore $G$ is a semialgebraic set.  Since $V=G\cap A$ for some algebraic set $A$, it follows that 
$V$ is a semialgebraic set (see, for example, \cite{bcr}).  It is clear from the definition of $\beta$ that $\beta$ is a regular mapping on $G$ (that is, the real and imaginary parts of $\beta$ are rational functions whose denominators do not vanish on $G$).  Hence $\beta$ is a semialgebraic map, and so, by \cite[Proposition 2.2.7]{bcr}, $\beta(V)$ is a semialgebraic set.  It follows from \cite[Proposition 2.2.2]{bcr} that  the boundary $\partial\beta(V)$ of $\beta(V)$ is a semialgebraic set.  By \cite[Theorem 2.4.5]{bcr}, $\partial\beta(V)$ has finitely many connected components.
\end{proof}

A {\em circular region} is defined to be a region in $\c$ whose boundary consists of a finite number of disjoint \nd circles.   
\index{circular region}
These domains are canonical multiply connected domains, in the sense that every finitely connected planar domain is conformally equivalent to a circular region  \cite[Theorem 7.9]{conway}.  A domain is {\em nondegenerate} if none of its boundary components is a singleton set. 
\index{domain, nondegenerate}

\begin{lemma}\label{extlem60}  
Let $V$ be an algebraic set in $ G$ having the norm-preserving extension property. 
If $V$ does not have contact with any flat datum, then either $V$ is a singleton or there exists a   circular region $R$ in the plane and a bijective holomorphic mapping $\sigma: R\to V$ such that $\sigma'(z)\neq 0$ for all $z\in R$.
\end{lemma}
\begin{proof}
Suppose that $V$ is not a singleton.
By Lemma \ref{extlem41}, $V$ is a Riemann surface and
 $V$ is homeomorphic to $\beta(V)$, an open subset of $\d$.  Now any open planar Riemann surface is conformally equivalent to a plane region $R$ \cite[Chapter III, Paragraph 4]{AS}. By Lemma \ref{extlem50}, $\partial \beta(V)$ has a finite number of components, and so $\beta(V)$ is finitely connected.  Since $R$ is homeomorphic to $\beta(V)$, \ $R$ is a finitely connected plane region.  We claim that $R$ is nondegenerate.
Suppose, to the contrary, that there is some point $z_0$ that is a singleton boundary component of $R$.
Let $\si=(a,b): R \to V$ be a conformal map onto the Riemann surface $V$.      Since every bounded holomorphic function on $R\setminus \{z_0\}$ continues holomorphically to $R$,
\[
s_0 \df \lim_{z\to z_0} (a(z),b(z)) 
\]
exists.  Furthermore, since $z_0\in \partial R, \, s_0\in\partial V \subseteq \partial G$.  Hence there exists $\omega\in\t$ such that $|\Phi_\omega(s_0)|=1$, for otherwise $|\Phi_\omega(s_0)| < 1$ for all $\omega\in\t$, which implies that $s_0\in G$, by Proposition \ref{elG}.   The holomorphic extension to $R \cup \{z_0\}$ of $\Phi_\omega\circ \sigma$ has modulus at most $1$  and attains its maximum modulus at the interior point $z_0$, contrary to the Maximum Modulus Principle.  Hence the boundary of $R$ has no singleton components, and so, by  \cite[Theorem 7.9]{conway}, we can assume that $R$ is a circular domain.
Then $\si:R\to V$ is the required conformal map.
\end{proof}

\chapter{Proof of the main theorem}\label{mainproof}

This chapter is devoted to the proof of Theorem \ref{main}.
For convenience we repeat the statement.

\begin{theorem}\label{mainbis}
 $V$ is an algebraic subset of $G$ having the norm-preserving extension property if and only if   either $V$ is a retract in $G$ or $V$ is the union of $\calr$ and  a flat geodesic in $G$. 
\end{theorem}
By Proposition \ref{extprop10a} and   Theorem \ref{anom.thm10}, if $V$ is a retract in $G$ or $V=\calr\cup \cald$, where $\cald$ is a flat geodesic in $G$,  then $V$ is  an algebraic subset of $G$ with the norm-preserving extension property.  We shall prove the converse.

Accordingly, for the remainder of the chapter, fix  a set $V\subseteq G$ such that
\be\label{sh1}
V \mbox{ is an algebraic set in } G, \mbox{ and }
\ee
\be\label{sh2}
V \mbox{ has the norm-preserving extension property. }
\ee

We wish to show that  $V$ is a retract in $G$   or $V=\calr\cup \cald$, where  $\cald$ is a flat geodesic in $G$.  This conclusion certainly follows if $V$ is a singleton set
or if $V=G$.  We may therefore assume that
\be\label{sh3}
V \mbox{ is not a singleton set, and }
\ee
\be\label{sh4}
V\neq G.
\ee
If $V$ has contact with a flat datum $\de$, then by Lemma \ref{extlem40},
either $V=G$ or $V=\cald_\de$, both of which are retracts in $G$, or $V=\calr\cup \cald_\de$. If $V$ has contact with a royal datum $\de$, then by Lemma \ref{extlem45},
either $V=\calr$, $V$ is the union of $\calr$ and  a flat geodesic in $G$   or $V=G$. We may therefore assume further
that 
\be\label{sh5}
V \mbox{ does not have contact with any flat or royal datum.}
\ee
By Lemma \ref{extlem60} there exists a circular region $R$ in the plane and  a bijective holomorphic mapping $\sigma: R \to V$ such that $\sigma'(z) \neq 0$ for all $z\in R$.
\begin{lemma}\label{purelybal}
If  $V$ has contact with a purely balanced datum $\de$ in $G$ then {\rm (1)} $V=\cald_\de$ and {\rm (2)} $V$ is a retract in $G$.
\end{lemma}
\begin{proof}
Suppose that $V$ has contact with a purely balanced datum $\de$ in $G$.  By Lemma \ref{extlem20}, $\cald_\de\subseteq V$.  Choose a solution $k$ of $\Kob(\de)$, so that $\cald_\de= k(\d)$. 
 Then $k$ is a rational $\Ga$-inner function and so {\em a fortiori} proper as a map from $\d$ to $V$, 
and hence $f\df\si^{-1}\circ k$ is a proper injective holomorphic map from $\d$ to $R$.   
By the Open Mapping Theorem, $f$ is a homeomorphism from $\d$ to $f(\d)$.  We claim that $f(\d)=R$.

Suppose that $f(\d)\neq R$.  Choose a point $w\in R\setminus f(\d)$, a point $u\in f(\d)$ and a continuous path $\gamma$ from $u$ to $w$ in $R$.  Let $z_0=\ga(t_0)$ where $t_0=\sup\{t:\ga(t)\in f(\d)\}$.  Then $z_0\in (\partial f(\d)) \cap R$.

Pick a sequence $(\la_n)$ in $\d$ such that $f(\la_n)\to z_0$.  If $(\la_n)$ has a subsequence which converges to a limit $\mu$ in $\d$ then $f(\mu)=z_0$ and hence $z_0\in f(\d)$, a contradiction.  Hence $|\la_n| \to 1$, and so, by the propriety of $f$, $f(\la_n)$ tends to $\partial R$, and so  $z_0\in\partial R$, contrary to the fact that $z_0\in R$.  Hence $f(\d) = R$, and so $k(\d)=\si\circ f(\d)=\si(R)=V$.
Thus $\cald_\de=V$, and so $V$ is a retract in $G$ (see equation \eqref{ret10}).
\end{proof}

In the light of Lemma \ref{purelybal} we may further assume that
\be\label{sh6}
V \mbox{ does not have contact with any purely balanced datum.}
\ee

 Let 
\be\label{bdryR}
\partial R= \partial_0\cup\partial_1\cup \dots \cup \partial_n
\ee
be a decomposition of $\partial R$ into its connected components, where $\partial_0$ is the boundary of the unbounded component of the complement of $R^-$.  We may take $\partial_0$ to be the unit circle.  

\begin{lemma}\label{Vrtr}
If $n=0$ then $V$ is a nontrivial retract in $G$ and is a complex geodesic of $G$.
\end{lemma}
\begin{proof}
Since $n=0$ we have $R= \d$. By assumption 
$V$ is an algebraic subset of $G$ having the norm-preserving extension property, and thus the map
 $\sigma^{-1}: V \to \d$ has a holomorphic extension $f: G \to \bar{\d}$. By the Open Mapping Theorem, $f(G) \subset \d$, and so $\sigma \circ f$ is a holomorphic map from $G$ to $V$ which is the identity on $V$. That is, $V$ is a retract in $G$.  By statements \eqref{sh3} and \eqref{sh4},  $V$ is neither a singleton nor the whole of $G$, and so $V$ is a nontrivial retract.  By Theorem \ref{retthm10}, $V$ is a complex geodesic in $G$.
\end{proof}

{\it The rest of the proof consists of a demonstration that $n=0$}. 

Suppose $n \neq 0$. Write $\si=(a,b)$, so that $a$ and $b$ are holomorphic functions on $R$ and, for all $z\in R$, $a'(z)$ and $b'(z)$ are not both zero.

\section{Preliminary lemmas}\label{prelim}

For $\omega\in\t$ let $f_\omega=\Phi_\omega \circ \si:R\to\d$.  Define $\cala\subseteq \d(R)$ by
\[
\cala=\{f_\omega:\omega\in\t\}.
\]
\begin{lemma}\label{lem20a}
If $\omega_1\neq\omega_2$ then $f_{\omega_1}\neq f_{\omega_2}$.
\end{lemma}
\begin{proof}
Assume, to the contrary, that $\omega_1, \omega_2$ are distinct point in $\t$ but $f_{\omega_1}=f_{\omega_2}$.
Straightforward manipulation of the equation
\[
\frac{2\omega_1 b-a}{2-\omega_1 a}=\frac{2\omega_2 b-a}{2-\omega_2 a}
\]
yields the relation $a^2=4b$, so that $V=\si(R) \subseteq \calr$, contrary to the statement \eqref{sh5}.
\end{proof}
\begin{lemma}\label{lem30}
$\cala$ is a universal set for the Carath\'eodory Problem on $R$. Furthermore, if $\zeta$ is a \nd datum in $R$, then there exists a {\em unique} $f \in \cala$ that solves $\Car(\zeta)$.
\end{lemma}
\begin{proof}
If $\zeta$ is a datum in $R$ then $\sigma(\zeta)$ is a datum in $V$. Furthermore, since $\sigma$ is biholomorphic, the holomorphic invariance of the Carath\'eodory metric implies that
\[
|\zeta|_R = |\sigma(\zeta)|_V.
\]
Here we use a self-explanatory variant on the notation $\car{\cdot}$ which indicates the domain in question.
Also, as $V$ has the norm-preserving extension property,
\[
|\sigma(\zeta)|_V = |\sigma(\zeta)|_G.
\]
Therefore, if we view $\si(\zeta)$ as a datum in $G$ and we choose $\omega \in \t$ such that $\Phi_\omega$ solves $\Car(\sigma(\zeta))$, then $f_\omega=\Phi_\omega \circ \sigma$ solves $\Car(\zeta)$.  

To see that $f_\omega$ is unique, assume to the contrary that $f_{\omega_1}$ and $f_{\omega_2}$ solve $\Car(\zeta)$ and $\omega_1\not=\omega_2$. Then $\Phi_{\omega_1}$ and $\Phi_{\omega_2}$ both solve $\Car(\sigma(\zeta))$.  This means that $\si(\zeta)$ is a flat, royal or purely balanced datum that contacts $V$, contrary to statements \eqref{sh5} and \eqref{sh6}.
\end{proof}

In the light of Lemma \ref{Vrtr} it is clear that the proof of Theorem \ref{main} will be complete if we establish that $n=0$.
Assume to the contrary that 
\be\label{ngeq1}
n \ge 1.
\ee

While $\cala$ is a universal set for the Carath\'eodory Problem on $R$, by Lemma \ref{lem30}, it is not necessarily the case that each $f\in\cala$ solves $\Car (\zeta)$ for some \nd datum $\zeta$ in $R$. Accordingly, we define a set ${E} \subseteq \t$ by
\[
{E} = \set{\omega\in\t}{f_\omega \text{ solves } \Car(\zeta) \text{ for some \nd datum } \zeta \text{ in } R}
\]

It is well known that the Carath\'eodory extremal functions on $R$ (the {\em Ahlfors functions}) are $(n+1)$-valent inner functions that analytically continue to a neighborhood of $R^-$ (for example, \cite[Chapter 5, Theorem 1.6]{fisher}).  In particular the functions $f_\omega, \, \omega\in E$ enjoy these properties.  Of course $f_\omega$ is nonconstant for $\omega\in E$.

\begin{lemma}\label{lem35}
There exist two datums $\zeta_1$ and $\zeta_2$ in $R$ such that $f_{\omega_1}$ solves $\Car(\zeta_1)$, $f_{\omega_2}$ solves $\Car(\zeta_2)$, and $\omega_1 \not= \omega_2$.
\end{lemma}
\begin{proof}
Fix a datum $\zeta_1$ in $R$ and let $f_{\omega_1}$ solve $\Car(\zeta_1)$. As $f$ is $(n+1)$-valent, there exist $z_1,z_2 \in R$ such that $z_1\not=z_2$ and $f_{\omega_1}(z_1)=f_{\omega_2}(z_2)$. Let $\zeta_2 = (z_1,z_2)$. As $|f_{\omega_1}(\zeta_2)|=0$, $f_{\omega_1}$ cannot possibly solve $\Car(\zeta_2)$. Therefore, the lemma follows by choice of $\omega_2$ such that $f_{\omega_2}$ solves $\Car(\zeta_2)$.
\end{proof}

\begin{lemma}\label{lem40}
If  $n\geq 1$ then $E$ is an infinite set.
\end{lemma}
\begin{proof}
Assume to the contrary that $E$ is finite.

Fix a nondegenerate discrete datum $\zeta_1=(z_0,z_1)$ in $R$ and let $f_{\omega_1}$ solve $\Car(\zeta_1)$. Since $f_{\omega_1}$ is $(n+1)$-valent and $n\geq 1$, there exists $z_2 \in R$ with $z_2\not=z_1$ and $f_{\omega_1}(z_1)=f_{\omega_1}(z_2)$. Construct a continuous curve $\gamma:[1,2]\to R$ with $\gamma(1)=z_1$ and $\gamma(2)=z_2$ and define a curve of discrete datums $\delta_t$ in $R$ by the formula
\[
\zeta_t = (z_0,\gamma(t)), \qquad 1\le t \le 2.
\]
Since Lemma \ref{lem30} guarantees that for each $t\in[1,2]$ there exists a unique $\omega \in E$ such that $f_\omega$ solves $\Car(\zeta_t)$, it follows that there exists a well defined function $\Omega:[1,2] \to E$ such that
\[
f_{\Omega(t)} \text{ solves } \Car(\zeta_t) \text{ for all } t\in[1,2].
\]
Also, observe that by construction, $\Omega(1) = \omega_1$.

We claim that for each $\tau \in E$, $\Omega^{-1}(\{\tau\})$ is an open subset of $[1,2]$. To prove this claim, fix $\tau_0=\Omega(t_0) \in E$ and note that since the solution to $\Car(\zeta_{t_0})$ is unique,
\be\label{5}
|f_\tau (\zeta_{t_0})| < |f_{\tau_0}(\zeta_{t_0})| \text{ for all } \tau \in E\setminus \{\tau_0\}.
\ee
It follows by continuity, that equation \eqref{5} holds, with $t_0$ replaced by $t$, for all $t$ in a neighborhood $I$ of $t_0$. But then,
\[
f_{\tau_0} \text{ solves } \Car(\zeta_t) \text{ for all } t\in I,
\]
that is, $I \subseteq \Omega^{-1}(\{\tau_0\}$. This proves that $\Omega^{-1}(\{\tau\})$ is an open subset of $[1,2]$ for each $\tau \in E$.

Since $\Omega$ is continuous and $[1,2]$ is connected,  $\Omega$ is constant. Hence
\[
\Omega(2) = \Omega(1) = \omega_1
\]
 and $f_{\omega_1}$ solves $\Car(\zeta_2)$. But $\zeta_2$ is nondegenerate while $|f_{\omega_1}(\zeta_2)| =0$.
\end{proof}

\section{$\sigma: R\to V$ is analytic on $R^-$}\label{analytic}

\begin{lemma}\label{lem50}
The map $\sigma$ extends holomorphically to a neighborhood of $R^-$.
\end{lemma}
\begin{proof}
By Lemma \ref{lem40} there exist distinct points $\omega_1,\omega_2 \in E$.
Let
\be\label{f1f2}
f_1 =f_{\omega_1}= \frac{2\omega_1 b-a}{2-\omega_1a}\ \ \text{ and }\ \  f_2 =f_{\omega_2}= \frac{2\omega_2 b-a}{2-\omega_2a}.
\ee
Solve these equations for $a$ and $b$ in terms of $f_1,f_2$:
\begin{align*}
a&=2\frac{\omega_1f_2 - \omega_2 f_1}{\omega_2-\omega_1 +\omega_1\omega_2 (f_2-f_1)}\\
b&=\frac{f_2-f_1 +(\omega_2-\omega_1)f_1f_2}{\omega_2-\omega_1 +\omega_1\omega_2 (f_2-f_1)}.
\end{align*}
Since $f_1$ and $f_2$ are holomorphic on a neighborhood of $R^-$, $a$ and $b$ extend to meromorphic functions defined on a neighborhood of $R^-$. But $a$ and $b$ are bounded on $R$, and so extend to holomorphic functions on a neighborhood of $R^-$.
\end{proof}
\begin{lemma}\label{lem60}
$\sigma(\partial R)$ is contained in the distinguished boundary $b\Gamma$ of the closure $\Ga$ of $G$, and therefore
\[
|a|\leq 2, \quad |b| = 1\quad \mbox{ and }\quad a=\bar a b\quad\mbox{ on } \partial R.
\]
\end{lemma}
\begin{proof}
We are using the characterization of $b\Ga$ given in Proposition \ref{bGam}.
First notice that as $\sigma\in G(R)$, $|a(z)| <2$ for all $z\in R$. By Lemma \ref{lem50}, $\si$ extends continuously to $\partial R$, and therefore
$|a(z)| \le 2$ for all $z \in \partial R$. There remains to prove that $a=\bar ab$ and $|b|=1$ on $ \partial R$.

Let $f_1$ and $f_2$ be as in Lemma \ref{lem50}, let $E_1 = \set{z \in \partial R}{a(z)=2\bar\omega_1}$ and  $E_2 = \set{z \in \partial R}{a(z)=2\bar\omega_2}$. Clearly, Lemma \ref{lem50} guarantees that $E_1$ and $E_2$ are finite. As
\[
f_1 =\frac{2\omega_1 b-a}{2-\omega_1a}
\]
is inner, it follows that $|\Phi_{\omega_1}(a(z),b(z))|=1$ for all 
$z \in \partial R \setminus E_1$. Since, for such $z$,  $(a(z),b(z)) \in \Ga\setminus \{(2\bar\omega_1,\bar\omega_1^2)\}$, Proposition \ref{modPhi1} shows that
\be\label{10}
\omega_1 \big(a(z)-\overline {a(z)}b(z)\big) = 1-|b(z)|^2
\ee

for all $z \in \partial R \setminus E_1$. But Lemma \ref{lem50} implies that $a$ and $b$ are continuous on $\partial R$. Therefore, equation \eqref{10} holds for all $z \in \partial R$. In similar fashion we deduce that
\be\label{20}
 \omega_2 \big(a(z)-\overline {a(z)}b(z)\big) = 1-|b(z)|^2\quad \mbox{ for all } z\in\partial R.
\ee
Since $\omega_1 \neq \omega_2$, it follows immediately from equations \eqref{10} and \eqref{20} that
\be\label{30}
a(z)-\overline {a(z)}b(z) =0\ \ \text{and }\ \ |b(z)|=1
\ee
for all $z \in \partial R$.  Thus $\si(z)\in b\Ga$ for all $z\in\partial R$.
\end{proof}
The following lemma gives an additional relationship between $a$ and $b$ that must hold whenever $\omega\in E$.
\begin{lemma}\label{lem70}
If $\omega\in E$ and $a(\eta)= 2\bar\omega$ for some $\eta\in\partial R$, then $b(\eta)=\bar\omega^2$.  In consequence
\[
a(\eta)^2=4b(\eta).
\]
\begin{proof}
Suppose that $b(\eta)\neq \bar\omega^2$.  Then $\Phi_\omega$ is continuous at $\si(\eta)$, and
\[
f_\omega=\frac{2\omega b-a}{2-\omega a}
\]
is bounded on a neighborhood of $\si(\eta)$.  Since $2-\omega a =0$ at $\eta$, necessarily $2\omega b-a=0$ at $\eta$, so that $b(\eta)=\bar\omega^2$.
\end{proof}
\end{lemma}
\begin{lemma}\label{lem80}
The set 
\[
F\df\{\omega\in\t:2\bar\omega\in a(\partial R)\}
\]
is finite.
\end{lemma}
\begin{proof}
If not then there is an infinite sequence $(\omega_i)$ of distinct points in $\t$ and a sequence $(\eta_i)$ of distinct points in $\partial R$ such that $a(\eta_i)=2\bar\omega_i$ for each $i$.  Lemma \ref{lem70} then implies that $a(\eta_i)^2=4b(\eta_i)$ for all $i$.  Hence, by Lemma \ref{lem50}, $a(\eta)^2=4b(\eta)$ for all $\eta\in R$, so that $\si(R)$ is contained in the royal variety $\calr$.  Then each datum in $G$ that contacts $V$ also contacts $\calr$, contrary to assumption \eqref{sh5}.
\end{proof}
\begin{lemma}\label{lem90}
There exists $\omega\in E$ such that $2\bar\omega\notin a(\partial R)$.
\end{lemma}
\begin{proof}
Lemma \ref{lem40} asserts that $E$ is infinite, while Lemma \ref{lem80} states that the subset $F$ of $E$ is finite.  Therefore $E\setminus F$ is nonempty, that is, there exists $\omega\in E$ such that $2\bar\omega \notin a(\partial R)$.
\end{proof}
\section{Degree considerations}\label{degrees}
For $r=0, \ldots, n$ and $\phi$ a nonvanishing continuously differentiable complex-valued function on $\partial_r$ we define
\[
\#_r (\phi) = \frac{1}{2\pi i}\int_{\partial_r} \frac{\phi'(z)}{\phi(z)}\ dz.
\]
Here, in  the decomposition \eqref{bdryR} of the boundary of $R$, $\partial_r$ is oriented counterclockwise if $r=0$ and clockwise if $r>0$.

If $\phi$ is defined on $\partial R$ we set
\[
\# (\phi) = \sum_{r=0}^n\#_r (\phi).
\]
We note that if $\phi$ is a nonconstant inner function on $R$ that extends to be holomorphic on a neighborhood $U$ of $R^-$, then $\#_r(\phi) \ge 1$ for each $r$ so that  $\#(\phi) \ge n+1$. Furthermore, since the cycle $\partial_0 +\dots + \partial_n$ is clearly homologous to $0$ in $U$, we may apply the Argument Principle to deduce that in this case $\#(\phi)$ is equal to the number of zeros of $\phi$ in $R$ (for example, \cite[Chapter 4, Subsection 5.2]{ahlfors}).

  \begin{lemma}\label{lem100}
\be\label{50}
\#(b) = n+1.
\ee
\end{lemma}
\begin{proof}
By Lemma \ref{lem90}, there is an $\omega \in E$ such that  $2\bar\omega \not\in a(\partial R)$.  Thus  $\omega a(z)\neq 2$ for all $z\in \partial R$. It follows from Lemma \ref{lem60}  that $|a(z)| \le 2$ for all $z\in\partial R$. Hence the curves
\[
\frac{2-\overline{\omega a}}{2-\omega a}(z), \qquad z\in \partial_r,
\]
are well defined and do not meet the closed negative real axis for 
$r=0,\dots,n$. Therefore
\[
\#\left(\frac{2-\overline{\omega a}}{2-\omega a}\right) = 0.
\]
Since
\be\label{formfo}
f_\omega = \frac{2\omega b-a}{2-\omega a}=\frac{2\omega b-\bar a b}{2-\omega a}=\omega b \frac{2-\overline{\omega a}}{2-\omega a},
\ee
we have $  \#(f_\omega)=\#(b) $.
Now $f_\omega$ is not a constant function, for otherwise $\Phi_\omega \circ \si$ is a constant of unit modulus on $R$, and hence $\si(R) \subseteq \partial G$, which is false.  Thus  the Ahlfors function $f_\omega$ for $R$ has $n+1$ zeros counting multiplicity, so that 
\be\label{indexfo}
\# (f_\omega) = n+1.
\ee
Therefore  $\#(b)=n+1$.
\end{proof}
Notice that the equation \eqref{formfo} is valid for all $\omega\in\t\setminus F$, and therefore the index formula \eqref{indexfo} is also valid for such $\omega$.
  \begin{lemma}\label{lem110}
The set $F$ of Lemma \ref{lem80} is empty.
\end{lemma}
\begin{proof} Assume to the contrary that $\omega_0 \in F$, say $a(z_0)=2\bar\omega_0$ where $z_0\in\partial R$. Since $F$ is finite, there exists a sequence $\{\omega_i\}_{i\geq 1}$ in $\t \setminus F$
such that $ \omega_i \to \omega_0$. Consequently, $f_{\omega_i} \to f_{\omega_0}$ uniformly on compact subsets of $R$. Furthermore, the denominator of  $f_{\omega_i}$, $2-  {\omega_i}a$, converges uniformly on a neighborhood $U$ of $\bar{R}$ to the denominator of  $f_{\omega_0}$. Hence, by Hurwitz's Theorem,  $2-\omega_i a$ has a zero at a point $w_i\in U$, and since $\omega_i \notin F$, we have $w_i\notin R^-$.  We can assume that $w_i \to z_0$.  Thus
$f_{\omega_i}$ has a pole at a point $w_i\in U\setminus \bar{R}$. 
By the Schwarz Reflection Principle, for $i\geq 0$, $f_{\omega_i}$ has a zero at a point $z_i \in R$, where $z_i \to z_0$.  
Now  $\omega_i\notin F$ for each $i\geq 1$,
and so the formula \eqref{indexfo} obtains.  Hence $f_{\omega_i}$ is an inner function on $R$ of minimal degree $n+1$.
Since cancellation occurs in the limit $f_{\omega_0}$ of $(f_{\omega_i})$, it follows that $\#(f_{\omega_0}) < n+1$, 
and therefore $\#(f_{\omega_0})=0$, which is to say that $f_{\omega_0}$ is constant. 
However, since $\omega_0\in E$, there is a \nd datum $\de$ in $R$ such that $f_{\omega_0}$ solves $\Car(\de)$, which implies that  $f_{\omega_0}$ is nonconstant, a contradiction.
Hence $F=\emptyset$.
\end{proof}

{\em Conclusion}.  It remains to show that $n=0$.  Suppose not.

By  Lemma \ref{lem110}, the set $F$ is empty. Therefore, on $\partial R$, $|a| < 2$  and
\[
| 4b - (4b - a^2)| =| a^2| < 4 = |4b|.
\]
Hence, by Rouch\'e's Theorem, $4b-a^2$ has the same number of zeros as $b$, which, in view of Lemma \ref{lem100}, is to say that
\[
\#(4b-a^2) =\#(b) = n+1.
\]
Since $n \ge 1$, this implies that $4b-a^2$ has at least two zeros in $R$, and hence that $V=\si(R)$ meets $\calr$ at least twice. Thus there exists
a royal datum in $G$ which contacts $V$, contrary to statement \eqref{sh5}.

We have established that $n=0$.  This completes the proof of Theorem \ref{main}.

\chapter{Sets in $\d^2$ with the symmetric extension property}\label{appD2}

Motivation for the study of the \npep \ in a domain of holomorphy is described in the introduction of \cite{agmc_vn}.  One motive is to understand the set of solutions of a Nevanlinna-Pick interpolation problem on $\d^2$.  Another is to prove refinements of the inequalities of von Neumann \cite{neu} and And\^o \cite{and63}.  In this section we shall apply the preceding results to obtain some statements relevant to interpolation problems on $\d^2$ having a symmetry property; in the next section we shall apply our results to the theory of spectral sets of commuting pairs of operators.

In \cite[Theorem 1.20]{agmc_vn} Agler and McCarthy described all sets in the bidisc that have the \npep.  The following statement is contained their result.

\begin{theorem}\label{jim&john}
An algebraic set $V$ in $\d^2$ has the \npep \ if and only if $V$ has one of the following forms.
\begin{enumerate}[\rm (1)]
\item $V=\{\la\}$ for some $\la\in\d^2$;
\item $V=\d^2$;
\item $V=\{(z, f(z)):z\in\d\}$ for some $f\in\d(\d)$;
\item $V=\{(f(z),z):z\in\d\}$ for some $f\in\d(\d)$.
\end{enumerate}
\end{theorem}
The authors also generalized the \npep \ in \cite[Definition 1.2]{agmc_vn} as follows. 
\begin{definition}\label{Aextprop}
 Let $\Omega$ be a domain of holomorphy, $V$ be a subset of $\Omega$ and $A$ be a collection of bounded holomorphic functions on $V$.  Then $V$ is said to have the {\em $A$-extension property} (relative to $\Omega$) if, for every $f\in A$, there is a bounded holomorphic function $g$ on $\Omega$ such that $g|V=f$ and
\[
\sup_\Omega |g| = \sup_V |f|.
\]
\end{definition}
\index{extension property! $A$-}
Let $V$ be a symmetric algebraic set in $\d^2$ (`symmetric' meaning that $(\la^1,\la^2)\in V$ implies that $(\la^2,\la^1) \in V$). 
\index{set!symmetric in $\d^2$}
\index{function!symmetric}
 Let $H^\infty_\sym(V)$ denote the algebra of bounded holomorphic functions $g$ on $V$ which are symmetric, in the sense that $g(\la^1,\la^2)= g(\la^2,\la^1)$ for all $(\la^1,\la^2)  \in V$. \label{HinfV}
\index{$H^\infty_\sym(V)$}
 We say that $V$ has the {\em symmetric extension property} if $V$ has the $H^\infty_\sym(V)$-extension property.
\index{extension property!symmetric}
In this section we shall describe all symmetric algebraic sets in $\d^2$ that have the symmetric extension property.  It is striking that there are three species of set in $\d^2$ that have the symmetric extension property but do not resemble any of the types in Theorem \ref{jim&john}.

The symmetric extension property in $\d^2$ is closely related to the \npep \ in $G$.  We shall denote by $t$ the transposition map $t(\la^1,\la^2)=(\la^2,\la^1)$.
\index{transposition map}
\begin{lemma}\label{exprops}
A symmetric subset $V$ of $\d^2$ has the symmetric extension property if and only if $\pi(V)$ has the \npep \ in $G$.
\end{lemma}
\begin{proof}
Note that since $V$ is symmetric, $V=\pi^{-1}(\pi(V))$.

Suppose that $V\subseteq \d^2$ has the symmetric extension property.  Let $f$ be a bounded holomorphic function on $\pi(V)$; then $f\circ \pi$ is a bounded symmetric function on $V$.  Moreover, $f\circ\pi$ is holomorphic on $V$.  For consider any $\la\in V$.  Since $f$ is holomorphic, there is a neighborhood $U$ of $\pi(\la)$ in $G$ and a holomorphic function $g$ on $U$ which agrees with $f$ on $U\cap \pi(V)$.  Then $g\circ\pi$ is a holomorphic function on the neighborhood $\pi^{-1}(U)$ of $\la$ which agrees with $f\circ\pi$ on $\pi^{-1}(U)\cap V$.  Thus $f\circ\pi$ is a symmetric bounded {\em holomorphic} function on $V$.  Since $V$ has the symmetric extension property, there exists a bounded holomorphic function $f_1$ on $\d^2$ which agrees with $f\circ\pi$ on $V$ and satisfies
\[
\sup_{\d^2} |f_1| = \sup_V |f\circ \pi|.
\]
Let $\tilde f= \half (f_1+ f_1\circ t)$.  Then $\tilde f$ is a symmetric bounded holomorphic function on $\d^2$ which agrees with $f\circ\pi$ on $V$ and satisfies  
\[
\sup_{\d^2} |\tilde f| = \sup_V |f\circ \pi|.
\]
There exists a holomorphic function $F$ on $G$ such that $\tilde f=F\circ\pi$.  Then $F$ agrees with $f$ on $\pi(V)$ and
\[
\sup_G |F|= \sup_{\d^2} |\tilde f| = \sup_V |f\circ\pi|= \sup_{\pi(V)} |f|.
\]
Hence $\pi(V)$ has the \npep \  in $G$.

Conversely, suppose that $\pi(V)$ has the \npep \ in $G$.  
We must show that $V$ has the symmetric extension property.  To this end consider any bounded symmetric holomorphic function $f$ on $V$.  
There exists a unique function $F$ on $\pi(V)$ such that $f= F\circ \pi$.  We claim that $F$ is holomorphic on $\pi(V)$.  

Consider any $s\in\pi(V)$ and let $\mu\in V$ be a preimage of $s$ under $\pi$. 
  Since $f$ is holomorphic on $V$ there exists a neighborhood $U$ of $\mu$ in $\d^2$ and a holomorphic function  $g$ on $U$ such that $f$ and $g$ agree on $U\cap V$.  We claim that both $U$ and $g$ can be taken to be symmetric.  Indeed,  if $t(\mu)=\mu$ then define $U_1$ to be $U\cap t(U)$ and let $g_1=\half(g+g\circ t)$; then $U_1$ is a symmetric neighborhood of $\mu$ and $g_1$ is a symmetric holomorphic function on $U_1$ such that $g_1$ and $f$ agree on $ U_1\cap V$.  If $t(\mu)\neq \mu$, let $U_0$ be an open ball centred at $\mu$ and contained in $U$ such that $U_0$ is disjoint from $ t(U_0)$, let $U_1=U_0\cup t(U_0)$ and extend $g$ to a symmetric holomorphic function on $U_1$ by $g_1(\la)=g\circ t(\la)$ for $\la\in t(U_0)$.  In either case $U_1$ is a symmetric neighborhood of $\mu$ and $g_1$ agrees with $f$ on $U_1\cap V $.

  The symmetric function $g$ determines a holomorphic function $H$ on the neighborhood $\pi(U)$ of $s$ such that $g=H\circ \pi$ on $U$.  Then $H$ is a holomorphic function on the neighborhood $\pi(U)$ of $s$ such that $H$ and $F$ agree on $\pi(U)\cap\pi(V)$.  Thus $F$ is bounded and holomorphic on   $\pi(V)$.

Since $\pi(V)$ has the \npep, there exists a bounded holomorphic function $\tilde F$ on $G$ which extends $F$ and satisfies
\[
\sup_G |\tilde F|= \sup_{\pi(V)} |F|= \sup_{V} |f|.
\]
Then $\tilde F \circ \pi$ is a bounded holomorphic function on $\d^2$ that extends $f$ and has the same supremum norm as $f$.
Hence $V$ has the symmetric extension property in $\d^2$.
\end{proof}
\begin{lemma}\label{piValg}
If $V$ is a symmetric algebraic set in $\d^2$ then $\pi(V)$ is an algebraic set in $G$.
\end{lemma}
\begin{proof}
  Suppose that 
\[
V= \{ \la\in\d^2: f(\la)=0 \mbox{ for all } f\in S\}
\]
for some set $S \subseteq \c[\la^1,\la^2]$.  Since $V$ is symmetric, for $\la\in\d^2$ we have $\la\in V$ if and only if $t(\la)\in V$.  Hence $\la\in V$ if and only if both $(f+f\circ t)(\la)=0$ and $(f-f\circ t)^2(\la)=0$ for all $f\in S$.  On replacing $S$ by the set
\[
\{ f+f\circ t : f\in S\} \cup \{(f-f\circ t)^2: f\in S\},
\]
we may assume that $S$ is a set of {\em symmetric} polynomials, and so can be expressed in the form 
\[
S=\{ F\circ \pi: F\in S^\flat\}
\]
for some set $S^\flat\subseteq \c[s^1,s^2]$.  Then 
\[
\pi(V)=\{s\in G: F(s)=0 \mbox{ for all } F\in S^\flat\}.
\]
Hence $\pi(V)$ is an algebraic set.
\end{proof}
Recall that the notion of a balanced disc in $\d^2$ was given in Definition \ref{def_bal_datum_D2}.  It is a subset $D$ of $\d^2$ having the form $D=\{(z,m(z)):z\in\d\}$ for some $m\in\aut\d$.   
\begin{theorem}\label{allsymexprop}
A symmetric algebraic set $V$ in $\d^2$ has the symmetric extension property if and only if 
one of the following six alternatives holds.
\begin{enumerate}[\rm (1)]
\item  $V=\{\la,t(\la)\}$ for some $\la\in\d^2$;
\item $V=\d^2$;
\item $V= D\cup t(D)$ for some balanced disc $D$ in $\d^2$ such that $D^-$ meets the set $\{(z,z): z\in\t\}$;
\item   $V=V_\beta$ for some $\beta\in\d$,  where
\be\label{defVbeta}
V_\beta \df \{(z,w)\in\d^2:  z+w= \beta + \bar\beta zw\};
\ee
\item $V=\Delta\cup V_\beta $ for some $\beta\in\d$, where $\Delta=\{(z,z): z\in\d\}$;
\item $V=V_{m,r}$ for some $r\in(0,1)$ and $m\in\aut\d$, where
\be\label{Vmr}
V_{m,r}\df\{(z,w)\in\d^2: H_r(m(z),m(w))=0\}
\ee 
and
\be\label{defHr}
H_r(z,w) \df  2zw(r(z+w)+2-2r) - (1+r)(z+w)^2 + 2r(z+w).
\ee
\end{enumerate}
Moreover, the six types of sets $V$ in {\rm (1)} to {\rm (6)} are mutually exclusive.
\end{theorem}
\index{theorem!symmetric algebraic sets in $\d^2$ with the symmetric extension property}
\begin{proof}
Let $V$ be a symmetric algebraic set in $\d^2$.  By Lemma \ref{piValg}, $\pi(V)$ is an algebraic set in $G$.
By Lemma \ref{exprops}, $V$ has the symmetric extension property if and only if 
$\pi(V)$ has the \npep \ in $G$.  By Theorem \ref{main}, $\pi(V)$ has the \npep \  if and only if either $\pi(V)$ is a retract in $G$ or $\pi(V)=\calr\cup\cald$ for some flat geodesic $\cald$ of $G$.  

Suppose that $V$ has the symmetric extension property. Consider the two cases (i) $\pi(V)$ is a retract and (ii)  $\pi(V)=\calr\cup\cald$ for some flat geodesic $\cald$.

In case (ii), by Proposition \ref{calfbeta}, $\cald = \calf_\beta$ for some $\beta\in\d$, and so $V=\pi^{-1}(\calr\cup \calf_\beta)= \Delta\cup V_\beta$, as in alternative (5) of the theorem.

In case (i), by Theorem \ref{main2}, the retracts in $G$ are the singleton sets, $G$ itself and the complex geodesics in $G$. If $\pi(V)$ is either a singleton or $G$ then alternative (1) or (2) respectively holds.

The remaining possibility in case (i) is that $\pi(V)$ is a complex geodesic in $G$, so that $\pi(V)=\cald_\de$ for some \nd datum $\de$, of one of the five types described in Definition \ref{extdef10}.   If $\de$ is flat then $\cald = \calf_\beta$ for some $\beta\in\d$, and $V=V_\beta$, so that alternative (4) holds.
By Proposition \ref{prop3.13}, if $\de$ is balanced then there exists a balanced disc $D$ in $\d^2$ such that $\pi(D)=\pi(V)$, and hence $V=D \cup t(D)$.   Moreover, if $k$ solves $\Kob(\de)$, then $k(z)=(z+m(z),zm(z))$ for some $m\in\aut\d$,  and the disc $D$ is given by $\{(z,m(z)):z\in \d\}$.
Furthermore, since $k$ has a royal node in $\t$, the disc $D^-$ meets $\{(z,z):z\in\t\}$. Thus alternative (3) holds.

There remains the case that $\pi(V)$ is a purely unbalanced geodesic.  By Theorem \ref{formgeos}, then $\pi(V) \sim k_r(\d)$ for some $r\in (0,1)$, where
\be\label{formkrbis}
k_r(z) = \frac{1}{1-r z}\left(2(1-r)z, z(z-r)\right) \quad \mbox{ for all } z\in\d.
\ee
Let $h_r(z)=k_r(-z)$.  Then $h_r(\d)=k_r(\d)$ and one finds that $\Phi_1\circ h_r=\id{\d}$.  We may apply Proposition \ref{retprop10} to write $h_r(\d)$ as the zero set of a polynomial.  We have $h_r= \tilde q_r/q_r$, where
\[
q_r(z)=1+rz, \qquad \tilde q_r(z)= z(z+r).
\]
As in equation \eqref{defP}, let
\begin{align*}
P_r(s)&= (2-\omega s^1)^2 \; \frac{ \tilde q_r\circ \Phi_\omega(s) - s^2 q_r\circ\Phi_\omega(s)}{s^2-\bar\omega^2} \\
	&= 2s^2(rs^1+2-2r) - (1+r)(s^1)^2 + 2rs^1.
\end{align*}
By Proposition \ref{retprop10}, $k_r(\d)= G\cap P_r^{-1}(0)$.  Hence $\pi(V) = G\cap  \tilde m^{-1}(P_r^{-1}(0))$ for some $m\in\aut\d$.
That is, there exist $r\in(0,1)$ and $m\in\aut\d$ such that $(z,w) \in V$ if and only if
\[
P_r ( m(z)+m(w),m(z)m(w)) =0.
\]
Since the function $H_r$ is defined in equation \eqref{defHr} so that $H_r(z,w)=P_r(z+w,zw)$, we have $V=V_{m,r}$, and alternative (6) of the theorem holds.

We have shown that if $V$ has the symmetric extension property then $V$ is of one of the forms (1) to (6).  We prove the converse.  If either of alternatives (1) or (2) holds, then $\pi(V)$ is a singleton or $G$ respectively and so $\pi(V)$ has the \npep.  If (5) holds then $\pi(V)= \calr\cup \calf_\beta$, and so, by Theorem \ref{main}, $\pi(V)$ has the \npep \ in $G$. 

We claim that, in the remaining cases (3), (4) and (6), $\pi(V)$ is a complex geodesic in $G$.

If (3) holds then  $D=\{(z,m(z)):z\in\d\}$ for some $m\in\aut\d$ having a fixed point in $\t$.   The map  $k(z)=(z+m(z),zm(z))$ is a complex C-geodesic and so $\pi(V)=\pi(D)=k(\d)$ is a geodesic of degree $2$.

If (4) holds and $V=V_\beta$ for some $\beta \in\d$, then $\pi(V)$ is the flat geodesic $\calf_\beta$.

 If (6) holds and $V=V_{m,r}$ for some $m\in\aut\d$ and $r\in (0,1)$, then $\pi(V)\sim k_r(\d)$, where $k_r$ is given by equation \eqref{formkrbis}, and so, by Theorem \ref{formgeos}, $\pi(V)$ is a (purely unbalanced) geodesic in $G$.  Thus $\pi(V)$ has the \npep \ in $G$. 

In each of the six cases $\pi(V)$ has the \npep \ in $G$, and therefore
$V$ has the symmetric extension property in $\d^2$.

To see that the classes (1) to (6) of sets $V$ in $\d^2$ with the symmetric extension property are mutually exclusive, one can simply consider the corresponding sets $\pi(V)$.  Our reasoning shows that, in cases (1) to (6), the corresponding set $\pi(V)$ is respectively (1) a singleton (2) $G$ (3) an exceptional, purely balanced or royal geodesic (4) a flat geodesic (5) $\calr\cup \calf_\beta$ for some $\beta$ and (6) a purely unbalanced geodesic.
\end{proof}

\chapter{Applications to the theory of spectral sets}\label{AvonN}

Two fundamental results in the harmonic analysis of operators on Hilbert space \cite{nf} are the inequalities of von Neumann and And\^o \cite{neu,and63}, which can be expressed by the statements
\begin{enumerate}[\rm (1)]
\item the closed unit disc is a spectral set for every contraction, and
\item the closed unit bidisc is a spectral set for every pair of commuting contractions.
\end{enumerate}
In this chapter an {\em operator} means a bounded linear operator on a Hilbert space, and a {\em contraction} means an operator of norm at most $1$.
\index{operator}
\index{contraction}
A {\em spectral set} for a commuting $n$-tuple $T$ of operators is a set $V\subseteq \c^n$ such that $\si(T) \subseteq V$ and, for every holomorphic function $f$ in a neighborhood of $V$,
\[
\|f(T)\| \leq \sup_V |f|.
\]
\index{set!spectral}
The terminology {\em spektralische Menge} is due to von Neumann \cite{neu}.  Spectral sets for commuting tuples of operators are commonly defined to be closed subsets of $\c^n$, but in \cite{agmc_vn} the notion was broadened considerably, with the goal of a refinement of And\^o's and von Neumann's inequalities.

\begin{definition}\label{subordinate} 
Let $V$ be a subset of $\c^n$ and $T$ be an $n$-tuple of commuting operators on a Hilbert space. $T$ is {\em subordinate} to $V$ if the spectrum $\sigma(T)$ is a subset of $V$ and every holomorphic function on a neighborhood of $V$ that vanishes on $V$ annihilates $T$.
\end{definition}
\index{subordinate}
Clearly, if $T$ is subordinate to $V$ and $g$ is the restriction to $V$ of a holomorphic function $f$ on a neighborhood of $V$ then we may uniquely define $g(T)$ to be $f(T)$, where $f(T)$ is defined by the Taylor functional calculus.  We denote by $H^\infty(V)$ the algebra of functions $f|V$ where $f$ is bounded and holomorphic in some neighborhood $U_f$ of $V$.  Thus, if $T$ is subordinate to $V$ then the map $g \mapsto g(T)$ is a functional calculus for $H^\infty(V)$.
\index{$H^\infty(V)$}

\begin{definition}\label{Aspecset} 
Let $V\subseteq\c^n$, let $A\subseteq H^\infty(V)$ and let $T$ be an $n$-tuple of commuting operators. $V$ is an {\em $A$-spectral set} for $T$ if $T$ is subordinate to $V$ and, for every $f\in A$,
\be\label{specsetcond}
\|f(T)\| \leq \sup_V |f|.
\ee
\end{definition}
\index{set!$A$-spectral}
Another formulation of Ando's inequality is that $\d^2$ is a spectral set for any commuting pair of contractions whose joint spectrum is contained in $\d^2$.  Isolating the role of $\d^2$ in this statement and generalizing it to arbitrary subsets of $H^\infty(V)$, in \cite[Definition 1.12]{agmc_vn} the authors introduced the following notion.
\begin{definition}\label{A-von_N_set_D2} 
Let $V \subseteq \c^2$ and let $A \subseteq {H}^\infty(V)$. Then $V$ is an {\em $A$-von Neumann set } if the inequality
\[
\| f(T)\| \le \sup_{V} |f|
\]
holds for all $f \in A$ and all pairs $T$ of commuting contractions which are subordinate to $V$, in the sense of Definition \ref{subordinate}. 
\end{definition}
\index{set!$A$-von Neumann}
Thus $V$ is an $A$-von Neumann set if $V$ is an $A$-spectral set for every pair $T$ of commuting contractions which is subordinate to $V$.

One of the main results of \cite{agmc_vn}, Theorem 1.13, states that if $V\subseteq \d^2$ and $A\subseteq H^\infty(V)$ then $V$ is an  $A$-von Neumann set if and only if  $V$ has the $A$-extension  property relative to $\d^2$.
In the case that $V$ is a symmetric subset of $\d^2$ and $A$ is the algebra $H^\infty_\sym(V)$ (see page \pageref{HinfV}),
Theorem \ref{allsymexprop} enables us to give an explicit description of the $A$-von Neumann sets in $\d^2$.

\begin{theorem}\label{allsymexprop-appl}
Let $V$ be a symmetric algebraic set in $\d^2$. Then $V$ is an $H^\infty_\sym(V)$-von Neumann set if and only if $V$ has one of the  the six forms {\rm (1)} to {\rm(6)} in Theorem \ref{allsymexprop}.
\end{theorem}
\index{theorem!symmetric algebraic sets in $\d^2$ which are $H^\infty_\sym(V)$-von Neumann sets}
\begin{proof} By \cite[Theorem 1.13]{agmc_vn},  $V$ is an  $H^\infty_\sym(V)$-von Neumann set in $\d^2$ if and only if  $V$ has the $H^\infty_\sym(V)$-extension  property relative to $\d^2$.  By Theorem \ref{allsymexprop}, the latter is so if and only if $V$ has one of the six forms (1) to (6) in the theorem.
\end{proof}

The $A$-von Neumann sets of Definition \ref{A-von_N_set_D2} are very much tied to the bidisc.  One can define a similar notion for other subsets of $\c^2$.
Let us illustrate with the symmetrized bidisc.
\begin{definition}\label{gaAvnset}
A pair $T$ of commuting bounded linear operators is a {\em $\Gamma$-contraction} 
if $\Gamma$ is a spectral set for $T$. 

Let $V\subseteq G$ and let $A\subseteq H^\infty(V)$.  Then $V$ is a {\em $(G, A)$-von Neumann set} if
$V$ is an $A$-spectral set for every $\Ga$-contraction $T$ subordinate to $V$.
\end{definition}
\index{$\Ga$-contraction}
\index{set!$(G,A)$-von Neumann}

\begin{theorem}\label{A-ext=A-von_N}  
Let $V\subseteq G$ and let $A\subseteq H^\infty(V)$. Then
$V$ is a $(G,A)$-von Neumann set if and only if $V$ has the $A$-extension property relative to $G$. 
\end{theorem}
\index{theorem!$(G,A)$-von Neumann sets are sets with $A$-extension property relative to $G$}
\begin{proof} Suppose that $V$ has the $A$-extension property relative to $G$. Consider any $f \in A$. There exists $g \in {H}^\infty(G)$ such that $g|V=f$ and
\be\label{spect_ineq_A_G}
\sup_G |g| = \sup_V |f|.
\ee
Let $T$ be a $\Gamma$-contraction subordinate to $V$:  then
\[
\|g(T)\| \leq \sup_G |g|,
\]
and so,by equation  \eqref{spect_ineq_A_G},
\be \label{f-Gamma-cont}
\| f(T)\| = \| g(T)\| \leq \sup_{\Gamma} |g |= \sup_{V} |f |.
\ee
This inequality holds for all $f\in A$, and so $V$ is an $A$-spectral set for $T$.
Since this is true for every $\Ga$-contraction $T$ subordinate to $V$, it follows that $V$ is a $(G,A)$-von Neumann set.

Conversely, suppose that $V$ is a $(G,A)$-von Neumann set.
We claim that $\pi^{-1}(V)$ is  an  $A\circ \pi$-von Neumann set contained in $\d^2$. Consider a commuting pair of contractions $T$
subordinate to $\pi^{-1}(V)$. Then $\pi(T)$ is a $\Ga$-contraction and 
\[
\sigma(\pi(T))=\pi(\si(T)) \subseteq \pi(\pi^{-1}(V))=V.
\]
 Let us show that  $\pi (T)$ is subordinate to $V$.
Consider $g$ holomorphic on a  neighborhood of $V$ and such that $g|V=0$. Then $g\circ\pi$ is holomorphic on a  neighborhood of $\pi^{-1}(V)$
and is zero on $\pi^{-1}(V)$. Since $T$ is 
subordinate to $\pi^{-1}(V)$, $ g\circ\pi(T) =0$ and hence $\pi (T)$ is subordinate to $V$.

By assumption, $V$ is a $(G,A)$-von Neumann set, and so $V$ is an $A$-spectral set for $\pi(T)$.  Therefore
\be\label{spect_ineq_A}
\| f\circ \pi(T)\| \le \sup_{V} |f |= \sup_{\pi^{-1}(V)} |f\circ \pi|
\ee
for all $f \in A$. Thus, for all $F \in A \circ \pi$, 
\be\label{spectA}
\| F(T)\| \le \sup_{\pi^{-1}(V)} |F|
\ee
whenever $T$ is a  commuting pair of contractions 
subordinate to $\pi^{-1}(V)$.
That is,  $\pi^{-1}(V)$ is  an  $A\circ \pi$-von Neumann set in $\d^2$.

By  \cite[Theorem 1.13]{agmc_vn}, a subset $W$ of $\d^2$ is an $A\circ \pi$-von Neumann set if and only if  $W$ has the $A \circ \pi$-extension  property relative to $\d^2$. On applying this result to $W=\pi^{-1} (V)$ we deduce that $\pi^{-1} (V)$ has the $A \circ \pi$-extension  property relative to $\d^2$.  It follows that $V$ has the $A$-extension property relative to $G$. Indeed, consider $f \in A \subseteq{H}^\infty(V)$.
Since $\pi^{-1} (V)$ has the $A \circ \pi$-extension  property in $\d^2$,
there exists a bounded holomorphic function $g$ on $\d^2$ which agrees with $f\circ\pi$ on $\pi^{-1} (V)$ and satisfies
\[
\sup_{\d^2} |g| = \sup_{\pi^{-1} (V)} |f\circ \pi| = \sup_{V} |f|.
\]
Let $\tilde g= \half (g+ g\circ t)$.  Then $\tilde g$ is a symmetric bounded holomorphic function on $\d^2$ which agrees with $f\circ\pi$ on $\pi^{-1} (V)$ and satisfies  
\[
\sup_{\d^2} |\tilde g| = \sup_{\pi^{-1} (V)} |f\circ \pi|.
\]
There exists a holomorphic function $F$ on $G$ such that $\tilde g=F\circ\pi$.  Then $F$ agrees with $f$ on $V$ and
\[
\sup_G |F|= \sup_{\d^2} |\tilde g| = \sup_{\pi^{-1} (V)} |f\circ\pi|= \sup_{V} |f|.
\]
Hence $V$ has the $A$-extension property relative to $G$.
\end{proof}

The same theorem is stated in \cite{bhatta2}.

In the event that $A=H^\infty(V)$ for a subset $V$ of $G$, we can describe all $(G,A)$-von Neumann sets.

\begin{theorem}\label{spectral_sets_G} 
Let $V$ be an algebraic subset of $G$. Then
 $V$ is a  $(G,{H}^\infty(V))$-von Neumann set in $G$ if and only if either $V$ is a retract in $G$ or $V=\calr\cup \cald$ for some flat geodesic $\cald$ in $G$,  where $\calr$ is the royal variety. 
\end{theorem}
\index{theorem!algebraic sets in $G$ which are $(G,{H}^\infty(V))$-von Neumann sets}
\begin{proof} By Theorem \ref{A-ext=A-von_N}, $V$ is a  $(G,{H}^\infty(V))$-von Neumann set if and only if $V$ has the ${H}^\infty(V)$-extension property relative to $G$. By Theorem \ref{main}, the latter holds if and only if  either $V$ is a retract in $G$ or $V=\calr\cup \cald$ for some flat geodesic $\cald$ in $G$. 
\end{proof}

\chapter[Anomalous sets with the extension property in  other domains]{Anomalous sets with the norm-preserving extension property in some other domains}\label{SpecBall}

At little extra cost we can deduce from Theorems \ref{main2} and  \ref{main} that there are anomalous sets having the \npep\ in some other domains besides $G$ -- to wit, domains containing $G$ as a retract.    In particular, the statement holds for the $2\times 2$ spectral ball, the tetrablock and the pentablock, defined below.

\begin{lemma}\label{OGretract}
If $\Omega$ is a domain in $\c^d$ such that $G$ can be embedded in $\Omega$ as a retract then there exist sets which have the \npep\  but are not retracts in $\Omega$. 
\end{lemma}
\begin{proof}
$G$ can be embedded in $\Omega$ as a retract if and only if there exist $\iota\in\Omega(G)$ and $\kappa\in G(\Omega)$ such that $\kappa$ is a left inverse of $\iota$.

Suppose such maps $\iota$ and $\kappa$ exist. 
  Let $\cald$ be a flat geodesic in $G$ and let $V=\calr\cup \cald$.  By Theorem \ref{main}, $V$ is a subset of $G$ that has the \npep.  By Lemma \ref{npepretr} below, $\iota(V)$ has the \npep\ in $\Omega_2$.

However, $\iota(V)$ is not a retract of $\Omega$.  For suppose that $\rho$ is a retraction in $\Omega$ with range $\iota(V)$.
Let 
\[
\rho_1= \kappa\circ\rho\circ\iota: G\to G;
\]
then
\[
\rho_1\circ\rho_1= \kappa\circ\rho\circ(\iota\circ\kappa)\circ\rho\circ\iota.
\]
Now $\iota\circ\kappa$ acts as the identity mapping on $\iota(G)$, and so {\em a fortiori} on $\ran \rho$.  Hence
\begin{align*}
\rho_1\circ\rho_1&= \kappa\circ\rho\circ\rho\circ\iota \\
	&=\kappa\circ\rho\circ\iota \\
	&=\rho_1.
\end{align*}
Thus $\rho_1$ is a retraction on $G$ with range $\kappa\circ\iota(V)$, which is $V$.
Therefore $V$ is a nontrivial retract in $G$.  This contradicts Theorem \ref{main2}, since $V$ is clearly not a geodesic (it is not homeomorphic to a disc).  Hence $\iota(V)$ is not a retract of $\Omega_2$.
\end{proof}
\begin{lemma}\label{npepretr}
Let $U_1, U_2$ be domains and suppose that $\iota\in U_2(U_1), \, \kappa\in U_1(U_2)$ satisfy $\kappa\circ\iota=\id{U_1}$.  If a subset $V$ of $\;U_1$ has the \npep\ in $U_1$ then $\iota(V)$ has the \npep\ in $U_2$.
\end{lemma}
\begin{proof}
Consider a bounded holomorphic function $f$ on $\iota(V)$.  Then $f\circ \iota$ is bounded and holomorphic on $V$, and so has a norm-preserving extension $g$ to $U_1$.  Thus $g|V=f\circ \iota$ and
\[
\sup_{U_1} |g|=\sup_V |f\circ \iota| = \sup_{\iota(V)} |f|.
\]
Now $g\circ \kappa$ is bounded and holomorphic on $U_2$, and
\[
\sup_{U_2} |g\circ\kappa|=\sup_{U_1} |g| =\sup_{\iota(V)} |f|.
\]
Moreover, for any point of $\iota(V)$, say $\iota(z)$ where $z\in V$,
\[
g\circ\kappa(\iota(z))=g\circ\kappa\circ\iota(z)=g(z)=f\circ\iota(z).
\]
Hence $g\circ\kappa$ is a norm-preserving extension of $f$ to $U_2$.
\end{proof}

The {\em $n\times n$ spectral ball} is the set
\[
\Omega_n \df \{ A\in \c^{n\times n}: r(A) < 1\}
\]
where $r(\cdot)$ denotes the spectral radius of a square matrix.  
\index{spectral ball}
\index{$\Omega_n$}
The domain $\Omega_n$ is of interest principally in the theory of invariant distances and metrics \cite{jp,nipf,nithzw,andrist,andkut}.
Holomorphic interpolation from $\d$ to $\Omega_n$, the `spectral Nevanlinna-Pick problem', is a test case of the $\mu$-synthesis problem of $H^\infty$ control (see for example \cite{bft,Y}).
\index{spectral Nevanlinna-Pick problem}

The symmetrized bidisc was introduced as a tool for the study of $\Omega_2$, and so it is natural to consider the implications of our main theorems for the function theory of $\Omega_2$.

There are many ways in which $G$ can be embedded as a retract in $\Omega_2$.  Let $\kappa:\c^{2\times 2} \to \c^2$ be defined by $\kappa(A)=(\tr A, \det A)$.  Clearly a matrix $A\in\c^{2\times 2}$ belongs to $\Omega_2$ if and only if $\kappa(A) \in G$.  For any holomorphic function
\be\label{FF}
F:G\to GL_2(\c)
\ee
define a holomorphic map $\iota_F:G \to \Omega_2$ by
\be\label{iotaF}
\iota_F(s)= F(s)^{-1}\bbm 0&1\\ -s^2 & s^1 \ebm F(s).
\ee
Then $\kappa\circ\iota_F=\id{G}$, and so $\iota_F$ is injective and $\iota_F\circ\kappa$ is a retraction in $\Omega_2$.  Thus $\iota_F(G)$ is a retract in $\Omega_2$.

The {\em tetrablock} is the domain
\[
\mathcal{E} \df \{ (a_{11},a_{22}, \det A): A=\bbm a_{ij}\ebm_{i,j=1}^2, \ \|A\| <1\}
\]
in $\c^3$.  There are many equivalent characterizations of this domain \cite{awy}.
\index{$\mathcal E$}
\index{tetrablock}
It is easy to show that the map $\iota: G \to \mathcal E$ given by 
\[
\iota(s)=(\half s^1,\half s^1,s^2)
\]
is a holomorphic injection with left inverse
\[
\kappa (z)= (z^1+z^2, z^3)
\]
that maps $\mathcal E$ to $G$.  Thus $E$ contains $G$ as a retract.

The {\em pentablock} is the domain
\[
\mathcal{P}= \{ (a_{21}, \tr A, \det A):  A=\bbm a_{ij}\ebm_{i,j=1}^2, \ \|A\| <1\}
\]
in $\c^3$. 
\index{$\mathcal P$}
\index{pentablock}
The pentablock also admits numerous characterizations \cite{penta}.  The map $\iota: G\to \mathcal P$ given by
\[
\iota(s)=(0,s^1,s^2)
\]
has a left inverse $\kappa:\mathcal{P}\to G$ given by $\kappa(z^1,z^2,z^3)=(0,z^2,z^3)$.   Thus $G$ is embeddable as a retract in $\mathcal P$.

The tetrablock and the pentablock are both of interest in the theory of invariant distances \cite{jp} and in the analysis of special cases of the $\mu$-synthesis problem \cite{Y,penta}.

The following statement is an immediate consequence of Lemma \ref{OGretract}.
\begin{theorem} \label{3domains}
The $2\times 2$ spectral ball $\Omega_2$, the tetrablock and the pentablock all contain sets having the \npep\ which are not retracts in the respective domains.
\end{theorem}
\begin{remark} For a domain $U$ in $\c^N$, $ N \ge 3$, one cannot expect  all nontrivial retracts to be complex geodesics. For example, let $\Omega$ be one of the following domains: the $2\times 2$ spectral ball $\Omega_2$, the tetrablock and the pentablock. Then 
the image of $G$ under $\iota$ in $\Omega$  is a retract which is not a complex geodesic (since it has complex dimension $2$). 
\end{remark}

\appendix
\chapter{Some useful facts about the symmetrized bidisc}\label{appendix}
In this appendix we gather together some established properties of the symmetrized bidisc $G$ relevant to the paper, with appropriate citations.   We also prove analogs for infinitesimal datums of some  statements in the literature about discrete datums.  An alternative source for many of these results is \cite[Chapter 7]{jp}.

\section{Basic properties of $G$ and $\Gamma$}\label{basic}
There are numerous criteria for a point of $\c^2$ to belong to $G$.
\begin{proposition}\label{elG}
The following statements are equivalent for any $s=(s^1,s^2) \in\c^2$.
\begin{enumerate}[\rm (1)]
\item $s\in G$ (that is, there exist $z, w\in\d$ such that $s=(z+w,zw)$);
\item $|s^1 - \overline{s^1} s^2| < 1-|s^2|^2$;
\item $|\Phi_\omega(s)| < 1$ for all $\omega\in\t$;
\item there exists $\beta\in \d$ such that $s^1=\beta +\bar\beta s^2$;
\item  $2|s^1-\overline{s^1}s^2| + |(s^1)^2-4s^2| < 4-|s^1|^2$.
\end{enumerate}
\end{proposition}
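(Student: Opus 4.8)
The plan is to route every equivalence through condition (2), which I regard as the analytic heart of the statement. The single most useful preliminary move is to pass to the two roots $z,w\in\c$ of the quadratic $t^2-s^1t+s^2$, so that $s^1=z+w$ and $s^2=zw$ \emph{whether or not} $s\in G$, and then to record three algebraic identities. Writing $X\df s^1-\overline{s^1}s^2$, direct expansion gives
\[
X=z(1-|w|^2)+w(1-|z|^2),
\]
\[
1-|s^2|^2=(1-|z|^2)+(1-|w|^2)-(1-|z|^2)(1-|w|^2),
\]
and, using $(s^1)^2-4s^2=(z-w)^2$ together with $|z+w|^2+|z-w|^2=2|z|^2+2|w|^2$,
\[
4-|s^1|^2-|(s^1)^2-4s^2|=2\big[(1-|z|^2)+(1-|w|^2)\big].
\]
Setting $u\df1-|z|^2$ and $v\df1-|w|^2$, these say $X=zv+wu$, that (2) reads $|zv+wu|<u+v-uv$, and that (5) reads $|zv+wu|<u+v$; note also that (1) is exactly the assertion $u>0$ and $v>0$.

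For (1)$\Rightarrow$(2) and (1)$\Rightarrow$(5) I would use the triangle inequality: when $u,v>0$ we have $|zv+wu|\le|z|v+|w|u=(|z|+|w|)(1-|z||w|)$, and the bound $(|z|+|w|)(1-|z||w|)<1-|z|^2|w|^2=u+v-uv$ reduces, after dividing by $1-|z||w|>0$, to $0<(1-|z|)(1-|w|)$; since $u+v-uv\le u+v$ in this regime, (5) follows as well. The two converse directions (2)$\Rightarrow$(1) and (5)$\Rightarrow$(1) are the substantive geometric content, since they must produce genuine bidisc data, and I would prove them together by contraposition. If, say, $|z|\ge1$ (the case $|s^2|=|z||w|\ge1$ being trivial, as then $1-|s^2|^2\le0<|X|$), then the reverse triangle inequality gives $|zv+wu|\ge|z|v-|w||u|=(|z|+|w|)(1-|z||w|)\ge1-|z|^2|w|^2$, the last step again amounting to $(1-|z|)(1-|w|)\le0$; this contradicts (2), and since the same lower bound dominates $u+v$ in this regime it contradicts (5) too. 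A short symmetric case analysis then forces $u,v>0$, i.e.\ (1).

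For (2)$\Leftrightarrow$(3) I would avoid the roots entirely and use instead the purely algebraic identity, valid for all $s\in\c^2$ and all $\omega\in\t$,
\[
|2-\omega s^1|^2-|2\omega s^2-s^1|^2=4(1-|s^2|^2)-4\re(\omega X),
\]
so that $1-|\Phi_\omega(s)|^2$ has the same sign as $1-|s^2|^2-\re(\omega X)$. Since $\sup_{\omega\in\t}\re(\omega X)=|X|$, demanding $|\Phi_\omega(s)|<1$ for every $\omega$ is exactly the demand $|X|<1-|s^2|^2$, namely (2). Finally (2)$\Leftrightarrow$(4) is linear algebra: (2) forces $1-|s^2|^2>0$, after which $\beta\df X/(1-|s^2|^2)$ lies in $\d$ and solves the $\c$-linear system in $\beta,\bar\beta$ with coefficient matrix $\bbm 1 & s^2 \\ \overline{s^2} & 1\ebm$, of determinant $1-|s^2|^2$, which is equivalent to $s^1=\beta+\bar\beta s^2$; conversely that relation gives $X=\beta(1-|s^2|^2)$, whence $|X|<1-|s^2|^2$.

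The steps I expect to demand the most care are exactly the boundary cases the clean identities gloss over. In (2)$\Leftrightarrow$(3) one must treat the values of $\omega$ with $2-\omega s^1=0$, which can occur only when $|s^1|=2$; there $\Phi_\omega$ has a pole and (3) fails, and one checks that (2) fails too, so the regime $|s^1|\ge2$ is handled separately and consistently. In (2)$\Leftrightarrow$(4) the converse implication genuinely needs the information $1-|s^2|^2>0$, so one must be sure this is present rather than relying on the bare existence of $\beta$. The real work, however, is the reverse-triangle argument underlying (2)$\Rightarrow$(1) and (5)$\Rightarrow$(1); but once the identity $X=z(1-|w|^2)+w(1-|z|^2)$ is in hand, it is an elementary optimization over the relative argument of the two roots.
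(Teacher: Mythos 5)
The paper does not actually prove this proposition: the line following it reads ``The proof of this proposition can be found in \cite[Theorem 2.1]{AY04}'', so there is no internal argument to compare yours against, and a self-contained proof is genuinely worth having. For the equivalence of (1), (2), (3) and (5) your argument is correct. The three identities for $X\df s^1-\overline{s^1}s^2$, for $1-|s^2|^2=u+v-uv$, and for $4-|s^1|^2-|(s^1)^2-4s^2|=2(u+v)$ all check out; the triangle estimate gives (1)$\Rightarrow$(2) and (1)$\Rightarrow$(5); the reverse-triangle estimate gives the contrapositives; and the algebraic identity
\[
|2-\omega s^1|^2-|2\omega s^2-s^1|^2=4\bigl(1-|s^2|^2\bigr)-4\re(\omega X),
\]
together with the attainment of $\sup_{\omega\in\t}\re(\omega X)=|X|$ on the compact circle, settles (2)$\Leftrightarrow$(3) away from the locus $|s^1|=2$, which you rightly isolate as the only place where $\Phi_\omega$ can have a pole and where both (2) and (3) fail. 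Two slips are harmless: in the case $|s^2|\ge 1$ you assert $0<|X|$, which can fail (take $z=1$, $w=-1$, so $X=0$), but (2) still fails there since it demands a strict inequality against a nonpositive right-hand side; and in that same case the failure of (5) needs the one-line remark that $u+v\le 0$.

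The genuine gap is the implication (4)$\Rightarrow$(2), and it is not merely a matter of care: the step fails as written, and with it the proposition as printed in this paper. From $s^1=\beta+\bar\beta s^2$ you correctly derive $X=\beta(1-|s^2|^2)$, but this gives $|X|=|\beta|\cdot\bigl|1-|s^2|^2\bigr|$, which yields $|X|<1-|s^2|^2$ only when $1-|s^2|^2>0$ --- and nothing in condition (4), as stated here, supplies that. Indeed $s=(3/2,2)$ satisfies (4) with $\beta=1/2$ yet fails (1), (2), (3) and (5), so the bare existence of $\beta\in\d$ is strictly weaker than the other four conditions. You half-identified the issue (``one must be sure this is present rather than relying on the bare existence of $\beta$''), but the honest conclusion is stronger: no amount of added care closes this gap, because the statement itself is an imprecise transcription, and condition (4) must include the hypothesis $|s^2|<1$ --- which is how the corresponding condition reads in the source cited for the proof. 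With (4) so amended, your one-line computation completes the equivalence and your proof of the whole proposition stands; without the amendment, you should present the point $(3/2,2)$ as a counterexample rather than pretend the step can be repaired.
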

The proof of this proposition can be found in \cite[Theorem 2.1]{AY04}.
The functions $\Phi_\omega, \, \omega\in\t$, in (3) are those introduced in Definition \ref{defPhi}.

A simple calculation shows that, for any $s\in G$, the corresponding $\beta$ in (4) is unique and is
 given by
\be\label{appformbeta}
\beta = \frac{s^1-\overline{s^1}s^2}{1-|s^2|^2}.
\ee
Recall from Section \ref{cgeosdtmsG} that $\Ga$ denotes the closure of $G$ and $b\Ga$ the distinguished boundary of $\Ga$.  It means that $b\Ga$ is the Shilov boundary of the uniform algebra of continuous complex-valued functions on $\Ga$ that are analytic in $G$.  
\index{$b\Ga$} 
The following descriptions of $b\Ga$ are to be found in \cite[Theorem 2.4, equation (2.1)]{AY04}.
\begin{proposition}\label{bGam}
\begin{align*}
b\Ga&= \{(z+w,zw): |z|=1=|w|\} \\
	&=\{s\in\c^2: |s^1| \leq 2, \, |s^2|=1 \mbox{ and } s^1=\overline{s^1} s^2\}.
\end{align*}
\end{proposition}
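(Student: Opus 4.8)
The proposition asserts a chain of equalities: writing $\pi(z,w)=(z+w,zw)$ and $\pi(\ttwo)$ for the symmetrized torus, we must show that $b\Ga=\pi(\ttwo)$ and that $\pi(\ttwo)$ coincides with the semialgebraic set $\set{s\in\ctwo}{|s^1|\le 2,\ |s^2|=1,\ s^1=\overline{s^1}s^2}$. The plan is to dispose of the purely algebraic identity by a direct calculation and then to identify $b\Ga$ by the usual two inclusions.

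For the algebraic identity, the inclusion of $\pi(\ttwo)$ in the semialgebraic set is immediate: if $s=(z+w,zw)$ with $|z|=|w|=1$ then $|s^2|=1$, $|s^1|\le|z|+|w|=2$, and $\overline{s^1}s^2=(\bar z+\bar w)zw=|z|^2w+|w|^2z=z+w=s^1$. For the reverse inclusion I would write $s^2=\zeta^2$ with $\zeta\in\t$; the relation $s^1=\overline{s^1}s^2$ then shows $s^1/\zeta$ is real, and $|s^1|\le 2$ gives $s^1=t\zeta$ with $t\in[-2,2]$. Substituting $X=\zeta Y$ in the quadratic $X^2-s^1X+s^2$ reduces its zeros to those of $Y^2-tY+1$, whose roots have modulus $1$ because $t^2-4\le 0$; hence the zeros $z,w$ of $X^2-s^1X+s^2$ lie on $\t$ and $s=\pi(z,w)\in\pi(\ttwo)$.

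The inclusion $b\Ga\subseteq\pi(\ttwo)$ is the routine half. Since $\pi$ maps $\overline{\dtwo}$ onto $\Ga$ and is holomorphic on $\dtwo$, for any $f$ in the algebra of functions continuous on $\Ga$ and holomorphic on $G$ the pullback $f\circ\pi$ belongs to the bidisc algebra, whose Shilov boundary is $\ttwo$. Consequently $\sup_\Ga|f|=\sup_{\overline{\dtwo}}|f\circ\pi|=\sup_{\ttwo}|f\circ\pi|=\sup_{\pi(\ttwo)}|f|$, so the compact set $\pi(\ttwo)$ is a boundary for the algebra and hence contains the minimal closed boundary $b\Ga$.

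The reverse inclusion $\pi(\ttwo)\subseteq b\Ga$ is the real content, and I expect it to be the main obstacle, since it amounts to proving that no proper closed subset of $\pi(\ttwo)$ is a boundary. Rather than exhibit a peak function at each point, I would argue by symmetry. The Shilov boundary is invariant under every automorphism of $\Ga$, and for $m\in\aut(\d)$ the maps $\tau_m\colon\pi(z,w)\mapsto\pi(m(z),m(w))$ are automorphisms of $G$ that extend to $\Ga$ and permute $\pi(\ttwo)$. Because $\aut(\d)$ acts $2$-transitively on $\t$, the family $\{\tau_m\}$ acts transitively on the off-diagonal points $\set{\pi(z,w)}{|z|=|w|=1,\ z\ne w}=\set{s\in\pi(\ttwo)}{|s^1|<2}$, which are dense in $\pi(\ttwo)$. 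It therefore suffices to show that $b\Ga$ contains one off-diagonal point, for then invariance gives the whole off-diagonal set and closedness gives all of $\pi(\ttwo)$. To produce such a point I would observe that the diagonal circle $\set{(2z,z^2)}{|z|=1}$ is not a boundary: the polynomial $f(s)=s^2-\tfrac14(s^1)^2$, which equals $-\tfrac14(z-w)^2$ on $\Ga$, vanishes on the diagonal yet has modulus $1$ at the off-diagonal point $(0,-1)=\pi(1,-1)$. Hence $\norm{f}_\Ga$ is not attained on the diagonal, so $b\Ga$ cannot be contained in it; being nonempty and contained in $\pi(\ttwo)$, it must meet the off-diagonal set, and the symmetry argument then finishes the proof.
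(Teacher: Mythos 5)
Your proof is correct, and it takes a genuinely different route from the paper, which in fact offers no proof of Proposition \ref{bGam} at all: it defers entirely to \cite[Theorem 2.4, equation (2.1)]{AY04}. Each of your steps checks out: the two-way identification of $\pi(\ttwo)$ with the semialgebraic set (the substitution $X=\zeta Y$ reducing the quadratic to $Y^2-tY+1$ with $t\in[-2,2]$ is clean); the inclusion $b\Ga\subseteq\pi(\ttwo)$ by pulling functions back through $\pi$ to the bidisc algebra, using $\pi(\overline{\dtwo})=\Ga$; and the reverse inclusion by symmetry. That last step is where you depart from the standard treatment: the usual proof of minimality in the literature exhibits a peaking-type function at each point of the symmetrized torus, whereas you need only the single polynomial $f(s)=s^2-\tfrac14(s^1)^2$ (which vanishes exactly on the closure of the royal variety in $\Ga$ and has modulus $1$ at $(0,-1)$), together with three structural facts: the Shilov boundary is invariant under automorphisms of the uniform algebra; the maps $\tau_m$ are precisely the continuous extensions to $\Ga$ of the automorphisms $\tilde m\in\aut G$, which exist by Theorem \ref{autosG} and Corollary \ref{corautos} (your direct construction via the quotient map $\pi$ works equally well); and $\aut\d$ acts $2$-transitively on $\t$, so $\{\tau_m\}$ acts transitively on the off-diagonal points of $\pi(\ttwo)$, which are dense there. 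What the symmetry argument buys is economy --- one explicit function in place of a family of peak functions --- at the cost of invoking the automorphism group and the existence of the Shilov boundary as the minimal closed boundary; both are legitimate, and yours is a complete, self-contained replacement for the citation.
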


A {\em $\Ga$-inner function} is a holomorphic map $h:\d\to G$ such that, for almost all $\la\in\t$ with respect to Lebesgue measure, the radial limit
$\lim_{r\to 1-} f(r\la)\in b\Ga$.  Rational $\Ga$-inner functions were defined in Definition \ref{Gainner}.
\begin{proposition}\label{phiok}
If $k$ is a nonconstant $\Ga$-inner function and $\omega\in\t$ then $\Phi_\omega\circ k$ is an inner function.
\end{proposition}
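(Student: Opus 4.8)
The plan is to reduce the statement to a boundary-value computation for the explicit rational function $\Phi_\omega$. Recall from Definition \ref{defPhi} that
\be
\Phi_\omega(s)=\frac{2\omega s^2-s^1}{2-\omega s^1},\qquad s=(s^1,s^2)\in\c^2,\ \omega\in\t .
\ee
First I would observe that $\Phi_\omega$ is holomorphic on $G$: if $s=(z+w,zw)\in G$ then $|s^1|=|z+w|<2$, so the denominator $2-\omega s^1$ is nonvanishing there. By Proposition \ref{elG}(3), $|\Phi_\omega(s)|<1$ for every $s\in G$, so $f\df\Phi_\omega\circ k$ is a holomorphic self-map of $\d$; in particular $f\in H^\infty$ with $\norm{f}_\infty\le 1$. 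To prove that $f$ is inner it then suffices to show that its radial boundary values satisfy $|f^*(\la)|=1$ for almost every $\la\in\t$.

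The first substantive step is the algebraic identity that $|\Phi_\omega(s)|=1$ at every point $s\in b\Ga$ at which $\Phi_\omega$ is defined. Using the description $b\Ga=\{(z+w,zw):|z|=|w|=1\}$ from Proposition \ref{bGam}, I would substitute $\bar z=1/z$ and $\bar w=1/w$ into $\Phi_\omega(s)\overline{\Phi_\omega(s)}$ and verify that its numerator and denominator coincide; equivalently one may work directly from the characterising relations $|s^2|=1$ and $s^1=\overline{s^1}s^2$. This computation is routine and presents no real difficulty.

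Next I would pass to boundary values. Since $G$ is bounded, $k=(k^1,k^2)$ is a bounded holomorphic map, so $k^1,k^2\in H^\infty$ and, by Fatou's theorem, the radial limits $k^{1*}(\la),k^{2*}(\la)$ exist for almost every $\la\in\t$; moreover, because $k$ is $\Ga$-inner, the point $k^*(\la)=(k^{1*}(\la),k^{2*}(\la))$ lies in $b\Ga$ for almost every $\la$. Writing $f=(2\omega k^2-k^1)/(2-\omega k^1)$ on $\d$ and letting $r\to1-$ along the radius at such a $\la$, the numerator tends to $2\omega k^{2*}(\la)-k^{1*}(\la)$ and the denominator to $2-\omega k^{1*}(\la)$. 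Provided this last limit is nonzero, we conclude $f^*(\la)=\Phi_\omega(k^*(\la))$, which has modulus $1$ by the previous step.

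It remains to control the exceptional set on which $2-\omega k^{1*}(\la)=0$, and this is the only delicate point, since there $\Phi_\omega$ degenerates to a $0/0$ singularity on $b\Ga$. If $2-\omega k^{1*}(\la)=0$ then $k^{1*}(\la)=2\bar\omega$, and the relation $s^1=\overline{s^1}s^2$ valid on $b\Ga$ gives $2\bar\omega=2\omega\,k^{2*}(\la)$, that is, $k^{2*}(\la)=\bar\omega^2$. Now $k^2$ maps $\d$ into $\d$ and has unimodular boundary values almost everywhere (because $|s^2|=1$ on $b\Ga$), so $k^2$ is inner; it is moreover nonconstant, since otherwise its constant boundary value would be simultaneously unimodular and of modulus less than $1$. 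Consequently the level set $\set{\la\in\t}{k^{2*}(\la)=\bar\omega^2}$ has Lebesgue measure zero. Therefore $|f^*(\la)|=1$ for almost every $\la\in\t$, and hence $f=\Phi_\omega\circ k$ is inner, as required. The main obstacle is precisely this handling of the removable singularity of $\Phi_\omega$ on $b\Ga$, which is resolved by the nonconstancy of the inner function $k^2$.
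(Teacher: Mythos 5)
Your proof is correct and takes essentially the same route as the paper: pass to the radial limit function $k^*$, which lies in $b\Ga$ almost everywhere, and verify by the conjugation identity $\bar z=1/z$, $\bar w=1/w$ that $|\Phi_\omega|=1$ there away from the singular point $(2\bar\omega,\bar\omega^2)$. In fact you make explicit a point the paper only asserts (that $z,w$ are ``not both equal to $\bar\omega$'' for almost all $\tau$), by noting that $k^2$ is a nonconstant inner function and hence its boundary level set $\{k^{2*}=\bar\omega^2\}$ has measure zero.
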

\begin{proof}
For if $k^*$ is the radial limit function of $k$ then, for almost all $\tau\in\t$ there exist $z,w \in\t$, not both equal to $\bar\omega$, such that $k^*(\tau)=(z+w,zw)$.  Then
\begin{align*}
\left|\Phi_\omega\circ k^*(\tau)\right| &= \left|\frac{2\omega zw-z-w}{2-\omega(z+w)}         \right|\\
	&=\left| \omega zw \frac{2-\bar\omega\bar w-\bar\omega\bar z}{2-\omega z-\omega w}\right| \\
	&= 1.
\end{align*}
\end{proof}
The functions $\Phi_\omega, \, \omega\in\t$, are central to the theory of $G$.  Their most significant property is the following.
\begin{theorem}\label{Phiunivl}
The functions $\Phi_\omega, \ \omega\in\t$, constitute a universal set for the Carath\'eodory problem on $G$
\end{theorem}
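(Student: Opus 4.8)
The plan is to unwind the definition of a universal set and reduce to a sharp formula for the Carath\'eodory distance. Recall that a family $\mathcal F\subseteq\hol(G,\d)$ is universal for the Carath\'eodory problem if, for every Carath\'eodory extremal problem on $G$---both the two-point problems (given $s_0,s_1\in G$, maximise $d(f(s_0),f(s_1))$ over $f\in\hol(G,\d)$, where $d$ is the pseudohyperbolic metric) and the infinitesimal problems (given a point and a tangent vector)---some member of $\mathcal F$ is a solution. I would treat the two-point case first, the infinitesimal case following by an analogous differentiation argument in which one lets $s_1$ approach $s_0$ along the prescribed direction; it then suffices to prove that
\[
\Car(s_0,s_1)=\max_{\omega\in\t}d\bigl(\Phi_\omega(s_0),\Phi_\omega(s_1)\bigr)
\]
with the maximum attained.

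One inequality, together with the attainment, is immediate. By Proposition \ref{elG}(3) each $\Phi_\omega$ maps $G$ into $\d$, so every $\Phi_\omega$ competes in the Carath\'eodory problem and $\Car(s_0,s_1)\ge\sup_{\omega\in\t}d(\Phi_\omega(s_0),\Phi_\omega(s_1))$. Since $\omega\mapsto d(\Phi_\omega(s_0),\Phi_\omega(s_1))$ is continuous on the compact circle $\t$, this supremum is attained at some $\omega_0$, and it remains only to prove the reverse inequality, namely that no $f\in\hol(G,\d)$ beats $\Phi_{\omega_0}$.

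For the reverse inequality I would exploit that Lempert's theorem holds for $G$, so that $\Car=\Kob$ on $G$ and a complex geodesic $k\colon\d\to G$ joins $s_0$ and $s_1$. Realising the Kobayashi distance by this geodesic, the problem becomes that of producing a left inverse for $k$ of the special form $\Phi_\omega$: if $\Phi_\omega\circ k$ is an automorphism of $\d$ and $s_0=k(\la_0),\ s_1=k(\la_1)$, then $d(\Phi_\omega(s_0),\Phi_\omega(s_1))=d(\la_0,\la_1)=\Kob(s_0,s_1)=\Car(s_0,s_1)$, which forces $\Phi_\omega$ to be extremal. Proposition \ref{phiok} already guarantees that $\Phi_\omega\circ k$ is inner whenever $k$ is $\Ga$-inner, so the task reduces to locating an $\omega\in\t$ for which this inner function has degree one, i.e. is a single Blaschke factor. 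I would pin down this $\omega$ using the explicit classification of complex geodesics in $G$ modulo $\aut(G)$, reducing $k$ to a normal form in which the correct $\omega$ can be read off directly.

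The crux---and the step I expect to be the main obstacle---is this last point: showing that some $\Phi_\omega$ is genuinely a left inverse of the geodesic, equivalently that the reverse inequality $\Car(s_0,s_1)\le\max_{\omega}d(\Phi_\omega(s_0),\Phi_\omega(s_1))$ holds. An alternative, more operator-theoretic route to the same inequality is a realisation theorem for the Schur class of $\Ga$: one represents an arbitrary $f\in\hol(G,\overline\d)$ through the test functions $\{\Phi_\omega\}$---which form a defining family for $\Ga$ by Proposition \ref{elG}(3), since $s\in\Ga$ iff $|\Phi_\omega(s)|\le1$ for all $\omega\in\t$---thereby obtaining the von Neumann-type bound $d(f(s_0),f(s_1))\le\max_{\omega}d(\Phi_\omega(s_0),\Phi_\omega(s_1))$ directly. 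Either way the easy inequality and the compactness argument are routine, and all the real work lies in the reverse inequality.
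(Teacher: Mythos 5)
Your easy inequality and the compactness argument are fine, but your main route to the reverse inequality is circular. The equality $\Car=\Kob$ on $G$ is not an instance of Lempert's theorem: $G$ is not convex, and indeed cannot be exhausted by domains biholomorphic to convex ones, so this equality is itself a nontrivial theorem of Agler--Young and Costara, and its known proofs are obtained \emph{by way of} the universality of the family $\{\Phi_\omega\}$ --- one first shows $\car{\de}=\sup_{\omega\in\t}|\Phi_\omega(\de)|$ and only then constructs, for each datum, a map $k:\d\to G$ with $\Phi_\omega\circ k=\id{\d}$ for an extremal $\omega$, which simultaneously yields $\kob{\de}\le\car{\de}$. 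The same objection applies to your appeal to the classification of complex geodesics: the description of complex C-geodesics (Theorem \ref{descgeos}, taken from \cite{AY06}), in particular the fact that every geodesic admits some $\Phi_{\omega_0}$ as a left inverse up to a M\"obius map, is proved using the universality theorem; indeed that fact essentially \emph{is} the statement you are trying to prove, restated through the geodesic. So the crux step of your argument assumes the conclusion.

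The operator-theoretic alternative you mention only in closing is, in fact, the actual proof in the cited source \cite[Theorem 1.1 and Corollary 4.3]{AY04} --- the paper itself offers nothing beyond this citation, though its appendix sketches the method in the proof of the Trichotomy Theorem \ref{trichotomy}: from a datum $\de$ one builds commuting $2\times 2$ matrices $(S_\kappa,P_\kappa)$, observes that $G$ is a spectral domain for $(S_\kappa,P_\kappa)$ if and only if $\kappa\car{\de}\le 1$, and then invokes \cite[Theorem 3.2]{AY04}, which asserts that $G$ is a spectral domain for $(S,P)$ if and only if $\|\Phi_\omega(S,P)\|\le 1$ for all $\omega\in\t$; together these give $\car{\de}=\sup_\omega|\Phi_\omega(\de)|$. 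But that key step is not ``direct'': the fact that $\{\Phi_\omega\}$ is a defining family for $\Gamma$ (Proposition \ref{elG}(3)) does not by itself imply the von Neumann-type bound --- for a general domain, defining functions need not be Carath\'eodory-universal. That implication is the substantial operator-theoretic content (a realization/model argument for the test functions $\Phi_\omega$), and your sketch leaves it entirely unproved. To make your proof genuine you must either carry out that realization argument or find an independent, $\Phi_\omega$-free proof that $\Car=\Kob$ on $G$ together with a non-circular identification of a left inverse of the form $\Phi_\omega$; neither is routine.
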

This statement is in  \cite[Theorem 1.1 and Corollary 4.3]{AY04}.  

It is easy to see that, for any $\omega\in\t$, the function $\Phi_\omega$ extends continuously to $\Ga\setminus \{(2\bar\omega,\bar\omega^2)\}$ (but not to $\Ga$).  The range of the extended function is $\d^-$. It is useful to know which points are mapped into $\t$.
\begin{proposition}\label{modPhi1}
For any $\omega\in\t$ and $s\in\Ga\setminus \{(2\bar\omega,\bar\omega^2)\}$,
\[
|\Phi_\omega(s)|=1 \quad\mbox{ if and only if }\quad \omega(s^1-\overline{s^1}s^2)= 1-|s^2|^2.
\]
\end{proposition}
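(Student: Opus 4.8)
The plan is to compute directly from the explicit rational expression for $\Phi_\omega$. From the proof of Proposition \ref{phiok} one reads off that, for $s=(s^1,s^2)\in\Ga$,
\[
\Phi_\omega(s)=\frac{2\omega s^2-s^1}{2-\omega s^1}.
\]
First I would check that the denominator does not vanish on $\Ga\setminus\{(2\bar\omega,\bar\omega^2)\}$: if $2-\omega s^1=0$ then $|s^1|=2$, and since $s^1=z+w,\ s^2=zw$ with $|z|,|w|\le 1$, the equality $|z+w|=2$ forces $z=w\in\t$, whence $s=(2\bar\omega,\bar\omega^2)$, the excluded point. Thus $\Phi_\omega$ is well defined at every $s$ under consideration.

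Next I would reduce the modulus condition to an identity about real parts. Since $|\omega|=1$, the equation $|\Phi_\omega(s)|=1$ is equivalent to $|2\omega s^2-s^1|^2=|2-\omega s^1|^2$. Expanding both sides, the terms $|s^1|^2$ cancel and, after dividing by $2$, the identity collapses to
\[
2|s^2|^2-2=\omega s^2\overline{s^1}+\bar\omega\,\overline{s^2}s^1-\bar\omega\,\overline{s^1}-\omega s^1.
\]
Writing $A=s^1-\overline{s^1}s^2$, the right-hand side factors as $-\omega A-\overline{\omega A}=-2\re(\omega A)$, so $|\Phi_\omega(s)|=1$ holds if and only if
\[
\re\bigl[\omega(s^1-\overline{s^1}s^2)\bigr]=1-|s^2|^2.
\]
This computation is entirely reversible, so it establishes the equivalence of $|\Phi_\omega(s)|=1$ with the real-part equation.

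The main point is then to promote this real-part equality to the full complex equality asserted in the proposition, and here is where membership in $\Ga$ enters decisively. Taking the closure of the strict inequality in Proposition \ref{elG}(2) gives, for every $s\in\Ga$,
\[
|s^1-\overline{s^1}s^2|\le 1-|s^2|^2,
\]
with $1-|s^2|^2\ge 0$ since $|s^2|\le 1$ on $\Ga$. Assuming $\re[\omega(s^1-\overline{s^1}s^2)]=1-|s^2|^2$, the chain
\[
1-|s^2|^2=\re\bigl[\omega(s^1-\overline{s^1}s^2)\bigr]\le\bigl|\omega(s^1-\overline{s^1}s^2)\bigr|=\bigl|s^1-\overline{s^1}s^2\bigr|\le 1-|s^2|^2
\]
forces equality throughout; hence $\omega(s^1-\overline{s^1}s^2)$ is real and nonnegative and equals $1-|s^2|^2$. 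Conversely, the complex equation $\omega(s^1-\overline{s^1}s^2)=1-|s^2|^2$ trivially yields the real-part condition. I expect this upgrade to be the only genuinely delicate step: the naive modulus computation alone yields merely equality of real parts, and it is the defining inequality of the symmetrized bidisc that rules out any imaginary part and produces the sharp equality $\omega(s^1-\overline{s^1}s^2)=1-|s^2|^2$.
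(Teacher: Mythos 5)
Your proof is correct and follows the same route as the paper's source: the paper itself gives no in-text proof of this proposition (it defers to \cite[Theorem 2.5]{AY04}, describing the argument as ``a simple calculation''), and your argument is precisely that calculation. In particular, the reduction of $|\Phi_\omega(s)|=1$ to the real-part identity $\re\bigl[\omega(s^1-\overline{s^1}s^2)\bigr]=1-|s^2|^2$, followed by the upgrade to the full complex equality using the closed form of Proposition \ref{elG}(2) (namely $|s^1-\overline{s^1}s^2|\le 1-|s^2|^2$ together with $|s^2|\le 1$ on $\Ga$), is sound and complete, and you have correctly located the one genuinely delicate step, where membership of $\Ga$ is essential.
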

The proof is a simple calculation and is given in \cite[Theorem 2.5]{AY04}.

\begin{theorem} \label{crit-canc=royal} Let $h =(s^1,s^2)$ be a nonconstant rational $\Gamma$-inner function and let  $\ups$ be a finite Blaschke product. Then   $\Phi_{\ups} \circ h $
has a cancellation at $\zeta$ if and only if the following conditions are satisfied: $\zeta \in \t$, $\zeta$ is a royal node for $h$ and  $\ups(\zeta)=\frac{1}{2}\overline{ s^1(\zeta)}$.
Moreover $\Phi_{\ups} \circ h$ has at most one cancellation at any royal node $\zeta$.
\end{theorem}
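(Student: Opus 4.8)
The plan is to compute the rational function $\Phi_\ups\circ h$ explicitly and to characterise the points at which its numerator and denominator share a zero. Since $\Phi_\omega(s)=(2\omega s^2-s^1)/(2-\omega s^1)$, writing $h=(s^1,s^2)$ gives
\[
\Phi_\ups\circ h(\zeta)=\frac{2\ups(\zeta)s^2(\zeta)-s^1(\zeta)}{2-\ups(\zeta)s^1(\zeta)}=\frac{n(\zeta)}{d(\zeta)},
\]
where $n$ and $d$ denote the displayed numerator and denominator, each holomorphic on a neighbourhood of $\d^-$ since $\ups$ and $h$ are rational with no poles on $\d^-$. A cancellation at $\zeta$ is a common zero of $n$ and $d$. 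First I would treat the equivalence. If $n(\zeta)=d(\zeta)=0$, then $\ups(\zeta)s^1(\zeta)=2$ and $s^1(\zeta)=2\ups(\zeta)s^2(\zeta)$; eliminating $\ups(\zeta)$ (which is legitimate since $d(\zeta)=0$ forces $s^1(\zeta)\ne 0$) yields $s^1(\zeta)^2=4s^2(\zeta)$, so $h(\zeta)$ lies on the royal variety $\royal$ and $\zeta$ is a royal node, while $\ups(\zeta)=2/s^1(\zeta)$.

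To force $\zeta\in\t$ I would invoke the bounds $|\ups|\le 1$ on $\d^-$ and, by Proposition \ref{elG}(5), $|s^1|<2$ on $G$: the relation $\ups(\zeta)s^1(\zeta)=2$ gives $2=|\ups(\zeta)|\,|s^1(\zeta)|$, which for $\zeta\in\d$ would read $2<1\cdot 2$, a contradiction; hence $\zeta\in\t$, and the equality then forces $|\ups(\zeta)|=1$ and $|s^1(\zeta)|=2$. On $\t$ one rewrites $\ups(\zeta)=2/s^1(\zeta)=2\overline{s^1(\zeta)}/|s^1(\zeta)|^2=\half\overline{s^1(\zeta)}$, which is the stated condition. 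Conversely, assume $\zeta\in\t$ is a royal node with $\ups(\zeta)=\half\overline{s^1(\zeta)}$. A boundary royal node satisfies $s^1(\zeta)^2=4s^2(\zeta)$, and since $h(\zeta)\in b\Ga$ gives $|s^2(\zeta)|=1$, also $|s^1(\zeta)|=2$. Then $d(\zeta)=2-\half|s^1(\zeta)|^2=0$, while $n(\zeta)=\overline{s^1(\zeta)}s^2(\zeta)-s^1(\zeta)=0$ by the identity $s^1=\overline{s^1}s^2$ on $b\Ga$ from Proposition \ref{bGam}; so $\zeta$ is a cancellation.

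There remains the uniqueness assertion, which I expect to be the main obstacle and which I would handle by a boundary maximum-principle argument. The point is to show that whenever $d(\zeta_0)=0$ with $\zeta_0$ as above, the zero is simple. Since $|\ups|<1$ on $\d$ and $|s^1|<2$ on $G$, one has $\re d(\zeta)=2-\re(\ups(\zeta)s^1(\zeta))\ge 2-|\ups(\zeta)s^1(\zeta)|>0$ for $\zeta\in\d$; thus $\re d$ is positive and harmonic on $\d$ and vanishes at the boundary point $\zeta_0$. By Hopf's lemma the inward normal derivative of $\re d$ at $\zeta_0$ is nonzero, whence $\nabla(\re d)(\zeta_0)\ne 0$ and therefore $d'(\zeta_0)\ne 0$. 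Consequently the order of the common zero of $n$ and $d$ at $\zeta_0$ is $\min(\operatorname{ord}_{\zeta_0}n,\operatorname{ord}_{\zeta_0}d)\le\operatorname{ord}_{\zeta_0}d=1$, so there is at most one cancellation at each royal node. The only delicate points are the passage, on $\t$, between the forms $\ups(\zeta)=2/s^1(\zeta)$ and $\ups(\zeta)=\half\overline{s^1(\zeta)}$, which relies on $|s^1(\zeta)|=2$ there, and the verification that the hypotheses of the maximum principle ($d$ holomorphic across $\zeta_0$ and $\re d>0$ strictly inside $\d$) genuinely hold.
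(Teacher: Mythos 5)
The paper contains no internal proof of Theorem \ref{crit-canc=royal}: the statement is imported wholesale, with the proof deferred to \cite[Theorem 7.12]{ALY13}. So there is no argument of the paper's own to compare yours with; judged on its own terms, your proof is correct in substance and is a reasonable reconstruction of what the citation supplies. The skeleton is right: a cancellation is a common zero of $n=2\ups s^2-s^1$ and $d=2-\ups s^1$, both holomorphic near $\d^-$; from $n(\zeta)=d(\zeta)=0$ you eliminate $\ups(\zeta)$ to get $(s^1(\zeta))^2=4s^2(\zeta)$; interior $\zeta$ is impossible because $|\ups|\le 1$ on $\d$ while $|s^1|<2$ on $G$ (Proposition \ref{elG}); on $\t$ the equality case gives $|\ups(\zeta)|=1$, $|s^1(\zeta)|=2$ and hence $\ups(\zeta)=2/s^1(\zeta)=\half\overline{s^1(\zeta)}$; the converse follows from Proposition \ref{bGam}; and the multiplicity bound follows because $\re d>0$ on $\d$ together with Hopf's lemma (and the identity $|\nabla \re d|=|d'|$ for holomorphic $d$) forces the zero of $d$ at a boundary royal node to be simple. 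That last step is the only genuinely delicate assertion, and your treatment of it is sound.

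Two caveats. First, your argument quantifies $\zeta$ only over $\d^-$. Since $\ups$ and $h$ are rational, $\Phi_\ups\circ h$ is a rational function on essentially all of $\c$, and if a cancellation means a common zero of the cleared numerator and denominator polynomials (which is how the cited source uses the theorem, for degree bookkeeping), you must also exclude $|\zeta|>1$, where your bounds on $|\ups|$ and $|s^1|$ say nothing. This is fixable in one line: on $\t$ one has $\overline{\ups}=1/\ups$, $\overline{s^2}=1/s^2$ and $\overline{s^1}=s^1/s^2$ (Proposition \ref{bGam}), whence $d(\la)=\ups(\la)s^2(\la)\overline{n(1/\bar\la)}$ and $n(\la)=\ups(\la)s^2(\la)\overline{d(1/\bar\la)}$ hold as identities of rational functions; since $\ups s^2$ is a finite Blaschke product, with no zeros outside $\d^-$, an exterior common zero of $n$ and $d$ would reflect to an interior one, which you have already ruled out. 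Second, you never use the hypothesis that $h$ is nonconstant. That is consistent with this paper's definition of a $\Ga$-inner function (a map of $\d$ into $G$), but under the definition in force in the cited source ($h$ maps $\d$ into $\Ga$) the strict bound $|s^1|<2$ can fail at interior points, and excluding interior cancellations then requires a maximum-modulus argument in which nonconstancy is exactly the hypothesis consumed. Being forced to use every hypothesis is a useful check that one is proving the theorem in its intended generality.
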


The proof of this theorem  is given in \cite[Theorem 7.12]{ALY13}.

\section{Complex C-geodesics in $G$}\label{C-geod}
Recall from Section \ref{cgeos} that a map $k\in G(\d)$ is a complex C-geodesic if $k$ has a holomorphic left inverse.  Such maps can be concretely described.
The following statement is \cite[Lemma 1.1 and Theorem 1.2]{AY06}.  
\begin{theorem}\label{descgeos}
Let $k: \d \to G$
be a complex C-geodesic of $G$.  Then $k^2$ is a Blaschke product of degree $1$
or $2$, $k$ extends to a continuous function on $\d^-$ which maps $\t$
into $b\Gamma$ and there exist  a M\"obius function $\upsilon$ and an 
$\omega_0 \in \t$ such that $\Phi_{\omega_0}\circ k = \upsilon$.
Furthermore,
$$
k^2\circ \upsilon^{-1}(\bar\omega_0) = \bar\omega_0^2
$$
and, for all $\la \in \d$,
$$
k^1(\la) = 2\frac{\omega_0 k^2(\la) - \upsilon(\la)}{1 - \omega_0 \upsilon(\la)}.
$$
{\em A fortiori}, $k$ is a rational $\Ga$-inner function.
\end{theorem}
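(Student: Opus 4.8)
The plan is to extract everything from two inputs: the universality of the family $\{\Phi_\omega\}$ for the Carath\'eodory problem (Theorem \ref{Phiunivl}) and the boundary criterion of Proposition \ref{modPhi1}. First I would observe that $k$ is nonconstant and is a Carath\'eodory isometry. Indeed, let $F\colon G\to\d$ be a holomorphic left inverse of $k$. For distinct $\la_1,\la_2\in\d$,
\[
d_{\d}(\la_1,\la_2)=d_{\d}\big(F(k(\la_1)),F(k(\la_2))\big)\le\Car\big(k(\la_1),k(\la_2)\big)\le d_{\d}(\la_1,\la_2),
\]
the last inequality because holomorphic maps decrease the Carath\'eodory distance; hence there is equality throughout and $F$ is Carath\'eodory-extremal for the pair $\big(k(\la_1),k(\la_2)\big)$. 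By Theorem \ref{Phiunivl} the extremal for this pair is attained by some $\Phi_{\omega_0}$, so $\upsilon:=\Phi_{\omega_0}\circ k$ is a holomorphic self-map of $\d$ (its values lie in $\d$ by Proposition \ref{elG}(3)) that preserves $d_{\d}$ between $\la_1$ and $\la_2$. The Schwarz--Pick lemma then forces $\upsilon$ to be an automorphism of $\d$, that is, a M\"obius function; this is the identity $\Phi_{\omega_0}\circ k=\upsilon$. Writing $\Phi_{\omega_0}(s)=(2\omega_0 s^2-s^1)/(2-\omega_0 s^1)$ and solving $\Phi_{\omega_0}(k)=\upsilon$ for $k^1$ is then pure algebra and produces the stated formula $k^1=2(\omega_0 k^2-\upsilon)/(1-\omega_0\upsilon)$.

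Next I would show that $k$ carries $\t$ into $b\Ga$, equivalently that $k^2$ is inner; here I use that a point $s\in\Ga$ lies in $b\Ga$ precisely when $|s^2|=1$ (combine Proposition \ref{bGam} with Proposition \ref{elG}(2)). Since $\upsilon$ is M\"obius, $|\Phi_{\omega_0}(k_*(\la))|=|\upsilon(\la)|=1$ for almost every $\la\in\t$, where $k_*=(k^1_*,k^2_*)$ denotes the radial boundary values. For such $\la$ with $k_*(\la)\neq(2\bar\omega_0,\bar\omega_0^2)$, Proposition \ref{modPhi1} gives $\omega_0\big(k^1_*-\overline{k^1_*}\,k^2_*\big)=1-|k^2_*|^2$. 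On the hypothetical set where $|k^2_*|<1$ this says that $(s^1-\overline{s^1}s^2)/(1-|s^2|^2)$ equals $\bar\omega_0$ at $s=k_*(\la)$, so by Proposition \ref{elG}(4) we have $k^1_*=\bar\omega_0+\omega_0 k^2_*$; equating this with the boundary values of the displayed formula for $k^1$ and simplifying yields $|k^2_*|=1$, a contradiction. Hence $|k^2_*|=1$ almost everywhere, $k^2$ is inner, and $k_*(\la)\in b\Ga$ almost everywhere.

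The substantive obstacle is the degree bound: that the inner function $k^2$ is a \emph{finite} Blaschke product and that its degree is at most $2$. I would approach this through the boundary behaviour of $k$ as a map $\t\to b\Ga$. Because $k^1=z+w$ and $k^2=zw$ with $|z|=|w|=1$ on $\t$, one has $|k^1_*|\le2$ with equality exactly where $z=w$, that is, where $k_*$ meets the royal variety $\{(2\zeta,\zeta^2):|\zeta|=1\}$; away from these the relation $k^1_*=\overline{k^1_*}\,k^2_*$ lets one recover $k^2_*$ from $k^1_*$. A winding-number count of $k^2_*$ about the origin, handled carefully at the finitely many royal boundary points (where $k^1_*$ vanishes or attains modulus $2$), should both show the winding number is finite---so that $k^2$ is a finite Blaschke product and $k$ is rational $\Ga$-inner---and bound it by $2$. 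Equivalently, once rationality is in hand, I would run the bookkeeping of Theorem \ref{crit-canc=royal}: $\Phi_{\omega_0}\circ k=\upsilon$ has degree $1$, the degree can fall below that of $k^2$ only through cancellations, these occur solely at royal nodes on $\t$ and at most one per node, and the geodesic structure limits the number of such nodes, forcing $\deg k^2\le2$. I expect this degree/winding analysis, and in particular the control of the royal boundary points, to be the hard part of the proof.

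Granting the degree bound, the two remaining assertions are immediate. For the boundary identity, the denominator $1-\omega_0\upsilon$ in the formula for $k^1$ vanishes only at $\la_0:=\upsilon^{-1}(\bar\omega_0)\in\t$ and nowhere in $\d$; as $k^2$ is now a finite Blaschke product, $k^1$ extends continuously to $\la_0$ with a finite value, so the numerator $\omega_0 k^2-\upsilon$ must vanish there too, which is exactly $k^2\big(\upsilon^{-1}(\bar\omega_0)\big)=\bar\omega_0^2$. Finally, $k^2$ being a finite Blaschke product makes $\upsilon$ and, through the formula for $k^1$, $k^1$ rational; since $k$ maps $\t$ into $b\Ga$, it is a rational $\Ga$-inner function, which is the concluding clause.
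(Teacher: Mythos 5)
Your first three steps are sound and essentially optimal for what they prove: the universality of $\{\Phi_\omega\}$ (Theorem \ref{Phiunivl}) combined with the Schwarz--Pick equality case does produce $\omega_0\in\t$ with $\upsilon=\Phi_{\omega_0}\circ k\in\aut\d$; the formula $k^1=2(\omega_0 k^2-\upsilon)/(1-\omega_0\upsilon)$ is pure algebra as you say; and your use of Proposition \ref{modPhi1} together with Proposition \ref{elG} to show that the radial limits of $k$ lie in $b\Ga$ almost everywhere (hence $k^2$ is inner) is correct. Note for context that the paper itself offers no proof of Theorem \ref{descgeos}: it quotes the result from the reference [AY06] (Lemma 1.1 and Theorem 1.2 there), so the substance of the theorem is exactly the content of that cited lemma.

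The genuine gap is the step you yourself flag as the hard part, and neither of your two proposed routes can close it. The assertion that $k^2$ is a \emph{finite} Blaschke product of degree at most $2$ --- equivalently that $k$ is rational and extends continuously to $\d^-$ --- does not follow from the almost-everywhere boundary information you have assembled. Your winding-number argument is not well posed: at this stage $k^2$ is only known to be inner, and an inner function need not extend continuously to any point of $\t$, so ``the winding number of $k^2_*$ about the origin'' is undefined; moreover your phrase ``the finitely many royal boundary points'' presupposes a finiteness statement that is unproven (for a general $\Ga$-inner function, say $\la\mapsto(\la+B(\la),\la B(\la))$ with $B$ an infinite Blaschke product, the set $\{\la\in\t : k_*(\la)\in\calr^-\}$ can be infinite --- for the geodesic $k$ its finiteness is a \emph{consequence} of the theorem, not an available tool). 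Your second route is circular: Theorem \ref{crit-canc=royal} applies only to \emph{rational} $\Ga$-inner functions, so ``once rationality is in hand'' assumes precisely what must be proved; and even granting rationality, you give no bound on the number of cancellations, i.e.\ on the number of royal nodes $\zeta\in\t$ with $k(\zeta)=(2\bar\omega_0,\bar\omega_0^2)$, so $\deg k^2\le 2$ does not follow from the bookkeeping you describe. What is actually needed, and what the cited lemma in [AY06] supplies, is an argument that exploits the hypothesis at \emph{interior} points: from $k(\d)\subseteq G$ (via the membership criteria of Proposition \ref{elG}) together with the identity $k^1=2(\omega_0 k^2-\upsilon)/(1-\omega_0\upsilon)$ on all of $\d$, one forces $k^2$ to be a Blaschke product of degree at most $2$ with $k^2(\upsilon^{-1}(\bar\omega_0))=\bar\omega_0^2$; rationality, continuity on $\d^-$, and your final paragraph then go through. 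Without that lemma or an equivalent substitute, your argument establishes only that $k$ is a (possibly non-rational) $\Ga$-inner map satisfying the displayed formula for $k^1$.
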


The following statement is a simple corollary.
\begin{proposition}\label{cancels} 
Let $\de$ be a \nd non-flat datum in $G$.  If $\Phi_\omega$ solves $\Car(\de)$,  $k$ solves $\Kob(\de)$ and $k$ is normalised so that $\Phi_\omega \circ k = \id{\d}$, then  $k^2$ is a Blaschke product of degree $2$ and $k(\bar\omega)=(2\bar \omega,\bar\omega^2)$.  
\end{proposition}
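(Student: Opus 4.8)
The plan is to identify $k$ as a complex C-geodesic and then read off everything from the structure theorem for such maps, Theorem \ref{descgeos}. Since $\Phi_\omega$ maps $G$ into $\d$ (Proposition \ref{elG}(3)) and $\Phi_\omega\circ k=\id{\d}$, the function $\Phi_\omega$ is a holomorphic left inverse of $k$, so $k$ is a complex C-geodesic. Matching the normalisation $\Phi_\omega\circ k=\id{\d}$ against the conclusion $\Phi_{\omega_0}\circ k=\upsilon$ of Theorem \ref{descgeos}, I would take $\omega_0=\omega$ and $\upsilon=\id{\d}$. The theorem then immediately gives that $k^2$ is a Blaschke product of degree $1$ or $2$, that $k$ extends continuously to $\d^-$ with $k(\t)\subseteq b\Gamma$, that $k^2(\bar\omega)=\bar\omega^2$, and that $k^1$ is determined by the displayed rational formula in $k^2$ and $\lambda$.

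The next step is to exclude the possibility that $k^2$ has degree $1$. A complex geodesic whose second coordinate is a degree-$1$ Blaschke product is exactly a flat geodesic, and the datum it carries is flat; since $\delta$ is assumed non-flat, this case cannot occur, and hence $k^2$ has degree $2$. Here I would simply invoke the identification of flat data with degree-$1$ geodesics established in the earlier treatment of geodesics in $G$.

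It remains to evaluate $k$ at $\bar\omega$. The second coordinate is already in hand, $k^2(\bar\omega)=\bar\omega^2\in\t$, consistent with $k(\bar\omega)\in b\Gamma$; the content is to show $k^1(\bar\omega)=2\bar\omega$, so that $k(\bar\omega)$ is the royal point $(2\bar\omega,\bar\omega^2)$ and not merely some other point of $b\Gamma$ with the same second coordinate. I would obtain this in one of two equivalent ways. Directly, one substitutes $\lambda=\bar\omega$ into the rational formula for $k^1$: with $k^2(\bar\omega)=\bar\omega^2$ both numerator and denominator vanish, so the value must be recovered as the limit $\lambda\to\bar\omega$, i.e.\ from the boundary behaviour of the degree-$2$ Blaschke product $k^2$ at $\bar\omega$. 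Alternatively, and more cleanly, since $k^2$ has degree $2$ while $\Phi_\omega\circ k=\id{\d}$ has degree $1$, the composition—which a priori could have degree $2$—drops degree, and this degree drop is a cancellation; by Theorem \ref{crit-canc=royal}, applied with the (degree-zero) finite Blaschke product $\upsilon\equiv\omega$, for which $\Phi_\upsilon=\Phi_\omega$, any such cancellation occurs at a point $\zeta\in\t$ that is a royal node of $k$ and satisfies $\omega=\frac{1}{2}\overline{k^1(\zeta)}$, forcing $k^1(\zeta)=2\bar\omega$ and hence $k(\zeta)=(2\bar\omega,\bar\omega^2)$. The main obstacle is precisely this last identification: one must show that the unique royal node $\zeta$ produced by the cancellation coincides with the distinguished point $\bar\omega$ furnished by Theorem \ref{descgeos} (equivalently, that the indeterminate quotient for $k^1$ resolves to $2\bar\omega$ rather than to another boundary value sharing the second coordinate $\bar\omega^2$), and it is here that the non-flat, degree-$2$ hypothesis and the exact normalisation $\Phi_\omega\circ k=\id{\d}$ do the essential work.
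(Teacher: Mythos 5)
Your opening moves coincide with what the paper does: the paper gives no separate proof of this proposition, presenting it exactly as a corollary of Theorem \ref{descgeos}, and your steps that are actually carried out are sound --- $\Phi_\omega$ is a holomorphic left inverse of $k$, so $k$ is a complex C-geodesic; the theorem applies with $\omega_0=\omega$, $\upsilon=\id{\d}$ (legitimately, since the displayed formula for $k^1$ and the relation $k^2(\bar\omega)=\bar\omega^2$ follow from $\Phi_\omega\circ k=\id{\d}$ by algebra together with continuity of $k$ on $\d^-$); and degree $1$ is excluded because degree-$1$ geodesics are flat. The gap is the step you defer at the end: you never prove $k^1(\bar\omega)=2\bar\omega$, saying only that the non-flatness, the degree, and the normalisation ``do the essential work''. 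Since $k^2(\bar\omega)=\bar\omega^2$ is immediate, that equality is the entire nontrivial content of the proposition, so what is missing is not a detail but the whole proof.

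Moreover, the step cannot be completed in the form you set it up, and carrying out your first route would have revealed why. From $k^1(\la)(1-\omega\la)=2\left(\omega k^2(\la)-\la\right)$, differentiating and putting $\la=\bar\omega$ gives $k^1(\bar\omega)=2\bar\omega-2(k^2)'(\bar\omega)$, and a nonconstant finite Blaschke product has nonvanishing derivative at every point of $\t$; so if Theorem \ref{descgeos} is taken verbatim, $k^1(\bar\omega)$ is \emph{never} $2\bar\omega$. The issue is a sign convention: with the formula $\Phi_\omega(s)=(2\omega s^2-s^1)/(2-\omega s^1)$ used elsewhere in this appendix (proofs of Proposition \ref{phiok} and Theorem \ref{flatroyalesimal}), with which the displayed formulas of Theorem \ref{descgeos} are consistent, the normalisation $\Phi_\omega\circ k=\id{\d}$ places the royal point at $-\bar\omega$, not at $\bar\omega$. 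For instance, the royal geodesic so normalised is $k(\la)=(-2\la,\la^2)$, for which $k(\bar\omega)=(-2\bar\omega,\bar\omega^2)$ while $k(-\bar\omega)=(2\bar\omega,\bar\omega^2)$; and the phenomenon is not special to royal datums, since for $k_r(z)=\frac{1}{1-rz}\left(2(1-r)z,\,z(z-r)\right)$ one finds $\Phi_1\circ k_r=-\id{\d}$, so the normalised map is $z\mapsto k_r(-z)$, whose value at $z=1$ is $\left(-2(1-r)/(1+r),\,1\right)\neq(2,1)$. Your second, cancellation-based route meets the same obstruction: the royal node $\zeta$ supplied by Theorem \ref{crit-canc=royal} satisfies $k(\zeta)=(2\bar\omega,\bar\omega^2)$, but $\zeta$ turns out to be $-\bar\omega$. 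So as the two cited statements are printed they carry incompatible sign conventions, and a correct write-up must confront this --- e.g.\ by taking the normalisation to be $\Phi_\omega\circ k=-\id{\d}$, or stating the conclusion as $k(-\bar\omega)=(2\bar\omega,\bar\omega^2)$, or adopting the opposite sign in the definition of $\Phi_\omega$ and adjusting the formulas of Theorem \ref{descgeos} accordingly. An appeal to the hypotheses ``doing the essential work'' cannot substitute for resolving that.
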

\begin{corollary}\label{Cgeoiff}
A map $k\in G(\d)$ is a complex C-geodesic of $G$ if and only if there is a \nd datum $\de$ in $G$ such that $k$ solves $\Kob(\de)$.
\end{corollary}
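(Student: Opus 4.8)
The plan is to prove the two implications of the "if and only if" separately. The forward implication (a C-geodesic solves some $\Kob(\de)$) is immediate from the definition together with the Schwarz--Pick lemma, whereas the reverse implication carries the real content and will rest on the universality of the functions $\Phi_\omega$ (Theorem \ref{Phiunivl}) combined with the coincidence of the Carath\'eodory and Kobayashi extremal values on $G$. Throughout I write $d$ for the Poincar\'e distance on $\d$.

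First I would show that a complex C-geodesic is a Kobayashi extremal. Suppose $k\in G(\d)$ has a holomorphic left inverse $F\in\hol(G,\d)$, so that $F\circ k=\id{\d}$. Fix two distinct points $\la_1,\la_2\in\d$ and form the discrete datum $\de=(k(\la_1),k(\la_2))$; this datum is nondegenerate because $F$ separates $k(\la_1)$ and $k(\la_2)$, so these two points are distinct. On the one hand $k$ itself realises $\de$ with parameters $\la_1,\la_2$, so $\kob{\de}\le d(\la_1,\la_2)$. On the other hand $F$ is a competitor for $\Car(\de)$ and, since $F\circ k=\id{\d}$, it reproduces exactly this distance, giving $\car{\de}\ge d(\la_1,\la_2)$. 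Since $\car{\de}\le\kob{\de}$ always holds, all three quantities coincide and $k$ attains the Kobayashi infimum; hence $k$ solves $\Kob(\de)$. The same argument runs verbatim for the infinitesimal datum $(k(0),k'(0))$, using $F'(k(0))k'(0)=1$ to see that the tangent vector is nonzero.

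For the converse, suppose $k$ solves $\Kob(\de)$ for some nondegenerate $\de$. By Theorem \ref{Phiunivl} the problem $\Car(\de)$ is solved by some $\Phi_\omega$ with $\omega\in\t$, and since $k(\d)\subset G$ the composite $\Phi_\omega\circ k$ is a well-defined holomorphic self-map of $\d$. Now I would invoke the equality $\car{\de}=\kob{\de}$ on $G$: as both extremal values coincide and are attained, by $k$ and by $\Phi_\omega$ respectively, the map $\Phi_\omega\circ k$ carries the interpolation parameters of $\de$ to images whose Poincar\'e distance equals $\kob{\de}$, i.e.\ it preserves the Poincar\'e distance between those two parameters. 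By the rigidity clause of the Schwarz--Pick lemma this forces $\Phi_\omega\circ k$ to be a M\"obius automorphism $m$ of $\d$, whence $m^{-1}\circ\Phi_\omega\in\hol(G,\d)$ is a holomorphic left inverse of $k$ and $k$ is a complex C-geodesic. In the non-flat case one may instead quote Proposition \ref{cancels} directly, normalising so that $\Phi_\omega\circ k=\id{\d}$, which already exhibits $\Phi_\omega$ as a left inverse.

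The step I expect to be the main obstacle is the appeal in the converse to the identity $\car{\de}=\kob{\de}$ for $G$. This Lempert-type coincidence of the two extremal problems is precisely what promotes a Kobayashi extremal to a map admitting a left inverse; without it the rigidity argument has nothing to bite on. Granting this coincidence, which is part of the established hyperbolic geometry of $G$ and is exactly what underlies the normalisation used in Proposition \ref{cancels}, the remainder is routine bookkeeping with the Schwarz--Pick lemma, and the corollary is indeed \emph{simple}.
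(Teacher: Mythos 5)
Your proof is correct and takes essentially the route the paper intends: the statement appears there without an explicit proof, as a ``simple corollary'' of the surrounding results, and the ingredients you assemble --- the Schwarz--Pick lemma for the forward direction, and for the converse the universality of the $\Phi_\omega$ (Theorem \ref{Phiunivl}) together with the coincidence $\car{\cdot}=\kob{\cdot}$ on $G$ forcing $\Phi_\omega\circ k\in\aut\d$ --- are exactly those the paper itself deploys (see the step ``Since $\kob{\cdot}=\car{\cdot}$ on $G$, $\Phi_\omega\circ h\in\aut\d$'' in its proof of Theorem \ref{Kob_ess_unique}, and the normalisation in Proposition \ref{cancels}). Nothing is missing: in particular you correctly check nondegeneracy of the datum and attainment of the Kobayashi infimum in the forward direction, which are the only points requiring care.
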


The following uniqueness result for complex C-geodesics (stated in the introduction as Theorem \ref{extprop30}) plays a vital role in the paper.
\begin{theorem}\label{Kob_ess_unique}
If $\delta$ is a \nd  datum in $G$ then the solution to $\Kob (\delta)$ is essentially unique.
\end{theorem}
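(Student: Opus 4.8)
The plan is to reduce the uniqueness statement to a concrete interpolation problem by exploiting the duality between the Kobayashi and Carath\'eodory extremal problems on $G$, together with the explicit description of C-geodesics in Theorem \ref{descgeos}. Here ``essentially unique'' means unique up to precomposition with a member of $\aut(\d)$, so after fixing one solution $k$ it suffices to show every other solution has the form $k\circ m$ with $m\in\aut(\d)$; equivalently, once the interpolation nodes in $\d$ are normalised, the solution is literally unique. I would first dispose of the flat case: if $\de$ lies in a flat geodesic then the value $\beta$ of \eqref{appformbeta} is common to the points of $\de$ (or is read off its tangent vector), the flat geodesic $\la\mapsto(\beta+\bar\beta\,g(\la),g(\la))$ is the unique one through $\de$, and a short comparison of Kobayashi distances shows that no non-flat competitor can be extremal. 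So assume henceforth that $\de$ is non-flat.

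For the non-flat case, recall that the Carath\'eodory and Kobayashi extremal values of $\de$ coincide for $G$. By Theorem \ref{Phiunivl} choose $\omega\in\t$ so that $\Phi_\omega$ solves $\Car(\de)$. If $k$ is any solution of $\Kob(\de)$, then $\Phi_\omega\circ k$ is a holomorphic self-map of $\d$ that transports the datum to its image and realises equality in the Schwarz--Pick inequality; hence $\Phi_\omega\circ k\in\aut(\d)$, and after a reparametrisation we may assume $\Phi_\omega\circ k=\id{\d}$, exactly the normalisation of Proposition \ref{cancels}. By Corollary \ref{Cgeoiff} the map $k$ is a complex C-geodesic, so Theorem \ref{descgeos} applies with $\upsilon=\id{\d}$ and $\omega_0=\omega$, recovering the first coordinate from the second by
\[
k^1(\la)=2\,\frac{\omega\,k^2(\la)-\la}{1-\omega\la},
\]
while Proposition \ref{cancels} shows that $k^2$ is a Blaschke product of degree $2$ with $k^2(\bar\omega)=\bar\omega^2$. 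Consequently $k$ is completely determined by the single pair $(\omega,k^2)$, and the problem is reduced to showing that this pair is forced by $\de$.

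To pin down the pair I would argue in two stages. First, $\omega$ is determined by $\de$: in the non-flat case the solution $\Phi_\omega$ of $\Car(\de)$ is essentially unique, which forces a single admissible $\omega$, and the interpolation nodes are then forced as well, since $\Phi_\omega\circ k=\id{\d}$ gives $\la_j=\Phi_\omega(s_j)$ for a discrete datum $\de=(s_1,s_2)$ (with the analogous infinitesimal identity in the tangent-vector case). Second, with $\omega$ fixed, $k^2$ is a degree-$2$ Blaschke product constrained by the two interior conditions $k^2(\Phi_\omega(s_j))=s_j^2$ together with the boundary condition $k^2(\bar\omega)=\bar\omega^2$; in the infinitesimal case one interior value is replaced by a value-and-derivative condition. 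A parameter count shows that these conditions match the real degrees of freedom of a degree-$2$ Blaschke product, so uniqueness is plausible, and I would establish it through the uniqueness of the extremal solution of the associated Nevanlinna--Pick/Schwarz-lemma problem.

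The main obstacle is precisely this last uniqueness for $k^2$: neither a bare parameter count nor a naive Schwarz-lemma argument rules out a one-parameter family on its own, because one of the three interpolation conditions sits on $\t$ rather than in $\d$. I expect the decisive input to be the boundary condition $k^2(\bar\omega)=\bar\omega^2$ together with the requirement that the reconstructed pair $(k^1,k^2)$ genuinely take values in $G$ (via criterion \eqref{appformbeta} and Proposition \ref{elG}); these together convert the interpolation problem into an extremal problem with a unique solution. Once $k^2$ is shown to be unique, the displayed formula recovers $k^1$, the two coordinates agree across any two normalised solutions, and essential uniqueness follows. The infinitesimal datum case runs in parallel, using the infinitesimal analogues of Proposition \ref{cancels} and Theorem \ref{descgeos} recorded in the appendix.
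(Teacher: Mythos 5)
Your route is essentially the paper's own: fix a Carath\'eodory extremal $\Phi_\omega$ for $\de$, use $\car{\cdot}=\kob{\cdot}$ on $G$ to conclude that $\Phi_\omega\circ k\in\aut\d$ for any solution $k$ of $\Kob(\de)$, invoke the structure theory of complex C-geodesics to express $k^1$ as a M\"obius expression in $k^2$ and to obtain the boundary condition $k^2(\bar\omega)=\bar\omega^2$, and thereby reduce essential uniqueness to the uniqueness of a Blaschke product of degree at most $2$ satisfying two interior interpolation conditions plus one boundary condition. The paper does exactly this (proving only the infinitesimal case and citing the earlier Agler--Young paper for discrete datums), except that it keeps $q=\Phi_\omega\circ h$ as a general automorphism rather than normalising it to $\id{\d}$, and it has no need of your flat/non-flat dichotomy because it only claims $k^2$ has degree \emph{at most} $2$, which covers the flat case uniformly.

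Two concrete criticisms. First, your claim that ``in the non-flat case the solution $\Phi_\omega$ of $\Car(\de)$ is essentially unique'' is false: by the Trichotomy Theorem \ref{trichotomy}, a purely balanced datum (which is non-flat) admits exactly two extremal functions $\Phi_{\omega_1},\Phi_{\omega_2}$, and a royal datum (also non-flat) admits $\Phi_\omega$ for every $\omega\in\t$. Fortunately this claim is not needed for your argument: essential uniqueness only requires you to \emph{fix one} extremal $\omega$ and show that the solution normalised by $\Phi_\omega\circ k=\id{\d}$ is unique, since any two solutions of $\Kob(\de)$ can be normalised against that same $\omega$; the step should be rewritten accordingly. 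Second, the step you yourself flag as the main obstacle --- that a Blaschke product of degree at most $2$ is uniquely determined by its value (and derivative, in the infinitesimal case) at the interior node(s) together with its value at the boundary point $\bar\omega$ --- is left as an expectation rather than proved. This is precisely where the real content lies: the paper settles it by assertion (``these properties determine $p$ uniquely''), resting on the Schur-reduction and Nevanlinna--Pick machinery of the earlier Agler--Young work, so your proposed appeal to uniqueness of the extremal solution of the associated Nevanlinna--Pick problem is the right idea, but as written your proof is incomplete at exactly this point, and, as you note, neither a parameter count nor a naive Schwarz-lemma argument closes it.
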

\begin{proof}
For discrete datums this statement is contained in \cite[Theorem 0.3]{AY06}; we prove the infinitesimal case.
 Let $\delta= (s_1, v)$ be a \nd infinitesimal  datum in $G$.
According to the definition \eqref{defkob},
\[
\kob{\de} = \inf \{ |\zeta|: \zeta  \; \text{is a datum in } \d \; \text{and there exists} \; h \in G(\d)\;\text{such that} \; h(\zeta) = \delta \}.
\]
Note that, for a datum $ \zeta = (z_1, w)$ in $\d$ and $h \in G(\d)$, 
\[
h(\zeta) = (h(z_1), D_w h(z_1)) = (h(z_1),w h'(z_1)).
\]
Thus
\[
h(\zeta) = \delta \Leftrightarrow h(z_1)= s_1, \quad w (h^1)'(z_1)=v^1 \mbox{ and } \; w (h_2)'(z_1)=v^2.
\]
Hence, for a datum $\delta= (s_1, v)$ in $G$,
\begin{align*}
\kob{\delta} &= \inf_{z_1 \in \d, w \in \c} \left\{
\frac{|w|}{1-|z_1|^2}: \mbox{ there exists } h \in G(\d)\mbox{ such that } \right. \\
	& \hspace*{1.5cm} \left.   h(z_1)= s_1, w (h^1)'(z_1)=v^1 \mbox{ and }  w (h^2)'(z_1)=v^2 \right\}.
\end{align*}
Pick an infinitesimal datum $\zeta = (z_1, w) $ in $\d$ such that $|\zeta|=\kob{\de}$; then $\zeta$ is nondegenerate.
Let $h=(s,p)$ solve  $\Kob(\delta)$ and satisfy
 $ h(\zeta) = \delta$. Let
$\Phi_\omega$ solve $\Car (\delta)$. Since $\kob{\cdot} =\car{\cdot} $ on $G$,  $\Phi_\omega \circ h \in \aut \d$. Say $\Phi_\omega \circ h = q$. Then, by
\cite[Lemma 1.1]{AY06}, $p$ is a Blaschke product of degree at most 2 and 
\be \label{spq}
s = 2 \frac{\omega p -q}{1- \omega q}.
\ee
Hence, if $\tau = q^{-1}(\bar\omega) \in \t$, then $(\omega p- q)(\tau) =0$, and so 
$p(\tau) = \bar\omega ^2$. Thus  $p$ is a  Blaschke product of degree at most 2 which satisfies
\[
 p(z_1)= s_1, \quad  w (p)'(z_1)=v^2 \quad\; \text{and} \; p(\tau) = \bar\omega ^2.
\]
These properties determine $p$ uniquely.

Since $q$ is an automorphism of $\d$ such that  
\[
q (\zeta) = \Phi_\omega \circ h (\zeta) = \Phi_\omega (\delta),
\]
$q$ is uniquely determined. Thus $s$ is also uniquely determined
by equation \eqref{spq}.  Hence there is a unique solution $h$ of $\Kob(\de)$ for which $h(\zeta)=\de$.  It follows that the solution of $\Kob(\de)$ is essentially unique.
\end{proof}

\section{Automorphisms of $G$}\label{autom}
 For every $m\in\aut\d$ we define a map $\tilde m\in G(G)$ by 
\be\label{defmtilde}
\tilde m(z+w,zw)= (m(z)+m(w), m(z)m(w)) \qquad \mbox{ for all } z,w\in\d.
\ee
It is easy to see that $\tilde m$ is an automorphism of $G$ and that $m\mapsto \tilde m$ is a homomorphism from $\aut\d$ to $\aut G$.  In fact it is an isomorphism.
\begin{theorem}\label{autosG}
\begin{enumerate}[\rm (1)]
\item Every automorphism of $G$ is of the form $\tilde m$ for some 
$m$ in  $\aut\d$.

\item Every automorphism of $G$ leaves the royal variety $\calr$ invariant.

\item The automorphisms of $G$ act transitively on the set of flat geodesics of $G$.
\end{enumerate}
\end{theorem}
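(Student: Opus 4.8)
The plan is to prove the three parts in the order (3), (2), (1), since part (3) is self-contained, part (2) is the essential ingredient, and part (1) quickly implies part (2) again (so we must establish (2) independently to avoid circularity). Throughout I write $\pi(z,w)=(z+w,zw)$ for the symmetrization map, so that $\tilde m\circ\pi=\pi\circ(m\times m)$, and I use that $m\mapsto\tilde m$ is a homomorphism into $\aut G$ (noted before the theorem). For part (3) it therefore suffices to show that the subgroup $\{\tilde m:m\in\aut\d\}$ already acts transitively on flat geodesics. By Proposition~\ref{elG}(4) each flat geodesic is $D_\beta=\set{s\in G}{s^1=\beta+\bar\beta s^2}$ for a unique $\beta\in\d$, and a short computation identifies $\pi^{-1}(D_\beta)$ with the graph of the elliptic involution $B_\beta(w)=\frac{\beta-w}{1-\bar\beta w}\in\aut\d$. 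Since $(m\times m)$ carries the graph of $B_\beta$ to the graph of $m\circ B_\beta\circ m^{-1}$, we get $\tilde m(D_\beta)=D_{\beta'}$ where $B_{\beta'}=mB_\beta m^{-1}$. Every elliptic involution of $\d$ is conjugate in $\aut\d$ to $z\mapsto-z=B_0$, and these conjugates are exactly the $B_\gamma$, $\gamma\in\d$; hence for each $\beta$ there is an $m$ with $\tilde m(D_0)=D_\beta$, which gives (3).

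Part (2) is the crux. The plan is to characterize $\calr$ intrinsically. First, $s\in\calr$ iff $(s^1)^2=4s^2$. Writing $\Phi_\omega(s)=\frac{2\omega s^2-s^1}{2-\omega s^1}$ as a Möbius transformation of the variable $\omega$, its associated determinant is $4s^2-(s^1)^2$; thus $\omega\mapsto\Phi_\omega(s)$ is constant on $\t$ precisely when $s\in\calr$. The point is that this collapse of the family $\{\Phi_\omega(s)\}$ to a single value is a biholomorphic invariant: because $\{\Phi_\omega:\omega\in\t\}$ is a universal set for the Carath\'eodory problem on $G$ (Theorem~\ref{Phiunivl}) and the Carath\'eodory problem is preserved by $\aut G$, the property ``all Carath\'eodory extremals through $s$ agree at $s$'' does not depend on the chosen universal family, and so is invariant under any $F\in\aut G$. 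Making this last invariance genuinely rigorous — i.e.\ showing that the degeneracy locus of the canonical family $\{\Phi_\omega\}$ is detected independently of how $G$ is presented — is the main obstacle of the whole theorem; granting it, $F(\calr)=\calr$ for every $F\in\aut G$, which is (2).

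Granting (2), I would prove (1) by lifting through $\pi$. By (2), $F$ restricts to an automorphism of $G\setminus\calr$, over which $\pi$ is the connected unramified double cover on which $z-w=\pm\sqrt{(s^1)^2-4s^2}$ becomes single-valued. Since $F$ is a biholomorphism preserving $\calr$ and $(s^1)^2-4s^2$ is a reduced defining function of the smooth divisor $\calr$, we have $F^*\big((s^1)^2-4s^2\big)=u\cdot\big((s^1)^2-4s^2\big)$ with $u$ holomorphic and nowhere zero; hence $\sqrt{u}$ is single-valued, $F$ preserves the monodromy of the cover, and $F$ lifts to $\hat F\colon\d^2\setminus\{z=w\}\to\d^2\setminus\{z=w\}$ with $\pi\hat F=F\pi$. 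Being bounded, $\hat F$ extends holomorphically across the diagonal, and the extension is a bijective proper self-map of $\d^2$, so $\hat F\in\aut\d^2$. The deck group of $\pi$ is $\{\id{},\tau\}$ with $\tau$ the coordinate swap, and $\hat F\tau\hat F^{-1}$ covers $\id{G}$, forcing $\hat F\tau=\tau\hat F$. As $\aut\d^2$ consists of the maps $(z,w)\mapsto(m_1(z),m_2(w))$ together with their composites with $\tau$, the only $\tau$-equivariant elements that descend to $G$ are the diagonal ones $m\times m$; therefore $F=\tilde m$, which is (1), and (2) then follows again at once.

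I expect the covering argument to be the cleanest route, but (1) can also be finished by a rigidity argument: after post-composing $F$ with a suitable $\widetilde{g}^{\,-1}$ one obtains $F_0\in\aut G$ fixing $\calr$ pointwise, and one shows $dF_0$ equals the identity at the origin (fixing $\calr$ gives eigenvalue $1$ along $T_0\calr$; the transformation law $\big((s^1)^2-4s^2\big)\circ F_0=u\cdot\big((s^1)^2-4s^2\big)$, compared to second order at the origin, forces the remaining diagonal entry to be $1$ and the lower-left entry to vanish; and preservation of the unique flat geodesic through the origin kills the last off-diagonal entry). Cartan's uniqueness theorem for bounded domains then gives $F_0=\id{G}$ and hence $F=\widetilde{g}$. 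The extra input needed for this variant is that automorphisms carry flat geodesics to flat geodesics, which follows from an invariant description of flatness, e.g.\ that a complex geodesic is flat exactly when it meets $\calr$ in a single point.
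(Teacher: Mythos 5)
Your architecture inverts the paper's. The paper takes (1) as the fundamental fact --- it does not reprove it, but cites the operator-theoretic proof in \cite{AY08} and the alternative proof via Cartan's classification in \cite{jp04}, \cite{jp} --- and then observes that (2) and (3) follow easily from (1). You instead try to prove (2) first and recover (1) from it. That inversion would be legitimate, and indeed the two flanking pieces of your proposal are essentially sound: the covering argument deriving (1) from (2) works (the lift exists because $u=\bigl((s^1)^2-4s^2\bigr)\circ F/\bigl((s^1)^2-4s^2\bigr)$ is nowhere zero on the contractible domain $G$ and so has a single-valued square root; the lift extends across the diagonal by Riemann's removable singularity theorem; $\hat F\tau\hat F^{-1}$ is a nontrivial deck transformation, hence equals $\tau$; and the classical form of $\aut\dtwo$ then forces $\hat F=m\times m$ or $(m\times m)\circ\tau$, either of which descends to $\tilde m$). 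Likewise your computation for (3) --- flat geodesics lift to graphs of the involutions $B_\beta$, which form a single conjugacy class in $\aut\d$ --- is correct, granted that arbitrary automorphisms carry flat geodesics to flat geodesics, which again needs (1).

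The genuine gap is exactly where you place the crux, part (2), and the justification you sketch cannot be repaired as stated. Universality of $\{\Phi_\omega\}$ (Theorem \ref{Phiunivl}) says only that every datum has \emph{some} extremal inside this family; it does not make the family canonical, so the degeneracy locus of the family is not automatically a biholomorphic invariant --- two different universal families could degenerate on different sets. Worse, the literal invariant you propose, ``all Carath\'eodory extremals through $s$ agree at $s$,'' is false even at royal points: the solution set of $\Car(\de)$ is closed under post-composition with $\aut\d$ (the Poincar\'e metric is $\aut\d$-invariant), so if $C$ is extremal then so is $m\circ C$, and $m(C(s))$ can be made any point of $\d$. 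The genuinely invariant object is the solution set of $\Car(\de)$ modulo post-composition by $\aut\d$, and showing that the collapse of $\omega\mapsto\Phi_\omega(s)$ on $\calr$ is detected by that abstract quotient is a substantial theorem, not a formality; in this paper and its sources, the invariance of $\calr$ (and the automorphism-invariance of the datum types, cf.\ Theorem \ref{flatandroyal}) is \emph{deduced from} (1), not used to prove it. Your fallback rigidity argument via Cartan's uniqueness theorem hinges on the same unproved invariance of $\calr$ and of flatness. So as it stands the proposal proves neither (1) nor (2): it reduces them to precisely the hard statement that the paper imports from the literature.
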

\index{automorphisms of $G$}
Statement (1) is \cite[Theorem 4.1]{AY08}, proved by an operator-theoretic method; a proof which uses Cartan's Classification Theorem is given in \cite{jp04} and \cite{jp}.  Given statement (1), statements (2) and (3) are easy to verify.
\begin{corollary}\label{corautos}
Every automorphism of $G$ extends continuously to a bijective self-map of $\Ga$.
\end{corollary}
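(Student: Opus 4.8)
The plan is to leverage the explicit description of $\aut G$ furnished by Theorem \ref{autosG}(1), which says that every automorphism of $G$ has the form $\tilde m$ for some $m\in\aut\d$, and to realise the continuous extension of $\tilde m$ as a map induced on the symmetrization of the closed bidisc. Write $\pi:\c^2\to\c^2$ for the symmetrization map $\pi(z,w)=(z+w,zw)$, and let $\sigma(z,w)=(w,z)$ denote the coordinate swap. First I would record two elementary facts. One is that $m$, being a M\"obius function, extends to a homeomorphism of $\d^-$ carrying $\t$ bijectively onto $\t$. The other is that $\Ga=\pi(\d^-\times\d^-)$: indeed $\pi$ is continuous and $\d^-\times\d^-$ is compact, so $\pi(\d^-\times\d^-)$ is a compact, hence closed, set containing $G=\pi(\d\times\d)$ and therefore containing $\Ga=\overline{G}$, while conversely $\pi(\d^-\times\d^-)\subseteq\overline{\pi(\d\times\d)}=\Ga$ by continuity.

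With these in hand, consider $F=m\times m:\d^-\times\d^-\to\d^-\times\d^-$, $F(z,w)=(m(z),m(w))$. Since $m$ is a homeomorphism of $\d^-$, so is $F$, and $F$ manifestly commutes with $\sigma$. The key observation is that $\pi$ realises $\Ga$ as the quotient of $\d^-\times\d^-$ by the $\sigma$-action: two points have the same image under $\pi$ if and only if they are related by $\sigma$, because $z+w$ and $zw$ determine the unordered pair $\{z,w\}$ as the roots of $X^2-(z+w)X+zw$. Because $F$ is $\sigma$-equivariant it carries $\pi$-fibres to $\pi$-fibres, so it descends to a well-defined map $\mu:\Ga\to\Ga$ with $\mu\circ\pi=\pi\circ F$; and since $\pi$ restricts to a quotient map onto $\Ga$ (a continuous surjection from a compact space onto the Hausdorff space $\Ga$), $\mu$ is continuous. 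Comparing with the defining formula \eqref{defmtilde} shows that $\mu$ agrees with $\tilde m$ on $G$, so $\mu$ is the desired continuous extension. Bijectivity is then immediate: the same construction applied to $m^{-1}$ yields a continuous $\nu:\Ga\to\Ga$ with $\nu\circ\pi=\pi\circ(m^{-1}\times m^{-1})$, and since $F$ and $m^{-1}\times m^{-1}$ are mutually inverse homeomorphisms of $\d^-\times\d^-$, the descended maps $\mu$ and $\nu$ are mutually inverse on $\Ga$.

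The main obstacle I anticipate is the continuity of $\mu$ at the collision points of $\Ga$, namely those $s$ with $(s^1)^2=4s^2$ where $z=w$: there one cannot choose the two roots $z,w$ continuously as functions of $s$, so a naive argument that lifts $s\mapsto(z,w)$ and applies $m$ coordinatewise breaks down. Passing to the quotient $\Ga=(\d^-\times\d^-)/\sigma$ sidesteps this entirely, since the universal property delivers the continuity of $\mu$ with no continuous choice of individual roots required; only the unordered pair $\{z,w\}$, and hence $\{m(z),m(w)\}$, must vary continuously, and it does. I would therefore present the argument in the quotient language above rather than attempting to track individual roots.
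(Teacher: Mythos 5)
Your proof is correct and takes essentially the same route as the paper: the paper's entire proof is the single remark that the corollary ``follows from Theorem \ref{autosG} and the fact that every automorphism of $\d$ extends continuously to a bijective self-map of $\d^-$'', and your argument is exactly a careful fleshing-out of that remark. The quotient formulation via $\pi:\d^-\times\d^-\to\Ga$ and $\sigma$-equivariance of $m\times m$ is a clean way to supply the continuity (including at points of $\Ga$ where the roots collide) and the bijectivity that the paper leaves implicit.
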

It follows from Theorem \ref{autosG} and the fact that every automorphism of $\d$ extends continuously to a bijective self-map of $\d^-$.

The automorphisms of $G$ are linear fractional maps.  If $m=\tau B_\al$ for some $\tau\in\t, \, \al\in\d$ then, for $s\in G$,
\be\label{formmtilde}
\tilde m(s)=\frac{1}{1-\bar\al s^1+\bar\al^2 s^2}\left(\tau(-2\al+(1+|\al|^2)s^1-2\bar\al s^2), \tau^2(s^2-\al s^1+\al^2)\right).
\ee
\index{$\tilde m$}
\section{A trichotomy theorem}\label{Trich}

Since the classification of datums in Definition \ref{extdef10} depends on the number of functions $\Phi_\omega$ that solve the corresponding Carath\'eodory problem, it is important to know what the possibilities are for this number.
\newtheorem{trichot}[theorem]{Trichotomy theorem}
\index{trichotomy}
\begin{trichot}\label{trichotomy} Let $\de$ be a \nd datum in $G$. Exactly one of the following assertions is true.
\begin{enumerate}[\rm (1)]
\item There is a unique $\omega \in \t$ such that  $\Phi_\omega$ solves $\Car(\de)$; 
\item  there are exactly two points $\omega_1, \omega_2 \in \t$ such that  $\Phi_{\omega_1}$,  $\Phi_{\omega_2}$
solve $\Car(\de)$; 
\item for all $\omega \in \t$, $\Phi_\omega$ solves $\Car(\de)$.
\end{enumerate}
Moreover, all three possibilities do occur.
\end{trichot}
\begin{proof}
In the case that $\de$ is a discrete datum, the statement is \cite[Theorem 1.6]{AY06}. We sketch a proof for a infinitesimal datum $\de=(s_1,v)$: it is a combination of the arguments in \cite[Section 4]{AY04} and \cite[Theorem 1.6]{AY06}, where fuller versions of the reasoning can be found.

For $\kappa \ge 0$ let $S_\kappa, P_\kappa$ be the
operators on the Hilbert space $\mathbb{C}^2$ (with its
standard inner product) given by
\begin{equation}\label{defSPOP}
S_\kappa =
\left[\begin{array}{cc} s_1^1 & \kappa v^1\\
0&s_1^1\end{array}\right],
\qquad P_\kappa =
\left[\begin{array}{cc} s_1^2 & \kappa v^2\\
0& s_1^2\end{array}\right].
\end{equation}
$S_\kappa, P_\kappa$ are commuting operators and $\si(S_\kappa,P_\kappa)=\{s_1\}\subset G$. Moreover the spectral radius of $S_\kappa$ is $|s_1^1|$, which is less than $2$.  For any $f\in \d(G)$,
\[
f(S_\kappa,P_\kappa)= \bbm f(s_1)& \kappa D_vf(s_1) \\ 0 & f(s_1) \ebm.
\]
Hence
\begin{align*}
\|f(S_\kappa,P_\kappa)\| \leq 1\quad &\Leftrightarrow \quad  \frac{\kappa |D_vf(s_1)|}{1-|f(s_1)|^2} \leq 1 \\
	&\Leftrightarrow \quad \kappa |f(\de)| \leq 1.
\end{align*}
Recall that $G$ is said to be a {\em spectral domain} for the commuting pair $(S,P)$ if $\si(S,P) \subset G$ and 
\[
\|f(S,P)\| \leq \sup_G |f|
\]
for every $f\in \c(G)$.  It follows that $G$ is a spectral domain for $(S_\kappa,P_\kappa)$ if and only if $\kappa \car{\de} \leq 1$.

On the other hand, by \cite[Theorem 3.2]{AY04}, $G$ is a spectral domain for $(S_\kappa,P_\kappa)$ if and only if $\|\Phi_\omega(S_\kappa,P_\kappa)\|\leq 1$ for all $\omega\in\t$, or equivalently, if and only if
\be\label{pencil}
(2-\omega S_\kappa)^*(2-\omega S_\kappa)- (2\omega P_\kappa-S_\kappa)^*(2\omega P_\kappa-S_\kappa)\geq 0
\ee
for all $\omega\in\t$.  The inequality \eqref{pencil} can be written
\[
\re \{\omega (S_\kappa- S_\kappa^* P_\kappa)\} \leq 1-P_\kappa^*P_\kappa.
\]
In the event that $\|P_\kappa\| < 1$ (that is, $\kappa |v^2| < 1-|s_1^2|^2$) we define
\[
T_\kappa= (1-P_\kappa^*P_\kappa)^{-\half} (S_\kappa- S_\kappa^* P_\kappa)(1-P_\kappa^*P_\kappa)^{-\half};
\]
in the case that $\|P_\kappa\|=1$, we take $T_\kappa$ to be the unique operator on $\ran (1-P_\kappa^*P_\kappa)$ such that 
\[
(1-P_\kappa^*P_\kappa)^{\half} T_\kappa (1-P_\kappa^*P_\kappa)^{\half}=  S_\kappa- S_\kappa^* P_\kappa.
\]
Now the condition \eqref{pencil} can be written
\[
\re\{\omega T_\kappa\} \leq 1 \qquad \mbox{ for all } \omega\in\t,
\]
which is equivalent to the statement that the numerical range $W(T_\kappa)$ is contained in $\d^-$.
For the extremal choice $\kappa=1/\car{\de}$, $W(T_\kappa) $ touches the unit circle at one or more points $\omega$, and these values of $\omega$ are precisely those for which $\Phi_\omega$ solves $\Car(\de)$.  Since $T_\kappa$ has rank one or two, $W(T_\kappa)$ is either an ellipse (with its interior), a point or a line segment.  It follows that $W(T_\kappa) \cap \t$ consists of either a single point, a pair of points or the whole of $\t$.  Thus the three possibilities for the solutions $\Phi_\omega$ of $\Car(\de)$ are as described in the theorem. 

Examples to show that all three possibilities in Theorem \ref{trichotomy} do occur may be found in the proof of \cite[Theorem 1.6]{AY06}.
\end{proof}

\section{Datums for which all $\Phi_\omega$ are extremal}\label{flatroy}
For the proof of the Pentachotomy Theorem in Section \ref{5types} it is required to characterize the datums $\de$ for which $\rho_\de$ is constant on $\t$, or in other words, for which $\Phi_\omega$ is extremal for $\Car(\de)$ for all $\omega\in\t$.
The next result is a re-statement of \cite[Theorem 5.5]{AY04}.

\begin{theorem}\label{flatandroyal}
Let $\de=(s_1,s_2)$ be a \nd discrete datum in $G$.  The following statements are equivalent.
\begin{enumerate}[\rm (1)]
\item $\Phi_\omega$ solves $\Car(\de)$ for every $\omega\in\t$;
\item  $s_1,s_2$ either both lie in the royal variety $\calr$ or both lie in the set
\[
\calf_\beta =\{(\beta+\bar\beta z,z): z\in\d\}
\]
for some  $\beta\in\d$.
\end{enumerate}
\end{theorem}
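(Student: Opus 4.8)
The plan is first to recast condition (1) analytically. Set
\[
\rho_\de(\omega)=\left|\frac{\Phi_\omega(s_1)-\Phi_\omega(s_2)}{1-\overline{\Phi_\omega(s_2)}\,\Phi_\omega(s_1)}\right|,\qquad \omega\in\t .
\]
By Theorem \ref{Phiunivl} the family $\{\Phi_\omega\}$ is universal for $\Car(\de)$, so $\car\de=\max_{\omega\in\t}\rho_\de(\omega)$ and $\Phi_\omega$ solves $\Car(\de)$ exactly when $\rho_\de(\omega)$ attains this maximum; hence (1) is equivalent to $\rho_\de$ being constant on $\t$. I would then record the working identity
\be
1-\rho_\de(\omega)^2=\frac{N_1(\omega)N_2(\omega)}{|C(\omega)|^2},
\ee
which follows from $1-\left|\frac{a-b}{1-\bar b a}\right|^2=\frac{(1-|a|^2)(1-|b|^2)}{|1-\bar b a|^2}$ and a short computation using $\Phi_\omega(s)=\frac{2\omega s^2-s^1}{2-\omega s^1}$: one gets $1-|\Phi_\omega(s_j)|^2=N_j(\omega)/|2-\omega s_j^1|^2$ with $N_j(\omega)=4(1-|s_j^2|^2)\bigl(1-\re(\omega\beta_j)\bigr)$, where $\beta_j$ is attached to $s_j$ by \eqref{appformbeta}, and $1-\overline{\Phi_\omega(s_2)}\Phi_\omega(s_1)=C(\omega)/\bigl((2-\omega s_1^1)(2-\bar\omega\,\overline{s_2^1})\bigr)$ with $C(\omega)=A+D\omega+E\bar\omega$, $A=4(1-s_1^2\overline{s_2^2})$, $D=-2(s_1^1-s_1^2\overline{s_2^1})$, $E=-2(\overline{s_2^1}-s_1^1\overline{s_2^2})$. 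The common denominators cancel, leaving a ratio of the degree-two trigonometric polynomials $N_1N_2$ and $|C|^2$.

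For the implication (2)$\Rightarrow$(1) I would argue directly from this. If $s_1,s_2\in\calr$, say $s_j=(2z_j,z_j^2)$, then $\Phi_\omega(s_j)=-z_j$ independently of $\omega$, so $\rho_\de\equiv\left|\frac{z_1-z_2}{1-\bar z_2 z_1}\right|$ is constant. If instead $s_1,s_2\in\calf_\beta$, then a one-line modulus check shows that the fractional-linear map $z\mapsto\Phi_\omega(\beta+\bar\beta z,z)$ carries $\t$ into $\t$; being also a self-map of $\d$, it is for each $\omega$ an automorphism of $\d$, so $\rho_\de(\omega)$ equals the (fixed) pseudohyperbolic distance between the two parameter values and is again constant.

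The substance is (1)$\Rightarrow$(2), and the plan is to reduce to a normal form by an automorphism. Both conditions are $\aut G$-invariant: condition (2) because automorphisms preserve $\calr$ and permute the flat geodesics $\calf_\beta$ (Theorem \ref{autosG}(2),(3)); condition (1) because $\aut G$ permutes $\{\Phi_\omega\}$ up to post-composition with automorphisms of $\d$, whence $\rho_{\tilde m\circ\de}=\rho_\de\circ\si$ for a homeomorphism $\si$ of $\t$, so constancy is preserved. Since $\aut G$ acts transitively on flat geodesics, I may carry the flat geodesic $\calf_{\beta_1}$ through $s_1$ onto $\calf_0$ and so assume $s_1=(0,q_1)$ with $q_1=s_1^2\in\d$. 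Then $\beta_1=0$, so $N_1$ is the constant $4(1-|q_1|^2)$ and $N_1N_2$ has degree one; equating the coefficients of $\omega^2$ in $N_1N_2=c\,|C|^2$, where $c=1-\car\de^2>0$, forces $D\overline E=0$. With $s_1^1=0$ one computes $D=2q_1\overline{s_2^1}$ and $E=-2\overline{s_2^1}$, so $D\overline E=-4q_1|s_2^1|^2$; hence $q_1=0$ or $s_2^1=0$.

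These two alternatives finish the proof. If $s_2^1=0$ then $s_2=(0,s_2^2)\in\calf_0$, and as $s_1\in\calf_0$ too, (2) holds. If $q_1=0$ then $s_1=(0,0)$ and $\rho_\de(\omega)=|\Phi_\omega(s_2)|$; matching the two coefficients of this degree-one quotient yields, when $s_2^1\neq0$, the relations $|s_2^1|^2=4|s_2^2|$ and $\arg s_2^2=2\arg s_2^1$, that is $s_2=(2z,z^2)\in\calr$, while when $s_2^1=0$ it gives $s_2\in\calf_0$; since $(0,0)$ lies in both $\calr$ and $\calf_0$, (2) holds in either case. I expect the main obstacle to be justifying the normalisation rigorously, that is, proving that $\aut G$ permutes $\{\Phi_\omega\}$ so that condition (1) is genuinely automorphism-invariant; this is a finite verification from \eqref{formmtilde} and the formula for $\Phi_\omega$, but it is the step that must be handled with care. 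Without this reduction one must instead treat the degree-two identity $N_1N_2=c|C|^2$ globally, factoring both sides into reciprocal-conjugate root pairs and matching them, and the resulting case analysis is appreciably heavier.
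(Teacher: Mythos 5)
Your proposal is correct, but it takes a genuinely different route from the paper — which in fact gives no proof of this discrete statement at all: Theorem \ref{flatandroyal} is presented there purely as a re-statement of \cite[Theorem 5.5]{AY04}, and the only argument written out in the paper is for the infinitesimal analogue, Theorem \ref{flatroyalesimal}. That proof makes no automorphism reduction: it writes $s_1=(\beta+\bar\beta p,p)$ for general $\beta$, expresses $|\Phi_\omega(\de)|$ via a quadratic-over-quadratic rational function $F$, extends the constancy relation $C^2=F(w)\overline{F(1/\bar w)}$ off $\t$ by rationality, and classifies according to how many cancellations occur between the numerator of $F$ and the reflected pair of denominator roots $w_\pm$ (none: impossible; one: the royal case; two: the flat case). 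You instead use the $\aut G$-action to move $s_1$ onto $\calf_0$, which collapses $N_1$ to a constant and reduces the whole implication (1)$\Rightarrow$(2) to Fourier-coefficient matching for trigonometric polynomials of degree at most two; this buys a much lighter case analysis, at the price of the equivariance lemma you rightly flag as the delicate step. That lemma is true and closes exactly as you predict: composing \eqref{formmtilde} with the formula for $\Phi_\omega$, one finds for $m=\tau B_\al$ that $\Phi_\omega\circ\tilde m=\nu\circ\Phi_{\omega'}$, where $\omega'=(\bar\al+\omega\tau)/(1+\al\omega\tau)$ and $\nu(\zeta)=\tau(\zeta+\al)/(1+\bar\al\zeta)\in\aut\d$, and $\omega\mapsto\omega'$ is a homeomorphism of $\t$, so constancy of $\rho_\de$ is indeed $\aut G$-invariant, while invariance of condition (2) follows from Theorem \ref{autosG}(2),(3). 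Your supporting computations are also correct: $N_j(\omega)=4(1-|s_j^2|^2)(1-\re(\omega\beta_j))$ with $\beta_j$ as in \eqref{appformbeta}, the $\omega^2$-coefficient of $|C|^2$ is $D\bar E$, the normalisation gives $D\bar E=-4q_1|s_2^1|^2$, and the endgame correctly yields $s_2\in\calf_0$ or $s_2=(2z,z^2)$ (using $\car\de<1$ to discard the factor $r^2-1$).
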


We also need the infinitesimal version of Theorem \ref{flatandroyal}.
\begin{theorem}\label{flatroyalesimal}
Let $\de=(s_1,v)$ be a \nd infinitesimal datum in $G$.  The following statements are equivalent.
\begin{enumerate}[\rm (1)]
\item $\Phi_\omega$ solves $\Car(\de)$ for every $\omega\in\t$;
\item  either (a) there exists $z\in\d$ such that $s_1=(2z,z^2)$ and $v$ is collinear with $(1,z)$ or (b) there exist $\beta,z \in\d$ such that $s_1=(\beta+\bar\beta z,z)$ and $v$ is collinear with $(\bar\beta,1)$.
\end{enumerate}
\end{theorem}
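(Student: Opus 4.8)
The plan is to compute the function $\rho_\de(\omega)=|D_v\Phi_\omega(s_1)|/(1-|\Phi_\omega(s_1)|^2)$ on $\t$ explicitly and to read off exactly when it is constant; by Theorem \ref{Phiunivl} this constancy is precisely statement (1). Writing $s_1=(s^1,s^2)$ and $v=(v^1,v^2)$ and using $\Phi_\omega(s)=(2\omega s^2-s^1)/(2-\omega s^1)$ together with its first derivatives at $s_1$, I would show that the factor $|2-\omega s^1|^{-2}$ common to numerator and denominator cancels, leaving
\be\label{plan-rho}
\rho_\de(\omega)=\frac{|N(\omega)|}{|Q(\omega)|},\qquad \omega\in\t,
\ee
where $N(\omega)=-v^1+2v^2\omega+(s^2v^1-s^1v^2)\omega^2$ and $Q(\omega)=\bar\eta-2(1-|s^2|^2)\omega+\eta\omega^2$, with $\eta=s^1-\overline{s^1}s^2$; on $\t$ one has $|Q(\omega)|=2\big((1-|s^2|^2)-\re(\omega\eta)\big)$, the positive denominator of $\rho_\de$. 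Here $Q$ depends only on $s_1$; by Proposition \ref{elG}(2) we have $|\eta|<1-|s^2|^2$, so $Q$ has no zero on $\t$, and a direct check shows $Q$ is self-inversive with one zero $\omega_0$ in $\d$ and the other at $1/\bar\omega_0$ outside $\d^-$.

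The next step is to reduce the constancy of \eqref{plan-rho} to an algebraic condition. Constancy on $\t$ means $|N(\omega)|^2=c^2|Q(\omega)|^2$ for a constant $c$ (positive, since the datum is nondegenerate, so $v\neq0$ and $N\not\equiv0$). Writing $p^*(\omega)=\omega^2\,\overline{p(1/\bar\omega)}$ for the degree-two reversal, so that $\overline{p(\omega)}=\bar\omega^2p^*(\omega)$ on $\t$, and using $Q^*=Q$, this is equivalent to the polynomial identity $NN^*=c^2Q^2$. Since $Q^2$ has double zeros at $\omega_0\in\d$ and $1/\bar\omega_0\notin\d^-$, the identity forces $N$ to have degree exactly two and its two zeros to lie in $\{\omega_0,1/\bar\omega_0\}$; comparing zeros I would find that $\rho_\de$ is constant exactly when $N$ is a nonzero multiple of $Q$, of $(\omega-\omega_0)^2$, or of $(\omega-1/\bar\omega_0)^2$. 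The degenerate subcases in which a polynomial loses degree (for instance $\eta=0$ or $s^2v^1-s^1v^2=0$) I would treat separately; a short check shows they conform to the same conclusion.

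It then remains to translate the surviving configurations into conditions on $(s_1,v)$. Because $N$ is linear in $v$, the proportionality $N\parallel Q$ is a single linear equation; solving its first two coefficient ratios and invoking the unique $\beta\in\d$ with $s^1=\beta+\bar\beta s^2$ from Proposition \ref{elG}(4) and \eqref{appformbeta}, one obtains exactly that $v$ is collinear with $(\bar\beta,1)$, which is alternative (b), and this solution is available at every point of $G$. For the two square configurations $N$ must be a perfect square; writing $s_1=(z+w,zw)$ with $z,w\in\d$, the vanishing of the discriminant of $N$ reads $(v^2)^2-s^1v^1v^2+s^2(v^1)^2=0$, which factors as $(v^2-zv^1)(v^2-wv^1)=0$. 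Thus $N$ is a square precisely when $v$ is collinear with $(1,z)$ or with $(1,w)$, and then $N=-(1-z\omega)^2$ or $N=-(1-w\omega)^2$, with double zero $1/z$ or $1/w$. Since $z,w\in\d$ these zeros lie outside $\d^-$, so they cannot equal the interior zero $\omega_0$ of $Q$; hence the configuration $(\omega-\omega_0)^2$ is vacuous, while $(\omega-1/\bar\omega_0)^2$ requires $1/z$ (or $1/w$) to be a zero of $Q$, i.e. $Q(1/z)=0$. The decisive computation is that
\be\label{plan-E}
z^2Q(1/z)=(1-|z|^2)(z-w)(z\bar w-1),
\ee
which vanishes if and only if $z=w$. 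Hence a square configuration occurs if and only if $s_1=(2z,z^2)$ lies in $\calr$ and $v$ is collinear with $(1,z)$, which is alternative (a).

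The step I expect to be the main obstacle is the factorisation \eqref{plan-E}, together with the attendant bookkeeping of which zero of $Q$ lies inside $\d$: this is exactly what excludes the spurious perfect-square directions off the royal variety, ties alternative (a) to $\calr$, and shows that the interior-zero configuration never arises. As an independent corroboration of the statement, one may instead pass to the limit in Theorem \ref{flatandroyal} along pairs of points tending to $s_1$ in the direction $v$: confining the pairs to $\calr$, or to a single $\calf_\beta$, yields condition (1) for the infinitesimal datum, and in the limit the discrete dichotomy collapses to tangency to $\calr$ or to a flat geodesic, recovering (a) and (b).
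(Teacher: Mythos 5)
Your proposal is correct and takes essentially the same route as the paper: the paper likewise reduces (1) to constancy on $\t$ of the modulus of the quotient of the quadratic $N(\omega)=-v^1+2v^2\omega+(s^2v^1-s^1v^2)\omega^2$ by the denominator $\bar\beta-2\omega+\beta\omega^2$ (a positive multiple of your $Q$), invokes the same reflection identity ($C^2=F(w)\overline{F(1/\bar w)}$, which is your $NN^*=c^2Q^2$ cleared of denominators), and splits into the same trichotomy -- your configuration $N\parallel Q$ is the paper's ``two cancellations'' case yielding (b), your perfect-square configuration is the paper's ``one cancellation at $w_0$ together with $F(w_0)=0$'' case yielding (a), and ``no cancellation'' is ruled out in both arguments. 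The only divergences are in packaging, not in the route: the paper pins down case (a) by solving a small linear system for $p$, $s_1^1$ and $v$ in terms of the double zero $w_0$ rather than via your discriminant factorization $(v^2-zv^1)(v^2-wv^1)$ and the identity $z^2Q(1/z)=(1-|z|^2)(z-w)(z\bar w-1)$ (both of which I have checked), it proves (2)$\Rightarrow$(1) by a separate direct computation using invariance of the Poincar\'e metric rather than by running the equivalences backwards, and it treats your deferred degenerate case $\beta=0$ explicitly, with the same outcome.
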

\begin{proof}
First show that (2)$\Rightarrow$(1). By the definition of  
$\rho_\delta$, for $\omega \in \t$, 
\[
\rho_\delta(\omega) =\left|\Phi_{\omega}(s_1,v) \right|^2 = \left|(\Phi_{\omega}(s_1),D_v\Phi_{\omega}(s_1)) \right|^2.
\]
Suppose (a) holds, that is, $s_1= (2z, z^2)$ and $v=\la(1,z)$ for some $z \in \d$ and $\la\neq 0$.
One can see that $\Phi_{\omega}(s_1) =-z$ and
\begin{align*}
D_{(1,z)}\Phi_{\omega}(s_1) & = \left[\frac{\partial }{\partial s^1}
\frac{2 \omega s^2 - s^1}{2 - \omega s^1} + z \frac{\partial }{\partial s^2}
\frac{2 \omega s^2 - s^1}{2 - \omega s^1} \right]_{|s = (2z, z^2)}\\
	& =\left[
\frac{-2+ 2 \omega^2 s^2 +2 z \omega (2 - \omega s^1)}{(2 - \omega s^1)^2}  \right]_{|s = (2z, z^2)}\\
& = - \frac{1}{2}.
\end{align*}
Thus
\[
\Phi_\omega(\de)=(\Phi_\omega(s_1), D_{\la(1,z)}\Phi_\omega(s_1))= (-z,-\half\la).
\]
Hence
\[
\rho_\delta(\omega) = |(-z,- \half\la)|^2.
\] 
Thus (a) implies that $\rho_\delta$ is constant on $\t$.\\

Suppose (b) holds, that is,  $s_1= (\beta + \bar{\beta}z, z)$ and $v$ is collinear with $(\bar{\beta},1)$ for some $\beta, z \in \d$.
\begin{align*}
\Phi_\omega(s_1) &= \frac{2\omega z - (\beta + \bar{\beta}z)}{2-\omega (\beta + \bar{\beta}z)} \\
& = m_\omega(z),
\end{align*}
where
\[  m_\omega(z)= 
c \frac{z-\alpha}{1- \bar{\alpha}z},\qquad
c =\omega \frac{2 - \bar{\omega }\bar{\beta}}{2-\omega \beta} \in \t \quad \text{ and } \quad
\alpha = \frac{\beta}{2 \omega - \bar{\beta}} \in \d.
\]
Now
\begin{align*}
D_{(\bar{\beta},1)}\Phi_{\omega}(s_1) & = \left[\bar{\beta}\frac{\partial }{\partial s^1}
\frac{2 \omega s^2 - s^1}{2 - \omega s^1} +  \frac{\partial }{\partial s^2}
\frac{2 \omega s^2 - s^1}{2 - \omega s^1} \right]_{|s = (\beta + \bar{\beta}z, z)}\\
	& =\left[\bar{\beta}
\frac{-2(1- \omega^2 s^2)}{(2 - \omega s^1)^2} + \frac{2 \omega}{2 - \omega s^1} \right]_{|s = (\beta + \bar{\beta}z, z)}\\
& = \frac{4 \omega (1 -\re (\omega \beta))}{(2 - \omega(\beta + \bar{\beta}z))^2}.
\end{align*}
It is routine to check that 
\[
m_\omega'(z) = \frac{4 \omega (1 -\re (\omega \beta))}{(2 - \omega(\beta + \bar{\beta}z))^2},
\]
and so 
\[
\Phi_{\omega}(\delta)= (m_\omega(z), m_\omega'(z)).
\]
Therefore, for all $\omega \in \t$,
\[
\rho_\delta(\omega) = |\Phi_{\omega}(\delta)|^2 = |(m_\omega(z), m_\omega'(z))|^2= |m_\omega((z,1))|^2.
\] 
Since the Poincar\'e metric on $\d$ is invariant under $\aut \d$,
\[
\rho_\delta(\omega) =|(z, 1)|^2 = \frac{1}{(1-|z|^2)^2},
\]
which is independent of $\omega \in \t$.
Thus (b) implies that $\rho_\delta$ is constant on $\t$.
Hence, for each $\omega \in \t$, $\Phi_{\omega}$ solves $\Car(\delta)$ in both cases (a) and (b).

Now we show that (1)$\Rightarrow$(2).  Let $\de=(s_1,v)$ be a \nd infinitesimal datum in $G$ and suppose that $\rho_\de$ is constant on $\t$.  Write
\[
s_1=(\beta+\bar\beta p,p)
\]
for some $\beta, p \in\d$.
Calculation shows that 
\[
|\Phi_\omega(\de)| = \frac{1}{1-|p|^2} \left| \frac{-v^1+2v^2\omega+ (pv^1-s_1^1v^2)\omega^2}{\bar\beta-2\omega+\beta\omega^2}  \right|.
\]
Thus the rational function
\be\label{1.1}
F(w)= \frac{-v^1+2v^2w+ (pv^1-s_1^1v^2)w^2}{\bar\beta-2w+\beta w^2}
\ee
has constant modulus, say $C>0$, for $w\in\t$.  Hence
\begin{align}\label{Fident}
C^2&= F(w)\overline{F(w)}\notag \\
	&= F(w)\overline{F(1/\bar w)}
\end{align}
 for  all $w\in \t$.  Since the right hand side of equation \eqref{Fident} is a rational function of $w$, the equation remains true for all $w\in\c$ for which $F(w)$ and $F(1/\bar w)$ are finite.

First we consider the case that $\beta=0$, when $s_1=(0,p)$. Since $|F|$ is constant on $\t$ we have
\[
|-v^1+2v^2w+pv^1w^2| = \mathrm{const}
\]
for $w\in\t$.  It follows that either $v^1=0$ or $v^2=0$ and $p=0$.  In the former case
\[
\de= \left((0,p),(0,v^2)\right)
\]
and in the latter case
\[
\de=\left((0,0),(v^1,0)\right),
\]
and the statement of the theorem holds.

{\it Now consider the case that $\beta\neq 0$.}  We claim that cancellation occurs in the fractional quadratic $F$.
The zeros of the denominator $\bar\beta-2w+\beta w^2$  in the right hand side of equation \eqref{1.1} are
\[
w_\pm=\frac{1\pm \sqrt{1-|\beta|^2}}{\beta}.
\]
Observe that $w_+\bar w_- =1$, that is, $w_+$ and $w_-$ are symmetric with respect to $\t$.  Moreover, $|w_+|>1$ and $|w_-|<1$.

Suppose there is no cancellation in equation \eqref{1.1}. Then the points $w_\pm$ are both poles of $F$. Equation \eqref{Fident} implies that $\frac{1}{\bar w_+}$ and $\frac{1}{\bar w_-}$ are zeros of $F$, that is, $w_\pm$ are zeros of the numerator of $F$, contrary to assumption.

If there is only one cancellation in equation \eqref{1.1}, say at $w_+$ and
not at $w_-$, then $F(w_+)$ is finite and $w_-$ is a pole of $F$.  Since, by equation \eqref{Fident},
\[
C^2= F(w_+)\overline{F(w_-)},
\]
we have $F(w_+) =0$.

There are only two cases to consider:

Case 1:  $\beta\neq 0$ and there exists exactly one cancellation in equation \eqref{1.1}, which is at  $w_+$ (when $F(w_+) =0$)
or at $w_-$ (when $F(w_-) =0$). We shall show that in this case the datum $\delta$ is such that 
 $s_1=(2z,z^2)$ and $v$ is collinear with $(1,z)$ for some $z\in\d$. 

Case 2: $\beta\neq 0$ and there exist 2 cancellations in equation \eqref{1.1}, at both $w_+$ and $w_-$. We shall show that in this case the datum $\delta$ is such that $s_1=(\beta+\bar\beta z,z)$ and $v$ is collinear with $(\bar\beta,1)$ for some $\beta,z \in\d$.

{\it Case 1}. Let $w_0$ be one of the roots of equation
\begin{equation} \label{denomF}
 \bar\beta-2w+\beta w^2=0.
\end{equation}
On taking the complex conjugate of this equation we obtain the system:
\begin{equation}\label{beta-w0}
\left\{\begin{array}{lcl}
 w_0^2 \beta & + \bar\beta  &=2 w_0\\
\beta & + \bar{w_0}^2 \bar\beta  &=2 \bar{w_0},\\
\end{array}\right.
\end{equation}
which can be solved to give  $ \beta = \frac{2\bar{w_0}}{1+|w_0|^2}$.

If there exists exactly one cancellation in equation \eqref{1.1}, which is at  $w_0$, then, by  L'H$\hat{\rm o}$pital's Rule,
\be\label{Fatw0}
F(w_0) = \frac{2v^2+ 2(pv^1-s_1^1v^2)w_0}{-2 +2\beta w_0}
\ee
Therefore, the facts that there exists exactly one cancellation in equation \eqref{1.1} at  $w_0$ and that $F(w_0) =0$ yield the system
\begin{equation}\label{F-w0-1}
\left\{\begin{array}{lcl}
-v^1+2v^2w_0 &+ (pv^1-s_1^1 v^2)w_0^2&=0\\
2v^2         &+ 2(pv^1-s_1^1 v^2)w_0 &=0,\\
\end{array}\right.
\end{equation}
which is equivalent to 
\begin{equation}\label{F-w0-2}
\left\{\begin{array}{lcl}
v^1  &=v^2 w_0\\
(pv^1-s_1^1 v^2)w_0 &=- v^2.\\
\end{array}\right.
\end{equation}
Since  $\de$ is \nd
 $v \neq 0$.   Hence $v^2\neq 0$ and
\begin{equation}\label{F-w0-3}
\left\{\begin{array}{lcl}
v^1  &=v^2 w_0\\
(p w_0-s_1^1)w_0 &=-1.\\
\end{array}\right.
\end{equation}
Recall that 
\[
s_1^1= \beta+\bar\beta p \;\; \text{and} \;\; \beta = \frac{2\bar{w_0}}{1+|w_0|^2}.
\]
Thus the equation $(p w_0-s_1^1)w_0 =-1$ implies that $p = \frac{1}{w_0^2}$, and so $s_1^1 =  \frac{2}{w_0}$. Hence the datum 
$\delta= \left((\frac{2}{w_0},\frac{1}{w_0^2}),(1, \frac{1}{w_0})\right)$ for $\frac{1}{w_0} \in \d$.\\

{\it Case 2}: there exist 2 cancellations in equation \eqref{1.1}, at both $w_+$ and $w_-$. Equation \eqref{Fident} implies that $\frac{1}{\bar w_+}$ and $\frac{1}{\bar w_-}$ are zeros of $F$, that is, $w_\pm$ are zeros of the numerator of $F$. Therefore, there exists
$\lambda \neq 0$ such that the numerator and the denominator of $F$ are connected by the following equation
\be\label{F-N-D}
-v^1+2v^2w+ (pv^1-s_1^1v^2)w^2 = - \lambda (\bar\beta-2w+\beta w^2).
\ee
Since $s_1^1= \beta+\bar\beta p$, we have
\[
-v^1+2v^2w+ (pv^1-(\beta+\bar\beta p) v^2)w^2 = -\lambda \bar\beta+2 \lambda w-\beta \lambda w^2),\; w \in \c.
\]
Therefore, $v^1 =  \lambda \bar\beta$, $v^2 = \lambda$,
$pv^1-(\beta+\bar\beta p) v^2 = -\beta \lambda$.
Thus,
$v^1 =  \bar\beta v^2$, $pv^1-\bar\beta p v^2 =0$.
The datum 
$\delta= \left((\beta+\bar\beta p, p), v^2(\bar\beta,1)\right)$ for some $\beta,p \in\d$.
\end{proof}

\chapter{Types of geodesic: a crib and some cartoons}\label{crib}
The five types of datum $\de$ and their associated complex geodesics $\cald_\de$ play a central role in the paper.
The types are defined in Definition \ref{extdef10} on page \pageref{extdef10}, and the geometric characterization of types
in terms of the intersection of $\cald_\de^-$ with the royal variety $\calr^-$ is contained in Theorem \ref{geothm30} on page \pageref{geothm30}.
We are grateful to a referee for a very good suggestion: to include a `crib sheet' containing the essentials of the terminology, and also cartoons to give the reader a visual representation of the five types.  
\section{Crib sheet}\label{cribsheet}

\flushleft
\begin{tabular*}{15cm}{llll}
{\bf Type of datum $\de$} & {\bf For how many $\omega \in\t$}  & {\bf How many points} & {\bf Balanced?} \\
{\bf and geodesic} $D_\de$ & {\bf is  $\Phi_\omega$ extremal?} &{\bf  are in $\cald_\de^-\cap\calr^-$?}  & \\
 & & & \\
Purely & Unique $\omega_0$, & One in $\calr$, & No \\
unbalanced & $\frac{d^2{\rho_\de}}{dt^2}\left|_{\omega_0}<0\right.$ & one in $\partial\calr$  & \\
& & & \\
Exceptional & Unique $\omega_0$, & One  (double)\footnote{}  & Yes \\
 &  $\frac{d^2{\rho_\de}}{dt^2}\left|_{\omega_0}=0\right.$ & in $\partial\calr$ & \\
  & & & \\
Purely  & Two & Two, & Yes \\
balanced & & both in $\partial\calr$ & \\
 & & & \\
Flat  & All $\omega\in\t$ & One, in $\calr$ & No\\
 & $C(s)=s^2$ solves & & \\
 &  $\Car(\de)$& & \\
 & & & \\
Royal & All $\omega\in\t$ & All of $\calr^- $ & Yes \\
& $C(s)= \half s^1$ solves & & \\
 &  $\Car(\de)$ & & \\
\end{tabular*}
\index{crib sheet}
\index{geodesics!types of}
\index{datum!royal}
\index{datum!flat}
\index{datum!purely balanced}
\index{datum!purely unbalanced}
\index{datum!exceptional}
\vspace* {0.5cm}
{\small{$^1$ In the sense that the multiplicity of the royal point is $2$ -- see Definition \ref{multiplicity}.}}

\vspace* {0.3cm}
\normalsize
Balanced geodesics are those which are parallel to $\calr$, in the sense that they either coincide with or are disjoint from $\calr$ in $G$.

\section{Cartoons}\label{cartoons}
\index{cartoons}
  We draw the intersection with $\r^2$ of a representative of each type of geodesic.  The set $G\cap \r^2$ in the $(s,p)$ plane is the interior of the isosceles triangle with vertices $(\pm 2,1)$ and $(0,-1)$, as in Figure 1.
\begin{figure}[!ht]\label{realgamma}
\includegraphics{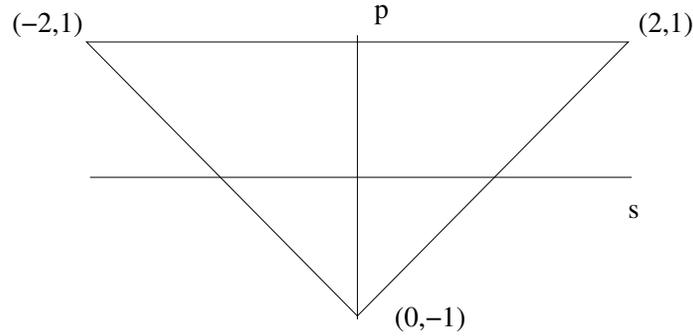}
\caption{The real symmetrized bidisc $G\cap \r^2$}
\end{figure}

\vspace{0.3cm}

The {\bf  royal variety} is the locus $s^2=4p$, as in Figure 2.
\begin{figure}
\label{royalcartoon}
\begin{center}
{\includegraphics{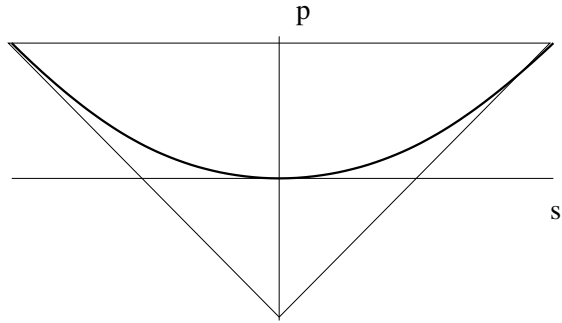}}
\caption{The royal variety}
\end{center}
\end{figure}

\vspace{0.3cm}

{\bf Flat geodesics} have the form $s= \beta +\bar\beta p$ for some $\beta\in\d$.  This locus meets $G\cap\r^2$ if and only $\beta$ is real,
in which case its cartoon is as in Figure 3.   A flat geodesic has a unique royal point, which is in $\calr$.

\begin{figure}
\label{flatcartoon}

\includegraphics{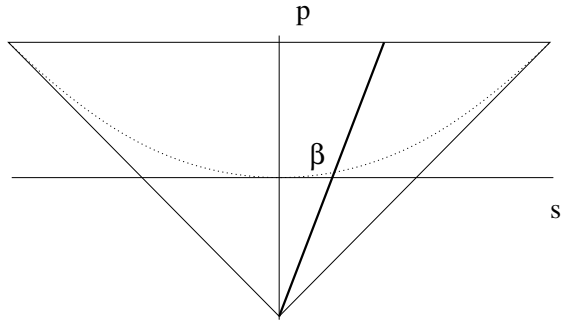}

\caption{A flat geodesic}
\end{figure}

\vspace{0.3cm}

{\bf Purely unbalanced geodesics,} by Theorem \ref{formgeos}, up to an automorphism of $G$, have the form $k_r(\d)$ for some $r\in\d$, where
\[
k_r(z)= \frac{1}{1-rz} \left(2(1-r)z, z(z-r)\right)
\]
for some $r\in(0,1)$.  The geodesic $k_r(\d^-)$ has royal points $(0,0)$ in $\calr$ and $(2,1)$ in $\partial\calr$.  Taking $r=\half$ we obtain the curve 
\[
p=\frac{s(3s-2)}{2(s+2)}, \qquad -\tfrac 23 \leq s \leq 2,
\]
which has graph as in Figure 4.

\begin{figure}
\label{pureunbalcartoon}
\begin{center}
\includegraphics[viewport=30mm 165mm 180mm 240mm, width=8cm, height=37mm,clip] {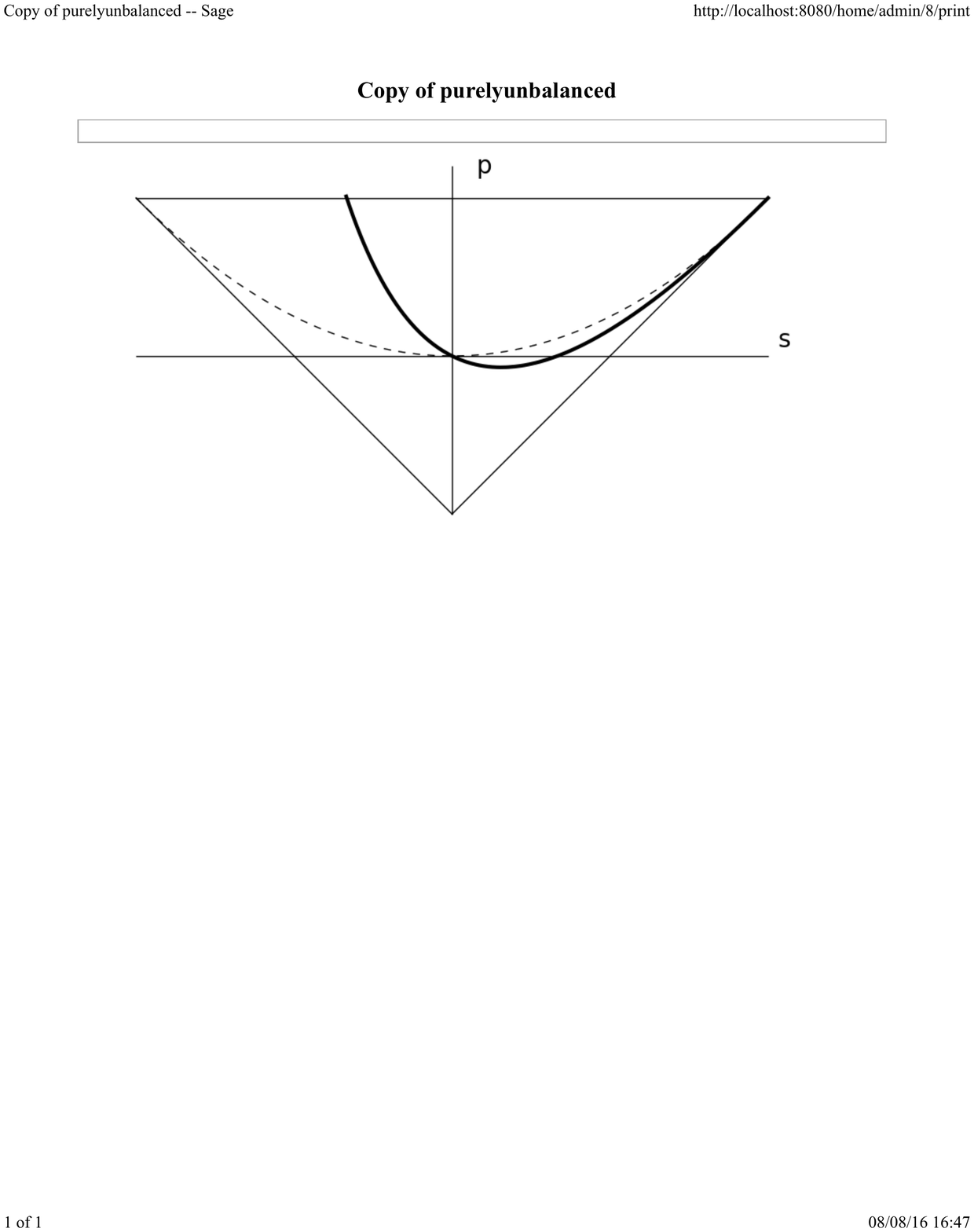}
\caption{A purely unbalanced geodesic}
\end{center}
\end{figure}

\vspace{0.3cm}

{\bf Purely balanced geodesics}, up to automorphisms of $G$, are of the form $g_r(\d)$, for some $r\in (0,1)$, by Theorem \ref{formgeos}, where
\[
g_r(z)=(z+B_r(z), zB_r(z)).
\]
The geodesic $g_r(\d^-)$ has the two royal points $(\pm 2,1)$, both in $\partial\calr$.
In Figure 5 we have taken $r=\half$.

\begin{figure}
\label{purebalcartoon}
\includegraphics{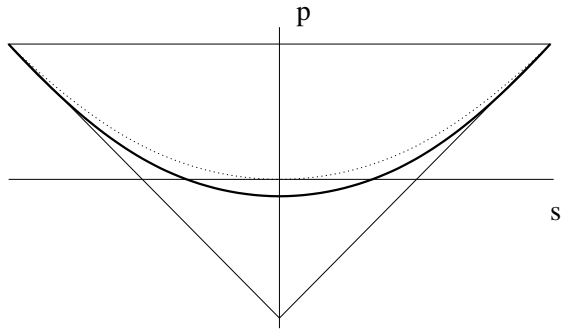}
\caption{A purely balanced geodesic}
\end{figure}

\vspace{0.3cm}

The remaining type of geodesic is the { \bf exceptional type}.    According to Theorem \ref{formgeos}, up to automorphisms of $G$ they have the form $h_r(\d)$ for some $r>0$, where 
\[
h_r(z)= (z+m_r(z), zm_r(z))
\]
and
\[
m_r(z)= \frac{(r-i)z+i}{r+i-iz}.
\]
Any such geodesic $h_r(\d^-)$ has the unique royal point $(2,1)$, and $h_r(z)\in\r^2$ for any $z$ lying on the circle of centre $1-ir$ and radius $r$.  When $r$ is chosen to be $1$ the locus in Figure 6 results.

\begin{figure}
\label{exceptcartoon}
\includegraphics{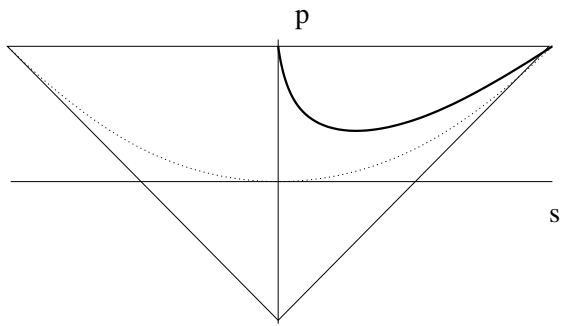}
\caption{An exceptional geodesic}
\end{figure}

\backmatter
\bibliographystyle{amsalpha}
\bibliography{} \label{refs}
\newpage \label{ind}
\printindex 

\end{document}